\renewcommand{\tocsection}[3]{%
  \indentlabel{\@ifnotempty{#2}{\bfseries\ignorespaces#1 #2\quad}}\bfseries#3}
\renewcommand{\tocsubsection}[3]{%
  \indentlabel{\@ifnotempty{#2}{\ignorespaces#1 #2\quad}}#3}
\newcommand\@dotsep{4.5}
\def\@tocline#1#2#3#4#5#6#7{\relax
  \ifnum #1>\c@tocdepth 
  \else
    \par \addpenalty\@secpenalty\addvspace{#2}%
    \begingroup \hyphenpenalty\@M
    \@ifempty{#4}{%
      \@tempdima\csname r@tocindent\number#1\endcsname\relax
    }{%
      \@tempdima#4\relax
    }%
    \parindent\z@ \leftskip#3\relax \advance\leftskip\@tempdima\relax
    \rightskip\@pnumwidth plus1em \parfillskip-\@pnumwidth
    #5\leavevmode\hskip-\@tempdima{#6}\nobreak
    \leaders\hbox{$\m@th\mkern \@dotsep mu\hbox{.}\mkern \@dotsep mu$}\hfill
    \nobreak
    \hbox to\@pnumwidth{\@tocpagenum{\ifnum#1=1\bfseries\fi#7}}\par
    \nobreak
    \endgroup
  \fi}
\renewcommand\csname r@tocindent0\endcsname{0pt}
\def\l@subsection{\@tocline{2}{0pt}{2.5pc}{5pc}{}}
\newcommand{\R}{\mathbb{R}}
\newcommand{\Z}{\mathbb{Z}}
\newcommand{\N}{\mathbb{N}}
\newcommand{\T}{\mathbb{T}}
\renewcommand{\P}{\mathbb{P}}
\newcommand{\bR}{\bm{R}}
\newcommand{\barR}{\overline{\bm{R}}}
\newcommand{\bartheta}{\wh{\theta}}
\newcommand{\bars}{\wh s}
\newcommand{\bu}{\bm{u}}
\newcommand{\bx}{\bm{x}}
\newcommand{\by}{\bm{y}}
\newcommand{\X}{\bm{X}}
\newcommand{\be}{\bm{e}}
\newcommand{\p}{\partial}
\renewcommand{\div}{{\rm{div}\,}}
\newcommand{\abs}[1]{\left\lvert #1 \right\rvert}
\newcommand{\norm}[1]{\left\lVert #1 \right\rVert}
\newcommand{\wh}[1]{\widehat{#1}}
\newcommand{\wt}[1]{\widetilde{#1}}
\newcommand{\mc}[1]{\mathcal{#1}}
\newtheorem{theorem}{Theorem}[section]
\newtheorem{lemma}[theorem]{Lemma}
\newtheorem{proposition}[theorem]{Proposition}
\newtheorem{corollary}[theorem]{Corollary}
\theoremstyle{definition}
\newtheorem*{remark}{Remark}
\begin{document}
\title{On an angle-averaged Neumann-to-Dirichlet map for thin filaments}

\author{Laurel Ohm}
\address{Department of Mathematics, University of Wisconsin - Madison, Madison, WI 53706}
\email{lohm2@wisc.edu}

\begin{abstract} 
We consider the Laplace equation in the exterior of a thin filament in $\R^3$ and perform a detailed decomposition of a notion of slender body Neumann-to-Dirichlet (NtD) and Dirichlet-to-Neumann (DtN) maps along the filament surface. The decomposition is motivated by a filament evolution equation in Stokes flow for which the Laplace setting serves as an important toy problem.
Given a general curved, closed filament with constant radius $\epsilon>0$, we show that both the slender body DtN and NtD maps may be decomposed into the corresponding operator about a straight, periodic filament plus lower order remainders. For the straight filament, both the slender body NtD and DtN maps are given by explicit Fourier multipliers and it is straightforward to compute their mapping properties. The remainder terms are lower order in the sense that they are small with respect to $\epsilon$ or smoother.
 While the strategy here is meant to serve as a blueprint for the Stokes setting, the Laplace problem may be of independent interest. 
\end{abstract}

\maketitle

\tableofcontents

\setlength{\parskip}{6pt}
\section{Introduction}
We consider the Laplace equation in the exterior of a thin filament in $\R^3$ and study a \emph{slender body Neumann-to-Dirichlet (NtD) map} and its inverse, the slender body Dirichlet-to-Neumann (DtN) map, along the filament surface. The boundary value problem corresponding to the slender body NtD map was introduced by the author, together with Mori and Spirn, in \cite{closed_loop,free_ends} in the context of the Stokes equations as mathematical justification for \emph{slender body theory}, an approximation for the interaction between a thin filament and a 3D viscous fluid. Slender body theory, developed and improved in various classical works \cite{batchelor1970slender, cox1970motion, gotz2000interactions, gray1955propulsion, johnson1980improved, keller1976slender, lighthill1976flagellar, tornberg2004simulating}, approximates the immersed filament as a 1D curve evolving according to a 1D force-to-velocity map along the fiber centerline. Making rigorous sense of this 1D force-to-velocity map for a truly 3D filament motivated the development of the slender body NtD map in \cite{closed_loop,free_ends} for a static filament, i.e., a fixed filament geometry. The PDE results in this paper are motivated by a filament evolution problem in the Stokes setting using the slender body NtD map. Here we study the Laplace version of the slender body NtD map as an important toy model for the Stokes setting.

The main result of this paper is Theorem \ref{thm:main}, which provides a detailed decomposition of the Laplace slender body NtD and DtN maps about a general curved, closed filament with constant radius $\epsilon>0$. We show that for both operators, the leading order mapping properties are given by the corresponding operator about a straight, periodic filament with radius $\epsilon$. In this simple geometry, the slender body NtD and DtN maps are both given by explicit Fourier multipliers, and it is relatively straightforward to compute their mapping properties. We extract the straight operator from the expression for the general curved filament and show that the remainder terms are lower order in the sense that they are small with respect to $\epsilon$ or smoother. The arguments presented here are meant to serve as a blueprint for performing the same type of decomposition in the Stokes setting.

\emph{Main motivation:} Our long-term aim is to develop a complete well-posedness theory for the evolution equation \eqref{eq:Stokes} describing the dynamics of an immersed elastic filament in Stokes flow. This would provide a more complete mathematical justification of the myriad computational results for thin filament dynamics based on slender body theory. Let $\X(s,t)$ denote the centerline position of a filament immersed in 3D Stokes flow at time $t$. We consider the evolution of $\X$ according to 
\begin{equation}\label{eq:Stokes}
\frac{\p\X}{\p t} = -({\rm SB \,NtD})\big[(\X_{sss}-\lambda \X_s)_s\big]\,, \qquad \abs{\X_s}=1\,.
\end{equation}
Here (SB NtD) denotes a version of Neumann-to-Dirichlet map in 3D Stokes flow taking force data defined along a 1D curve to the motion of the filament centerline. The way in which we propose to make sense of this map when the filament is interacting with a 3D fluid will be made more precise later (in the context of the Laplace equation--see equation \eqref{eq:SB_PDE}). 
The form of the force data $(\X_{sss}-\lambda \X_s)_s$ comes from Euler-Bernoulli beam theory \cite{camalet2000generic, camalet1999self, hines1978bend, tornberg2004simulating, wiggins1998flexive, wiggins1998trapping} and is a simple elastic response along the filament with $\lambda(s,t)$, the filament tension, serving as a Lagrange multiplier to enforce the inextensibility constraint $\abs{\X_s}=1$. 
The evolution \eqref{eq:Stokes} bears analogies to the Peskin problem for a 1D immersed elastic filament in 2D \cite{cameron2021critical, chen2021peskin, gancedo2020global, garcia2020peskin, lin2019solvability, kuo2023tension, mori2019well, tong2021regularized, tong2023geometric}, but here the filament is immersed in a 3D fluid and part of the difficulty in analyzing this problem is simply making sense of the operator (SB NtD) along the filament.

The PDE evolution \eqref{eq:Stokes} is studied in \cite{mori2023well,ohm2022well} when (SB NtD) is replaced with \emph{local} slender body theory $(\text{SB}_{\rm loc} \,\text{NtD})$, given simply by multiplication by the matrix $c\abs{\log\epsilon}({\bf I}+\X_s\X_s^{\rm T})$. In this setting, in addition to establishing well-posedness for the analogue of \eqref{eq:Stokes}, we show how the addition of a time-periodic forcing can give rise to swimming, i.e., net forward motion. However, this map incorporates only a very simplified description of the effects of the surrounding fluid on the filament. In particular, this is not yet a truly coupled fluid-structure free boundary problem. 
To incorporate more of the physics of the fluid-structure interaction, one can try using \emph{nonlocal} slender body theory $(\text{SB}_{\rm nloc} \,\text{NtD})$ \cite{keller1976slender,johnson1980improved, tornberg2004simulating}, which is a 1D integral operator incorporating nonlocal interactions along the length of the filament. However, from a PDE perspective, this 1D integral operator cannot yield a well-posed evolution problem in the form \eqref{eq:Stokes} due to issues at high wavenumbers leading to nonsensical mapping properties \cite{gotz2000interactions, inverse, tornberg2004simulating, shelley2000stokesian}. It is a nontrivial issue to incorporate more of the fluid physics while yielding a well-posed PDE. The challenge here is to make sense of a map from 1D force data to a 1D filament velocity when the filament is in 3D.

In \cite{closed_loop,free_ends}, we developed a notion of the operator (SB NtD) as a novel type of boundary value problem in the exterior of a filament with small but finite radius. In our setup, at each instant in time, the total force density $\bm{f}(s)$ over each 3D cross section of the filament is prescribed, and we solve for the corresponding filament velocity $\bu(s)$, unknown but constrained to be constant on cross sections, as a Stokes boundary value problem (compare with equation \eqref{eq:SB_PDE} in the Laplace setting). 
 The resulting boundary value problem is nonstandard but well posed in a natural energy space \cite{closed_loop,free_ends,rigid}. Crucially, this formulation also incorporates much more of the surrounding fluid physics than local slender body theory does. With this operator, \eqref{eq:Stokes} is a true free boundary problem.

To develop the well-posedness theory of \eqref{eq:Stokes} for the full (SB NtD) operator, we need a detailed understanding of its mapping properties. In particular, given the form $(\X_{sss}-\lambda\X_s)_s$ of the forcing terms, we need to understand how this operator interacts with derivatives along the filament.
Here we perform a full decomposition of the analogous (SB NtD) operator in the Laplace setting, providing an important stepping stone towards tackling the Stokes version. In particular, all of the major tools used here have an analogue in the Stokes setting. The analysis is already technical in the simpler Laplace setting, which underscores why such a blueprint is useful.

Our strategy is to consider the general curved, closed filament as a perturbation of the straight filament. This strategy has been used to study vortex filament solutions of the Ginzburg-Landau, Navier-Stokes, and Euler equations \cite{bedrossian2018vortex, davila2022interacting, davila2022travelling}, 
and bears analogy to the Dirichlet-to-Neumann operator decompositions used to study the Muskat problem \cite{alazard2020paralinearization, flynn2021vanishing, nguyen2020paradifferential} and water waves \cite{alazard2014cauchy, alazard2009paralinearization, lannes2005well}, where the strategy is to perturb around a flat interface. 
Extracting the straight operator as the leading order behavior is useful since it is given by an explicit Fourier multiplier, computed in \cite{inverse}. In particular, we have a Fourier multiplier representation for both the Dirichlet-to-Neumann and Neumann-to-Dirichlet directions, and we know exactly how this operator interacts with derivatives along the filament.

\emph{Additional motivation:}
Despite no clear analogue of the full dynamic problem \eqref{eq:Stokes} for Laplace, the Laplace slender body DtN and NtD maps have many conceivable applications for which a detailed understanding of mapping properties may be useful. Examples include modeling blood or chemical perfusion in tissue outside of a thin capillary using Darcy flow (see, e.g., \cite{kuchta2021analysis}), describing the electrostatic potential outside of a conducting wire, and computing the equilibrium temperature distribution outside of a heating wire. In many of these cases, the full power of the decomposition performed here is likely not necessary to completely understand the model behavior. Nevertheless, this paper provides a foundational result for further work on problems involving the Laplace equation outside a thin domain.

\subsection{Geometry}\label{subsec:geom}
Let $\X : \T\equiv \R / \Z \to \R^3$ denote the coordinates of a closed curve $\Gamma_0\in\R^3$, parameterized by arclength $s$ (see figure \ref{fig:filament}). We will require $\X(s)$ to belong to the H\"older space $C^{2,\beta}(\T)$ (see \eqref{eq:Ckalpha}) and to satisfy the non-self-intersection condition
\begin{equation}\label{eq:cGamma}
 \inf_{s\neq s'}\frac{\abs{\X(s)-\X(s')}}{\abs{s-s'}} \ge c_\Gamma 
 \end{equation}
for some constant $c_\Gamma>0$. Letting $\be_{\rm t}(s):= \frac{\p\X}{\p s}$ denote the unit tangent vector to $\X(s)$, we parameterize points sufficiently close to $\Gamma_0$ using the $C^{1,\beta}$ orthonormal frame $(\be_{\rm t}(s),\be_{\rm n_1}(s),\be_{\rm n_2}(s))$ satisfying the ODEs 
\begin{equation}\label{eq:frame}
\frac{d}{ds}
\begin{pmatrix}
\be_{\rm t}(s)\\
\be_{\rm n_1}(s)\\
\be_{\rm n_2}(s)
\end{pmatrix}
 = \begin{pmatrix}
 0 & \kappa_1(s) & \kappa_2(s) \\
 -\kappa_1(s) & 0 & \kappa_3 \\
-\kappa_2(s) & -\kappa_3& 0
 \end{pmatrix}\begin{pmatrix}
\be_{\rm t}(s)\\
\be_{\rm n_1}(s)\\
\be_{\rm n_2}(s)
\end{pmatrix}\,.
\end{equation}
Here the coefficients $\kappa_1$ and $\kappa_2$ satisfy 
\begin{align*}
\kappa_1^2(s) +\kappa_2^2(s) = \kappa^2(s)\,,
\end{align*}
where $\kappa(s)$ is the curvature of $\X(s)$, and $\kappa_3$ is a constant satisfying $\abs{\kappa_3}\le \pi$ (see \cite[Lemma 1.1]{closed_loop}). We denote
 \begin{equation}\label{eq:kappastar}
 \kappa_* := \max_{s\in\T} \abs{\kappa(s)}, \quad \kappa_{*,\beta} := \max_{s\in\T} \frac{\abs{\kappa(s)-\kappa(s')}}{\abs{s-s'}^\beta} \,.
 \end{equation}
It will be useful to define the following curved version of cylindrical basis vectors:
\begin{equation}\label{eq:er_etheta}
\be_r(s,\theta) = \cos\theta\be_{\rm n_1}(s) + \sin\theta\be_{\rm n_2}(s)\,,
\quad \be_\theta(s,\theta) = -\sin\theta\be_{\rm n_1}(s) + \cos\theta\be_{\rm n_2}(s)\,.
\end{equation}
Within a neighborhood ${\rm dist}(\bx,\Gamma_0)< r_*(c_\Gamma,\kappa_*)< \frac{1}{2\kappa_*}$ of $\Gamma_0$, we may parameterize points $\bx$ as
\begin{align*} 
\bx = \X(s) + r\be_r(s,\theta), \quad 0\le r <r_* \,.
\end{align*}
For $\epsilon<r_*/4$, we define a slender filament of uniform radius $\epsilon$ as
\begin{equation}\label{eq:SB_def} 
\Sigma_\epsilon:= \big\{\bx\in \R^3 \; : \; \bx = \X(s) + r\be_r(s,\theta)\,,  \; s\in\T\,, \; r < \epsilon\,, \; \theta\in2\pi\T \big\}\,.
\end{equation}
We define the filament surface $\Gamma_\epsilon = \p \Sigma_\epsilon$ as
\begin{align*}
 \Gamma_\epsilon :=  \big\{\bx\in \R^3 \; : \; \bx = \X(s) + \epsilon\be_r(s,\theta)\,,  \; s\in\T\,, \; \theta\in2\pi\T \big\}
 \end{align*}
and parameterize the surface element $\mc{J}_\epsilon(s,\theta)$ along the filament surface as
\begin{equation}\label{eq:jacfac}
\mc{J}_\epsilon(s,\theta) = \epsilon(1-\epsilon\wh\kappa(s,\theta))\,, \qquad
\wh\kappa(s,\theta):= \kappa_1(s)\cos\theta+\kappa_2(s)\sin\theta\,. 
\end{equation}

\begin{figure}[!ht]
\centering
\includegraphics[scale=0.3]{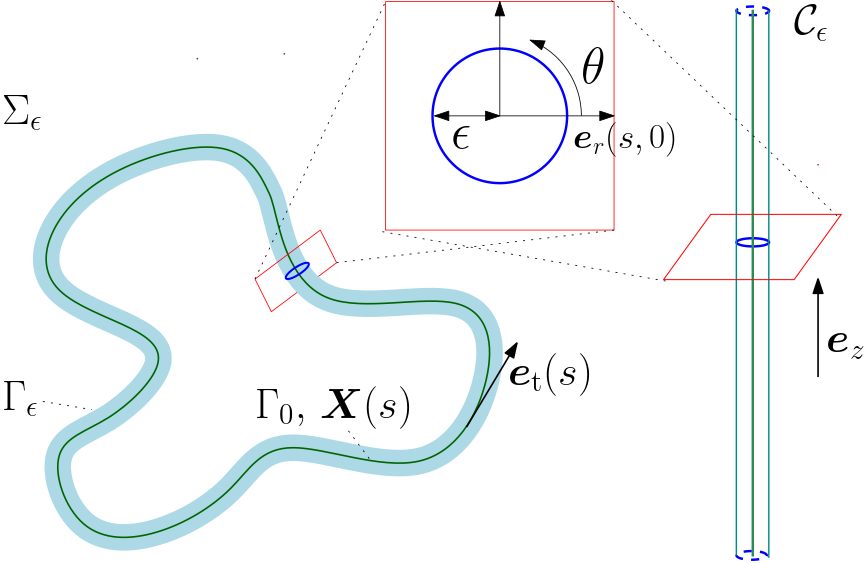}
\caption{An example of curved filament $\Sigma_\epsilon$ described in section \ref{subsec:geom} and the straight filament $\mc{C}_\epsilon$ described in \eqref{eq:Cepsilon}.}
\label{fig:filament}
\end{figure}

\subsection{Slender body Neumann-to-Dirichlet map}
We need to make sense of a map between 1D Neumann data and 1D Dirichlet data in $\R^3$. For the Laplace equation, this map will be defined via the following \emph{slender body boundary value problem}:
\begin{equation}\label{eq:SB_PDE}
\begin{aligned}
\Delta u &= 0 \qquad\qquad \text{in }\Omega_\epsilon = \R^3\backslash \overline{\Sigma_\epsilon} \\
\int_0^{2\pi}\frac{\p u}{\p \bm{n}_x} \, \mc{J}_\epsilon(s,\theta)\,d\theta &= f(s) \qquad\,\; \text{on }\Gamma_\epsilon \\
u\big|_{\Gamma_\epsilon} &= v(s)\,, \qquad \text{independent of }\theta\,.   
\end{aligned}
\end{equation}
In the ``forward" version of the problem, we prescribe the \emph{total} Neumann data $f(s)$ per cross section of the filament surface, and we seek the unique function $u$ which is harmonic in $\Omega_\epsilon=\R^3\backslash\overline{\Sigma_\epsilon}$ and constant over every cross section of $\Gamma_\epsilon$, i.e. a function of arclength $s$ only (see figure \ref{fig:filament}).
 In the ``inverse" version of the problem, we prescribe Dirichlet data $v(s)$ that is independent of the cross sectional angle parameter $\theta$ and solve for $u$ in $\Omega_\epsilon$ and, subsequently, the integral of $\frac{\p u}{\p\bm{n}_x}$ over each cross section.

Our analysis centers on the relationship between the total Neumann boundary data $f(s)$ and the $\theta$-independent Dirichlet data $v(s)$. Our aim is to understand the \emph{slender body Neumann-to-Dirichlet (NtD) map} 
\begin{equation}\label{eq:SB_NtD0}
\mc{L}_\epsilon: f(s)\mapsto v(s)
\end{equation} 
as well as its inverse, the \emph{slender body Dirichlet-to-Neumann (DtN) map} $\mc{L}_\epsilon^{-1}: v(s)\mapsto f(s)$. These maps are the slender body analogues of the standard Neumann-to-Dirichlet and Dirichlet-to-Neumann maps for harmonic functions, defined only between functions which depend solely on the arclength parameter $s$ along $\Gamma_\epsilon$.

The slender body inverse problem is simpler than the forward problem, as it is just a Dirichlet problem with $\theta$-independent data, and classical regularity results apply. Our strategy for developing the regularity theory of the trickier forward problem in many ways exploits the fact that the inverse problem is simpler to study. The forward problem was introduced in the Stokes setting in \cite{closed_loop,free_ends} and well-posedness was established in a natural energy space. Here we develop the regularity theory for the forward problem in the setting of H\"older spaces (see Lemma~\ref{lem:SB_PDE_holder} and its proof in section \ref{sec:SBNtD_Holder}), since this will be a convenient setting for extracting detailed mapping information from the layer potential formulation of \eqref{eq:SB_PDE} considered in section \ref{subsubsec:layer}. 

For $h$ defined along $\Gamma_\epsilon$, we denote 
\begin{equation}\label{eq:norms}
\norm{h}_{L^\infty} = \sup_{\bx\in\Gamma_\epsilon}\abs{h(\bx)}\,, \qquad
\abs{h}_{\dot C^{0,\alpha}} = \sup_{\bx\neq\bx'}\frac{\abs{ h(\bx)-h(\bx')}}{\abs{\bx-\bx'}^\alpha}\,, \quad 0<\alpha<1\,, 
\end{equation}
and recall the definition of the H\"older spaces $C^{k,\alpha}$ along $\Gamma_\epsilon$ as
\begin{equation}\label{eq:Ckalpha}
C^{k,\alpha}(\Gamma_\epsilon) = \{ h \; : \; \norm{h}_{C^{k,\alpha}} <\infty\}\,, \qquad 
\norm{h}_{C^{k,\alpha}} = \sum_{\ell=0}^k\norm{\nabla^\ell h}_{L^\infty}+\abs{\nabla^k h}_{\dot C^{0,\alpha}}\,.
\end{equation}

\subsubsection{Slender body NtD for the straight filament}
In the simple geometry of a straight filament, an explicit Fourier multiplier representation of $\mc{L}_\epsilon$ was derived in \cite{inverse}.
This explicit information will play a major role in our analysis of $\mc{L}_\epsilon$ and $\mc{L}_\epsilon^{-1}$ in the more general curved setting. 

In the case of the straight cylinder, the filament centerline $\X(s)$ may be written $\X(s)=s\be_z$, $s\in \T$, where $\be_z$ is the standard Cartesian basis vector (see figure \ref{fig:filament}). The vectors $\be_r(s,\theta)$ and $\be_\theta(s,\theta)$ \eqref{eq:er_etheta} reduce to the usual cylindrical basis vectors $\be_r(\theta)$, $\be_\theta(\theta)$. We will use $\mc{C}_\epsilon$ to denote the surface $\Gamma_\epsilon$ in the straight setting, i.e. 
\begin{equation}\label{eq:Cepsilon} 
\mc{C}_\epsilon :=  \big\{\bx\in \R^2\times\T \; : \; \bx = s\be_z + \epsilon\be_r(\theta)\,,  \; s\in\T\,, \; \theta\in2\pi\T \big\}\,.
\end{equation}
Letting $\overline{\mc{L}}_\epsilon$, $\overline{\mc{L}}_\epsilon^{-1}$ denote the slender body NtD and DtN maps along $\mc{C}_\epsilon$, from \cite{inverse} we have that the symbol of $\overline{\mc{L}}_\epsilon^{-1}$ on the unit circle $\T$ is given by 
\begin{equation}\label{eq:symbol}
m_\epsilon^{-1}(k) = 4\pi^2 \epsilon \abs{k}\frac{K_1(2\pi\epsilon \abs{k})}{K_0(2\pi\epsilon \abs{k})}\,, \quad \abs{k}=1,2,3,\dots \,,
\end{equation}
where $K_0$ and $K_1$ denote zeroth and first order modified Bessel functions of the second kind. In \cite{inverse}, it is shown that \textcolor{black}{at low wavenumbers $k\lesssim \frac{1}{\epsilon}$, the growth of $m_\epsilon^{-1}(k)$ is logarithmic:
\begin{equation}
 m_\epsilon^{-1}(k) \sim \abs{\log(\epsilon \abs{k})}^{-1}\,, \qquad k\lesssim \frac{1}{\epsilon}\,, 
 \end{equation} 
 }
while $m_\epsilon^{-1}(k)$ grows linearly as $\abs{k}\to \infty$: 
\begin{equation}\label{eq:growth}
4\pi^2\epsilon \abs{k} \le m_\epsilon^{-1}(k) \le 4\pi^2\epsilon \abs{k} + 2\pi\qquad \text{for all }k\,.
\end{equation}
From \eqref{eq:growth}, the $L^2$ mapping properties of $\overline{\mc{L}}_\epsilon$ and $\overline{\mc{L}}_\epsilon^{-1}$ are clear: $\overline{\mc{L}}_\epsilon$ maps the Sobolev space $H^s(\T)$ to $H^{s+1}(\T)$, and vice versa for $\overline{\mc{L}}_\epsilon^{-1}$.
In addition, from the expression \eqref{eq:symbol}, we can obtain the following H\"older space mapping properties along $\T$. 
\begin{lemma}[Mapping properties of SB NtD and DtN on $\mc{C}_\epsilon$]\label{lem:mapping_Leps}
Let $\overline{\mc{L}}_\epsilon$ denote the slender body NtD map along the straight filament $\mc{C}_\epsilon$. Given $h\in C^{0,\alpha}(\T)$ with $\int_\T h(s)\,ds=0$, we have  
\begin{equation}\label{eq:est_barLeps}
\norm{\overline{\mc{L}}_\epsilon[h]}_{C^{1,\alpha}(\T)} \le c\abs{\log\epsilon}^{1/2}\norm{h}_{L^\infty(\T)} + c\,\epsilon^{-1}\abs{\log\epsilon}^{1/2}|h|_{\dot C^{0,\alpha}(\T)} \,.
\end{equation}

Similarly, let $\overline{\mc{L}}_\epsilon^{-1}$ denote the slender body DtN map along $\mc{C}_\epsilon$. Given $g\in C^{1,\alpha}(\T)$, we have 
\begin{equation}\label{eq:est_barLepsinv}
\norm{\overline{\mc{L}}_\epsilon^{-1}[g]}_{C^{0,\alpha}(\T)} \le c\abs{\log\epsilon}^{-1}\norm{g}_{C^{1,\alpha}(\T)}\,.
\end{equation}
\end{lemma}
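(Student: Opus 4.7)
The plan is to work directly with the Fourier series representations
\[
\overline{\mc{L}}_\epsilon[h](s) = \sum_{k\neq 0}m_\epsilon(k)\hat h(k) e^{2\pi iks}\,, \qquad \overline{\mc{L}}_\epsilon^{-1}[g](s) = \sum_{k\neq 0}m_\epsilon^{-1}(k)\hat g(k) e^{2\pi iks}\,,
\]
where $m_\epsilon(k) = 1/m_\epsilon^{-1}(k)$ with $m_\epsilon^{-1}(k)$ given by \eqref{eq:symbol}. The two key inputs are the symbol bounds from \eqref{eq:growth} together with the low-frequency asymptotic $m_\epsilon^{-1}(k) \sim |\log(\epsilon|k|)|^{-1}$ (so that $m_\epsilon(k) \lesssim \min(|\log(\epsilon|k|)|, 1/(\epsilon|k|))$), and the standard H\"older-to-Fourier decay estimates $|\hat f(k)| \le c\|f\|_{C^{0,\alpha}}/|k|^\alpha$ (and analogously $|\hat g(k)|\le c\|g\|_{C^{1,\alpha}}/|k|^{1+\alpha}$), both of which follow from periodicity together with the identity $\hat f(k) = \tfrac12 \int_\T (f(s)-f(s+1/(2k))) e^{-2\pi iks}ds$.

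For the DtN estimate \eqref{eq:est_barLepsinv}, which I would tackle first as a warm-up, the plan is to bound $\|\overline{\mc{L}}_\epsilon^{-1}[g]\|_{L^\infty}$ by $\sum_{k\ne 0} m_\epsilon^{-1}(k)|\hat g(k)|$ and the H\"older seminorm by the analogous sum with the weight $|e^{2\pi iks}-e^{2\pi iks'}|\le c |k|^\alpha|s-s'|^\alpha$. Splitting at $|k| \sim 1/\epsilon$, the low-frequency piece is controlled by its value at $|k|=1$, where $m_\epsilon^{-1}(1) \sim 1/|\log\epsilon|$; summing against the $C^{1,\alpha}$-decay of $\hat g$ gives the leading $c/|\log\epsilon|$ factor. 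The high-frequency piece uses the linear bound $m_\epsilon^{-1}(k) \le 4\pi^2\epsilon|k|+2\pi$ together with the Fourier decay of $g$ to obtain a contribution of order $\epsilon^\alpha$, negligible compared to $|\log\epsilon|^{-1}$.

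For the NtD estimate \eqref{eq:est_barLeps}, the plan is also a frequency decomposition about $|k|\sim 1/\epsilon$, now keeping track of the growing symbol $m_\epsilon(k)$. The $\epsilon^{-1}$ prefactor comes from the observation that the symbol $|k| m_\epsilon(k)$ of $\p_s\overline{\mc{L}}_\epsilon$ saturates at $\sim 1/\epsilon$, globally in $k$; thus $|\hat h(k)| \le c|h|_{\dot C^{0,\alpha}}/|k|^\alpha$ combined with $|k|m_\epsilon(k)\lesssim 1/\epsilon$ gives the desired $\epsilon^{-1}|\log\epsilon|^{1/2}|h|_{\dot C^{0,\alpha}}$ term once the frequency sum is estimated via Cauchy--Schwarz against a weight $w(k)$ whose $\ell^1$-norm on $|k|\le 1/\epsilon$ yields the half-power of $|\log\epsilon|$. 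The remaining $|\log\epsilon|^{1/2}\|h\|_{L^\infty}$ term is produced by the lowest few frequencies, where $m_\epsilon(k)\sim |\log\epsilon|$ at $|k|\sim 1$ but $|\hat h(k)|\le \|h\|_{L^\infty}$; a Cauchy--Schwarz estimate $\sum m_\epsilon(k)|\hat h(k)|\le (\sum m_\epsilon(k)^2/|k|^2)^{1/2}(\sum |k|^2|\hat h(k)|^2)^{1/2}$, combined with the mean-zero assumption to control the second factor, extracts the half-power from the Plancherel-type sum of $|\log(\epsilon|k|)|^2/|k|^2$ over the low-frequency range.

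The main obstacle, and the step I expect to require the most care, is obtaining the sharp $|\log\epsilon|^{1/2}$ factor (rather than a full $|\log\epsilon|$) on the $\|h\|_{L^\infty}$ contribution in \eqref{eq:est_barLeps}: the symbol $m_\epsilon$ is as large as $|\log\epsilon|$ at $|k|=1$, and a naive $L^1$-kernel or Plancherel bound loses more than a half-power. The right strategy is to combine the mean-zero condition on $h$ with a weighted Cauchy--Schwarz calibrated to the precise logarithmic profile of $m_\epsilon$ over $1\le |k|\le 1/\epsilon$; once this Cauchy--Schwarz is set up correctly, the computation reduces to elementary integrals of the form $\int_\epsilon^1|\log y|^2\,dy$, whose boundedness is what produces the crucial half-power.
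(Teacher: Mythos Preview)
Your approach has a genuine gap: pointwise Fourier-coefficient decay is too weak to recover H\"older norms. The bound $|\hat f(k)|\le c\|f\|_{C^{0,\alpha}}/|k|^\alpha$ holds, but its converse fails badly, and your argument needs exactly that converse. Concretely, take the high-frequency part of your DtN $L^\infty$ bound: with $m_\epsilon^{-1}(k)\sim\epsilon|k|$ and $|\hat g(k)|\lesssim |k|^{-1-\alpha}$ you are summing $\epsilon\sum_{|k|>1/\epsilon}|k|^{-\alpha}$, which diverges for every $\alpha<1$. The H\"older seminorm is worse still, since your weight $|k|^\alpha$ makes the high-frequency sum $\epsilon\sum_{|k|>1/\epsilon}1$. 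The same obstruction kills the $C^{1,\alpha}$ estimate on the NtD side. Your Cauchy--Schwarz idea for the half-power also does not land on the right norm: $(\sum|k|^2|\hat h(k)|^2)^{1/2}$ is $\|h'\|_{L^2}$, not $\|h\|_{L^\infty}$, and mean zero does not bridge that gap.

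What the paper does instead is pass to the Littlewood--Paley/Besov characterization $C^{k,\alpha}=B^{k+\alpha}_{\infty,\infty}$, so that the target quantity is $\sup_j 2^{j\sigma}\|P_jT_m h\|_{L^\infty}$. Each piece is bounded by Young's inequality as $\|M_j\|_{L^1}\|\wt P_j h\|_{L^\infty}$, where $M_j=\mc{F}^{-1}[\phi_j m]$. The crucial input is a physical-space $L^1$ bound of the form $\|M_j\|_{L^1}\lesssim 2^j\sqrt{AB}$ with $A=\|\phi_j m\|_{L^\infty}$ and $B=\|\p_\xi^2(\phi_j m)\|_{L^\infty}$; this requires not just the size of the symbol but its \emph{second-derivative} bounds (Lemma~\ref{lem:multipliers}). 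For $m_\epsilon$ at low frequencies one has $A\sim|\log\epsilon|$ and $B\sim 2^{-2j}$, and the geometric mean $\sqrt{AB}\sim|\log\epsilon|^{1/2}2^{-j}$ is precisely where the half-power comes from --- it is a kernel-regularity effect, not a Plancherel cancellation. To repair your argument you would need to replace the absolute Fourier sums by this Besov framework and supply the $\p_\xi^2 m_\epsilon$, $\p_\xi^2 m_\epsilon^{-1}$ estimates.
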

The proof of Lemma \ref{lem:mapping_Leps} appears in section \ref{sec:strt_periodic}.

\subsubsection{Statement of main theorem}
Given that we have explicit Fourier multiplier expressions for $\overline{\mc{L}}_\epsilon$ and $\overline{\mc{L}}_\epsilon^{-1}$ in the straight setting, the goal of this paper is to show that this explicit information in fact provides the leading order behavior for more general curved filaments. Using a layer potential representation of $\mc{L}_\epsilon^{-1}$ for a general closed filament as in section \ref{subsec:geom}, we extract the straight operator $\overline{\mc{L}}_\epsilon^{-1}$ and show that the remainders are lower order with respect to regularity or size in $\epsilon$. The decomposition of $\mc{L}_\epsilon^{-1}$ is then used to show that $\mc{L}_\epsilon$ may similarly be decomposed as the straight operator $\overline{\mc{L}}_\epsilon$ plus lower order remainders.


\begin{theorem}[Decomposition of slender body DtN and NtD]\label{thm:main}
Let $0<\alpha<\gamma<\beta<1$. Given a closed filament $\Sigma_\epsilon$ as in section \ref{subsec:geom} with centerline $\X(s)\in C^{2,\beta}$, the slender body Dirichlet-to-Neumann operator $\mc{L}_\epsilon^{-1}$ may be decomposed as 
\begin{equation}\label{eq:thm_Leps_inv}
\mc{L}_\epsilon^{-1} = \overline{\mc{L}}_\epsilon^{-1} + \mc{R}_{\rm d,\epsilon} + \mc{R}_{\rm d,+}
\end{equation}
where, given $v(s)\in C^{1,\alpha}(\T)$, we have 
\begin{equation}\label{eq:thm_Leps_inv_bds}
\begin{aligned}
\norm{\mc{R}_{\rm d,\epsilon}[v]}_{C^{0,\alpha}(\T)} &\le
 \epsilon^{2-\textcolor{black}{\alpha^+}}\,c(\kappa_{*,\textcolor{black}{\alpha^+}},c_\Gamma)\norm{v}_{C^{1,\alpha}(\T)}\\
\norm{\mc{R}_{\rm d,+}[v]}_{C^{0,\gamma}(\T)} &\le 
 c(\kappa_{*,\textcolor{black}{\gamma^+}},c_\Gamma,\epsilon) \norm{v}_{C^{1,\alpha}(\T)}\,.
\end{aligned} 
\end{equation}
\textcolor{black}{for any $\alpha^+\in(\alpha, \beta]$ and any $\gamma^+\in(\gamma, \beta]$.}

For $\epsilon$ sufficiently small we may similarly decompose the slender body Neumann-to-Dirichlet operator $\mc{L}_\epsilon$ defined in \eqref{eq:SB_NtD0} as 
\begin{equation}\label{eq:thm_Leps}
\mc{L}_\epsilon = ({\bf I}+\mc{R}_{\rm n,\epsilon})\overline{\mc{L}}_\epsilon   + \mc{R}_{\rm n,+}
\end{equation}
where, given $f(s)\in C^{0,\alpha}(\T)$ with $\int_{\T}f(s)\,ds=0$, we have
\begin{equation}\label{eq:thm_Leps_bds}
\begin{aligned}
\norm{\mc{R}_{\rm n,\epsilon}\overline{\mc{L}}_\epsilon[f]}_{C^{1,\alpha}(\T)}&\le \epsilon^{1-\textcolor{black}{\alpha^+}}\abs{\log\epsilon}^{1/2}\,c(\kappa_{*,\textcolor{black}{\alpha^+}},c_\Gamma)\norm{\overline{\mc{L}}_\epsilon[f]}_{C^{1,\alpha}(\T)}\\
\norm{\mc{R}_{\rm n,+}[f]}_{C^{1,\gamma}(\T)} &\le c(\textcolor{black}{\kappa_{*,\gamma^+}},c_\Gamma,\epsilon)\norm{f}_{C^{0,\alpha}(\T)}
\end{aligned}
\end{equation}
\textcolor{black}{for any $\alpha^+\in(\alpha, \beta]$ and any $\gamma^+\in(\gamma, \beta]$.}
\end{theorem}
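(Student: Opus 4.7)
The plan is to handle the simpler DtN direction first via layer potentials, and then obtain the NtD decomposition by algebraic inversion. I represent the harmonic extension as a single-layer potential
\[ u(\bx) = \mc{S}_\epsilon[\varphi](\bx) = \int_{\Gamma_\epsilon}\frac{\varphi(\bx')}{4\pi\abs{\bx-\bx'}}\,dS(\bx'); \]
the Dirichlet condition gives the integral equation $\mc{S}_\epsilon[\varphi](\bx(s,\theta))=v(s)$ for the density $\varphi$ on $\Gamma_\epsilon$, and the jump relation yields the cross-section-integrated flux
\[ f(s) = \int_0^{2\pi}\Bigl(-\tfrac12\varphi + \mc{K}_\epsilon^*[\varphi]\Bigr)(s,\theta)\,\mc{J}_\epsilon(s,\theta)\,d\theta, \]
where $\mc{K}_\epsilon^*$ is the adjoint double layer on $\Gamma_\epsilon$. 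The same construction on the straight cylinder $\mc{C}_\epsilon$ reproduces $\overline{\mc{L}}_\epsilon^{-1}$, whose symbol \eqref{eq:symbol} is explicit, so the task reduces to showing that $\mc{S}_\epsilon$ and $\mc{K}_\epsilon^*$ differ from their straight counterparts by lower-order operators.

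To extract the straight piece, I parameterize $\Gamma_\epsilon$ by $(s,\theta)$ and expand the chord $\bx(s,\theta)-\bx(s',\theta')$ in the local $C^{1,\beta}$ frame \eqref{eq:frame}: to leading order it equals the straight-cylinder chord $(s-s')\be_z + \epsilon(\be_r(\theta)-\be_r(\theta'))$, and the Jacobian $\mc{J}_\epsilon = \epsilon(1-\epsilon\wh\kappa)$ differs from the straight value $\epsilon$ by $O(\epsilon^2\kappa_*)$. Splitting the integration by a cutoff at scale $\abs{s-s'}\lesssim\delta$ and substituting these expansions into $1/\abs{\bx-\bx'}$, one finds $\mc{S}_\epsilon = \overline{\mc{S}}_\epsilon + \mc{S}_\epsilon^\kappa + \mc{S}_\epsilon^+$: the near-field remainder $\mc{S}_\epsilon^\kappa$ carries either an extra factor of $\epsilon\wh\kappa$ or $\kappa(s-s')^2/\abs{\bx-\bx'}^2$ on top of the straight kernel, while $\mc{S}_\epsilon^+$ collects the smooth far-field part together with $\wh\kappa$-modulated contributions that inherit the $C^{1,\beta}$ regularity of the frame. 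Analogous splittings apply to $\mc{K}_\epsilon^*$ and to the Jacobian factor in $f$; feeding all of this through the $\varphi$ integral equation (solvable in $C^{0,\alpha}(\Gamma_\epsilon)$ by Lemma \ref{lem:SB_PDE_holder}) produces the decomposition \eqref{eq:thm_Leps_inv}, with $\mc{R}_{\rm d,\epsilon}$ collecting the near-field curvature pieces and $\mc{R}_{\rm d,+}$ the far-field and smoother pieces.

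Verifying \eqref{eq:thm_Leps_inv_bds} then reduces to H\"older estimates for integral operators on $\T$: the extra $\epsilon$ factor in $\mc{S}_\epsilon^\kappa$, the $C^{0,\alpha^+}$-regularity of $\wh\kappa$, and standard Calder\'on--Zygmund bounds produce the claimed $\epsilon^{2-\alpha^+}$ gain, while the far-field kernel is smooth away from the diagonal and can be taken to map into any $C^{0,\gamma}$ with $\gamma<\beta$ at the cost of an $\epsilon$-dependent constant. To produce the NtD decomposition \eqref{eq:thm_Leps}, I compose the identity $\mc{L}_\epsilon^{-1}\mc{L}_\epsilon = I$ with $\overline{\mc{L}}_\epsilon$ on the left to obtain $(I + \overline{\mc{L}}_\epsilon\mc{R}_{\rm d,\epsilon} + \overline{\mc{L}}_\epsilon\mc{R}_{\rm d,+})\mc{L}_\epsilon = \overline{\mc{L}}_\epsilon$; by Lemma \ref{lem:mapping_Leps} the operator $\overline{\mc{L}}_\epsilon$ loses only $\abs{\log\epsilon}^{1/2}$ from $C^{0,\alpha}$ to $C^{1,\alpha}$, so composed with $\mc{R}_{\rm d,\epsilon}$ it is a small perturbation of the identity on $C^{1,\alpha}$ and can be inverted by Neumann series to give the factor $(I + \mc{R}_{\rm n,\epsilon})$; the residual $\mc{R}_{\rm d,+}$ term, composed with the forward H\"older estimate of Lemma \ref{lem:SB_PDE_holder}, becomes the smoothing remainder $\mc{R}_{\rm n,+}$. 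The main obstacle will be the sharp bookkeeping in the curvature expansion of $\mc{S}_\epsilon - \overline{\mc{S}}_\epsilon$ and $\mc{K}_\epsilon^* - \overline{\mc{K}}_\epsilon^*$: each remainder kernel must be shown to produce either an explicit factor of $\epsilon$ or a regularity gain, and one must carefully manage the $\kappa_3$ twist in the frame ODE \eqref{eq:frame} together with the Jacobian factor $(1-\epsilon\wh\kappa)$ so that they do not degrade the claimed $\epsilon^{2-\alpha^+}$ smallness.
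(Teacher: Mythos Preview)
Your overall architecture---extract the straight symbol, show the curved remainders are lower order, then invert by Neumann series using Lemma~\ref{lem:mapping_Leps} and close with Lemma~\ref{lem:SB_PDE_holder}---matches the paper, and your NtD step is essentially identical to what is done in section~\ref{subsec:thm_pf}. The DtN step, however, proceeds differently and has a gap.

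The paper does \emph{not} use a single-layer ansatz $u=\mc{S}_\epsilon[\varphi]$. It uses Green's representation formula \eqref{eq:usual_NtD}, so that the density appearing in $\mc{S}$ is the \emph{actual} Neumann boundary value $w(s,\theta)=-\partial_{\bm n}u$, not an auxiliary $\varphi$. After angle-averaging and applying $\overline{\mc S}^{-1}$ on $\theta$-independent functions, this yields \eqref{eq:SB_DtN_pert}: $f=\overline{\mc L}_\epsilon^{-1}[v]$ plus remainder terms involving $\mc R_{\mc S}[w]$, $\mc R_{\mc D}[v]$, and lower-order Jacobian corrections. The price is that $w$ must then be controlled in terms of $v$; this is Lemma~\ref{lem:w}, proved via an entirely separate layer-potential argument (modified double layer plus hypersingular operator $\mc T$, Lemmas~\ref{lem:Dsmooths}--\ref{lem:hypersingular}) which supplies the decomposition $w=w_0+w_+$ needed to split $\mc R_{\rm d,\epsilon}$ from $\mc R_{\rm d,+}$.

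Your route instead requires solving $\mc{S}_\epsilon[\varphi]=v(s)$ for a $\theta$-dependent density $\varphi$ on the full 2D surface $\Gamma_\epsilon$, with $C^{0,\alpha}$ control. You cite Lemma~\ref{lem:SB_PDE_holder} for this, but that lemma bounds the slender body NtD map $f\mapsto v$ and says nothing about inverting the single layer. To carry out your plan you would need to invert $\overline{\mc S}$ on $\theta$-dependent functions (the symbol $m_{\mc S}(k,\ell)$ for $\ell\neq 0$ behaves differently from the $\ell=0$ case treated in Lemma~\ref{lem:S_mapping}) and then run a 2D Neumann-series perturbation argument on $\Gamma_\epsilon$; this is not obviously easier than the paper's route and is not what Lemma~\ref{lem:SB_PDE_holder} provides. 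The paper's Green's-formula approach sidesteps this by never needing to invert $\mc S$ on $\theta$-dependent data: the only inversion of $\overline{\mc S}$ is on the $\theta$-averaged quantities, where Lemma~\ref{lem:S_mapping} suffices, and the $\theta$-dependence of $w$ is handled by the independent estimate of Lemma~\ref{lem:w}.
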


\begin{remark}
 \textcolor{black}{The goal of this work is to establish a direct route for extracting the necessary information for addressing this difficult physical problem \eqref{eq:Stokes}. Here we use a boundary integral representation of the DtN map to explicitly extract the straight filament symbol as the leading order behavior. In principle, it should be possible to frame the analysis of the slender body NtD and DtN in terms of pseudodifferential calculus to obtain more general results for these maps. Here, however, we opt for directness over full generality. }
 \end{remark}

The proof of Theorem \ref{thm:main} relies on a collection of key lemmas, stated below (Lemmas \ref{lem:straight_components} -- \ref{lem:SB_PDE_holder}).
We will immediately use these key lemmas to prove Theorem \ref{thm:main} in section \ref{subsec:thm_pf}. The remainder of the paper will then be devoted to proving each key lemma.

\subsubsection{Layer potential formulation of slender body DtN map}\label{subsubsec:layer}
To understand the mapping properties of $\mc{L}_\epsilon$ along the more general curved filaments described in section \ref{subsec:geom}, we will rely on a representation of $\mc{L}_\epsilon^{-1}$ in terms of layer potentials on $\Gamma_\epsilon$. This representation is inspired by but differs from the boundary integral formulation of the Stokes slender body PDE proposed for numerical purposes in \cite{mitchell2022single}. From this representation, we will then extract the straight operator $\overline{\mc{L}}_\epsilon^{-1}$ as the leading order behavior. In particular, the remainder terms arising due to filament curvature can be shown to be lower order with respect to regularity or with respect to size in terms of the filament radius $\epsilon$. Here and throughout, we will use overline notation $\overline{(\cdot)}$ to denote functions defined along the straight filament $\mc{C}_\epsilon$.

We begin by defining notation and recalling some results from potential theory (see, e.g. \cite{folland1995introduction, kress1989linear, steinbach2007numerical}). 
Let $\mc{G}(\bx,\bx')$ denote the fundamental solution of the Laplace equation in $\R^3$:
\begin{equation}\label{eq:G}
\mc{G}(\bx,\bx') = \frac{1}{4\pi}\frac{1}{\abs{\bx-\bx'}}\,.
\end{equation}
For $\bx'\in\Gamma_\epsilon$, we define 
\begin{equation}\label{eq:KD}
K_\mc{D}(\bx,\bx') = \frac{\p\mc{G}(\bx,\bx')}{\p\bm{n}_{x'}} = \frac{1}{4\pi}\frac{(\bx-\bx')\cdot\bm{n}_{x'}}{\abs{\bx-\bx'}^3}\,, 
\end{equation}
where $\bm{n}_{x'}$ is the unit normal vector to $\Gamma_\epsilon$ at $\bx'$ pointing \emph{out from} the slender body $\Sigma_\epsilon$. 
Given a continuous function $\varphi(\bx')$ defined along the filament surface $\Gamma_\epsilon$, for $\bx\in\Omega_\epsilon$ we define the single layer potential $\mc{S}$ and double layer potential $\mc{D}$ by
\begin{align*}
\mc{S}[\varphi](\bx) = \int_{\Gamma_\epsilon} \mc{G}(\bx,\bx')\varphi(\bx') \, dS_{x'}\,, \qquad
\mc{D}[\varphi](\bx) = \int_{\Gamma_\epsilon} K_\mc{D}(\bx,\bx')\varphi(\bx') \, dS_{x'}\,,
\end{align*}
where $dS_{x'}$ denotes the surface element with respect to $\bx'$ on $\Gamma_\epsilon$. For $\bx\in\Gamma_\epsilon$, the single layer potential is continuous up to $\Gamma_\epsilon$, while the double layer potential satisfies the exterior jump relation 
\begin{equation}\label{eq:ext_jump}
 \lim_{h\to 0^+} \mathcal{D}[\varphi](\bx + h\bm{n}_x) = \mathcal{D}[\varphi](\bx) + \frac{1}{2}\varphi(\bx)\,,
 \end{equation}
Here again $\bm{n}_x$ is the unit normal vector at $\bx\in \Gamma_\epsilon$ exterior to the slender body $\Sigma_\epsilon$ (see \cite{kress1989linear}, Theorems 6.15 and 6.18). 

Now, given sufficiently smooth functions $u_1$ and $u_2$ in $\Omega_\epsilon$ with sufficiently fast decay at infinity, letting $v_1=u_1\big|_{\Gamma_\epsilon}$ and $v_2=u_2\big|_{\Gamma_\epsilon}$, we have Green's formula:
\begin{align*}
 \int_{\Omega_\epsilon} \bigg(u_1 \Delta u_2 - u_2\Delta u_1\bigg)\,d\bx = -\int_{\Gamma_\epsilon}\bigg( v_1 \frac{\p u_2}{\p \bm{n}_x} - v_2\frac{\p u_1}{\p \bm{n}_x}\bigg)\,dS_x, \quad \bm{n}_x \text{ exterior normal to }\Sigma_\epsilon\,. 
 \end{align*}
This follows by applying the divergence theorem to the vector field $u_1\nabla u_2-u_2\nabla u_1$ in $\Omega_\epsilon$. Note that the minus sign on the right hand side arises because $\bm{n}_x$ points out of the slender body $\Sigma_\epsilon$, so it is really the interior normal vector to $\Omega_\epsilon$. If $u_1$ is harmonic, then, fixing $\bx_0\in\Omega_\epsilon$ and taking $u_2(\bx)=\mc{G}(\bx_0,\bx)$, and noting $\Delta_x u_2 = -\delta(\bx-\bx_0)$, we obtain 
\begin{equation}\label{eq:greenform}
\begin{aligned}
u_1(\bx_0) &= \mathcal{S}[w_1](\bx_0)  + \mathcal{D}[v_1](\bx_0); \\
w_1(\bx) &:= -\frac{\p u_1}{\p\bm{n}_x}\bigg|_{\Gamma_\epsilon}, \quad \bm{n}_x \text{ exterior normal to }\Sigma_\epsilon\,, \\
v_1(\bx) &:= u_1 \big|_{\Gamma_\epsilon}\,.
\end{aligned}
\end{equation}
Here, again, the minus sign in $w_1$ arises because $\bm{n}_x$ points into $\Omega_\epsilon$.
Then, dropping the subscripts and using \eqref{eq:ext_jump} in \eqref{eq:greenform}, for a harmonic function $u$ in $\Omega_\epsilon$, the Dirichlet and Neumann boundary data $v$ and $w$, respectively, may be related by  
\begin{equation}\label{eq:usual_NtD}
 \left(\textstyle\frac{1}{2}{\bf I}- \mathcal{D}\right)[v](\bx) = \mathcal{S}[w](\bx)\,, \quad \bx\in \Gamma_\epsilon\,. 
 \end{equation}
Using \eqref{eq:usual_NtD}, the slender body Neumann data $f(s)$ and Dirichlet data $v(s)$ may be related via 
\begin{equation}\label{eq:SB_NtD} 
\begin{aligned}
\left( \textstyle \frac{1}{2}{\bf I} - \mathcal{D} \right) [v(s)] & = \mathcal{S}[w(s,\theta)] \\
\int_0^{2\pi} w(s,\theta)\, \mc{J}_\epsilon(s,\theta) \, d\theta &= f(s)\,. 
\end{aligned}
\end{equation}
Here we consider the Neumann boundary value $w(s,\theta)=w(\bx(s,\theta))$ as a function of the surface parameters $s$ and $\theta$ along $\Gamma_\epsilon$.

In the straight setting, we may use the representation \eqref{eq:SB_NtD} to exactly recover the symbol $m_\epsilon(k)$ in \eqref{eq:symbol}. In particular, using $\overline{\mc{S}}$ and $\overline{\mc{D}}$ to denote the single and double layer operators defined along the straight filament $\mc{C}_\epsilon$, we have the following lemma.
\begin{lemma}[Single and double layer operators on $\mc{C}_\epsilon$]\label{lem:straight_components}
Along the straight filament $\mc{C}_\epsilon$, the behavior of the single and double layer operators $\overline{\mc{S}}$ and $\overline{\mc{D}}$ is given by explicit Fourier multipliers: 
\begin{equation}\label{eq:S_multiplier}
\begin{aligned}
\overline{\mc{S}}[e^{2\pi i k s}e^{i\ell\theta}]&= m_\mc{S}(k,\ell)e^{2\pi i k s}\cos(\ell\theta)\,,\\
m_\mc{S}(k,\ell) &= \epsilon\, I_{\abs{\ell}}(2\pi\epsilon\abs{k})K_{\abs{\ell}}(2\pi\epsilon\abs{k})\,;
\end{aligned}
\end{equation}
\begin{equation}\label{eq:D_multiplier}
\begin{aligned}
\overline{\mc{D}}[e^{2\pi i k s}e^{i\ell\theta}]&= m_\mc{D}(k,\ell)e^{2\pi i k s}\cos(\ell\theta)\,,\\
m_\mc{D}(k,\ell) &= \begin{cases}
\frac{1}{2} - 2\pi\epsilon\abs{k}I_0(2\pi\epsilon\abs{k})K_1(\pi\epsilon\abs{k})\,, & \ell=0 \\
\frac{1}{2}-2\pi\epsilon\abs{k}I_{\abs{\ell}}(2\pi\epsilon\abs{k})\big(K_{\abs{\ell}-1}(2\pi\epsilon\abs{k})+K_{\abs{\ell}+1}(2\pi\epsilon\abs{k})\big)\,, & \ell\neq 0\,.
\end{cases}
\end{aligned}
\end{equation}
where here each $I_j$ and $K_j$ denote modified Bessel functions of the first and second kind, respectively.
\end{lemma}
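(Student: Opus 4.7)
The strategy I would follow exploits the full symmetry of the straight cylinder. Since $\mc{C}_\epsilon$, and hence the periodic Laplacian Green's function $G_{\rm per}$ on $\R^2\times\T$, is invariant under translations $s\mapsto s+s_0$ and rotations $\theta\mapsto\theta+\theta_0$, both $\overline{\mc{S}}$ and $\overline{\mc{D}}$ commute with these symmetries and are therefore simultaneously diagonalized by the joint Fourier basis $\{e^{2\pi iks}e^{i\ell\theta}\}_{k,\ell\in\Z}$. The task reduces to computing the eigenvalue on each mode.

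For $\overline{\mc{S}}$, I would first represent $G_{\rm per}$ as a Fourier series in $s$ using the 1D identity $\int_\R \frac{1}{4\pi\sqrt{r^2+z^2}}e^{-2\pi i\xi z}\,dz=\frac{1}{2\pi}K_0(2\pi|\xi|r)$ together with Poisson summation in the periodic direction, obtaining
\[
G_{\rm per}(\bx-\bx') \;=\; \frac{1}{2\pi}\sum_{k\neq 0} K_0(2\pi|k|\rho)\,e^{2\pi i k(s-s')}, \qquad \rho^2=r^2+r'^2-2rr'\cos(\theta-\theta')
\]
(the divergent $k=0$ mode is harmless on zero-mean data). Graf's addition theorem $K_0(\lambda\rho)=\sum_\ell K_{|\ell|}(\lambda r_>)I_{|\ell|}(\lambda r_<)e^{i\ell(\theta-\theta')}$, evaluated at $r=r'=\epsilon$, together with Fourier orthogonality in $s'$ and $\theta'$, then isolates a single term and delivers the stated formula for $m_\mc{S}(k,\ell)$.

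For $\overline{\mc{D}}$ the cleanest route is indirect: I would obtain it from $\overline{\mc{S}}$ through the boundary integral identity \eqref{eq:usual_NtD}. Given a nonconstant mode $v=e^{2\pi iks}e^{i\ell\theta}$ on $\mc{C}_\epsilon$, separation of variables in cylindrical coordinates produces the unique bounded harmonic extension to $\Omega_\epsilon$,
\[
u(r,s,\theta) \;=\; \frac{K_{|\ell|}(2\pi|k|r)}{K_{|\ell|}(2\pi\epsilon|k|)}\,e^{2\pi iks}e^{i\ell\theta},\qquad r\geq\epsilon,
\]
since $K_{|\ell|}$ is the modified Bessel solution that decays as $r\to\infty$. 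Taking the outward normal derivative on $\mc{C}_\epsilon$ yields $w=-\p u/\p\bm{n}|_{r=\epsilon^+}$ as an explicit Bessel ratio times $v$; substituting into $(\tfrac{1}{2}\mc{I}-\overline{\mc{D}})[v]=\overline{\mc{S}}[w]=m_\mc{S}(k,\ell)\,w$ and solving for the $(k,\ell)$ eigenvalue gives
\[
\tfrac{1}{2}-m_\mc{D}(k,\ell) \;=\; -2\pi\epsilon|k|\,I_{|\ell|}(2\pi\epsilon|k|)\,K_{|\ell|}'(2\pi\epsilon|k|),
\]
which rearranges into \eqref{eq:D_multiplier} via $K_0'=-K_1$ and $K_\nu'=-\tfrac{1}{2}(K_{\nu-1}+K_{\nu+1})$ for $|\nu|\geq 1$. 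The main technical care lies in verifying that this separation-of-variables solution really is the harmonic extension selected by the jump relation \eqref{eq:ext_jump}—essentially a uniqueness statement for bounded harmonic functions in the cylindrical exterior with nonconstant boundary data—so that the $\tfrac{1}{2}\mc{I}$ in \eqref{eq:usual_NtD} correctly pins down the principal-value symbol rather than introducing a spurious constant. The most direct alternative, expanding $\p_{r'}G_{\rm per}$ term-by-term via Graf's formula and evaluating at $r=r'=\epsilon$, avoids the harmonic-extension step but is delicate to carry out: the Wronskian $I_\nu(x)K_\nu'(x)-I_\nu'(x)K_\nu(x)=-1/x$ is precisely what realizes the $\tfrac{1}{2}\varphi$ jump, and it is easy to miss this contribution by choosing the wrong ordering of $r_<,r_>$.
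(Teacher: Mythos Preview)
Your approach is correct and, for $\overline{\mc{D}}$, takes a genuinely different route from the paper. For $\overline{\mc{S}}$ you and the paper do essentially the same thing: integrate the free-space kernel in $s$ to produce $K_0$, then handle the $\theta$-integral. You package the second step as Graf's addition theorem; the paper instead proves the needed special case
\[
\int_0^{2\pi}\cos(\ell\theta')\,K_0\big(z|\sin(\tfrac{\theta'}{2})|\big)\,d\theta' \;=\; 2\pi\,I_{|\ell|}(z/2)K_{|\ell|}(z/2)
\]
by writing $K_0$ as an integral, swapping order, and recognizing $J_{2|\ell|}$. Your version is quicker but assumes Graf; the paper's is more self-contained.

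For $\overline{\mc{D}}$ the two arguments diverge. The paper repeats the direct integration: take the $s'$-integral of $\overline K_{\mc{D}}$ to get $-\epsilon|k||\sin(\tfrac{\theta'}{2})|K_1(4\pi\epsilon|k||\sin(\tfrac{\theta'}{2})|)$, rewrite this as $z\partial_z K_0(z|\sin(\tfrac{\theta'}{2})|)$, differentiate the identity above in $z$, and simplify via Bessel recurrences and the Wronskian identity $I_{j+1}K_j+I_jK_{j+1}=1/z$. Your route is indirect: build the decaying harmonic extension $u=K_{|\ell|}(2\pi|k|r)/K_{|\ell|}(2\pi\epsilon|k|)\cdot e^{2\pi iks}e^{i\ell\theta}$, read off $w=-\partial_r u|_{r=\epsilon}$, and solve \eqref{eq:usual_NtD} mode-by-mode using the already-computed $m_{\mc{S}}$. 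The payoff of your method is that the derivative identity $K_{|\ell|}'=-\tfrac{1}{2}(K_{|\ell|-1}+K_{|\ell|+1})$ enters once and cleanly; the payoff of the paper's method is that it never leaves the boundary and needs no appeal to uniqueness for the exterior Dirichlet problem. Your caution about the jump and the $r_<,r_>$ ordering is well placed---that is exactly where the $\tfrac{1}{2}$ lives in a direct Graf-type computation---but by going through the harmonic extension you sidestep it entirely.
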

Furthermore, noting that by \eqref{eq:S_multiplier}, $\overline{\mc{S}}$ is invertible on the space of $\theta$-independent functions, we use $m_{\mc{S}}^{-1}(k)$ to denote
\begin{equation}\label{eq:mS_inv}
m_{\mc{S}}^{-1}(k) = \frac{1}{m_{\mc{S}}(k,0)} = \frac{1}{\epsilon I_0(2\pi\epsilon\abs{k})K_0(2\pi\epsilon\abs{k})}\,.
\end{equation}
Using the form \eqref{eq:mS_inv} of $m_{\mc{S}}^{-1}(k)$, we may show that $\overline{\mc{S}}^{-1}$ satisfies the following mapping properties.
\begin{lemma}[Inverse single layer on $\mc{C}_\epsilon$]\label{lem:S_mapping}
Given $g\in C^{1,\alpha}(\T)$ with $\int_\T g(s)\,ds=0$, we have
\begin{equation}
\norm{\overline{\mc{S}}^{-1}[g]}_{C^{0,\alpha}(\T)}\le \textcolor{black}{c\,\big(\epsilon^{-1}\norm{g}_{C^{0,\alpha}(\T)}+ |g|_{\dot C^{1,\alpha}(\T)}\big) }\,.
\end{equation}
\end{lemma}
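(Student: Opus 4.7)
The plan is to split the Fourier multiplier $m_\mc{S}^{-1}(k)$ into a principal part capturing its linear growth at infinity and a uniformly bounded remainder of order $\epsilon^{-1}$. Concretely, set $a_\epsilon(k):=m_\mc{S}^{-1}(k)-4\pi|k|$ for $k\neq 0$ and $a_\epsilon(0):=0$, so that on zero-mean $g$
\[
\overline{\mc{S}}^{-1}g \;=\; 2H\partial_s g \;+\; \text{Op}(a_\epsilon)\,g,
\]
where $H$ denotes the periodic Hilbert transform (symbol $-i\,\text{sgn}(k)$), so that $2H\partial_s$ has symbol $4\pi|k|$. The two summands are designed to produce exactly the two terms on the right-hand side of the claim.

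For the first term, I would use that $\partial_s g$ has zero mean on $\T$, so by continuity there exists $s_*\in\T$ with $\partial_s g(s_*)=0$ and hence $\|\partial_s g\|_{L^\infty}\le |\partial_s g|_{\dot C^{0,\alpha}}=|g|_{\dot C^{1,\alpha}}$, giving $\|\partial_s g\|_{C^{0,\alpha}}\le 2|g|_{\dot C^{1,\alpha}}$. Combined with the $C^{0,\alpha}(\T)$-boundedness of $H$ (valid for $0<\alpha<1$), this yields $\|2H\partial_s g\|_{C^{0,\alpha}}\le c\,|g|_{\dot C^{1,\alpha}}$, matching the second term of the estimate.

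For the remainder, write $a_\epsilon(k)=\epsilon^{-1}\tilde a(2\pi\epsilon|k|)$ with $\tilde a(y):=1/(I_0(y)K_0(y))-2y$. Using the classical expansions $I_0(y)K_0(y)\sim -\log(y/2)-\gamma$ as $y\to 0^+$ and $I_0(y)K_0(y)=\frac{1}{2y}(1+O(1/y^2))$ as $y\to\infty$, I would establish (i) $\tilde a\in C([0,\infty))$ with $\tilde a(0)=0$ and $\tilde a(y)=O(1/y)$ at infinity, and (ii) for each $j\ge 0$, $|y^j\tilde a^{(j)}(y)|\le c_j$ uniformly on $(0,\infty)$. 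Scaling in (ii) gives the Mikhlin-type bound $|k^j\partial_k^j a_\epsilon(k)|\le c_j\,\epsilon^{-1}$ for $k\neq 0$. A Mikhlin multiplier theorem on the Hölder space $C^{0,\alpha}(\T)$ then yields $\|\text{Op}(a_\epsilon)g\|_{C^{0,\alpha}}\le c\epsilon^{-1}\|g\|_{C^{0,\alpha}}$.

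The main obstacle is verifying (ii) near $y=0$, where $\tilde a$ is only $C^0$. Differentiating the leading behavior $1/|\log y|$ produces singularities of size $y^{-j}|\log y|^{-2}$, just barely absorbed by the factor $y^j$ so that $y^j\tilde a^{(j)}(y)\sim|\log y|^{-2}\to 0$. At infinity, termwise differentiation of the asymptotic expansion gives $y^j\tilde a^{(j)}(y)=O(1/y)$, and the intermediate regime is handled by smoothness of $I_0,K_0$ on $(0,\infty)$. Combining the Hilbert-transform bound with the Mikhlin bound on the remainder then yields the claimed inequality.
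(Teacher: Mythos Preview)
Your approach is correct and genuinely different from the paper's. The paper works directly with the full symbol $m_{\mc{S}}^{-1}$: it performs a Littlewood--Paley decomposition and splits at the transition frequency $j_\epsilon\sim\abs{\log\epsilon}/\log 2$, using the bounds $\abs{m_{\mc{S}}^{-1}}\lesssim\epsilon^{-1}$ for $\abs{\xi}<(2\pi\epsilon)^{-1}$ and $\abs{m_{\mc{S}}^{-1}}\lesssim\abs{\xi}$ for $\abs{\xi}\ge(2\pi\epsilon)^{-1}$ (together with the corresponding second-derivative bounds and the physical-space $L^1$ lemma) to produce the two terms $\epsilon^{-1}\norm{g}_{C^{0,\alpha}}$ and $\abs{g}_{\dot C^{1,\alpha}}$ from the low and high frequency blocks respectively. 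You instead split the \emph{symbol}: the principal part $4\pi\abs{k}=2H\partial_s$ delivers the $\abs{g}_{\dot C^{1,\alpha}}$ contribution via the classical $C^{0,\alpha}$ bound for the Hilbert transform, and the remainder $a_\epsilon$ is handled as a uniform Mikhlin multiplier of size $\epsilon^{-1}$. Your route is more modular and makes the leading-order structure transparent; the paper's is fully self-contained. One point to flag: your condition (ii) for $j=2$ at large $y$ requires $\abs{g''(y)}\lesssim y^{-2}$ (in fact $y^{-3}$), which is true from the full asymptotic expansion of $I_0K_0$ but is sharper than the paper's Lemma~2.3 bound $\abs{g''(z)}\le c/z$. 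This is not a gap---the refinement follows from the next term in the standard Bessel asymptotics---but you should state and verify it, since it is precisely what makes the Mikhlin argument close.
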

The proofs of Lemmas \ref{lem:straight_components} and \ref{lem:S_mapping} appear in section \ref{sec:strt_periodic}.

Motivated by the explicit information for $\mc{C}_\epsilon$, we consider the components of the relation \eqref{eq:SB_NtD} along the curved filament $\Gamma_\epsilon$ as perturbations of the straight setting. \textcolor{black}{Since the straight single layer operator $\overline{\mc{S}}$ is only well defined for functions with zero mean in $s$, we will use $\P_0$ to denote the projection $\P_0 g(s,\theta)=g(s,\theta) - \int_\T g(s,\theta)\,ds$.} We then write the map $v(s)\mapsto f(s)$ as 
\begin{equation}\label{eq:SB_DtN_strt_pert}
\begin{aligned}
\textstyle (\frac{1}{2}{\bf I}-\overline{\mc{D}}-\mc{R}_\mc{D})[v(s)] &= \overline{\mc{S}}[\textcolor{black}{\P_0}w(s,\theta)] + \mc{R}_{\mc{S}}[\textcolor{black}{\P_0}w(s,\theta)] + \mc{S}\bigg[\int_{\T}w\,ds\bigg] \\
\int_0^{2\pi} w(s,\theta)\,\mc{J}_\epsilon(s,\theta)\,d\theta &= f(s)\,. 
\end{aligned}
\end{equation}
Here we are parameterizing $\bx\in\Gamma_\epsilon$ as $\bx(s,\theta)=\X(s)+\epsilon\be_r(s,\theta)$, and we consider functions defined along $\Gamma_\epsilon$ in terms of the surface parameters $s$ and $\theta$. This will allow for direct comparison with the straight filament where $\overline{\bx}\in\mc{C}_\epsilon$ is parameterized by $\overline{\bx}(s,\theta)=s\be_z+\epsilon\be_r(\theta)$.

Given surface densities $\varphi(s,\theta)$ with $\int_{\T}\varphi(s,\theta)\,ds=0$ and $\psi(s,\theta)$, the remainder terms $\mc{R}_\mc{S}$ and $\mc{R}_\mc{D}$ may then be written as
\begin{align}
\mc{R}_\mc{S}[\varphi] &= (\mc{S}-\overline{\mc{S}})[\varphi] = \int_{\Gamma_\epsilon}\big(\mc{G}(s,\theta,s',\theta')-\overline{\mc{G}}(s,\theta,s',\theta')\big)\,\varphi(s',\theta')\,\mc{J}_\epsilon(s',\theta')\,d\theta'ds'   \label{eq:RS_def} \\
\mc{R}_\mc{D}[\psi] &= (\mc{D}-\overline{\mc{D}})[\psi] = \int_{\Gamma_\epsilon}\big(K_{\mc{D}}(s,\theta,s',\theta')-\overline{K}_{\mc{D}}(s,\theta,s',\theta')\big)\,\psi(s',\theta')\,\mc{J}_\epsilon(s',\theta')\,d\theta'ds'\,. \label{eq:RD_def}
\end{align}
Accordingly, we will consider the mapping properties of each of the operators in \eqref{eq:SB_DtN_strt_pert} in terms of $s$ and $\theta$. In particular, for any $h\in C^{0,\alpha}(\Gamma_\epsilon)$, we consider $h=h(s,\theta)$ and understand the $\dot C^{0,\alpha}$ seminorm \eqref{eq:norms} along $\Gamma_\epsilon$ as 
\begin{equation}\label{eq:dot_Calpha_eps}
\abs{h}_{\dot C^{0,\alpha}} = \sup_{(s-s')^2+(\theta-\theta')^2\neq 0}\frac{\abs{h(s,\theta)-h(s',\theta')}}{\sqrt{(s-s')^2+\epsilon^2(\theta-\theta')^2}^{\,\alpha}}\,.
\end{equation}
 Note that \eqref{eq:dot_Calpha_eps} and \eqref{eq:Ckalpha} can be seen to be equivalent characterizations of $\dot C^{0,\alpha}(\Gamma_\epsilon)$ using Lemma \ref{lem:Rests}. This characterization also helps make the $\epsilon$-dependence of the $\dot C^{0,\alpha}$ seminorm along $\Gamma_\epsilon$ more explicit.

Now, noting that $\overline{\mc{D}}[v(s)]$ is independent of $\theta$ and $\int_0^{2\pi}\overline{\mc{S}}[\textcolor{black}{\P_0}w(s,\theta)]\,d\theta=\overline{\mc{S}}[\int_0^{2\pi}\textcolor{black}{\P_0}w(s,\theta)\,d\theta]$, we may multiply the first line of \eqref{eq:SB_DtN_strt_pert} by $\epsilon$ and integrate in $\theta$ to obtain 
\begin{equation}
\begin{aligned}
\textstyle 2\pi\epsilon(\frac{1}{2}{\bf I}&-\overline{\mc{D}})[v(s)]- \displaystyle \int_0^{2\pi}\mc{R}_\mc{D}[v(s)]\,\epsilon\,d\theta \\
 &= \overline{\mc{S}}\int_0^{2\pi}\textcolor{black}{\P_0}w(s,\theta)\,\epsilon\,d\theta + \int_0^{2\pi}\mc{R}_{\mc{S}}[\textcolor{black}{\P_0}w(s,\theta)]\,\epsilon\,d\theta \textcolor{black}{+ \int_0^{2\pi}\mc{S}\bigg[\int_{\T}w\,ds\bigg]\,\epsilon\,d\theta}\,. 
\end{aligned}
\end{equation}
By Lemma \ref{lem:S_mapping}, $\overline{\mc{S}}$ is invertible on the space of $\theta$-independent functions, and we may thus write
\begin{equation}\label{eq:SB_DtN_pert}
\begin{aligned}
&\textstyle \overline{\mc{L}}_\epsilon^{-1}[v(s)]- \displaystyle \overline{\mc{S}}^{-1}\int_0^{2\pi}\mc{R}_\mc{D}[v(s)]\,\epsilon\,d\theta - \overline{\mc{S}}^{-1}\int_0^{2\pi}\mc{R}_{\mc{S}}[\textcolor{black}{\P_0}w(s,\theta)]\,\epsilon\,d\theta \\
&\qquad  \textcolor{black}{- \overline{\mc{S}}^{-1}\int_0^{2\pi}\mc{S}\bigg[\int_{\T}w\,ds\bigg]\,\epsilon\,d\theta} +\int_0^{2\pi}\int_\T w(s,\theta)\,\epsilon\,ds d\theta  -\epsilon^2\int_0^{2\pi} w(s,\theta)\wh\kappa(s,\theta)\,d\theta  = f(s)\,.
\end{aligned}
\end{equation}
Here we have used that $f(s)=\int_0^{2\pi}w(s,\theta)\,\epsilon(1-\epsilon\wh\kappa)\,d\theta$, resulting in the final two terms on the left hand side. 
We thus obtain a representation of the slender body DtN map $\mc{L}_\epsilon^{-1}[v(s)]=f(s)$ as a perturbation of the straight operator $\overline{\mc{L}}_\epsilon^{-1}[v(s)]$. Much of this paper will be devoted to showing that each of the remainder terms appearing in \eqref{eq:SB_DtN_pert} are lower order with respect to regularity or size in $\epsilon$.

In particular, we show that the operators $\mc{R}_\mc{S}$ and $\mc{R}_\mc{D}$ satisfy the following mapping properties:
\begin{lemma}[Mapping properties of $\mc{R}_\mc{S}$ and $\mc{R}_\mc{D}$]\label{lem:RS_and_RD}
Let $0<\alpha<\gamma<\beta<1$ and let $\X\in C^{2,\beta}$ be as in section \ref{subsec:geom}. Given $\varphi\in C^{0,\alpha}(\Gamma_\epsilon)$ with $\int_{\T}\varphi(s,\theta)\,ds=0$, the single layer remainder $\mc{R}_\mc{S}$ as in \eqref{eq:RS_def} may be decomposed as 
\begin{align*}
\mc{R}_\mc{S}[\varphi] &= \mc{R}_{\mc{S},\epsilon}[\varphi] + \mc{R}_{\mc{S},+}[\varphi] 
\end{align*}
where
\begin{equation}\label{eq:RS_mapping}
\begin{aligned}
\norm{\mc{R}_{\mc{S},\epsilon}[\varphi]}_{C^{0,\alpha}(\Gamma_\epsilon)} &\le c(\kappa_{*,\alpha},c_\Gamma)\,\epsilon^{2-\alpha}\norm{\varphi}_{L^\infty(\Gamma_\epsilon)}\,, \;\; 
\abs{\mc{R}_{\mc{S},\epsilon}[\varphi]}_{\dot C^{1,\alpha}(\Gamma_\epsilon)} \le c(\kappa_{*,\textcolor{black}{\alpha^+}},c_\Gamma)\,\epsilon^{1-\textcolor{black}{\alpha^+}}\norm{\varphi}_{C^{0,\alpha}(\Gamma_\epsilon)}  \\
\norm{\mc{R}_{\mc{S},+}[\varphi]}_{C^{0,\alpha}(\Gamma_\epsilon)} &\le c(\kappa_{*,\alpha},c_\Gamma)\,\epsilon^{1-\alpha}\norm{\varphi}_{L^\infty(\Gamma_\epsilon)} \,, \;\;
\abs{\mc{R}_{\mc{S},+}[\varphi]}_{\dot C^{1,\gamma}(\Gamma_\epsilon)} \le c(\kappa_{*,\gamma},c_\Gamma)\,\epsilon^{1-2\gamma}\norm{\varphi}_{C^{0,\alpha}(\Gamma_\epsilon)}
\end{aligned}
\end{equation}
\textcolor{black}{for any $\alpha^+\in(\alpha, \beta]$.}
Furthermore, given $\psi\in C^{0,\gamma}(\Gamma_\epsilon)$, the double layer remainder $\mc{R}_\mc{D}$ as in \eqref{eq:RD_def} satisfies
\begin{equation}\label{eq:RD_mapping}
\norm{\mc{R}_\mc{D}[\psi]}_{C^{1,\gamma}(\Gamma_\epsilon)} \le c(\kappa_{*,\textcolor{black}{\gamma^+}},c_\Gamma)\,\epsilon^{-\textcolor{black}{\gamma^+}} \norm{\psi}_{C^{0,\gamma}(\Gamma_\epsilon)}
\end{equation}
\textcolor{black}{for any $\gamma^+\in(\gamma, \beta]$.}
\end{lemma}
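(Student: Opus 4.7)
My plan is to exploit the fact that, for points on the filament surface parameterized by $(s,\theta)$ and $(s',\theta')$ with small $|s-s'|$, the curved geometry $\bx(s,\theta)=\X(s)+\epsilon\be_r(s,\theta)$ differs from the straight geometry $\overline{\bx}(s,\theta)=s\be_z+\epsilon\be_r(\theta)$ only at orders involving the curvatures. Using the frame ODE \eqref{eq:frame} together with Taylor expansion in arclength about $s=s'$, I would derive an expansion of the form
\begin{equation*}
|\bx(s,\theta)-\bx(s',\theta')|^2 - |\overline{\bx}(s,\theta)-\overline{\bx}(s',\theta')|^2 = E_1(s,\theta,s',\theta'),
\end{equation*}
with $|E_1|\lesssim \kappa_*\big(\epsilon|s-s'|^2+|s-s'|^4\big)$ and corresponding Hölder variation controlled by $\kappa_{*,\beta}$. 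Combined with the algebraic identity $\mc{G}-\overline{\mc{G}} = -\frac{1}{4\pi}\,E_1/\bigl(R\overline{R}(R+\overline{R})\bigr)$ (where $R=|\bx-\bx'|$ and $\overline{R}=|\overline{\bx}-\overline{\bx}'|$) and a comparability lemma asserting $R\sim\overline{R}\sim\sqrt{(s-s')^2+\epsilon^2(\theta-\theta')^2}$ on the relevant scale, this shows that $\mc{G}-\overline{\mc{G}}$ is less singular than $\mc{G}$ itself by roughly an $\epsilon\kappa_*$ factor. A parallel expansion must be carried out for $K_\mc{D}-\overline{K}_\mc{D}$, additionally tracking the deviation of the outward unit normal $\bm{n}_{x'}$ from $\be_r(s',\theta')$, which contributes an $O(\epsilon\kappa_*)$ correction.

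\textbf{Decomposition and the near-diagonal piece.} To produce $\mc{R}_{\mc{S},\epsilon}$ and $\mc{R}_{\mc{S},+}$, I would introduce a smooth arclength cutoff $\chi(|s-s'|/\delta)$ at a fixed scale $\delta=\delta(c_\Gamma,\kappa_*)>0$ at which the expansions above remain uniformly valid; $\mc{R}_{\mc{S},\epsilon}$ is defined by the near-diagonal contribution (where $\chi$ is supported) and $\mc{R}_{\mc{S},+}$ is the complementary off-diagonal piece. Since on the near-diagonal piece the kernel difference inherits the $\epsilon\kappa_*$ (or $\kappa_*|s-s'|^2$) improvement, standard Hölder potential estimates, carried out in the intrinsic $\Gamma_\epsilon$-scale $\sqrt{(s-s')^2+\epsilon^2(\theta-\theta')^2}$ from \eqref{eq:dot_Calpha_eps} and organized by the usual splitting of the Hölder difference $h(s,\theta)-h(s_1,\theta_1)$ into near-source and far-source regions, deliver the $\epsilon^{2-\alpha}$ bound on the sup norm and the $\epsilon^{1-\alpha^+}$ bound on the $\dot C^{1,\alpha}$ seminorm. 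The strict inequality $\alpha^+>\alpha$ absorbs the logarithmic loss that arises when a derivative falls on the kernel and generates an $(s-s')^{-1}$-type factor.

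\textbf{Off-diagonal piece and the double-layer estimate.} On the support of $1-\chi$ the kernels $\mc{G}-\overline{\mc{G}}$ and $K_\mc{D}-\overline{K}_\mc{D}$ are smooth in all four arguments with derivatives bounded by $c(\kappa_*,\kappa_{*,\beta},c_\Gamma)$; all $\epsilon$-dependence now enters through the measure $\mc{J}_\epsilon=\epsilon(1-\epsilon\wh\kappa)$. Differentiating under the integral and estimating Hölder norms directly gives the $\epsilon^{1-\alpha}$ sup-norm bound on $\mc{R}_{\mc{S},+}$, while a full derivative in $(s,\theta)$ combined with the anisotropic scale of \eqref{eq:dot_Calpha_eps} produces the stated $\epsilon^{1-2\gamma}$ seminorm bound — the additional $\epsilon^{-\gamma}$ reflecting that the $\theta$-direction carries a Hölder scale of $\epsilon|\theta-\theta'|$ rather than $|\theta-\theta'|$. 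The estimate \eqref{eq:RD_mapping} for $\mc{R}_\mc{D}$ is treated without splitting: both $K_\mc{D}$ and $\overline{K}_\mc{D}$ already enjoy the cancellation $(\bx-\bx')\cdot\bm{n}_{x'}=-\epsilon(1-\cos(\theta-\theta'))+O(\kappa_*|s-s'|^2)$, so the difference is substantially less singular and absorbs the one-derivative gain.

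\textbf{Main obstacle.} The most delicate step is the $C^{1,\gamma}$ bound \eqref{eq:RD_mapping} on $\mc{R}_\mc{D}$, which must deliver a full derivative gain over $\psi\in C^{0,\gamma}$ at the price of only an $\epsilon^{-\gamma^+}$ loss. The cancellation structure in $(\bx-\bx')\cdot\bm{n}_{x'}$ must be preserved at every stage of the differentiation; this forces an expansion of the numerator $(\bx-\bx')\cdot\bm{n}_{x'}-(\overline{\bx}-\overline{\bx}')\cdot\be_r(\theta')$ to sufficiently high order in arclength, and a careful verification that the resulting remainder kernel is locally integrable with the correct Hölder modulus. Integration by parts in $s$ — trading derivatives falling on the kernel for derivatives on $\psi$ via the $(s-s')$ prefactors generated by the Taylor expansion — will likely be needed to recover the sharp regularity. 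Checking that every curvature remainder collected along the way depends only on $\kappa_{*,\gamma^+}$ and $c_\Gamma$, and does not produce a logarithmic loss that would force $\gamma^+=\gamma$, is where the bulk of the technical effort must go.
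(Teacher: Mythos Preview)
Your kernel expansion is essentially correct, but the proposed near/far cutoff in $|s-s'|$ does not produce the decomposition claimed in the lemma. The point of the split $\mc{R}_\mc{S} = \mc{R}_{\mc{S},\epsilon}+\mc{R}_{\mc{S},+}$ is that one piece carries an \emph{extra factor of $\epsilon$} in the kernel while the other carries an \emph{extra factor of $\bars$} (hence extra smoothing); both types of terms are supported on the diagonal, so a geometric cutoff cannot separate them. Concretely, your own $E_1$ contains a term of order $\bars^4$ (coming from $-\bars^2\bm{Q}_{\rm t}$ in the expansion of $\X(s)-\X(s-\bars)$), and its contribution to the near-diagonal integral is $\int \frac{\bars^4}{R^3}\,\epsilon\,d\bartheta d\bars \sim \epsilon$ after including the surface element---not the $\epsilon^{2-\alpha}$ you claim for $\mc{R}_{\mc{S},\epsilon}$. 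The paper instead inserts an \emph{intermediate quantity} $\bR_{\rm t}=\bars\be_{\rm t}(s)+\epsilon\big(\be_r(s,\theta)-\be_r(s-\bars,\theta-\bartheta)\big)$ engineered so that $|\bR|^2-|\bR_{\rm t}|^2=\bars^3 Q+\epsilon\bars^2\sin(\tfrac{\bartheta}{2})Q'$ (an extra $\bars$) while every term in $|\bR_{\rm t}|^2-|\barR|^2$ carries at least one $\epsilon$. Then $\mc{R}_{\mc{S},+}$ is built from $\frac{1}{|\bR|}-\frac{1}{|\bR_{\rm t}|}$ and $\mc{R}_{\mc{S},\epsilon}$ from $\frac{1}{|\bR_{\rm t}|}-\frac{1}{|\barR|}$ together with the $\epsilon^2\wh\kappa$ Jacobian correction. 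This is an algebraic splitting of the numerator, not a spatial one.

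There is a second gap in how you propose to obtain the $\dot C^{1,\alpha}$ seminorm of $\mc{R}_{\mc{S},\epsilon}$ and the $C^{1,\gamma}$ bound on $\mc{R}_\mc{D}$. After one surface derivative the $\epsilon$-small kernel produces integrands of homogeneity $-2$ (e.g.\ $\epsilon\bars/|\bR_{\rm t}|^3$), and $\p_s K_\mc{D}-\p_s\overline{K}_\mc{D}$ likewise contains terms such as $\bars\,\wh\kappa/|\bR|^3$. These are not absolutely integrable on $\Gamma_\epsilon$, and integration by parts in $s$ would push a derivative onto $\psi\in C^{0,\gamma}$, which is the wrong direction. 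The mechanism the paper actually uses is \emph{oddness}: one introduces $|\bR_{\rm even}|^2=|\barR|^2+\epsilon\bars^2 Q_{R,0}(s,\theta)+2\kappa_3\epsilon^2\bars\sin\bartheta$, whose coefficients are constant in $(\bars,\bartheta)$, so that ${\rm p.v.}\int \bars^\ell(\epsilon\sin\tfrac{\bartheta}{2})^k/|\bR_{\rm even}|^{\ell+k+2}\,d\bartheta d\bars=0$ whenever $\ell+k$ is odd. Replacing $|\bR|^{-n}$ by $|\bR_{\rm even}|^{-n}$ (a degree $-1$ error) and then subtracting $\psi(s,\theta)$ converts the degree $-2$ singularity to degree $-(2-\gamma)$, which is integrable. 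It is this Calder\'on--Zygmund-type cancellation---not integration by parts---that delivers the derivative gain in \eqref{eq:RD_mapping}, and it is also the source of the strict inequality $\alpha^+>\alpha$ (the logarithmic loss appears when estimating the $\dot C^{0,\alpha}$ seminorm of these odd, critically singular pieces).
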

The proof of Lemma \ref{lem:RS_and_RD} appears in section \ref{sec:RS_RD}. Note that the $\epsilon$-dependence in the bounds \eqref{eq:RS_mapping}, \eqref{eq:RD_mapping} is explicit.
\textcolor{black}{
In addition to Lemma \ref{lem:RS_and_RD}, we will need lemma for dealing with the $\theta$-averaged single layer operator applied to a constant-in-$s$ function on $\Gamma_\epsilon$.
\begin{lemma}[Single layer applied to constant-in-$s$]\label{lem:mean_in_s}
For $0<\alpha<\gamma\le\beta<1$, let $\X\in C^{2,\beta}$ be as in section \ref{subsec:geom}. Given $h(\theta)\in C^{0,\alpha}(2\pi\T)$, we may write
\begin{align*}
\overline{\mc{S}}^{-1}\int_0^{2\pi}\mc{S}[h(\theta)]\,\epsilon\,d\theta = \mc{H}_\epsilon[h(\theta)]+\mc{H}_+[h(\theta)]
\end{align*}
where
\begin{equation}
\begin{aligned}
\norm{\mc{H}_\epsilon[h]}_{\cdot C^{0,\alpha}(\T)} 
&\le c(\kappa_{*,\alpha^+},c_\Gamma)\,\epsilon^{2-\alpha^+}\norm{h}_{C^{0,\alpha}(2\pi\T)} \\
 \norm{\mc{H}_+[h]}_{\cdot C^{0,\gamma}(\T)} 
&\le c(\kappa_{*,\gamma},c_\Gamma)\,\epsilon^{1-\gamma}\norm{h}_{C^{0,\alpha}(2\pi\T)}
 \end{aligned}
 \end{equation} 
 \textcolor{black}{for any $\alpha^+\in(\alpha, \beta]$.}
\end{lemma}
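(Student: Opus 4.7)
The plan is to reduce the lemma to the kernel-level analysis carried out for Lemma~\ref{lem:RS_and_RD}. The key observation is that on the straight filament $\mc{C}_\epsilon$, translation invariance in $s$ forces the analogous integral to be constant in $s$, so the $s$-variation of
\[
F(s)\;:=\;\int_0^{2\pi}\mc{S}[h(\theta)](s,\theta)\,\epsilon\,d\theta
\]
is driven entirely by the curvature of $\Gamma_\epsilon$. Since $\overline{\mc{S}}^{-1}$ is only defined after projection $\P_0$ onto zero-$s$-mean functions, only this curvature-induced variation survives to contribute to the output.

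First I would expand $\mc{J}_\epsilon=\epsilon(1-\epsilon\wh\kappa)$ and compare against the straight kernel, writing
\[
\P_0 F(s)\;=\;\P_0\,\epsilon\int_0^{2\pi}\!\!\int_{\Gamma_\epsilon}\!\big[\mc{G}-\overline{\mc{G}}\big]\,h(\theta')\,d\theta'\,ds'\,\epsilon\,d\theta\;-\;\P_0\,\epsilon^2\!\int_0^{2\pi}\mc{S}[h\,\wh\kappa](s,\theta)\,d\theta,
\]
using that the purely $\overline{\mc{G}}$ contribution paired with the leading $\epsilon$ part of $\mc{J}_\epsilon$ is constant in $s$ and hence killed by $\P_0$. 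The kernel difference $\mc{G}-\overline{\mc{G}}$ is precisely the object estimated in the proof of Lemma~\ref{lem:RS_and_RD}, so I would reuse that decomposition to obtain a splitting $\P_0 F=F_\epsilon+F_+$ with H\"older bounds of the correct $\epsilon$-scaling; the final Jacobian-correction term, a standard single layer against $h\wh\kappa$ multiplied by the small prefactor $\epsilon^2$, is absorbed into $F_+$.

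Second I would invert $\overline{\mc{S}}$ via Lemma~\ref{lem:S_mapping}, which costs a factor of $\epsilon^{-1}$ on the $C^{0,\alpha}$ (resp.\ $C^{0,\gamma}$) part of $\P_0 F$ and is free on the $\dot C^{1,\alpha}$ (resp.\ $\dot C^{1,\gamma}$) seminorm. The extra factor of $\epsilon$ arising from the outer $\epsilon\,d\theta$ integration in $F$, compared with the unweighted bounds of Lemma~\ref{lem:RS_and_RD}, compensates this $\epsilon^{-1}$ and yields the stated scalings $\epsilon^{2-\alpha^+}\|h\|_{C^{0,\alpha}}$ for $\mc{H}_\epsilon:=\overline{\mc{S}}^{-1}F_\epsilon$ and $\epsilon^{1-\gamma}\|h\|_{C^{0,\alpha}}$ for $\mc{H}_+:=\overline{\mc{S}}^{-1}F_+$.

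The main obstacle will be the $\theta$-constant mode $h_0:=\frac{1}{2\pi}\int_0^{2\pi}h\,d\theta$: formally $\overline{\mc{S}}$ is infinite on the $k=\ell=0$ Fourier mode, so the naive subtraction $\mc{S}-\overline{\mc{S}}$ is not pointwise meaningful when $h$ has nonzero $\theta$-mean and Lemma~\ref{lem:RS_and_RD} does not literally apply. I would sidestep this by working only with the well-defined kernel difference $\mc{G}-\overline{\mc{G}}$ and by implementing the projection through subtraction of the $s$-average of $F$ itself, rather than of any putative straight-filament reference value. The $s$-H\"older regularity of the resulting integrand then follows from the $C^{2,\beta}$ regularity of $\X$ and the bounds on $\kappa_{*,\alpha^+}$, $\kappa_{*,\gamma^+}$, exactly as in the proof of Lemma~\ref{lem:RS_and_RD}.
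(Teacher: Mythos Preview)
Your approach is correct in spirit but genuinely different from the paper's, and there is one small misallocation worth flagging.

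\textbf{Comparison.} The paper does not subtract the straight kernel at all. Instead it writes $\int_0^{2\pi}\mc{S}[h]\,\epsilon\,d\theta = H_1 + H_2$ (leading Jacobian piece plus $\epsilon\wh\kappa$ correction) and exploits the $s'$-independence of $h$ through the near-convolution identity $\partial_s\mc{G} = -\partial_{s'}\mc{G} + O(\epsilon/|\bR|^2)$: since $\int_{\T}\partial_{s'}\mc{G}\,h\,ds'=0$ by periodicity, $\partial_s H_1$ is directly one order smaller than a generic single-layer derivative, yielding $|H_1|_{\dot C^{1,\alpha}}\lesssim\epsilon^{2-\alpha^+}$. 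The splitting into $\mc{H}_\epsilon$ and $\mc{H}_+$ is then done by a Littlewood--Paley frequency cutoff at $j_\epsilon\sim|\log\epsilon|$: high frequencies of $H_1$ (plus all of $H_2$) go into $\mc{H}_\epsilon$, low frequencies of $H_1$ into $\mc{H}_+$, the point being that $|P_{<j_\epsilon}H_1|_{\dot C^{1,\gamma}}\lesssim \epsilon^{-1}\|H_1\|_{C^{0,\gamma}}$ for free. Your route---kill the $\overline{\mc{G}}$ piece by $\P_0$ and split the kernel difference $\mc{G}-\overline{\mc{G}}$ via the intermediate $\bR_{\rm t}$ of Lemma~\ref{lem:sing_layer_remainder}---reuses more of the existing machinery and avoids the frequency decomposition entirely; the paper's route is more self-contained and makes the role of the $s'$-independence of $h$ explicit.

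\textbf{One correction.} The Jacobian term (the $\epsilon^3\int\mc{G}\,h\wh\kappa$ piece, the paper's $H_2$) should be placed in $F_\epsilon$, not $F_+$. To put it in $F_+$ you would need $|H_2|_{\dot C^{1,\gamma}}$, and Lemma~\ref{lem:alpha_est} case~(2) for the leading $\bars/|\bR|^3$ integrand requires $\|h\wh\kappa\|_{C^{0,\gamma}}$ on the right---but $h$ is only $C^{0,\alpha}$ with $\alpha<\gamma$. Placing it in $F_\epsilon$ only asks for $|H_2|_{\dot C^{1,\alpha}}\lesssim\epsilon^{2-\alpha^+}\|h\|_{C^{0,\alpha}}$, which follows exactly as in the paper. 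With that reallocation your bounds close: the $\mc{R}_{\mc{S},1}$ piece (which becomes your $F_+$) needs only $\|h\|_{L^\infty}$ in its $\dot C^{1,\gamma}$ estimate, so the gain to $C^{0,\gamma}$ is genuine.
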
 
The proof appears in section \ref{subsec:mean_s_pf}.}

Furthermore, using an alternative layer potential representation of the full Dirichlet-to-Neumann map, which we outline in section \ref{sec:w}, we may bound the full Neumann boundary value $w(s,\theta)$ in terms of the Dirichlet data $v(s)$ as follows: 
\begin{lemma}[Bound for $w(s,\theta)$]\label{lem:w}
Let $0<\alpha<\gamma<\beta<1$ and consider $\X(s)\in C^{2,\beta}$ as in section \ref{subsec:geom}.
Given $\theta$-independent Dirichlet data $v(s)\in C^{1,\alpha}(\T)$, consider the Neumann boundary value $w(s,\theta)$ obtained from the full Dirichlet-to-Neumann map $v(s)\mapsto w(s,\theta)$ along $\Gamma_\epsilon$. We have that $w(s,\theta)$ may be decomposed as
\begin{align*}
w(s,\theta) = w_0(s,\theta) + w_+(s,\theta)
\end{align*}
where
\begin{equation}
\begin{aligned}
\norm{w_0}_{C^{0,\alpha}(\Gamma_\epsilon)} &\le c(\kappa_{*,\alpha},c_\Gamma)\,\norm{v}_{C^{1,\alpha}(\T)} \\
\norm{w_+}_{C^{0,\gamma}(\Gamma_\epsilon)} &\le c(\kappa_{*,\textcolor{black}{\gamma^+}},c_\Gamma,\epsilon)\,\norm{v}_{C^{0,\gamma}(\T)}\,.
\end{aligned}
\end{equation}
Here the $\epsilon$-dependence is explicit in the bound for $w_0$ but not for $w_+$.
\end{lemma}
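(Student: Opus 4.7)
The plan is to use a single-layer potential representation of $u$ to obtain a boundary integral equation whose solution can be extracted via the straight-filament decomposition, mirroring the strategy used for $\mc{L}_\epsilon^{-1}$ in Section \ref{subsubsec:layer}. I would write $u(\bx) = \mc{S}[\sigma](\bx)$ in $\Omega_\epsilon$ for an unknown density $\sigma$ on $\Gamma_\epsilon$. Since the single layer is continuous across $\Gamma_\epsilon$, the boundary condition yields
\begin{equation*}
\mc{S}[\sigma](s,\theta) = v(s)\,,\qquad (s,\theta)\in\T\times 2\pi\T\,.
\end{equation*}
Once $\sigma$ is known, the Neumann boundary value follows from the jump relation for the single layer, giving $w = (\tfrac{1}{2}{\bf I} - \mc{K}^*)[\sigma]$, where $\mc{K}^*$ denotes the adjoint double layer on $\Gamma_\epsilon$.

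First I would solve the boundary integral equation for $\sigma$ by splitting $\mc{S} = \overline{\mc{S}} + \mc{R}_\mc{S}$ and rearranging as $\overline{\mc{S}}[\sigma] = v - \mc{R}_\mc{S}[\sigma]$. I would then decompose $\sigma$ into its zero-mean-in-$s$ part and its mean-in-$s$ part, applying $\overline{\mc{S}}^{-1}$ (Lemma \ref{lem:S_mapping}) to the zero-mean part and absorbing the mean-in-$s$ contribution using Lemma \ref{lem:mean_in_s}. Because $\overline{\mc{S}}^{-1}$ loses one factor of $\epsilon^{-1}$ while Lemma \ref{lem:RS_and_RD} provides $\mc{R}_\mc{S} = O(\epsilon^{2-\alpha})$ in $C^{0,\alpha}$, the composition $\overline{\mc{S}}^{-1}\mc{R}_\mc{S}$ is a contraction of size $O(\epsilon^{1-\alpha^+})$ for $\epsilon$ sufficiently small, so a Neumann series yields $\sigma = \sigma_0 + \sigma_+$, with $\sigma_0$ the straight-filament density obtained by inverting $\overline{\mc{S}}$ directly on $v$ and $\sigma_+$ collecting curvature corrections of higher regularity and/or smaller size in $\epsilon$.

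Next I would compute $w$ from $\sigma$ by splitting $\mc{K}^* = \overline{\mc{K}^*} + \mc{R}_{\mc{K}^*}$, where $\mc{R}_{\mc{K}^*}$ is estimated analogously to $\mc{R}_\mc{D}$ in Lemma \ref{lem:RS_and_RD}. The leading-order contribution is $w_0 = (\tfrac{1}{2}{\bf I} - \overline{\mc{K}^*})[\sigma_0]$, which, because $v$ and hence $\sigma_0$ are $\theta$-independent, reduces to the straight-cylinder Dirichlet-to-Neumann Fourier multiplier $2\pi|k|K_1(2\pi\epsilon|k|)/K_0(2\pi\epsilon|k|)$ acting on $v$. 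Combining $C^{1,\alpha}\to C^{0,\alpha}$ multiplier bounds (in the style of Lemma \ref{lem:mapping_Leps}) with explicit asymptotics of the symbol, together with the observation that $\theta$-independence collapses the curved $\dot C^{0,\alpha}(\Gamma_\epsilon)$ seminorm \eqref{eq:dot_Calpha_eps} to the flat one on $\T$, gives the stated bound on $w_0$. All remaining terms---namely $(\tfrac{1}{2}{\bf I} - \overline{\mc{K}^*})[\sigma_+]$, $\mc{R}_{\mc{K}^*}[\sigma]$, and the contributions of the mean-in-$s$ mode---are collected into $w_+$ and bounded using Lemmas \ref{lem:RS_and_RD}, \ref{lem:S_mapping}, and \ref{lem:mean_in_s}. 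The gains in regularity or size in $\epsilon$ from $\mc{R}_\mc{S}$ and $\mc{R}_{\mc{K}^*}$ permit the weaker norm $C^{0,\gamma}(\T)$ on the right of the $w_+$ bound, at the cost of $\epsilon$-dependent constants.

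The main obstacle is the constant-in-$s$ mode, since $\overline{\mc{S}}$ fails to be invertible there, yet the harmonic extension of a constant Dirichlet datum carries nontrivial (in fact singular as $\epsilon\to 0$) normal flux on a thin filament. Lemma \ref{lem:mean_in_s} is tailored precisely to extract the leading-order piece of this mode together with a smoother correction, and without it one cannot close the estimates. A secondary technical point is ensuring that the fixed-point argument for $\sigma$ is self-consistent in the H\"older setting of Lemma \ref{lem:SB_PDE_holder}, which hinges on the fact that the combined mapping $\overline{\mc{S}}^{-1}\mc{R}_\mc{S}$ is genuinely a small perturbation in $C^{0,\alpha}(\Gamma_\epsilon)$ in the $\epsilon\to 0$ limit.
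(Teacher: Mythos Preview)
Your approach differs from the paper's in a fundamental way, and it contains a genuine gap.

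\textbf{The paper's route.} The paper does \emph{not} pass through the straight single layer at all for this lemma. It represents $u$ via a \emph{modified double layer} $u=\mc{D}'[\varphi]$, so that $v=(\tfrac12{\bf I}+\mc{D}')[\varphi]$ and $w=\mc{T}[\varphi]$ with $\mc{T}$ the hypersingular operator. Writing $\varphi=2v-\mc{M}[v]$ with $\mc{M}=\mc{D}'(\tfrac12{\bf I}+\mc{D}')^{-1}$, the key point is that $\mc{D}'$ \emph{smooths} (Lemma~\ref{lem:Dsmooths}), so $\mc{M}[v]\in C^{1,\gamma}$ even though $v\in C^{0,\gamma}$. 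Invertibility of $\tfrac12{\bf I}+\mc{D}'$ is obtained by a \emph{compactness} argument (Lemma~\ref{lem:DL_inverse}), not a Neumann series; this is precisely why the $\epsilon$-dependence in the $w_+$ bound is not explicit. The hypersingular operator is then split kernelwise on the curved surface as $\mc{T}=\mc{T}_0+\mc{T}_+$ (Lemma~\ref{lem:hypersingular}), and one simply takes $w_0:=2\mc{T}_0[v]$. No straight-filament inversion appears anywhere.

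\textbf{The gap in your contraction.} You assert that $\overline{\mc{S}}^{-1}\mc{R}_\mc{S}$ is $O(\epsilon^{1-\alpha^+})$ because ``Lemma~\ref{lem:RS_and_RD} provides $\mc{R}_\mc{S}=O(\epsilon^{2-\alpha})$ in $C^{0,\alpha}$.'' But Lemma~\ref{lem:RS_and_RD} gives this only for $\mc{R}_{\mc{S},\epsilon}$; the piece $\mc{R}_{\mc{S},+}$ satisfies merely $\|\mc{R}_{\mc{S},+}[\varphi]\|_{C^{0,\alpha}}\le c\,\epsilon^{1-\alpha}\|\varphi\|_{L^\infty}$. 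Combined with Lemma~\ref{lem:S_mapping}, whose low-frequency contribution scales like $\epsilon^{-1}$, this yields an operator of size $\epsilon^{-\alpha}$, so the Neumann series does not close. The decomposition $\mc{R}_\mc{S}=\mc{R}_{\mc{S},\epsilon}+\mc{R}_{\mc{S},+}$ is designed so that one piece is small and the other is smoother, but neither property alone makes $\overline{\mc{S}}^{-1}\mc{R}_\mc{S}$ a contraction on $C^{0,\alpha}(\Gamma_\epsilon)$. A second, related issue: Lemma~\ref{lem:S_mapping} only inverts $\overline{\mc{S}}$ on $\theta$-independent, mean-zero functions of $s$, whereas the right-hand side $v-\mc{R}_\mc{S}[\sigma]$ that you feed back into your iteration is generically $\theta$-dependent once curvature enters. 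You would need mapping properties for $\overline{\mc{S}}^{-1}$ on the full mode space $(k,\ell)$, which are not supplied. Finally, Lemma~\ref{lem:mean_in_s} treats the \emph{$\theta$-averaged} single layer on constant-in-$s$ densities and does not directly resolve the $k=0$ obstruction for the full equation $\mc{S}[\sigma]=v$.

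The paper sidesteps all three issues simultaneously: the double-layer modification kills the null space, compactness replaces the contraction, and the leading term $w_0=2\mc{T}_0[v]$ is read off from an explicit curved-surface kernel rather than from a straight-filament symbol.
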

The proof of Lemma \ref{lem:w} appears in section \ref{sec:w}.  


Combining Lemmas \ref{lem:straight_components}, \ref{lem:S_mapping}, \ref{lem:RS_and_RD}, \ref{lem:mean_in_s}, and \ref{lem:w}, we arrive at the decomposition \eqref{eq:thm_Leps_inv} of the slender body DtN map $\mc{L}_\epsilon^{-1}$ stated in Theorem \ref{thm:main} (see section \ref{subsec:thm_pf} for details). To obtain the decomposition \eqref{eq:thm_Leps} of the slender body NtD map $\mc{L}_\epsilon$, we will additionally require the following lemma regarding a general bound for $\mc{L}_\epsilon$ in H\"older spaces.
\begin{lemma}[Slender body NtD in H\"older spaces]\label{lem:SB_PDE_holder}
Given a curved slender body $\Sigma_\epsilon$ as in section \ref{subsec:geom} with centerline \textcolor{black}{$\X(s)\in C^{2,\alpha}(\T)$}, and given slender body Neumann data $f(s)\in C^{0,\alpha}(\T)$ with $\alpha\in(0,1)$, the slender body Neumann-to-Dirichlet map $\mc{L}_\epsilon[f]$ \eqref{eq:SB_NtD} satisfies 
\begin{equation}\label{eq:SB_PDE_holder}
\norm{\mc{L}_\epsilon[f]}_{C^{1,\alpha}(\T)}\le c(\textcolor{black}{\kappa_{*,\alpha}},\epsilon)\norm{f}_{C^{0,\alpha}(\T)}\,.
\end{equation}
\end{lemma}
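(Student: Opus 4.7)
The plan is to derive this H\"older bound for $\mc{L}_\epsilon$ from the DtN decomposition \eqref{eq:thm_Leps_inv} --- which is established using only Lemmas \ref{lem:straight_components}, \ref{lem:S_mapping}, \ref{lem:RS_and_RD}, \ref{lem:mean_in_s}, \ref{lem:w}, and therefore does not circularly invoke this lemma --- by inverting via a Fredholm alternative in H\"older spaces, with uniqueness supplied by the energy-space well-posedness of the slender body BVP from \cite{closed_loop,free_ends,rigid}. Given $f\in C^{0,\alpha}(\T)$ with $\int_\T f\,ds=0$, the energy theory produces a harmonic $u\in H^1(\Omega_\epsilon)$ whose trace $v=\mc{L}_\epsilon[f]$ lies at least in $H^{1/2}(\T)$. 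Substituting into \eqref{eq:thm_Leps_inv} and applying $\overline{\mc{L}}_\epsilon$, which is invertible on mean-zero functions by the Fourier multiplier \eqref{eq:symbol}, rewrites the problem as the fixed-point identity
\begin{equation*}
(I-K)\,v=\overline{\mc{L}}_\epsilon[f]\,,\qquad K:=-\overline{\mc{L}}_\epsilon\circ(\mc{R}_{\rm d,\epsilon}+\mc{R}_{\rm d,+})\,.
\end{equation*}

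The next step is to show that $K$ is compact on the mean-zero subspace of $C^{1,\alpha}(\T)$. By the bound \eqref{eq:thm_Leps_inv_bds} on $\mc{R}_{\rm d,+}$ combined with Lemma \ref{lem:mapping_Leps} applied at exponent $\gamma>\alpha$, the composition $\overline{\mc{L}}_\epsilon\mc{R}_{\rm d,+}$ maps $C^{1,\alpha}(\T)$ boundedly into $C^{1,\gamma}(\T)$, which embeds compactly into $C^{1,\alpha}(\T)$ via Arzel\`a--Ascoli. For the small-in-$\epsilon$ piece $\mc{R}_{\rm d,\epsilon}$ the plan is to leverage the flexibility $\alpha^+\in(\alpha,\beta]$ in \eqref{eq:thm_Leps_inv_bds} --- or, more transparently, the sharper $C^{0,\alpha}\to C^{1,\alpha}$ estimate on $\mc{R}_{\mc{S},\epsilon}$ appearing inside the construction of $\mc{R}_{\rm d,\epsilon}$ per Lemma \ref{lem:RS_and_RD} --- to upgrade the target into a strictly smaller H\"older class $C^{0,\alpha^+}$, so that $\overline{\mc{L}}_\epsilon\mc{R}_{\rm d,\epsilon}$ is likewise compact into $C^{1,\alpha}$.

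The Fredholm alternative then reduces boundedness of $(I-K)^{-1}$ to triviality of the kernel. Any $v_0\in C^{1,\alpha}(\T)$ with $(I-K)v_0=0$ satisfies $\mc{L}_\epsilon^{-1}[v_0]=0$; the corresponding harmonic function in $\Omega_\epsilon$ has vanishing total Neumann flux on every cross section, so the energy-space uniqueness result of \cite{closed_loop,free_ends} (standard interior and boundary elliptic regularity guaranteeing that a $C^{1,\alpha}$ trace is admissible energy-class data) forces $v_0$ to be constant, and the mean-zero normalization gives $v_0\equiv 0$. Hence $(I-K)$ is boundedly invertible on mean-zero $C^{1,\alpha}(\T)$, and
\begin{equation*}
\|v\|_{C^{1,\alpha}(\T)}\le \|(I-K)^{-1}\|\,\|\overline{\mc{L}}_\epsilon[f]\|_{C^{1,\alpha}(\T)}\le c(\kappa_{*,\alpha},\epsilon)\,\|f\|_{C^{0,\alpha}(\T)}
\end{equation*}
by Lemma \ref{lem:mapping_Leps}.

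The principal obstacle is the compactness step for the small piece $\overline{\mc{L}}_\epsilon\mc{R}_{\rm d,\epsilon}$: the estimate \eqref{eq:thm_Leps_inv_bds} as stated preserves the H\"older exponent $\alpha$ and so does not, on its face, yield compactness at a fixed $\epsilon$. Closing this step requires extracting a genuine H\"older-exponent gain, either by revisiting the derivation of \eqref{eq:thm_Leps_inv} with an input exponent $\alpha^+>\alpha$ in place of $\alpha$, or by directly exploiting the sharper $\dot C^{1,\alpha}$ bound on $\mc{R}_{\mc{S},\epsilon}$ from Lemma \ref{lem:RS_and_RD}. For small $\epsilon$ one could alternatively absorb this term on the left by Neumann series, using its prefactor $\epsilon^{1-\alpha^+}|\log\epsilon|^{1/2}$ after composing with $\overline{\mc{L}}_\epsilon$; the Fredholm route is needed precisely because Lemma \ref{lem:SB_PDE_holder} is asserted for general $\epsilon$.
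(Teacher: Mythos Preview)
Your route is genuinely different from the paper's. The paper does not touch the layer-potential decomposition at all in proving Lemma~\ref{lem:SB_PDE_holder}; instead it runs a direct Schauder--Campanato argument on the slender body PDE. After straightening $\mc{O}_{r_*}$ to a cylindrical shell, it introduces a Campanato-type seminorm over thin annuli $A_\rho(s_0)$ (Lemma~\ref{lem:camp}), proves higher interior regularity and a Caccioppoli inequality for the frozen-coefficient problem \eqref{eq:v_pde} (Proposition~\ref{prop:high_reg}, Corollary~\ref{cor:caccio}, Lemma~\ref{lem:v_est}), and then compares $u$ to this frozen solution to get decay of the Campanato quantity for $\p_s u$. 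This is self-contained in the sense that it uses only the $L^2$ theory (Lemma~\ref{lem:L2_SBPDE}) and standard elliptic machinery, and it delivers the sharp exponent $\alpha$ under the hypothesis $\X\in C^{2,\alpha}$. The logical independence matters: Lemma~\ref{lem:SB_PDE_holder} is later fed back into the $\mc{L}_\epsilon$ half of Theorem~\ref{thm:main}.

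Your approach, by contrast, has the gap you yourself flag, and the proposed patches do not close it. Look at the third summand of $\mc{R}_{\rm d,\epsilon}$, namely $\epsilon^2\int_0^{2\pi} w_0\,\wh\kappa\,d\theta$. Here $w_0=2\mc{T}_0[v]$ and, by Lemma~\ref{lem:hypersingular}, $\mc{T}_0$ sends $C^{1,\alpha'}\to C^{0,\alpha'}$ with no exponent gain; multiplication by $\wh\kappa$ and $\theta$-averaging preserve the $s$-exponent, so this piece is exactly order $1$ in $s$. Composing with $\overline{\mc{L}}_\epsilon$ yields a bounded order-$0$ map $C^{1,\alpha'}\to C^{1,\alpha'}$ with no smoothing, hence not compact. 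Neither of your remedies touches this term: replacing $\alpha$ by $\alpha^+$ in \eqref{eq:thm_Leps_inv_bds} only changes constants, not the target exponent, and the sharper $\dot C^{1,\alpha}$ bound on $\mc{R}_{\mc{S},\epsilon}$ from Lemma~\ref{lem:RS_and_RD} concerns a different summand entirely. The Neumann-series fallback only handles small $\epsilon$, whereas Lemma~\ref{lem:SB_PDE_holder} is stated for any admissible $\epsilon<r_*/4$. There is also a secondary exponent mismatch: the DtN bounds \eqref{eq:thm_Leps_inv_bds} need $\X\in C^{2,\beta}$ with $\beta$ strictly above the working H\"older exponent, so under the bare hypothesis $\X\in C^{2,\alpha}$ your argument would at best produce $v\in C^{1,\alpha'}$ for $\alpha'<\alpha$, short of the stated conclusion.
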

The $\epsilon$-dependence in \eqref{eq:SB_PDE_holder} is not explicit. The proof of Lemma \ref{lem:SB_PDE_holder} is given in section \ref{sec:SBNtD_Holder}.

In the next section, we proceed directly to showing how the key lemmas \ref{lem:straight_components}, \ref{lem:S_mapping}, \ref{lem:RS_and_RD}, \ref{lem:mean_in_s}, \ref{lem:w}, and \ref{lem:SB_PDE_holder} combine to yield Theorem \ref{thm:main}. In the remaining sections \ref{sec:strt_periodic}, \ref{sec:RS_RD}, \ref{sec:w}, and \ref{sec:SBNtD_Holder}, we provide a proof for each of these key lemmas.

\subsection{Proof of Theorem \ref{thm:main}}\label{subsec:thm_pf} 
We first show the decomposition \eqref{eq:thm_Leps_inv} for $\mc{L}_\epsilon^{-1}$. We begin by specifying the form of the remainder terms $\mc{R}_{\rm d,\epsilon}$ and $\mc{R}_{\rm d,+}$. Using the decompositions of $\mc{R}_{\mc{S}}$ and $w(s,\theta)$ from Lemmas \ref{lem:RS_and_RD} and \ref{lem:w}, respectively, we may write 
\begin{align*}
&\mc{R}_{\rm d,\epsilon}[v(s)] =  - \overline{\mc{S}}^{-1}\int_0^{2\pi}\mc{R}_{\mc{S},\epsilon}[\P_0w_0(s,\theta)]\,\epsilon\,d\theta \textcolor{black}{- \mc{H}_\epsilon\bigg[\int_\T w_0\,ds\bigg]}-\epsilon^2\int_0^{2\pi}w_0(s,\theta)\wh\kappa(s,\theta)\,d\theta\\
&\mc{R}_{\rm d,+}[v(s)] = -\overline{\mc{S}}^{-1}\int_0^{2\pi}\mc{R}_\mc{D}[v(s)]\,\epsilon\,d\theta - \overline{\mc{S}}^{-1}\int_0^{2\pi}\mc{R}_{\mc{S},+}[\P_0w_0(s,\theta)]\,\epsilon\,d\theta \textcolor{black}{- \mc{H}_\epsilon\bigg[\int_\T w_+\,ds\bigg]}\\
&\quad - \overline{\mc{S}}^{-1}\int_0^{2\pi}\mc{R}_{\mc{S}}[w_+(s,\theta)]\,\epsilon\,d\theta  \textcolor{black}{- \mc{H}_+\bigg[\int_\T w\,ds\bigg]} -\epsilon^2\int_0^{2\pi}w_+(s,\theta)\wh\kappa(s,\theta)\,d\theta  + \int_0^{2\pi}\int_\T w \,\epsilon\,dsd\theta \,.
\end{align*}
To estimate $\mc{R}_{\rm d,\epsilon}$, we begin by noting that, using Lemmas \ref{lem:RS_and_RD} and \ref{lem:w}, we have 
\begin{align*}
\norm{\int_0^{2\pi}\mc{R}_{\mc{S},\epsilon}[\P_0w_0(s,\theta)]\,\epsilon\,d\theta}_{C^{0,\alpha}(\T)} &\le 2\pi\epsilon\norm{\mc{R}_{\mc{S},\epsilon}[\P_0w_0(s,\theta)]}_{C^{0,\alpha}(\Gamma_\epsilon)} \\
&\le \epsilon^{3-\alpha}\,c(\kappa_{*,\alpha},c_\Gamma)\norm{w_0}_{L^\infty(\Gamma_\epsilon)} \le \epsilon^{3-\alpha}\,c(\kappa_{*,\alpha},c_\Gamma)\norm{v}_{C^{1,\alpha}(\T)}\,,\\
\abs{\int_0^{2\pi}\mc{R}_{\mc{S},\epsilon}[\P_0w_0(s,\theta)]\,\epsilon\,d\theta}_{\dot C^{1,\alpha}(\T)} &\le 2\pi\epsilon\abs{\mc{R}_{\mc{S},\epsilon}[\P_0w_0(s,\theta)]}_{\dot C^{1,\alpha}(\Gamma_\epsilon)} \\
&\le \epsilon^{2-\textcolor{black}{\alpha^+}}\,c(\kappa_{*,\textcolor{black}{\alpha^+}},c_\Gamma)\norm{w_0}_{C^{0,\alpha}(\Gamma_\epsilon)} \le \epsilon^{2-\textcolor{black}{\alpha^+}}\,c(\kappa_{*,\textcolor{black}{\alpha^+}},c_\Gamma)\norm{v}_{C^{1,\alpha}(\T)}\,. 
\end{align*}
Using Lemmas \ref{lem:S_mapping} and \ref{lem:mean_in_s}, we may then estimate: 
\begin{equation}\label{eq:Rdeps_est}
\begin{aligned}
\norm{\mc{R}_{\rm d,\epsilon}[v(s)]}_{C^{0,\alpha}(\T)} &\le
\epsilon^2 \,c(\kappa_{*,\alpha})\norm{w_0}_{C^{0,\alpha}(\Gamma_\epsilon)} \textcolor{black}{ + \norm{\mc{H}_\epsilon\bigg[\int_{\T}w_0\,ds\bigg]}_{C^{0,\alpha}(\T)}} \\
&\qquad + \norm{\overline{\mc{S}}^{-1}\int_0^{2\pi}\mc{R}_{\mc{S},\epsilon}[\P_0w_0(s,\theta)]\,\epsilon\,d\theta}_{C^{0,\alpha}(\T)}\\
&\le c(\kappa_{*,\textcolor{black}{\alpha^+}},c_\Gamma)\bigg(
\epsilon^2\norm{v}_{C^{1,\alpha}(\T)} + \epsilon^{2-\textcolor{black}{\alpha^+}}\norm{w_0}_{C^{0,\alpha}(\Gamma_\epsilon)}+ \epsilon^{2-\textcolor{black}{\alpha^+}}\norm{v}_{C^{1,\alpha}(\T)}\bigg)\\
&\le \epsilon^{2-\textcolor{black}{\alpha^+}}\,c(\kappa_{*,\textcolor{black}{\alpha^+}},c_\Gamma)\norm{v}_{C^{1,\alpha}(\T)}\,.
\end{aligned}
\end{equation}

We next estimate $\mc{R}_{\rm d,+}[v(s)]$. We first note that, again using Lemmas \ref{lem:RS_and_RD} and \ref{lem:w}, we have
\begin{align*}
\norm{\int_0^{2\pi}\mc{R}_\mc{D}[v(s)]\,\epsilon\,d\theta}_{C^{1,\gamma}(\T)}&\le c(\kappa_{*,\textcolor{black}{\gamma^+}},c_\Gamma) \epsilon^{1-\textcolor{black}{\gamma^+}}\norm{v}_{C^{0,\gamma}(\T)} \\
\norm{\int_0^{2\pi}\mc{R}_{\mc{S}}[\P_0w_+(s,\theta)]\,\epsilon\,d\theta}_{C^{1,\gamma}(\T)} &\le c(\kappa_{*,\textcolor{black}{\gamma^+}},c_\Gamma) \epsilon^{2-2\textcolor{black}{\gamma^+}}\norm{w_+}_{C^{0,\gamma}(\Gamma_\epsilon)} 
\le c(\kappa_{*,\textcolor{black}{\gamma^+}},c_\Gamma,\epsilon) \norm{v}_{C^{0,\gamma}(\T)} \\
\norm{\int_0^{2\pi}\mc{R}_{\mc{S},+}[\P_0w_0(s,\theta)]\,\epsilon\,d\theta}_{C^{1,\gamma}(\T)} &\le c(\kappa_{*,\gamma},c_\Gamma)\epsilon^{2-2\gamma}\norm{w_0}_{C^{0,\alpha}(\Gamma_\epsilon)} 
\le c(\kappa_{*,\gamma},c_\Gamma)\epsilon^{2-2\gamma}\norm{v}_{C^{1,\alpha}(\T)}\,.
\end{align*}
The $\epsilon$-dependence is explicit in the first and third bounds but not the second. 
Using the above bounds along with Lemmas \ref{lem:S_mapping}, \ref{lem:mean_in_s}, and \ref{lem:w}, we may estimate 
\begin{equation}\label{eq:Rdplus_est}
\begin{aligned}
\norm{\mc{R}_{\rm d,+}[v(s)]}_{C^{0,\gamma}(\T)} &\le \norm{\overline{\mc{S}}^{-1}\int_0^{2\pi}\mc{R}_\mc{D}[v(s)]\,\epsilon\,d\theta}_{C^{0,\gamma}(\T)} + \norm{\overline{\mc{S}}^{-1}\int_0^{2\pi}\mc{R}_{\mc{S}}[\P_0w_+(s,\theta)]\,\epsilon\,d\theta}_{C^{0,\gamma}(\T)} \\
&\quad   + \norm{\overline{\mc{S}}^{-1}\int_0^{2\pi}\mc{R}_{\mc{S},+}[\P_0w_0(s,\theta)]\,\epsilon\,d\theta}_{C^{0,\gamma}(\T)} \textcolor{black}{+ \norm{\mc{H}_\epsilon\bigg[\int_\T w_+\,ds\bigg]}_{C^{0,\gamma}(\T)}}  \\
&\quad \textcolor{black}{+ \norm{\mc{H}_+\bigg[\int_\T w\,ds\bigg]}_{C^{0,\gamma}(\T)} } +\epsilon^2c(\kappa_{*,\gamma})\norm{w_+}_{C^{0,\gamma}(\Gamma_\epsilon)} +\epsilon\norm{w}_{L^\infty(\Gamma_\epsilon)}  \\
 &\le c(\kappa_{*,\textcolor{black}{\gamma^+}},c_\Gamma,\epsilon)\norm{v}_{C^{0,\gamma}(\T)} +c(\kappa_{*,\textcolor{black}{\gamma^+}},c_\Gamma)\,\epsilon^{2-\textcolor{black}{\gamma^+}}\norm{w_+}_{C^{0,\gamma}(\Gamma_\epsilon)} \\
 &\quad +c(\kappa_{*,\textcolor{black}{\gamma^+}},c_\Gamma)\,\epsilon^{1-\textcolor{black}{\gamma^+}}\norm{w}_{C^{0,\alpha}(\Gamma_\epsilon)} + c(\kappa_{*,\gamma},c_\Gamma)\,\epsilon^{2-2\gamma}\norm{v}_{C^{1,\alpha}(\T)} \\
 &\le c(\kappa_{*,\textcolor{black}{\gamma^+}},c_\Gamma,\epsilon) \norm{v}_{C^{1,\alpha}(\T)}\,.
\end{aligned}
\end{equation}
Thus we obtain the estimates \eqref{eq:thm_Leps_inv_bds} for the decomposition \eqref{eq:thm_Leps_inv} of $\mc{L}_\epsilon^{-1}$.

We next turn to the decomposition \eqref{eq:thm_Leps} for $\mc{L}_\epsilon$. For $f(s)$ satisfying $\int_\T f(s)\,ds=0$, we may use the $\mc{L}_\epsilon^{-1}$ decomposition \eqref{eq:thm_Leps_inv} to write 
\begin{align*}
v(s) &= ({\bf I}+\overline{\mc{L}}_\epsilon\P_0\mc{R}_{\rm d,\epsilon})^{-1}\overline{\mc{L}}_\epsilon[f(s)] - ({\bf I}+\overline{\mc{L}}_\epsilon\P_0\mc{R}_{\rm d,\epsilon})^{-1}\overline{\mc{L}}_\epsilon\P_0\mc{R}_{\rm d,+}[v(s)] \\
&= ({\bf I}+\overline{\mc{L}}_\epsilon\P_0\mc{R}_{\rm d,\epsilon})^{-1}\overline{\mc{L}}_\epsilon[f(s)] - ({\bf I}+\overline{\mc{L}}_\epsilon\P_0\mc{R}_{\rm d,\epsilon})^{-1}\overline{\mc{L}}_\epsilon\P_0\mc{R}_{\rm d,+}\mc{L}_\epsilon[f(s)]\,.
\end{align*}
Note that, using Lemma \ref{lem:mapping_Leps} and the estimate \eqref{eq:Rdeps_est} for $\mc{R}_{\rm d,\epsilon}$, for any $g\in C^{1,\alpha}(\T)$, we have
\begin{align*}
\norm{\overline{\mc{L}}_\epsilon\P_0\mc{R}_{\rm d,\epsilon}[g]}_{C^{1,\alpha}(\T)} &\le c\abs{\log\epsilon}^{1/2}\norm{\mc{R}_{\rm d,\epsilon}[g]}_{L^\infty(\T)}+c\epsilon^{-1}\abs{\log\epsilon}^{1/2}\norm{\mc{R}_{\rm d,\epsilon}[g]}_{\dot C^{0,\alpha}(\T)} \\
&\le \epsilon^{1-\textcolor{black}{\alpha^+}}\abs{\log\epsilon}^{1/2}\,c(\kappa_{*,\textcolor{black}{\alpha^+}},c_\Gamma)\norm{g}_{C^{1,\alpha}(\T)}\,.
\end{align*}
For $\epsilon$ sufficiently small, we may use a Neumann series to write 
\begin{equation}\label{eq:neumann_series}
({\bf I}+\overline{\mc{L}}_\epsilon\P_0\mc{R}_{\rm d,\epsilon})^{-1} = {\bf I}+\sum_{j=1}^\infty(-\overline{\mc{L}}_\epsilon\P_0\mc{R}_{\rm d,\epsilon})^j \;
=: \; {\bf I}  + \mc{R}_{\rm n,\epsilon} 
\end{equation}
where
\begin{equation}\label{eq:neumann_series_bd}
\norm{\mc{R}_{\rm n,\epsilon}[g]}_{C^{1,\alpha}(\T)}\le \epsilon^{1-\textcolor{black}{\alpha^+}}\abs{\log\epsilon}^{1/2}\,c(\kappa_{*,\textcolor{black}{\alpha^+}},c_\Gamma)\norm{g}_{C^{1,\alpha}(\T)}\,.
\end{equation}

Next, writing 
\begin{align*}
\mc{R}_{\rm n,+} = -({\bf I}+\overline{\mc{L}}_\epsilon\P_0\mc{R}_{\rm d,\epsilon})^{-1}\overline{\mc{L}}_\epsilon\P_0\mc{R}_{\rm d,+}\mc{L}_\epsilon\,,
\end{align*}
we may use \eqref{eq:neumann_series} and \eqref{eq:neumann_series_bd} along with Lemma \ref{lem:mapping_Leps}, estimate \eqref{eq:Rdplus_est}, and Lemma \ref{lem:SB_PDE_holder} to obtain 
\begin{align*}
\norm{\mc{R}_{\rm n,+}[f(s)]}_{C^{1,\gamma}(\T)} &= \norm{({\bf I}+ \mc{R}_{\rm n,\epsilon})\overline{\mc{L}}_\epsilon\P_0\mc{R}_{\rm d,+}\mc{L}_\epsilon[f(s)]}_{C^{1,\gamma}} 
\le c(\kappa_{*,\beta},c_\Gamma,\epsilon)\norm{\overline{\mc{L}}_\epsilon\P_0\mc{R}_{\rm d,+}\mc{L}_\epsilon[f(s)]}_{C^{1,\gamma}} \\
&\le c(\kappa_{*,\textcolor{black}{\gamma^+}},c_\Gamma,\epsilon)\norm{\mc{R}_{\rm d,+}\mc{L}_\epsilon[f(s)]}_{C^{0,\gamma}}
\le c(\kappa_{*,\textcolor{black}{\gamma^+}},c_\Gamma,\epsilon)\norm{\mc{L}_\epsilon[f(s)]}_{C^{1,\alpha}} \\
&\le c(\textcolor{black}{\kappa_{*,\gamma^+}},c_\Gamma,\epsilon)\norm{f}_{C^{0,\alpha}(\T)}\,.
\end{align*}
Altogether, we obtain the decomposition \eqref{eq:thm_Leps} and bounds \eqref{eq:thm_Leps_bds} for $\mc{L}_\epsilon$.
\hfill\qedsymbol \\

The remainder of the paper is structured as follows. Section \ref{sec:strt_periodic} contains the proofs of Lemmas \ref{lem:mapping_Leps}, \ref{lem:straight_components}, and \ref{lem:S_mapping} regarding mapping properties in the straight setting; section \ref{sec:RS_RD} contains the proof of the layer potential remainder estimates in Lemmas \ref{lem:RS_and_RD} and \ref{lem:mean_in_s}; section \ref{sec:w} gives the proof of Lemma \ref{lem:w} bounding the full Neumann boundary value $w(s,\theta)$; and section \ref{sec:SBNtD_Holder} has the proof of H\"older space estimates for $\mc{L}_\epsilon$ from Lemma \ref{lem:SB_PDE_holder}.


\section{Mapping properties in the straight setting}\label{sec:strt_periodic}
In this section we prove mapping properties for the slender body NtD and its components about the straight cylinder $\mc{C}_\epsilon$. 

\subsection{Symbols for single and double layer}
We begin with Lemma \ref{lem:straight_components} regarding explicit expressions for the single and double layer operators $\overline{\mc{S}}$ and $\overline{\mc{D}}$ on $\mc{C}_\epsilon$.

Note that, due to the form of the kernels \eqref{eq:str_kernels}, periodicity in $s$ will be enforced by considering only densities $\varphi$ which are 1-periodic in $s$. We may then write the operators $\overline{\mc{S}}$ and $\overline{\mc{D}}$ as 
\begin{equation}\label{eq:barS_barD}
\begin{aligned}
\overline{\mc{S}}[\varphi](s,\theta) &= \int_{-\infty}^\infty\int_0^{2\pi}\overline{\mc{G}}(s,\theta,s',\theta')\,\varphi(s',\theta') \, \epsilon\,d\theta'ds' \\
\overline{\mc{D}}[\varphi](s,\theta) &= \int_{-\infty}^\infty\int_0^{2\pi}\overline{K}_{\mc{D}}(s,\theta,s',\theta')\,\varphi(s',\theta') \, \epsilon\,d\theta'ds'
\end{aligned}
\end{equation}
where
\begin{equation}\label{eq:str_kernels}
\overline{\mc{G}}(s,\theta,s',\theta') = \frac{1}{\sqrt{(s-s')^2+4\epsilon^2\sin^2(\frac{\theta-\theta'}{2})}}\,, \;\;
\overline{K}_\mc{D}(s,\theta,s',\theta') = \frac{-2\epsilon \sin^2(\frac{\theta-\theta'}{2})}{\sqrt{(s-s')^2+4\epsilon^2\sin^2(\frac{\theta-\theta'}{2})}^{\,3}}\,.
\end{equation}
Note that in the straight setting, both $\overline{\mc{G}}$ and $\overline{K}_\mc{D}$ are convolution kernels in both $s$ and $\theta$.

We use the expressions \eqref{eq:barS_barD}, \eqref{eq:str_kernels} to derive the symbols for $\overline{\mc{S}}$ and $\overline{\mc{D}}$ given by Lemma \ref{lem:straight_components}.
\begin{proof}[Proof of Lemma \ref{lem:straight_components}]
For $k,\ell\in \Z$, we may calculate an exact expression for $\overline{\mc{S}}[e^{2\pi i k s}e^{i\ell\theta}]$. We begin with explicit integration in $s$:
\begin{align*}
\overline{\mc{S}}[e^{2\pi i k s}e^{i\ell\theta}] &= \frac{1}{4\pi}\int_{-\infty}^\infty\int_0^{2\pi} \frac{1}{\sqrt{(s')^2+4\epsilon^2\sin^2(\frac{\theta'}{2})}} e^{2\pi i k(s-s')} e^{i\ell(\theta-\theta')}\,\epsilon d\theta' ds' \\
&= \frac{1}{2\pi}\int_0^{2\pi}K_0(4\pi\epsilon \abs{k}\textstyle\sin(\frac{\theta'}{2}) )\, e^{-i\ell\theta'}\,\epsilon d\theta' \; e^{2\pi i k s}e^{i\ell\theta}\,.
\end{align*}
Now, due to symmetry in $\theta'$, we have that for $z\in \R_+$ and any integer $\ell\neq0$, 
\begin{equation}\label{eq:Bessel_sin}
\int_0^{2\pi}\sin(\ell \theta')K_0(z\textstyle\sin(\frac{\theta'}{2}) )\, d\theta'=0\,.
\end{equation}
Furthermore, using the definition of the zeroth order modified Bessel function $K_0$, we may write  
\begin{equation}\label{eq:Bessel_cos}
\begin{aligned}
\int_0^{2\pi}\cos(\ell \theta')K_0(z\textstyle\sin(\frac{\theta'}{2}) )\, d\theta' 
&= \int_0^{2\pi}\int_0^\infty \cos(\ell \theta')\frac{\cos(z t \,\sin(\frac{\theta'}{2}) )}{\sqrt{t^2+1}}\, dt\,d\theta' \\
&= 2\pi\int_0^\infty \frac{J_{2\abs{\ell}}(zt)}{\sqrt{t^2+1}}\, dt\\
&= 2\pi \textstyle I_{\abs{\ell}}(\frac{z}{2})K_{\abs{\ell}}(\frac{z}{2})\,,
\end{aligned}
\end{equation}
where $J_{2\abs{\ell}}$ and $I_{\abs{\ell}}$ are, respectively, Bessel functions and modified Bessel functions of the first kind.
Altogether, we find that $\overline{\mc{S}}[e^{2\pi i k s}e^{i\ell\theta}]$ may be explicitly evaluated to obtain
\begin{align*}
\overline{\mc{S}}[e^{2\pi i k s}e^{i\ell\theta}] &= \epsilon\, I_{\abs{\ell}}(2\pi\epsilon\abs{k})K_{\abs{\ell}}(2\pi\epsilon\abs{k})\; e^{2\pi i k s}\cos(\ell\theta)\,.
\end{align*}

For the double layer potential $\overline{\mc{D}}$, we may calculate 
\begin{align*}
\overline{\mc{D}}[e^{2\pi i k s}e^{i\ell\theta}] &= \frac{1}{4\pi}\int_{-\infty}^\infty\int_0^{2\pi}\frac{-2\epsilon\sin^2(\frac{\theta'}{2})}{\sqrt{(s')^2+4\epsilon^2\sin^2(\frac{\theta'}{2})}^{\,3}}\,e^{2\pi i k (s-s')}e^{i\ell(\theta-\theta')}\,\epsilon d\theta'ds' \\
&= -\int_0^{2\pi}\epsilon\abs{k}|\textstyle \sin(\frac{\theta'}{2})|\, K_1\big(4\pi\epsilon\abs{k}|\textstyle \sin(\frac{\theta'}{2})|\big)\,e^{-i\ell\theta'}\, d\theta'\; e^{2\pi i ks}e^{i\ell\theta}\,.
\end{align*}
For $z> 0$, we have
\begin{align*}
-\textstyle z|\sin(\frac{\theta'}{2})|K_1\big(z|\sin(\frac{\theta'}{2})|\big) = z\p_zK_0\big(z|\sin(\frac{\theta'}{2})|\big)\,,
\end{align*}
and, using \eqref{eq:Bessel_sin} and \eqref{eq:Bessel_cos} from above, we have
\begin{align*}
z\p_z\int_0^{2\pi}&\textstyle K_0\big(z|\sin(\frac{\theta'}{2})|\big)\,e^{-i\ell\theta'}\,d\theta' = \textstyle 2\pi z\p_z\big(I_{\abs{\ell}}(\frac{z}{2})K_{\abs{\ell}}(\frac{z}{2})\big) \\
&=\begin{cases}
\pi z \textstyle \big(I_1(\frac{z}{2})K_0(\frac{z}{2}) - I_0(\frac{z}{2})K_1(\frac{z}{2}) \big)\,, & \ell=0 \\
\pi z \textstyle \left[\big(I_{\abs{\ell}-1}(\frac{z}{2})+I_{\abs{\ell}+1}(\frac{z}{2})\big)K_{\abs{\ell}}(\frac{z}{2})   
- I_{\abs{\ell}}(\frac{z}{2})\big(K_{\abs{\ell}-1}(\frac{z}{2})+K_{\abs{\ell}+1}(\frac{z}{2})\big) \right]\,,  & \ell\neq 0
\end{cases}\\
&= \begin{cases}
\textstyle 2\pi - 2\pi zI_0(\frac{z}{2})K_1(\frac{z}{2})\,, & \ell=0 \\
\textstyle 2\pi - 2\pi zI_{\abs{\ell}}(\frac{z}{2})\big(K_{\abs{\ell}-1}(\frac{z}{2})+K_{\abs{\ell}+1}(\frac{z}{2})\big)\,, & \ell\neq 0\,,
\end{cases}
\end{align*}
where we have used the identity
\begin{align*}
I_{j+1}(z)K_j(z)+I_j(z)K_{j+1}(z) = \frac{1}{z}\,.
\end{align*}
Taking $z=4\pi\epsilon\abs{k}$, we then have
\begin{align*}
\overline{\mc{D}}[e^{2\pi i k s}e^{i\ell\theta}] 
&= \begin{cases}
\bigg(\frac{1}{2} - 2\pi\epsilon\abs{k}I_0(2\pi\epsilon\abs{k})K_1(\pi\epsilon\abs{k})\bigg)\, e^{2\pi i ks}\,, & \ell=0 \\
\bigg(\frac{1}{2}-2\pi\epsilon\abs{k}I_{\abs{\ell}}(2\pi\epsilon\abs{k})\big(K_{\abs{\ell}-1}(2\pi\epsilon\abs{k})+K_{\abs{\ell}+1}(2\pi\epsilon\abs{k})\big)\bigg)e^{2\pi i k s}\cos(\ell\theta)\,, & \ell\neq 0\,.
\end{cases}
\end{align*}
\end{proof}

\subsection{Mapping properties: preliminaries}
We next turn to the proofs of Lemmas \ref{lem:mapping_Leps} and \ref{lem:S_mapping} regarding mapping properties of the operators $\overline{\mc{L}}_\epsilon$ and $\overline{\mc{S}}$ in the straight setting. We will begin by building up some background tools.

We consider $C^{0,\alpha}(\T)$ as a subset of $C^{0,\alpha}(\R)$. The proof will rely on the characterization of $C^{0,\alpha}(\R)$ as the Besov space $B^\alpha_{\infty,\infty}(\R)$ (see \cite[Theorem 2.36]{bahouri2011fourier}). 
Consider a smooth cutoff function $\phi(\xi)$ supported on the annulus $\{\frac{3}{4}< \abs{\xi}< 2\}$ and satisfying $\sum_{j\in\Z}\phi(2^{-j}\xi)=1$. Let 
\begin{equation}\label{eq:dyadic_phi}
\phi_j(\xi) = \phi(2^{-j}\xi)
\end{equation}
and, given a function $g(s)$, $s\in\R$, let
\begin{align*}
 \mc{F}[g](\xi) = \int_{-\infty}^\infty g(s)\,e^{-2\pi i \xi s}\, ds 
\end{align*}
denote the Fourier transform of $g$. Define the Littlewood-Paley projection $P_jg$ onto frequencies supported in annulus $j$ by
\begin{equation}\label{eq:littlewood_p}
\mc{F}[P_j g](\xi)  = \phi_j(\xi)\mc{F}[g](\xi)\,.
\end{equation}
We also define
\begin{align*}
P_{\le j_*} := \sum_{j\le j_*} P_j \,.
\end{align*}
For $\sigma\in \R$, we may then define the homogeneous Besov seminorm
\begin{align*}
|g|_{\dot B^\sigma_{\infty,\infty}} = \sup_{j\in \Z} 2^{j\sigma}\norm{P_j g}_{L^\infty}
\end{align*}
as well as the inhomogeneous norm 
\begin{equation}\label{eq:besov}
\norm{g}_{B^\sigma_{\infty,\infty}} =\sup\big(\norm{P_{\le 0} \,g}_{L^\infty}, \sup_{j> 0} 2^{j\sigma}\norm{P_j g}_{L^\infty}\big)\,.
\end{equation}
 Letting $\lfloor\sigma\rfloor$ denote the integer part of $\sigma$, we have that the $B^\sigma_{\infty,\infty}$ norm \eqref{eq:besov} is equivalent to the $C^{\lfloor\sigma\rfloor,\sigma-\lfloor\sigma\rfloor}$ H\"older norm \eqref{eq:Ckalpha}.

Given a Fourier multiplier $m(\xi)$ and a function $g(s)$, $s\in \R$, we will use the notation
\begin{align*}
T_m g &= \mc{F}^{-1}[m\mc{F}[g]] \,, \qquad P_jT_m g = \mc{F}^{-1}[\phi_jm\mc{F}[g]] \,.
\end{align*}
To obtain the mapping properties of $T_m g$, we will need to measure the components $P_jT_m g$ of the norm $\norm{T_m g}_{B^\sigma_{\infty,\infty}}$ given by \eqref{eq:besov}. Let $M_j=\mc{F}^{-1}[\phi_jm],$ so $P_jT_m g =M_j*g$. Then by Young's inequality for convolutions we have 
\begin{align*}
\norm{P_jT_m g}_{L^\infty(\R)}=\norm{M_j*g}_{L^\infty(\R)}\le \norm{M_j}_{L^1(\R)}\norm{g}_{L^\infty(\R)}\,.
\end{align*}
The general strategy will thus be to obtain $L^1$ bounds for the functions $M_j$ in physical space. The following lemma will be useful.

\begin{lemma}[Physical space $L^1$ bounds for multipliers]\label{lem:multiplier}
For fixed $j\in \Z$, suppose that we have a Fourier multiplier $\wh M_j(\xi)\in C^\infty_0(\R)$ supported in the annulus $2^{j-1}<\abs{\xi}< 2^{j+1}$ and satisfying the bounds 
\begin{equation}\label{eq:Linfty_Mj}
\abs{\wh M_j}\le A\,, \qquad
\abs{\p_\xi^2\wh M_j}\le B
\end{equation}
for some numbers $A,B$.
Then on the physical side $M_j=\mc{F}^{-1}[\wh M_j]$ satisfies the $L^1$ estimate
\begin{equation}\label{eq:L1_Mj}
\norm{M_j}_{L^1(\R)} \lesssim 2^j \sqrt{AB}\,.
\end{equation}
\end{lemma}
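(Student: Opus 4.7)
The plan is to prove the bound by deriving two pointwise estimates on $M_j(s)$ from the two hypotheses on $\wh M_j$, splitting the $L^1$ integral at a well-chosen radius, and optimizing.

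\textbf{Step 1: pointwise bounds on $M_j$.} First I would use the $L^\infty$ bound on $\wh M_j$ directly, together with the fact that its support has measure $\lesssim 2^j$, to obtain the crude estimate
\begin{equation*}
|M_j(s)| \le \int_{\mathbb{R}} |\wh M_j(\xi)|\,d\xi \lesssim A\, 2^j\,.
\end{equation*}
Next I would integrate by parts twice, exploiting the identity $s^2 e^{2\pi i s\xi} = -\frac{1}{(2\pi)^2}\,\p_\xi^2 e^{2\pi i s\xi}$. Since $\wh M_j\in C_0^\infty$ has no boundary terms, this gives
\begin{equation*}
s^2 M_j(s) \;=\; -\frac{1}{(2\pi)^2}\int_{\mathbb{R}} \p_\xi^2 \wh M_j(\xi)\, e^{2\pi i s\xi}\, d\xi\,,
\end{equation*}
and combining the $L^\infty$ bound $B$ on $\p_\xi^2 \wh M_j$ with the same support estimate yields $|s^2 M_j(s)|\lesssim B\, 2^j$, i.e.\ $|M_j(s)|\lesssim B\,2^j/s^2$ for $s\neq 0$.

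\textbf{Step 2: split and optimize.} I would then split the $L^1$ integral at a radius $R>0$ to be chosen: the first pointwise bound handles the region $|s|\le R$ giving a contribution $\lesssim A\,R\,2^j$, while the decay bound handles $|s|>R$ giving $\lesssim B\,2^j/R$. This produces
\begin{equation*}
\|M_j\|_{L^1(\mathbb{R})} \;\lesssim\; 2^j\bigl(AR + B/R\bigr)\,.
\end{equation*}
Choosing $R=\sqrt{B/A}$ to equalize the two terms delivers the desired bound $\|M_j\|_{L^1}\lesssim 2^j\sqrt{AB}$.

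There is no real obstacle here; the only subtlety is book-keeping constants arising from the precise support $2^{j-1}<|\xi|<2^{j+1}$, which all fold into the implicit $\lesssim$. The estimate is essentially the classical Bernstein-type trade-off between the amplitude bound and the second-derivative bound, with the second derivative entering because decay of $M_j$ at rate $s^{-2}$ is exactly what makes the tail integrable and produces the $\sqrt{AB}$ geometric mean upon optimization.
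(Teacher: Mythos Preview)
Your proposal is correct and follows essentially the same approach as the paper: derive the two pointwise bounds $|M_j|\lesssim 2^jA$ and $|s^2M_j|\lesssim 2^jB$ from the support size and the hypotheses on $\wh M_j$, split the $L^1$ integral at radius $R$, and optimize $R=\sqrt{B/A}$. The paper presents the pointwise bounds as consequences of the $L^1$ bounds $\|\wh M_j\|_{L^1}\lesssim 2^jA$ and $\|\p_\xi^2\wh M_j\|_{L^1}\lesssim 2^jB$ rather than writing out the integration by parts, but the argument is the same.
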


\begin{proof}
Integrating the $L^\infty$ bounds \eqref{eq:Linfty_Mj} over the support of $\wh M_j$, we obtain the $L^1$ bounds 
\begin{align*}
\norm{\wh M_j}_{L^1(\R)}\lesssim 2^jA\,, \qquad 
\norm{\p_\xi^2\wh M_j}_{L^1(\R)}\lesssim 2^jB\,,
\end{align*}
which imply the physical space estimates 
\begin{align*}
\norm{M_j}_{L^\infty(\R)}\lesssim 2^jA\,, \qquad \norm{s^2 M_j}_{L^\infty(\R)}&\lesssim 2^jB\,.
\end{align*}
Using the above $L^\infty$ bounds, we have 
\begin{align*}
\int_\R \abs{M_j}\,ds = \int_{\abs{s}\le R}\abs{M_j}\,ds + \int_{\abs{s}> R}\abs{M_j}\,ds 
&\lesssim  2^j R A + 2^j \frac{B}{R}
\end{align*}
for some $R>0$. Equating $RA=\frac{B}{R}$, we find $R=\sqrt{\frac{B}{A}}$, from which we obtain \eqref{eq:L1_Mj}.
\end{proof}

The remainder of this section will be devoted to showing bounds of the form \eqref{eq:Linfty_Mj} for our multipliers $m_\epsilon$, $m_\epsilon^{-1}$, and $m_{\mc{S}}^{-1}$.

\subsection{Multiplier estimates}
Given the expressions \eqref{eq:symbol} and \eqref{eq:mS_inv} for our multipliers, we first state some general bounds for modified Bessel functions. 
\textcolor{black}{
\begin{proposition}[Modified Bessel function bounds]\label{prop:bessel_bds}
Let $K_j(z)$ denote the $j$-th order modified Bessel function of the second kind. For some constant $c>0$, we have 
\begin{align}
\abs{\frac{K_1(z)}{K_0(z)}-1-\frac{1}{2z}}&\le \frac{c}{z^2}\,, \quad
\abs{\frac{K_0(z)}{K_1(z)}-1+\frac{1}{2z}}\le \frac{c}{z^2}\,, \qquad z\ge 1\,, \label{eq:Kmod_bds} \\
\frac{c_1}{\abs{\log(z)}}&<z\frac{K_1(z)}{K_0(z)}<\frac{c_2}{\abs{\log(z)}} \,, \qquad 0<z<1 \label{eq:Kmod_bds_low} \,. 
\end{align}
Furthermore, for $I_j(z)$, the $j$-th order modified Bessel function of the first kind, we have
\begin{align}
\abs{\frac{I_1(z)}{I_0(z)}-1+\frac{1}{2z}}\le \frac{c}{z^2}\,, \qquad z\ge 1\,, \label{eq:Imod_bds} \\
\abs{\frac{I_1(z)}{I_0(z)}-\frac{z}{2}}\le c\,z^3\,, \qquad 0<z<1\,.
 \end{align} 
\end{proposition}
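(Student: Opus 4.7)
The plan is to derive each estimate from the classical asymptotic expansions of $K_\nu$ and $I_\nu$, splitting the analysis by regime. In the large-argument regime $z \ge 1$, I would start from the Hankel-type expansion
\begin{equation*}
K_\nu(z) = \sqrt{\tfrac{\pi}{2z}}\, e^{-z}\bigg(1 + \frac{4\nu^2-1}{8z} + R_\nu(z)\bigg),
\end{equation*}
where the remainder satisfies the sharp bound $|R_\nu(z)| \le c_\nu/z^2$ uniformly for $z\ge 1$ (this is standard, e.g.\ DLMF \S10.40 or Watson's Treatise). Substituting $\nu = 0$ and $\nu = 1$, the exponential and $\sqrt{\pi/2z}$ prefactors cancel in the ratio, giving
\begin{equation*}
\frac{K_1(z)}{K_0(z)} = \frac{1 + \tfrac{3}{8z} + R_1(z)}{1 - \tfrac{1}{8z} + R_0(z)}.
\end{equation*}
Since the denominator is bounded below away from $0$ for $z\ge 1$, a single step of long division / geometric series expansion in $1/z$ produces $1 + \tfrac{1}{2z}$ plus an $O(1/z^2)$ error, which is \eqref{eq:Kmod_bds}. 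The second bound of \eqref{eq:Kmod_bds} follows by the same manipulation applied to the reciprocal, or equivalently from $\tfrac{K_0}{K_1} = 1 - \tfrac{1}{2z} + O(1/z^2)$ via $(1+x)^{-1} = 1 - x + O(x^2)$ for small $x = O(1/z)$. The bound \eqref{eq:Imod_bds} is proved identically using the companion expansion
\begin{equation*}
I_\nu(z) = \frac{e^z}{\sqrt{2\pi z}}\bigg(1 - \frac{4\nu^2-1}{8z} + \wt R_\nu(z)\bigg), \quad |\wt R_\nu(z)| \le c/z^2,
\end{equation*}
at $\nu=0,1$, again with the common prefactor cancelling.

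For the small-argument regime $0 < z < 1$, I would appeal to the series representations
\begin{equation*}
K_0(z) = -\log(z/2) - \gamma + O(z^2|\log z|), \quad K_1(z) = \tfrac{1}{z} + O(z|\log z|),
\end{equation*}
and
\begin{equation*}
I_0(z) = 1 + \tfrac{z^2}{4} + O(z^4), \quad I_1(z) = \tfrac{z}{2} + \tfrac{z^3}{16} + O(z^5),
\end{equation*}
which are absolutely convergent on $0 < z < 1$. For \eqref{eq:Kmod_bds_low}, multiplying through gives $zK_1(z) = 1 + O(z^2 |\log z|)$ while $|K_0(z)| = |\log z|(1 + O(1/|\log z|))$; dividing shows $z K_1(z)/K_0(z)$ is pinched between constant multiples of $|\log z|^{-1}$ for $0<z<1$, with the constants coming from the explicit leading terms $1$ and $-\log(z/2)-\gamma$. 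The last bound follows by long division of the two power series: $I_1(z)/I_0(z) = (z/2)(1 + z^2/8 + O(z^4))(1 - z^2/4 + O(z^4)) = z/2 + O(z^3)$, where the $O$ constants are uniform on $0 < z < 1$ by absolute convergence.

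The main obstacle is not any one calculation but keeping the remainder estimates uniform in $z$ on the specified ranges, particularly ensuring that the denominators $K_0$ and $I_0$ stay bounded away from zero so that the reciprocal expansions are valid. For $z\ge 1$ this is immediate since $K_0(z) \ge c e^{-z} z^{-1/2}$ cancels against the $e^{-z}$ in $K_1$; for $0 < z < 1$ the denominator $K_0(z) \sim |\log z|$ is large rather than small, which is actually helpful. Once these quantitative lower bounds are noted, the rest of the proof is a bookkeeping exercise in manipulating the first two or three terms of standard Bessel asymptotics, so I would cite DLMF \S10.25, \S10.30, \S10.40 for the underlying expansions and the explicit remainder bounds rather than reproduce them in detail.
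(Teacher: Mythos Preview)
Your proposal is correct and takes essentially the same approach as the paper: the paper does not give a detailed proof but simply states that the bounds follow directly from well-documented large-$z$ and small-$z$ asymptotics of $K_0$, $K_1$, $I_0$, $I_1$ in DLMF Chapter 10, which is precisely the source you invoke. In fact you supply more of the intermediate algebra (ratio expansions, denominator lower bounds) than the paper does.
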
 
The bounds of Proposition \ref{prop:bessel_bds} follow directly from well-documented large-$z$ and small-$z$ asymptotics of $K_0(z)$, $K_1(z)$, $I_0(z)$, and $I_1(z)$ (see \cite[Chapter 10]{NIST:DLMF}). 
}

Using Proposition \ref{prop:bessel_bds}, we may prove the following bounds for the multipliers $m_{\mc{S}}^{-1}$, $m_\epsilon$, and $m_\epsilon^{-1}$.
\begin{lemma}[Bounds for multipliers]\label{lem:multipliers}
The inverse single layer multiplier $m_{\mc{S}}^{-1}(\xi)$ given by \eqref{eq:mS_inv} satisfies the bounds
\begin{equation}\label{eq:S_multiplier_est}
\abs{\p_\xi^\ell m_{\mc{S}}^{-1}(\xi)} \le \begin{cases}
c\,\abs{\xi}^{1-\ell}\,, & \abs{\xi}\ge \frac{1}{2\pi\epsilon} \\
c\,\epsilon^{-1}\abs{\xi}^{-\ell}\,, & \abs{\xi} < \frac{1}{2\pi\epsilon}\,,
\end{cases} 
\ell = 0,1,2\,.
\end{equation}
Furthermore, for the DtN multiplier $m_\epsilon^{-1}(\xi)$ as in \eqref{eq:symbol}, we have that 
\begin{equation}\label{eq:meps_bd1}
\begin{aligned}
\abs{\p_\xi^\ell m_\epsilon^{-1}(\xi)}&\le c\,\epsilon\,\abs{\xi}^{1-\ell}\,,\; \ell=0,1,2\,, \qquad \abs{\xi}\ge \frac{1}{2\pi\epsilon}\,,\\
\abs{m_\epsilon^{-1}(\xi)}\le \frac{c}{\abs{\log\epsilon}}\,, \quad 
\abs{\p_\xi^\ell m_\epsilon^{-1}(\xi)}&\le \frac{c}{\abs{\xi}^\ell\abs{\log\epsilon}^2}\,, \; \ell=1,2\,, \qquad \abs{\xi}< \frac{1}{2\pi\epsilon}\,.
\end{aligned}
\end{equation}
For the NtD multiplier $m_\epsilon(\xi)=(m_\epsilon^{-1}(\xi))^{-1}$, we have
\begin{equation}\label{eq:meps_bd2}
\begin{aligned}
\abs{\p_\xi^\ell m_\epsilon(\xi)} &\le \frac{c}{\epsilon\abs{\xi}^{\ell+1}}\,, \;\ell=0,1,2\,, \qquad \abs{\xi}\ge \frac{1}{2\pi\epsilon} \\
\abs{m_\epsilon(\xi)}\le c\abs{\log\epsilon}\,, \quad 
\abs{\p_\xi^\ell m_\epsilon(\xi)} &\le \frac{c}{\abs{\xi}^\ell}\,,\qquad\ell=1,2\,, \qquad \abs{\xi}< \frac{1}{2\pi\epsilon}\,.
\end{aligned}
\end{equation}
\end{lemma}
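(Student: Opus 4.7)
The plan is to reduce each multiplier to a function of the single dimensionless variable $z = 2\pi\epsilon|\xi|$, so that $\partial_\xi = 2\pi\epsilon\,\mathrm{sgn}(\xi)\,\partial_z$, and then apply Proposition \ref{prop:bessel_bds} in the two regimes $z\ge 1$ and $z<1$. Concretely, I would set
\[
F(z) = 2\pi z\,\frac{K_1(z)}{K_0(z)}, \qquad G(z) = \frac{1}{\epsilon\,I_0(z)K_0(z)},
\]
so that $m_\epsilon^{-1}(\xi) = F(z)$, $m_{\mathcal{S}}^{-1}(\xi) = G(z)$, and $m_\epsilon(\xi) = 1/F(z)$. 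Then $\partial_\xi^\ell m_\epsilon^{-1}(\xi) = (2\pi\epsilon)^\ell\,(\mathrm{sgn}\,\xi)^\ell\,F^{(\ell)}(z)$ (and analogously for $G$ and $1/F$), so the lemma reduces to $\ell$-th order bounds on $F$, $G$, and $1/F$ separately on $(0,1)$ and on $[1,\infty)$, with the prefactor $(2\pi\epsilon)^\ell$ then distributed between $\epsilon$ and $|\xi|$.

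In the high-frequency regime $z\ge 1$, I would use the expansions from \eqref{eq:Kmod_bds} and \eqref{eq:Imod_bds}, together with the standard recurrences $K_0' = -K_1$, $K_1' = -\tfrac12(K_0+K_2)$, $I_0'= I_1$, $I_1' = \tfrac12(I_0+I_2)$, to express $F^{(\ell)}$ and $G^{(\ell)}$ as rational combinations of the Bessel ratios $K_1/K_0$ and $I_1/I_0$. The key point is that the leading $O(1)$ terms in these ratios cancel after differentiation, so Proposition \ref{prop:bessel_bds} yields $F(z) = 2\pi z + O(1)$, $F'(z) = 2\pi + O(z^{-2})$, $F''(z) = O(z^{-3})$, and correspondingly $G(z) = (2/\epsilon)z + O((\epsilon z)^{-1})$ with analogous decay for $G'$, $G''$. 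Combining with the $(2\pi\epsilon)^\ell$ factor from $\partial_\xi$ and re-writing $z = 2\pi\epsilon|\xi|$ produces exactly the claimed scaling $\epsilon|\xi|^{1-\ell}$ for $m_\epsilon^{-1}$ and $\epsilon^{-1}|\xi|^{-\ell}$ for $m_{\mathcal{S}}^{-1}$ when $\xi$ lies in the high-frequency range (allowing for a constant bound at $\ell=0$ absorbed into $c$). For $m_\epsilon = 1/F$, I would use the uniform lower bound $F(z)\gtrsim z$ for $z\ge 1$ (a direct consequence of \eqref{eq:Kmod_bds}) and then apply Fa\`a di Bruno to pass the $F^{(\ell)}$ estimates to $(1/F)^{(\ell)}$.

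In the low-frequency regime $z<1$, I would apply \eqref{eq:Kmod_bds_low} to obtain $F(z)\le c/|\log z|$ directly and use the small-$z$ expansions $I_0(z) = 1 + O(z^2)$ and $K_0(z) = |\log(z/2)|+O(1)$ to get $G(z)\le c/(\epsilon|\log z|)$. Since $z = 2\pi\epsilon|\xi|$ and we need the estimate uniformly on the relevant range, the inequality $|\log z|^{-1}\lesssim |\log \epsilon|^{-1}$ holds as long as $|\xi|$ is bounded away from the upper endpoint $1/(2\pi\epsilon)$; the transition near $z\simeq 1$ is absorbed by the matching high-frequency bound. For the derivatives, the elementary identities $\partial_z|\log z| = 1/z$ and $\partial_z(1/\log z) = -1/(z(\log z)^2)$ show that each $z$-derivative of $F$ or $G$ picks up a factor of $1/z$ together with an additional power of $1/|\log z|$, so after multiplication by $(2\pi\epsilon)^\ell$ the $\epsilon$ factors cancel and the claimed $|\xi|^{-\ell}$ decay remains. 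For $m_\epsilon = 1/F$, the lower bound $F(z)\gtrsim 1/|\log\epsilon|$ on the relevant low-frequency range immediately yields $|m_\epsilon|\lesssim |\log\epsilon|$, and Fa\`a di Bruno on $1/F$ gives $|\partial_\xi^\ell m_\epsilon|\lesssim |\xi|^{-\ell}$.

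The main obstacle is the bookkeeping of the higher-order Bessel asymptotics under differentiation: the $O(1)$ leading term in each Bessel ratio cancels in $F'$ and $G'$, so one must carry along the next term in Proposition \ref{prop:bessel_bds} with uniform error control. A secondary subtlety is the transition region $z\approx 1$, where the two regimes must be shown to produce compatible bounds up to absorbing constants so that the final statement applies cleanly for all $\xi\in\mathbb{R}$. Once these estimates are assembled, the bounds for $m_\epsilon$ follow mechanically from $m_\epsilon = 1/F$ via Fa\`a di Bruno together with the already-established lower bounds on $F$.
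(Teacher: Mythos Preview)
Your approach is correct and follows the same overall strategy as the paper: reduce to the dimensionless variable $z=2\pi\epsilon|\xi|$, analyze the auxiliary functions $H(z)=zK_1(z)/K_0(z)$ (your $F/2\pi$), $g(z)=1/(I_0(z)K_0(z))$ (your $\epsilon G$), and $1/H$ separately on $(0,1)$ and $[1,\infty)$ using the Bessel ratio bounds of Proposition~\ref{prop:bessel_bds}, and then convert back via $\partial_\xi=(2\pi\epsilon)\partial_z$.

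The execution differs in two places worth noting. First, for $g(z)=1/(I_0K_0)$ at large $z$, the paper does \emph{not} appeal to the full large-$z$ expansions of $I_0$ and $K_0$ separately (which are not part of Proposition~\ref{prop:bessel_bds}); instead it observes that $g'=g\,(K_1/K_0-I_1/I_0)$, bounds the bracket by $1/z+O(1/z^2)$ using the proposition, and applies Gr\"onwall to obtain $g(z)\le cz$. This keeps the argument entirely within the ratio bounds already recorded. Your route via direct asymptotics also works but implicitly invokes the individual expansions $I_0\sim e^z/\sqrt{2\pi z}$, $K_0\sim\sqrt{\pi/(2z)}e^{-z}$. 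Second, for $m_\epsilon=1/H$ the paper computes the derivatives of $G(z)=K_0/(zK_1)$ directly from the ratio bound \eqref{eq:Kmod_bds} rather than going through Fa\`a di Bruno; both are fine, the direct computation being marginally cleaner. Your identification of the transition at $z\approx1$ as a point requiring care is accurate; the paper handles the $\ell=0$ low-frequency bound for $g$ by simply invoking monotonicity ($g(z)\le g(1)$ for $z<1$), which is what your ``absorbed by the matching high-frequency bound'' amounts to.
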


\begin{proof}[Proof of Lemma \ref{lem:multipliers}]
We begin with the bounds \eqref{eq:S_multiplier_est} for $m_{\mc{S}}^{-1}$. Define
\begin{align*}
g(z) := \frac{1}{I_0(z)K_0(z)}
\end{align*}
and note that $g(z)>0$ for all $z>0$.
Using Lemma \ref{prop:bessel_bds}, we have that $g'(z)\ge 0$ satisfies
\begin{equation}\label{eq:Dgz_ineq0}
g'(z) = g(z) \bigg(\frac{K_1(z)}{K_0(z)}-\frac{I_1(z)}{I_0(z)} \bigg) 
\le g(z)\bigg(\frac{1}{z}+\textcolor{black}{\frac{c}{z^2}}\bigg)
\end{equation}
for $z\ge 1$. By Gr\"onwall's inequality, we then have 
\begin{equation}\label{eq:gz_ineq}
g(z) \le g(1) \,{\rm exp}\bigg(\int_1^z \bigg(\frac{1}{s}\textcolor{black}{+\frac{c}{s^2}}\bigg)\,ds\bigg) = c\,e^{\log(z)\textcolor{black}{+c(1-\frac{1}{z})}} \le c\,z\,, \qquad z\ge 1.
\end{equation}
For $0<z<1$, we simply note that since $g$ is monotone increasing, $g(z)\le g(1)$.

Using \eqref{eq:gz_ineq} in \eqref{eq:Dgz_ineq0}, we also obtain
\begin{equation}\label{eq:Dgz_ineq}
g'(z) \le \begin{cases}
c\,, & z\ge 1 \\
\frac{c}{z}\,, & z< 1\,.
\end{cases}
\end{equation}
In addition, we may calculate
\begin{align*}
\abs{g''(z)} &= \abs{\bigg(g'(z)-\frac{g(z)}{z} \bigg)\bigg(\frac{K_1(z)}{K_0(z)}-\frac{I_1(z)}{I_0(z)} \bigg) + g(z) \bigg(\frac{K_1^2(z)}{K_0^2(z)}+\frac{I_1^2(z)}{I_0^2(z)}-2 \bigg)}\\
&=\abs{\bigg(g'-\frac{g}{z} \bigg)\bigg(\frac{K_1}{K_0}-\frac{I_1}{I_0} \bigg) + g \bigg(\bigg(\frac{K_1}{K_0}-1\bigg)^2+\bigg(\frac{I_1}{I_0}-1\bigg)^2+2\bigg(\frac{K_1}{K_0}+\frac{I_1}{I_0}-2\bigg) \bigg)} \\
&\le \frac{c}{z}\abs{g'-\frac{g}{z}} + c\frac{\abs{g}}{z^2} \,,
\end{align*}
by Lemma \ref{prop:bessel_bds}. Using the bounds \eqref{eq:gz_ineq} and \eqref{eq:Dgz_ineq} for $g$ and $g'$, we then have
\begin{equation}\label{eq:D2gz_ineq}
\abs{g''(z)}\le \begin{cases}
\frac{c}{z}\,, & z \ge 1\\
\frac{c}{z^2}\,, & z<1\,.
\end{cases}
\end{equation}
Using that 
\begin{align*}
m_{\mc{S}}^{-1}(\xi) = \epsilon^{-1}g(2\pi\epsilon\abs{\xi})\,,  
\end{align*}
with each of \eqref{eq:gz_ineq}, \eqref{eq:Dgz_ineq}, and \eqref{eq:D2gz_ineq}, we obtain the bounds \eqref{eq:S_multiplier_est}.

We next turn to the bounds \eqref{eq:meps_bd1}. Let $H(z)= z\frac{K_1(z)}{K_0(z)}$. By Proposition \ref{prop:bessel_bds}, for $z\ge 1$ we have 
\begin{align*}
 \abs{H(z)-z-\frac{1}{2}} &\le \frac{c}{z}\\
 \abs{H'(z)-1} &= \abs{\frac{1}{z}(H+z)(H-z)-1} = \abs{2(H-z)+\frac{1}{z}(H-z)^2-1}\le \frac{c}{z}\\
 \abs{H''(z)} &= \abs{-\frac{1}{z^2}(H+z)(H-z)+\frac{1}{z}(H'+1)(H-z)+\frac{1}{z}(H+z)(H'-1)} \le \frac{c}{z}\,.
 \end{align*} 
Furthermore, for $z<1$ we have
\begin{align*}
\abs{H(z)}\le \frac{c}{\abs{\log(z)}}\,, \quad 
\abs{H'(z)}\le \frac{c}{z\log^2(z)}\,, \quad 
\abs{H''(z)}\le \frac{c}{z^2\log^2(z)}\,.
\end{align*}
Thus, using that  
\begin{align*}
m_\epsilon^{-1}(\xi) = 2\pi H(2\pi\epsilon\abs{\xi})\,,
\end{align*}
we obtain the bounds \eqref{eq:meps_bd1}.

We next consider $G(z) =\frac{1}{H(z)}= \frac{K_0(z)}{zK_1(z)}$. For $z\ge 1$, using Proposition \ref{prop:bessel_bds}, we have 
\begin{align*}
\abs{G(z)- \frac{1}{z}+\frac{1}{2z^2}}&\le \frac{c}{z^3} \\
\abs{G'(z)+\frac{1}{z^2}}
%
&=\abs{z\bigg(G(z)-\frac{1}{z}\bigg)^2 +
z\bigg(G(z)+\frac{1}{z}\bigg)\bigg(G(z)-\frac{1}{z}+\frac{1}{2z^2}\bigg)} \le \frac{c}{z^3}\\
\abs{G''(z)} &= \abs{\bigg(G+\frac{1}{z}\bigg)\bigg(G-\frac{1}{z} \bigg) + z\bigg(G'-\frac{1}{z^2}\bigg)\bigg(G-\frac{1}{z}\bigg) + z\bigg(G+\frac{1}{z} \bigg)\bigg(G'+\frac{1}{z^2} \bigg)} 
\le \frac{c}{z^3}\,.
\end{align*}
%
For $z<1$, we have 
\begin{align*}
 G(z) < c\abs{\log(z)}\,.
\end{align*}
By the above forms of $G'(z)$ and $G''(z)$, we may then obtain the bounds
\begin{align*}
\abs{G'(z)}\le \frac{c}{z}\,, \qquad \abs{G''(z)}\le \frac{c}{z^2}\,, \qquad 0<z<1\,.
\end{align*}
Using that 
\begin{align*}
m_\epsilon(\xi) = \frac{1}{2\pi} G(2\pi\epsilon\abs{\xi})\,,
\end{align*}
we obtain the estimates \eqref{eq:meps_bd2}.
\end{proof}

\subsection{Proof of Lemmas \ref{lem:mapping_Leps} and \ref{lem:S_mapping}}
Using the multiplier estimates of Lemma \ref{lem:multipliers}, we may finally prove Lemmas \ref{lem:mapping_Leps} and \ref{lem:S_mapping} regarding the mapping properties of $\overline{\mc{L}}_\epsilon$, $\overline{\mc{L}}_\epsilon^{-1}$, and $\overline{\mc{S}}^{-1}$.

We begin with the mapping properties of $\overline{\mc{L}}_\epsilon$ and $\overline{\mc{L}}_\epsilon^{-1}$.
\begin{proof}[Proof of Lemma \ref{lem:mapping_Leps}]
We first define the Fourier multipliers $\wh M^{(1)}_{\epsilon,j}(\xi)=\phi_j(\xi)m_\epsilon(\xi)$ and $\wh M^{(2)}_{\epsilon,j}(\xi)=\phi_j(\xi)m_\epsilon^{-1}(\xi)$ for $\phi_j$ as in \eqref{eq:dyadic_phi}. For $j_\epsilon=\frac{\abs{\log(2\pi\epsilon)}}{\log(2)}$, we may use Lemma \ref{lem:multipliers} to calculate 
\begin{align*}
\norm{\p_\xi^\ell\wh M^{(1)}_{\epsilon,j}}_{L^\infty}&\le 
\begin{cases}
c\,\epsilon^{-1}2^{-j(\ell+1)}\,,  & j\ge j_\epsilon\,, \;\ell=0,1,2\\
c\abs{\log\epsilon}\,, & j<j_\epsilon\,, \; \ell=0 \\
c\,2^{-j\ell}\,, & j<j_\epsilon\,, \;\ell=1,2\,.
\end{cases}
\end{align*}
as well as
\begin{align*}
\norm{\p_\xi^\ell\wh M^{(2)}_{\epsilon,j}}_{L^\infty}\le 
\begin{cases}
c\,\epsilon \,2^{j(1-\ell)}\,, & j\ge j_\epsilon \\
c\abs{\log\epsilon}^{-1} 2^{-j\ell}\,, & j<j_\epsilon\,, 
\end{cases}
\quad \ell=0,1,2\,.
\end{align*}
By Lemma \ref{lem:multiplier}, we thus obtain the following physical space estimates for $M^{(1)}_{\epsilon,j}$ and $M^{(2)}_{\epsilon,j}$:
\begin{align*}
\norm{M^{(1)}_{\epsilon,j}}_{L^1}\le \begin{cases}
c\, \epsilon^{-1}2^{-j}\,, & j\ge j_\epsilon\\
c\abs{\log\epsilon}^{1/2}\,, & j<j_\epsilon\,;
\end{cases}
\qquad
\norm{M^{(2)}_{\epsilon,j}}_{L^1}\le \begin{cases}
c\,\epsilon \, 2^j\,, & j\ge j_\epsilon\\
c\abs{\log\epsilon}^{-1}\,, & j<j_\epsilon\,.
\end{cases}
\end{align*}

Now, given $h\in C^{0,\alpha}(\T)$ with $\int_{\T}h(s)\,ds=0$ and $g\in C^{1,\alpha}(\T)$, we use the dyadic partition of unity \eqref{eq:dyadic_phi} to write 
\begin{align*}
\overline{\mc{L}}_\epsilon[h]= T_{m_\epsilon}h = \sum_j P_jT_{m_\epsilon}h\,; \qquad
\overline{\mc{L}}_\epsilon^{-1}[g]= T_{m_\epsilon^{-1}}g = \sum_j P_jT_{m_\epsilon^{-1}}g\,.
\end{align*}
Noting that on $\T$, 
$P_{\le 0}T_{m_\epsilon} h=P_0T_{m_\epsilon} h$ and $P_{\le 0}T_{m_\epsilon^{-1}} g=P_0T_{m_\epsilon^{-1}} g$, using the above $L^1$ bounds we may estimate 
\begin{equation}\label{eq:mult_est_linfty0}
\begin{aligned}
\norm{P_{\le 0}T_{m_\epsilon} h}_{L^\infty} &=\norm{M^{(1)}_{\epsilon,0}*h}_{L^\infty} \le \norm{M^{(1)}_{\epsilon,0}}_{L^1}\norm{h}_{L^\infty}\le c\abs{\log\epsilon}^{1/2}\norm{h}_{L^\infty}\,;\\
\norm{P_{\le 0}T_{m_\epsilon^{-1}}g}_{L^\infty} &=\norm{M^{(2)}_{\epsilon,0}*g}_{L^\infty} \le \norm{M^{(2)}_{\epsilon,0}}_{L^1}\norm{g}_{L^\infty}
\le c\abs{\log\epsilon}^{-1}\norm{g}_{L^\infty}\,.
\end{aligned}
\end{equation}
For $j> 0$, we will introduce the notation $\wt P_j =P_{j-1}+P_j+P_{j+1}$. Note that, due to the support of $\mc{F}[P_j\,\cdot]$ \eqref{eq:littlewood_p}, we have $P_jT_{m_\epsilon}h=P_jT_{m_\epsilon}\wt P_jh$ and $P_jT_{m_\epsilon^{-1}}g=P_jT_{m_\epsilon^{-1}}\wt P_jg$.
We estimate the low frequencies $0<j< j_\epsilon$ of $T_{m_\epsilon} h$ and $T_{m_\epsilon^{-1}}g$ via 
\begin{equation}\label{eq:mult_est_linfty}
\begin{aligned}
\sup_{0<j< j_\epsilon}2^{j(1+\alpha)}\norm{P_jT_{m_\epsilon} h}_{L^\infty} &= \sup_{0<j< j_\epsilon}2^{j(1+\alpha)}\norm{P_jT_{m_\epsilon}\wt P_j h}_{L^\infty}\\
&= \sup_{0<j< j_\epsilon}2^{j(1+\alpha)}\norm{M^{(1)}_{\epsilon,j}*(\wt P_j h)}_{L^\infty}\\ 
&\le 2^{j_\epsilon}\sup_{0<j< j_\epsilon}\norm{M^{(1)}_{\epsilon,j}}_{L^1}2^{j\alpha}\norm{\wt P_j h}_{L^\infty}
\le c\,\epsilon^{-1}\abs{\log\epsilon}^{1/2}\abs{h}_{\dot B^\alpha_{\infty,\infty}}\,;\\
\sup_{0<j< j_\epsilon}2^{j\alpha}\norm{P_jT_{m_\epsilon^{-1}}g}_{L^\infty} &=\sup_{0<j< j_\epsilon}2^{j\alpha}\norm{P_jT_{m_\epsilon^{-1}}\wt P_j g}_{L^\infty} 
= \sup_{0<j< j_\epsilon}2^{j\alpha}\norm{M^{(2)}_{\epsilon,j}*(\wt P_j g)}_{L^\infty}\\ 
&\le \sup_{0<j< j_\epsilon}\norm{M^{(2)}_{\epsilon,j}}_{L^1}2^{j\alpha}\norm{\wt P_j g}_{L^\infty}
\le c\,\abs{\log\epsilon}^{-1}\abs{g}_{\dot B^\alpha_{\infty,\infty}}\,.
\end{aligned}
\end{equation}
For high frequencies $j\ge j_\epsilon$, we may estimate
\begin{equation}\label{eq:mult_est_calpha}
\begin{aligned}
\sup_{j\ge j_\epsilon} 2^{j(1+\alpha)}\norm{P_jT_{m_\epsilon} h}_{L^\infty} 
&\le \sup_{j\ge j_\epsilon} 2^{j(1+\alpha)}\norm{M^{(1)}_{\epsilon,j}}_{L^1}\norm{\wt P_jh}_{L^\infty}
\le c\,\epsilon^{-1}\sup_{j\ge j_\epsilon}\,2^{j\alpha}\|\wt P_jh\|_{L^\infty}\\
&\le c\,\epsilon^{-1}\abs{h}_{\dot B^\alpha_{\infty,\infty}}\,; \\
\sup_{j\ge j_\epsilon} 2^{j\alpha}\norm{P_jT_{m_\epsilon^{-1}}g}_{L^\infty} 
&\le \sup_{j\ge j_\epsilon} 2^{j\alpha}\norm{M^{(2)}_{\epsilon,j}}_{L^1}\norm{\wt P_jg}_{L^\infty} 
\le c\,\epsilon \sup_{j\ge j_\epsilon}\,2^{j(1+\alpha)}\|\wt P_jg\|_{L^\infty}\\
&\le c\,\epsilon \abs{g}_{\dot B^{1+\alpha}_{\infty,\infty}}\,.
\end{aligned}
\end{equation}
Combining the estimates \eqref{eq:mult_est_linfty0}, \eqref{eq:mult_est_linfty}, and \eqref{eq:mult_est_calpha} and using the Besov characterization \eqref{eq:besov} of $C^{k,\alpha}(\R)$, we obtain the mapping properties \eqref{eq:est_barLeps} and \eqref{eq:est_barLepsinv} of Lemma \ref{lem:mapping_Leps}.
\end{proof}

Finally, we show Lemma \ref{lem:S_mapping} regarding the operator $\overline{\mc{S}}^{-1}$.
\begin{proof}[Proof of Lemma \ref{lem:S_mapping}]
Define the Fourier multiplier $\wh M_{\mc{S},j}(\xi)=\phi_j(\xi) m_{\mc{S}}^{-1}(\xi)$ where $\phi_j$ is given by \eqref{eq:dyadic_phi}. Using the estimates \eqref{eq:S_multiplier_est} for $m_{\mc{S}}^{-1}$ from Lemma \ref{lem:multipliers} and taking $j_\epsilon = \frac{\abs{\log(2\pi\epsilon)}}{\log(2)}$, we see that
\begin{align*}
\norm{\p_\xi^\ell\wh M_{\mc{S},j}}_{L^\infty} &\le  \begin{cases}
c\,2^{j(1-\ell)} \,, & j\ge j_\epsilon\\
c\,\epsilon^{-1}2^{-j\ell} \,, & j< j_\epsilon\,,
\end{cases} 
\qquad \ell=0,1,2\,. 
\end{align*}
Using Lemma \ref{lem:multiplier}, we then have the following physical space estimate for $M_{\mc{S},j}$:
\begin{align*}
\norm{M_{\mc{S},j}}_{L^1} \le  \begin{cases}
c\,2^j \,, & j\ge j_\epsilon\\
c\,\epsilon^{-1} \,, & j< j_\epsilon\,.
\end{cases}
\end{align*}

For $g\in C^{1,\alpha}(\T)$, we use the dyadic partition of unity \eqref{eq:dyadic_phi} to write 
\begin{align*}
\overline{\mc{S}}^{-1}[g] = T_{m_{\mc{S}}^{-1}}g = \sum_j P_j T_{m_{\mc{S}}^{-1}}g\,.
\end{align*}
Noting again that $P_{\le 0}T_{m_{\mc{S}}^{-1}}g=P_0T_{m_{\mc{S}}^{-1}}g$, we have
\begin{align*}
\norm{P_{\le 0}T_{m_{\mc{S}}^{-1}}g}_{L^\infty} &=\norm{M_{\mc{S},0}*g}_{L^\infty}
\le c\norm{M_{\mc{S},0}}_{L^1} \norm{g}_{L^\infty} 
\le c\,\epsilon^{-1}\norm{g}_{L^\infty}\,.
\end{align*}
Again letting $\wt P_jg =(P_{j-1}+P_j+P_{j+1})g$ and noting that $P_jT_{m_{\mc{S}}^{-1}}g=P_jT_{m_{\mc{S}}^{-1}}\wt P_jg$, for low frequencies $0<j< j_\epsilon$, we may estimate
\begin{align*}
\sup_{0<j< j_\epsilon} 2^{j\alpha}\norm{P_jT_{m_{\mc{S}}^{-1}}g}_{L^\infty}
&\le \sup_{0<j< j_\epsilon} 2^{j\alpha}\norm{P_jT_{m_{\mc{S}}^{-1}}\wt P_jg}_{L^\infty}
= \sup_{0<j< j_\epsilon} 2^{j\alpha}\norm{M_{\mc{S},j}*(\wt P_jg)}_{L^\infty}\\
&\le \sup_{0<j< j_\epsilon} 2^{j\alpha}\norm{M_{\mc{S},j}}_{L^1} \norm{\wt P_jg}_{L^\infty} 
\le c\,\epsilon^{-1}\sup_{0<j< j_\epsilon}\,2^{j\alpha}\|\wt P_jg\|_{L^\infty}\\
&\le c\,\epsilon^{-1}\abs{g}_{\dot B^{\alpha}_{\infty,\infty}}\,.
\end{align*}
For high frequencies $j\ge j_\epsilon$ we obtain the bound
\begin{align*}
\sup_{j\ge j_\epsilon} 2^{j\alpha}\norm{P_jT_{m_{\mc{S}}^{-1}}g}_{L^\infty}
&\le \sup_{j\ge j_\epsilon} 2^{j\alpha}\norm{P_jT_{m_{\mc{S}}^{-1}}\wt P_jg}_{L^\infty}
= \sup_{j\ge j_\epsilon} 2^{j\alpha}\norm{M_{\mc{S},j}*(\wt P_jg)}_{L^\infty}\\
&\le \sup_{j\ge j_\epsilon} 2^{j\alpha}\norm{M_{\mc{S},j}}_{L^1} \norm{\wt P_jg}_{L^\infty} 
\le c\,\sup_{j\ge j_\epsilon}\,2^{j(1+\alpha)}\|\wt P_jg\|_{L^\infty}\\
&\le c\,\abs{g}_{\dot B^{1+\alpha}_{\infty,\infty}}\,.
\end{align*}
Combining the above three estimates with the Besov characterization \eqref{eq:besov} of $C^{1,\alpha}(\R)$, we obtain Lemma \ref{lem:S_mapping}.
\end{proof}


\section{Mapping properties of remainder terms}\label{sec:RS_RD}
Here we prove Lemma \ref{lem:RS_and_RD} regarding mapping properties of the single and double layer potential remainder terms $\mc{R}_\mc{S}$ and $\mc{R}_\mc{D}$. We additionally show Lemma \ref{lem:mean_in_s} rgarding the angle-averaged single layer operator applied to constant-in-$s$ functions.

\subsection{Setup and tools}\label{subsec:tools}
The proof of Lemma \ref{lem:RS_and_RD} relies on a more careful characterization of the layer potential kernels \eqref{eq:G} and \eqref{eq:KD} along $\Gamma_\epsilon$ as functions of $s$ and $\theta$.

We begin in the straight setting. For $\overline{\bx},\overline{\bx}'\in \mc{C}_\epsilon$, the difference $\overline{\bx}-\overline{\bx}'$ may be written in terms of $s$ and $\theta$ as 
\begin{equation}\label{eq:overlinebx}
\barR:=\overline{\bx}-\overline{\bx}' = (s-s')\be_z + \epsilon(\be_r(\theta) - \be_r(\theta')) = \textstyle (s-s')\be_z + 2\epsilon\sin(\frac{\theta-\theta'}{2})\be_\theta(\frac{\theta+\theta'}{2})\,.
\end{equation}
It will be convenient to work in terms of $\bars=s-s'$ and $\bartheta=\theta-\theta'$ instead of $s'$ and $\theta'$. We may then write
\begin{equation}\label{eq:overlineR}
\overline{\bm{R}}(\bars,\theta,\bartheta) = \textstyle \bars \be_z + 2\epsilon\sin(\frac{\bartheta}{2})\be_\theta(\theta-\frac{\bartheta}{2})\,, \quad \bars = s-s'\,,\; \bartheta=\theta-\theta'\,,
\end{equation}
so, in particular, using that $\bm{n}_{x}=\be_r(\theta)$ and $\bm{n}_{x'}=\be_r(\theta-\bartheta)$, we have 
\begin{equation}\label{eq:barR_ids}
\begin{aligned}
\abs{\overline{\bm{R}}(\bars,\bartheta)}^2 &= \textstyle \bars^2+4\epsilon^2\sin^2(\frac{\bartheta}{2}) \\
\barR\cdot\overline{\bm{n}}_{x'} &= -\barR\cdot\overline{\bm{n}}_x= -\textstyle 2\epsilon\sin^2(\frac{\bartheta}{2}) \,.
\end{aligned}
\end{equation}

In the curved setting, it will again be convenient to consider
\begin{equation}\label{eq:R}
\bR := \bx-\bx'
\end{equation}
 as a function of $(s,\theta)$ and $(\bars,\bartheta)$.
 Using that $\X(s)\in C^{2,\beta}$ and the orthonormal frame ODEs \eqref{eq:frame}, we note the following expansions in $s$: 
\begin{equation}\label{eq:s_expand}
\begin{aligned}
\X(s)-\X(s-\bars) &= \bars\be_{\rm t}(s) - \bars^2\bm{Q}_{\rm t}(s,\bars) \\
\be_{\rm t}(s) - \be_{\rm t}(s-\bars) &= \bars\bm{Q}_{\rm t}(s,\bars)\\
\be_{\rm n_1}(s)-\be_{\rm n_1}(s-\bars) &= \bars(-\kappa_1(s)\be_{\rm t}(s) + \kappa_3\be_{\rm n_2}(s)) + \bars^2\bm{Q}_{\rm n_1}(s,\bars) \\
\be_{\rm n_2}(s)-\be_{\rm n_2}(s-\bars) &= \bars(-\kappa_2(s)\be_{\rm t}(s) - \kappa_3\be_{\rm n_1}(s)) + \bars^2\bm{Q}_{\rm n_2}(s,\bars) \,.
\end{aligned}
\end{equation}
%
Throughout, for any function $Q(s,\theta,\bars,\bartheta)$ of both $(s,\theta)$ and $(\bars,\bartheta)\in \T\times 2\pi\T$, we will consider the following norms with respect to both pairs of arguments: 
\begin{equation}\label{eq:Q_Cbeta_12}
\norm{Q}_{C^{0,\beta}_1}:=\sup_{\bars,\bartheta}\norm{Q(\cdot,\cdot,\bars,\bartheta)}_{C^{0,\beta}}\,, \quad \norm{Q}_{C^{0,\beta}_2}:=\sup_{s,\theta}\norm{Q(s,\theta,\cdot,\cdot)}_{C^{0,\beta}}\,.
\end{equation}
Recalling the definition \eqref{eq:kappastar} of $\kappa_{*,\beta}$, the remainder terms $\bm{Q}_{\rm t}(s,\bars)$, $\bm{Q}_{\rm n_1}(s,\bars)$, and $\bm{Q}_{\rm n_2}(s,\bars)$ in \eqref{eq:s_expand} may be shown to satisfy
%
\begin{equation}\label{eq:Q_bds2}
\begin{aligned}
&\norm{\bm{Q}_{\rm t}}_{C^{0,\beta}_\mu} \le c(\kappa_{*,\beta})\,; \qquad \bm{Q}_{\rm t}\cdot\be_{\rm t}=\bars Q_{\rm t}(s,\bars)\,, \; \norm{Q_{\rm t}}_{C^{0,\beta}_\mu} \le c(\kappa_{*,\beta})\,; \\
&\bm{Q}_{{\rm n}_j}(s,\bars)\cdot\be_{\rm t}(s) + \bm{Q}_{\rm t}(s,\bars)\cdot\be_{{\rm n}_j}(s) = \bars Q_{{\rm tn}_j}(s,\bars)\,,\; \norm{Q_{{\rm tn}_j}}_{C^{0,\beta}_\mu} \le c(\kappa_{*,\beta})\,; \\
&\norm{\bm{Q}_{{\rm n}_j}\cdot\be_{{\rm n}_i}}_{C^{0,\beta}_\mu} \le c(\kappa_{*,\beta})\,, \qquad \mu,i,j\in\{1,2\}\,.
\end{aligned}
\end{equation}

Using \eqref{eq:s_expand}-\eqref{eq:Q_bds2} and recalling the forms \eqref{eq:er_etheta} of $\be_r(s,\theta)$ and $\be_\theta(s,\theta)$, we also note the following identities and expansions: 
\begin{equation}\label{eq:QrQtheta}
\begin{aligned}
\be_r(s,\theta)-\be_r(s,\theta-\bartheta) &= \textstyle 2\sin(\frac{\bartheta}{2})\be_\theta(s,\theta-\frac{\bartheta}{2}) \,, \qquad
\textstyle\be_\theta(s,\theta-\frac{\bartheta}{2})\cdot\be_r(s,\theta) = \textstyle\sin(\frac{\bartheta}{2}) \\
\be_r(s,\theta)-\be_r(s-\bars,\theta) &= \bars\bm{Q}_r(s,\bars,\theta)\,, \qquad
\be_\theta(s,\theta)-\be_\theta(s-\bars,\theta) = \bars\bm{Q}_\theta(s,\bars,\theta) 
\end{aligned}
\end{equation}
where $\bm{Q}_r$ and $\bm{Q}_\theta$ satisfy
\begin{equation}\label{eq:Qexpand}
\begin{aligned}
\be_{\rm t}(s)\cdot\bm{Q}_r(s,\bars,\theta) &= -\wh\kappa(s,\theta) + \bars Q_{0,1}(s,\theta,\bars)  \\
\be_{\rm t}(s)\cdot\bm{Q}_\theta(s,\theta,\bars)+\bm{Q}_{\rm t}(s,\bars)\cdot\be_\theta(s,\theta) &= \bars Q_{0,2}(s,\theta,\bars)\\
\bm{Q}_r(s,\theta,\bars)-\bm{Q}_r(s,\theta-\bartheta,\bars) 
&= \textstyle 2\sin(\frac{\bartheta}{2})\bm{Q}_\theta(s,\theta-\frac{\bartheta}{2},\bars)\\
\textstyle \be_\theta(s,\theta-\frac{\bartheta}{2})\cdot\bm{Q}_r(s,\bars,\theta-\bartheta)&= \textstyle \kappa_3\cos(\frac{\bartheta}{2}) + \bars Q_{0,3}(s,\theta,\bars,\bartheta) \\
\abs{\bm{Q}_r(s,\theta,\bars)}^2&= \wh\kappa(s,\theta)^2+\kappa_3^2 +\bars Q_{0,4}(s,\theta,\bars) \\
\be_r(s,\theta)\cdot\bm{Q}_r(s,\theta,\bars) &= \bars Q_{0,5}(s,\theta,\bars)\,.
%
\end{aligned}
\end{equation}
Here, using the notation of \eqref{eq:Q_Cbeta_12}, each $Q_{0,j}$ satisfies $\norm{Q_{0,j}}_{C^{0,\beta}_\mu}\le \epsilon^{-\beta}c(\kappa_{*,\beta})$, $\mu=1,2$. The inverse $\epsilon$-dependence arises from the $\theta$-dependence in these terms, in contrast to \eqref{eq:Q_bds2}.

Using \eqref{eq:s_expand} and \eqref{eq:QrQtheta}, we may expand $\bR$ \eqref{eq:R} as
\begin{equation}\label{eq:curvedR}
\begin{aligned}
\bR(s,\theta,\bars,\bartheta) &= \X(s) - \X(s-\bars) + \epsilon\big(\be_r(s,\theta)-\be_r(s-\bars,\theta-\bartheta) \big) \\
&= \textstyle \bars\be_{\rm t}(s) + 2\epsilon\sin(\frac{\bartheta}{2})\be_\theta(s,\theta-\frac{\bartheta}{2})-\bars^2\bm{Q}_{\rm t}(s,\bars)  + \epsilon\bars\bm{Q}_r(s,\bars,\theta-\bartheta)\,.
\end{aligned}
\end{equation}
Furthermore, using the identities in \eqref{eq:Qexpand}, we may write
\begin{equation}\label{eq:Rsq}
\begin{aligned}
\abs{\bm{R}}^2 &= \textstyle
\abs{\overline{\bm{R}}}^2 +2\bars^3\be_{\rm t}\cdot\bm{Q}_{\rm t} +2\epsilon \bars^2\be_{\rm t}(s)\cdot\big(\bm{Q}_r(s,\bars,\theta)-2\sin(\frac{\bartheta}{2})\bm{Q}_\theta(s,\bars,\theta-\frac{\bartheta}{2})\big) \\
&\quad \textstyle - 4\epsilon \bars^2\sin(\frac{\bartheta}{2})\bm{Q}_{\rm t}(s,\bars)\cdot\be_\theta(s,\theta-\frac{\bartheta}{2}) + 4\epsilon^2\bars\sin(\frac{\bartheta}{2})\be_{\theta}(s,\theta-\frac{\bartheta}{2})\cdot\bm{Q}_r(s,\bars,\theta-\bartheta)  \\
&\quad \textstyle +2\epsilon \bars^3\bm{Q}_{\rm t}(s,\bars)\cdot\bm{Q}_r(s,\bars,\theta-\bartheta) +\bars^4\abs{\bm{Q}_{\rm t}}^2 +\epsilon^2\bars^2\abs{\bm{Q}_r(s,\bars,\theta)-2\sin(\frac{\bartheta}{2})\bm{Q}_\theta}^2\\
%
%
%
%
&=\textstyle
\abs{\overline{\bm{R}}}^2 + \epsilon\bars^2Q_{R,0}(s,\theta) + 2\kappa_3\epsilon^2\bars\sin(\bartheta) +\bars^4Q_{R,1}(s,\bars) \\
&\hspace{2cm} \textstyle + \epsilon \bars^3Q_{R,2}(s,\theta,\bars,\bartheta)+ \epsilon^2\bars^2\sin(\frac{\bartheta}{2})Q_{R,3}(s,\theta,\bars,\bartheta) \,.
\end{aligned}
\end{equation}
Here the functions $Q_{R,j}$ are given by
\begin{equation}\label{eq:Q_Rj}
\begin{aligned}
Q_{R,0}(s,\theta) &= -2\wh\kappa(s,\theta)+\epsilon\wh\kappa(s,\theta)^2+\epsilon\kappa_3^2\\
Q_{R,1}(s,\bars) &= 3Q_{\rm t} + \abs{\bm{Q}_{\rm t}}^2 \\
Q_{R,2}(s,\theta,\bars,\bartheta) &= \textstyle 2Q_{0,1}(s,\theta,\bars) +4\sin(\frac{\bartheta}{2})Q_{0,2}(s,\theta-\frac{\bartheta}{2},\bars) \\
&\hspace{2cm}+ 2\bm{Q}_{\rm t}(s,\bars)\cdot\bm{Q}_r(s,\theta-\bartheta,\bars)  +\epsilon Q_{0,4}(s,\theta,\bars) \\
Q_{R,3}(s,\theta,\bars,\bartheta) &= \textstyle 4Q_{0,3}(s,\theta,\bars,\bartheta)  -4\bm{Q}_\theta(s,\theta-\frac{\bartheta}{2},\bars)\cdot\bm{Q}_r(s,\bars,\theta) \\
&\hspace{4cm}\textstyle +4\sin(\frac{\bartheta}{2})|\bm{Q}_\theta(s,\bars,\theta-\frac{\bartheta}{2})|^2 \,.
\end{aligned}
\end{equation}
Note in particular that $Q_{R,0}(s,\theta)$ does not depend on $\bars$ and $\bartheta$ while the remaining $Q_{R,j}$ satisfy 
\begin{align*}
\norm{Q_{R,1}}_{C^{0,\beta}_\mu}&\le c(\kappa_{*,\beta}) \\
\norm{Q_{R,j}}_{C^{0,\beta}_\mu}&\le \epsilon^{-\beta} c(\kappa_{*,\beta})\,, \qquad \mu=1,2; \quad j=2,3\,.
\end{align*}

Using \eqref{eq:Rsq}, the difference $\frac{1}{\abs{\bm{R}}}-\frac{1}{|\overline{\bm{R}}|}$ may be rewritten as
\begin{equation}\label{eq:Rdiff}
\begin{aligned}
\frac{1}{\abs{\bm{R}}}-\frac{1}{\abs{\overline{\bm{R}}}} = \frac{\epsilon\bars^2Q_{R,0} +\kappa_3\epsilon^2\bars\sin(\bartheta) + \bars^4Q_{R,1} +\epsilon\bars^3 Q_{R,2}+ \epsilon^2 \bars^2\sin(\frac{\bartheta}{2})Q_{R,3}}{\abs{\overline{\bm{R}}}\abs{\bm{R}}(\abs{\bm{R}}+\abs{\overline{\bm{R}}})}\,.
\end{aligned}
\end{equation}
Furthermore, for any integer $k\ge 1$, we may expand
\begin{equation}\label{eq:Rdiffk}
\begin{aligned}
&\frac{1}{\abs{\bR}^k}-\frac{1}{|\barR|^k} = \bigg(\frac{1}{\abs{\bR}}-\frac{1}{|\barR|}\bigg)\sum_{\ell=0}^{k-1}\frac{1}{\abs{\bR}^\ell|\barR|^{k-1-\ell}}\\
&\quad= \frac{\epsilon\bars^2Q_{R,0} +\kappa_3\epsilon^2\bars\sin(\bartheta) + \bars^4Q_{R,1} +\epsilon\bars^3 Q_{R,2}+ \epsilon^2 \bars^2\sin(\frac{\bartheta}{2})Q_{R,3}}{\abs{\overline{\bm{R}}}\abs{\bm{R}}(\abs{\bm{R}}+\abs{\overline{\bm{R}}})}\sum_{\ell=0}^{k-1}\frac{1}{\abs{\bR}^\ell|\barR|^{k-1-\ell}}\,.
\end{aligned}
\end{equation}

We also note that, using that $\bm{n}_{x'}=\be_r(s-\bars,\theta-\bartheta)$ along with the properties \eqref{eq:Qexpand} of $\bm{Q}_r$, we have
\begin{equation}\label{eq:Rnxprime}
\begin{aligned}
\bR\cdot\bm{n}_{x'} &= -\textstyle 2\epsilon\sin^2(\frac{\bartheta}{2})+ \epsilon\bars\bm{Q}_r(s,\bars,\theta-\bartheta)\cdot\be_r(s,\theta-\bartheta) -\textstyle \bars^2(\bm{Q}_{\rm t}\cdot\be_r +  \bars\bm{Q}_{\rm t}\cdot\bm{Q}_r + \epsilon\abs{\bm{Q}_r}^2)\\
&= \barR\cdot\overline{\bm{n}}_{x'} + \bars^2Q_{\rm n'}(s,\bars,\theta-\bartheta)\,, \qquad
\norm{Q_{\rm n'}}_{C^{0,\beta}_\mu} \le \epsilon^{-\beta}c(\kappa_{*,\beta})\,, \; \mu=1,2\,.
\end{aligned}
\end{equation}
Similarly, for $\bm{n}_{x}=\be_r(s,\theta)$, we have 
\begin{equation}\label{eq:Rnx}
\bR\cdot\bm{n}_{x} = \textstyle 2\epsilon\sin^2(\frac{\bartheta}{2})+ \bars^2Q_{\rm n}(s,\bars,\theta-\bartheta) 
= \barR\cdot\overline{\bm{n}}_{x} + \bars^2Q_{\rm n}\,, \quad 
\norm{Q_{\rm n}}_{C^{0,\beta}_\mu} \le \epsilon^{-\beta}c(\kappa_{*,\beta})\,,\; \mu=1,2\,.
\end{equation}

Using the definition \eqref{eq:overlineR} of $\barR$ and the expansion \eqref{eq:curvedR} for $\bR$, we may show the following.
\begin{lemma}[Relating $\bR$, $\barR$, and ``flat"]\label{lem:Rests}
For $\barR$, $\bR$ as in \eqref{eq:overlineR}, \eqref{eq:curvedR}, the following bounds hold for $\epsilon$ sufficiently small:
\begin{align}
\abs{\abs{\bR}-|\barR|} &\le \frac{\kappa_{*}}{2}\bars^2 + c(\kappa_{*})\,\epsilon\abs{\bars} \label{eq:xest1}\\
\abs{\bR} &\ge c(\kappa_{*},c_\Gamma)|\barR| \,. \label{eq:xest2}
\end{align}
Furthermore, $\barR$ is close to the ``flat" expression $\sqrt{\bars^2+\epsilon^2\bartheta^2}$ in the following sense:
\begin{align}
\abs{\abs{\overline{\bm{R}}}-\sqrt{\bars^2+\epsilon^2\bartheta^2}} &\le (\sinh(\pi)-\pi)\epsilon|\bartheta|^3 \label{eq:flat2cyl1}\\
\abs{\overline{\bm{R}}} &\ge c\sqrt{\bars^2+\epsilon^2\bartheta^2} \,. \label{eq:flat2cyl2}
\end{align}
\end{lemma}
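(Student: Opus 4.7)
Bounds (3) and (4) involve only the straight expression $\abs{\barR}^2 = \bars^2 + 4\epsilon^2\sin^2(\bartheta/2)$, so are elementary. For (3), I apply the Lipschitz-type inequality $\abs{\sqrt{u^2+a^2} - \sqrt{u^2+b^2}} \le \abs{a-b}$ (which follows by factoring $a^2-b^2$ and using $(a+b)\le \sqrt{u^2+a^2}+\sqrt{u^2+b^2}$) with $u = \bars$, $a = 2\epsilon\abs{\sin(\bartheta/2)}$, $b = \epsilon\abs{\bartheta}$, reducing the estimate to $\epsilon\abs{2\sin(\bartheta/2) - \bartheta}$. On $\abs{\bartheta}\le \pi$ a Taylor comparison $\abs{\sin y - y}\le \sinh\abs{y} - \abs{y}$ controls this by a constant multiple of $\abs{\bartheta}^3$, yielding the claimed bound. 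For (4), I use the uniform lower bound $\abs{\sin(\bartheta/2)} \ge \abs{\bartheta}/\pi$ valid on $\abs{\bartheta}\le \pi$ to conclude $\abs{\barR}^2 \ge \bars^2 + (4/\pi^2)\epsilon^2\bartheta^2 \ge (4/\pi^2)(\bars^2+\epsilon^2\bartheta^2)$.

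\textbf{Bound (1).} The key observation is that $\be_{\rm t}(s)$ and $\be_\theta(s,\theta-\bartheta/2)$ are orthonormal in the curved frame, so the ``principal part'' $\bars\be_{\rm t}(s) + 2\epsilon\sin(\bartheta/2)\be_\theta(s,\theta-\bartheta/2)$ of $\bR$ has norm exactly $\sqrt{\bars^2+4\epsilon^2\sin^2(\bartheta/2)} = \abs{\barR}$. Using the expansion \eqref{eq:curvedR} and the reverse triangle inequality,
\begin{equation*}
\abs{\abs{\bR}-\abs{\barR}} \le \abs{{-}\bars^2\bm{Q}_{\rm t}(s,\bars) + \epsilon\bars\bm{Q}_r(s,\bars,\theta-\bartheta)} \le \bars^2\abs{\bm{Q}_{\rm t}} + \epsilon\abs{\bars}\abs{\bm{Q}_r},
\end{equation*}
and the $L^\infty$ bounds from \eqref{eq:Q_bds2}--\eqref{eq:Qexpand} give (1), with $\abs{\bm{Q}_{\rm t}} \le \kappa_*/2 + O(\kappa_*^2\abs{\bars})$ following from the second-order Taylor expansion of $\X$ in \eqref{eq:s_expand}.

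\textbf{Bound (2).} I split on the size of $\abs{\bars}$. For $\abs{\bars} \le r_1 := 1/(2\kappa_*)$, using $\abs{\barR}\ge \abs{\bars}$ together with (1),
\begin{equation*}
\abs{\bR}\ge \abs{\barR} - \tfrac{\kappa_*}{2}\bars^2 - c(\kappa_*)\epsilon\abs{\bars} \ge \abs{\barR}\bigl(1 - \tfrac{\kappa_* r_1}{2} - c(\kappa_*)\epsilon\bigr) \ge \tfrac{1}{2}\abs{\barR}
\end{equation*}
for $\epsilon$ sufficiently small. For $\abs{\bars} \ge r_1$, I use the non-self-intersection condition \eqref{eq:cGamma}: since $\abs{\X(s)-\X(s-\bars)} \ge c_\Gamma\abs{\bars}$, the triangle inequality applied to $\bR = (\X(s)-\X(s-\bars)) + \epsilon(\be_r(s,\theta) - \be_r(s-\bars,\theta-\bartheta))$ gives $\abs{\bR}\ge c_\Gamma\abs{\bars} - 2\epsilon \ge (c_\Gamma/2)\abs{\bars}$ once $\epsilon \le c_\Gamma r_1/4$. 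Combining with the upper bound $\abs{\barR}\le \abs{\bars}+2\epsilon \le (1 + 4\epsilon\kappa_*)\abs{\bars}$ valid in this regime yields $\abs{\bR}/\abs{\barR} \ge c(\kappa_*, c_\Gamma)$.

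\textbf{Main obstacle.} The delicate point is (2): the threshold $r_1$ and the smallness requirement on $\epsilon$ must be chosen consistently so that the short-range curvature estimate (1) and the long-range non-self-intersection estimate can be matched at the same value of $\abs{\bars}$, with a uniform constant independent of $\theta$ and $\bartheta$. The exact trigonometric constant in (3) is also mildly subtle; one should track whether $\bartheta$ is identified with a representative in $[-\pi,\pi]$ or $[-2\pi,2\pi]$, as the $\sinh$-based bound depends on this range.
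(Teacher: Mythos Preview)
Your proof is correct and follows essentially the same approach as the paper: bound (1) via the triangle inequality against the principal part (whose norm equals $|\barR|$), bound (2) by a short-range/long-range split on $|\bars|$ using (1) near the diagonal and the non-self-intersection condition \eqref{eq:cGamma} away from it, and bound (3) via the Taylor/$\sinh$ comparison for $|2\sin(\bartheta/2)-\bartheta|$. Your argument for (4), using the chord inequality $|\sin(\bartheta/2)|\ge|\bartheta|/\pi$ on $|\bartheta|\le\pi$ to get $|\barR|\ge(2/\pi)\sqrt{\bars^2+\epsilon^2\bartheta^2}$ directly, is actually cleaner than the paper's case split (which first uses (3) for small $|\bartheta|$ and then a crude lower bound for $|\bartheta|$ bounded away from zero); both yield the same conclusion with an explicit constant.
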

\begin{proof}
First, noting that 
\begin{align*}
\textstyle\abs{\bars\be_{\rm t}(s)+2\epsilon\sin(\frac{\bartheta}{2})\be_\theta(s,\theta-\frac{\bartheta}{2})} &= \textstyle \sqrt{\bars^2+4\epsilon^2\sin^2(\frac{\bartheta}{2})} = |\barR|\,,
\end{align*}
the first inequality \eqref{eq:xest1} follows from the form \eqref{eq:curvedR} of $\bR$ and the triangle inequality.

Second, using \eqref{eq:xest1} and the form \eqref{eq:barR_ids} of $|\barR|$, if $\abs{\bars}\le\frac{1}{\kappa_{*}}$, then
\begin{align*}
\abs{\bR} &\ge |\barR|-\frac{\kappa_{*}}{2}\bars^2 - c(\kappa_*)\,\epsilon\abs{\bars}
\ge \frac{1}{6}|\barR| + \frac{\abs{\bars}}{2} -\frac{\kappa_{*}}{2}\bars^2 + \frac{\abs{\bars}}{3} - c(\kappa_*)\,\epsilon\abs{\bars}
\ge \frac{1}{6}|\barR| 
\end{align*}
for $\epsilon\le \frac{1}{3c(\kappa_*)}$. If $\kappa_{*}\le 2$ we are done, since $\max\abs{\bars}=\frac{1}{2}$. If not, suppose $\frac{1}{\kappa_{*}}\le \abs{\bars}\le \frac{1}{2}$. Then, using \eqref{eq:s_expand}, we have  
\begin{align*}
\abs{\bR} &\ge \textstyle\abs{\X(s)-\X(s-\bars)}-2\epsilon\abs{\sin(\frac{\bartheta}{2})}
\ge c_\Gamma\abs{\bars}-2\epsilon
\ge \frac{c_\Gamma}{\kappa_{*}} -2\epsilon
\ge \frac{c_\Gamma}{2\kappa_{*}}
\end{align*}
as long as $\epsilon\le \frac{c_\Gamma}{4\kappa_{*}}$. Since $|\barR|\le \abs{\bars}+2\epsilon\abs{\sin(\frac{\bartheta}{2})}\le \frac{1}{2}+\frac{c_\Gamma}{2\kappa_{*}}$, we obtain \eqref{eq:xest2}.

The bound \eqref{eq:flat2cyl1} follows immediately from the triangle inequality and the fact that $|\bartheta|\le \pi$ and 
\begin{align*}
\textstyle \abs{2\sin(\frac{\bartheta}{2})-\bartheta} &\le (\sinh(\pi)-\pi)|\bartheta|^3\,.
\end{align*}
The final bound \eqref{eq:flat2cyl2} may be seen using \eqref{eq:flat2cyl1} as follows. If $(\sinh(\pi)-\pi)\bartheta^2\le \frac{1}{2}$, then
\begin{align*}
\abs{\overline{\bm{R}}}&\ge \sqrt{\bars^2+\epsilon^2\bartheta^2} - (\sinh(\pi)-\pi)\epsilon|\bartheta|^3
\ge \sqrt{\bars^2+\epsilon^2\bartheta^2} - \frac{\epsilon}{2}|\bartheta| \ge \frac{1}{2}\sqrt{\bars^2+\epsilon^2\bartheta^2}\,.
\end{align*}
If instead $\frac{1}{\sqrt{2(\sinh(\pi)-\pi)}} \le |\bartheta|\le \pi$, then 
\begin{align*}
\abs{\overline{\bm{R}}}&\ge \textstyle\sqrt{\bars^2+4\epsilon^2\sin^2\big(\frac{1}{2\sqrt{2(\sinh(\pi)-\pi)}}\big)} \ge c\sqrt{\bars^2+\epsilon^2\pi^2}\,.
\end{align*}
The above two bounds together yield \eqref{eq:flat2cyl2}.
\end{proof}

With the aid of Lemma \ref{lem:Rests}, we may obtain bounds on the following integrals. 
\begin{lemma}[Integral bounds for $\bR$ and $\barR$]\label{lem:basic_est}
Let $\bR$, $\overline{\bm{R}}$ be as in \eqref{eq:curvedR}, \eqref{eq:overlineR}, respectively. Given $0\le\alpha<1$ and an integer $k$ satisfying $k-\alpha< 2$, we have 
\begin{equation}
\int_{-1/2}^{1/2}\int_{-\pi}^{\pi}\frac{1}{\abs{\bR}^{k-\alpha}}\,\epsilon d\bartheta d\bars\, \le c(\kappa_*,c_\Gamma)\int_{-1/2}^{1/2}\int_{-\pi}^{\pi}\frac{1}{\abs{\overline{\bm{R}}}^{k-\alpha}}\,\epsilon d\bartheta d\bars \; \le \begin{cases}
c\,\epsilon^{2-k+\alpha} &\text{for } 1<k-\alpha<2\\
c\, \epsilon &\text{for } k-\alpha\le 1 \,.
\end{cases}
\end{equation}
\end{lemma}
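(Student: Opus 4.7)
The plan is to proceed in three reductions. First, the comparability bound \eqref{eq:xest2} of Lemma \ref{lem:Rests} gives $\abs{\bR}^{-(k-\alpha)} \le c(\kappa_*,c_\Gamma)\abs{\barR}^{-(k-\alpha)}$ pointwise, so integration in $\bars$ and $\bartheta$ immediately yields the first inequality in the statement. Next, I would invoke \eqref{eq:flat2cyl2} to further replace $\abs{\barR}$ by the ``flat'' expression $\sqrt{\bars^2+\epsilon^2\bartheta^2}$ at the cost of an absolute multiplicative constant. Thus it suffices to estimate
\begin{align*}
I_\epsilon := \int_{-1/2}^{1/2}\int_{-\pi}^{\pi}\frac{\epsilon\, d\bartheta\, d\bars}{(\bars^2+\epsilon^2\bartheta^2)^{(k-\alpha)/2}}.
\end{align*}

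The natural change of variables $u=\epsilon\bartheta$ absorbs the prefactor $\epsilon$ into $du$ and converts $I_\epsilon$ into a standard two-dimensional integral
\begin{align*}
I_\epsilon = \int\!\!\!\int_{\mathcal{Q}}\frac{du\, d\bars}{(\bars^2+u^2)^{(k-\alpha)/2}}, \qquad \mathcal{Q}:=[-1/2,1/2]\times[-\pi\epsilon,\pi\epsilon].
\end{align*}
Passing to polar coordinates $\bars=\rho\cos\phi$, $u=\rho\sin\phi$, the integrand becomes $\rho^{1-(k-\alpha)}$, which is locally integrable at the origin precisely because of the assumption $k-\alpha<2$. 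The key is that $\mathcal{Q}$ is a long thin rectangle, so I would split it into the inscribed disk $\{\rho\le \pi\epsilon\}$ (fully contained in $\mathcal{Q}$) and the outer piece $\{\pi\epsilon<\rho\lesssim 1\}\cap\mathcal{Q}$, where the angular cross section has measure $O(\epsilon/\rho)$. The disk contributes $\sim\epsilon^{2-(k-\alpha)}$, while the outer piece contributes at most $c\,\epsilon \int_{\pi\epsilon}^{1}\rho^{-(k-\alpha)}\,d\rho$.

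It remains to tabulate by case. When $1<k-\alpha<2$, the outer integral is of size $\epsilon^{1-(k-\alpha)}$, and multiplication by the $\epsilon$ prefactor matches the disk contribution at $\epsilon^{2-k+\alpha}$. When $k-\alpha<1$, the outer integral is $O(1)$ and the total is $O(\epsilon)$, while the disk contribution is $O(\epsilon^{2-(k-\alpha)})=O(\epsilon)$ or smaller. The main (very minor) obstacle is the borderline case $k-\alpha=1$, for which the outer integral produces a factor $|\log\epsilon|$; this can either be absorbed into the constant $c$ in the stated $O(\epsilon)$ bound (since it is used only as an upper bound and $\epsilon|\log\epsilon|$ is dominated by $\epsilon^{1-\delta}$) or noted as a logarithmic loss. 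Apart from this borderline bookkeeping, the argument is a routine polar-coordinate computation once the two reductions via Lemma \ref{lem:Rests} have been made.
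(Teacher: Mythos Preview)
Your approach is essentially the same as the paper's: both reduce via Lemma \ref{lem:Rests} to the flat integral, rescale $\bartheta$, and split into a near-origin region handled by polar coordinates and a far piece. Your angular-cross-section bound for the far piece is more elementary than the paper's explicit hypergeometric-function computation, and you correctly flag the logarithmic borderline at $k-\alpha=1$, which the paper's statement and proof gloss over.
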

\begin{proof}
We start by noting that the following ``flat'' estimate holds:
\begin{equation}\label{eq:flatest}
\int_{-1/2}^{1/2}\int_{-\pi}^{\pi}\frac{1}{\sqrt{\bars^2+(\epsilon\bartheta)^2}^{\,k-\alpha}} \, \epsilon \,d\bartheta d\bars \le \begin{cases}
c\,\epsilon^{2-k+\alpha} &\text{for } 1<k-\alpha<2\\
c\, \epsilon &\text{for } k\le 1 \,.
\end{cases}
\end{equation}
This may be seen by rescaling $\bartheta$ by $\epsilon^{-1}$ and writing 
\begin{align*}
  \int_{-1/2}^{1/2}\int_{-\pi\epsilon}^{\pi\epsilon}\frac{1}{\sqrt{\bars^2+\bartheta^2}^{\,k-\alpha}} \, d\bartheta d\bars = J_1+J_2\,,
\end{align*}
where
\begin{align*}
J_1 &:= \int_{-2\pi\epsilon}^{2\pi\epsilon}\int_{-\pi\epsilon}^{\pi\epsilon}\frac{1}{\sqrt{\bars^2+\bartheta^2}^{\,k-\alpha}} \, d\bartheta d\bars 
\le \int_0^{2\pi}\int_{\rho\le3\pi\epsilon} \frac{1}{\rho^{k-\alpha}} \, \rho d\rho d\phi 
\le c\,\epsilon^{2-k+\alpha}
\end{align*}
and 
\begin{align*}
J_2 &:= 2\int_{2\pi\epsilon}^{1/2}\int_{-\pi\epsilon}^{\pi\epsilon}\frac{1}{\sqrt{\bars^2+\bartheta^2}^{\,k-\alpha}} \, d\bartheta d\bars 
\end{align*}
can be directly integrated in $\bartheta$ to yield
\begin{align*}
J_2 &= 4\pi\epsilon\int_{2\pi\epsilon}^{1/2}\frac{(\bars^2+\pi^2\epsilon^2)^{\frac{2-k+\alpha}{2}}}{\bars^2} \;_2F_1\big(1,\frac{3-k+\alpha}{2},\frac{3}{2},-\frac{\pi^2\epsilon^2}{\bars^2}\big) \, d\bars\,.
\end{align*}
Here $\,_2F_1$ is the hypergeometric function which, since $\frac{\pi^2\epsilon^2}{\bars^2}\le \frac{1}{2}$, is defined by the power series
\begin{align*}
\;_2F_1\big(1,\frac{3-k+\alpha}{2},\frac{3}{2},-\frac{\pi^2\epsilon^2}{\bars^2}\big) = \sum_{n=0}^\infty a_n(k,\alpha)\bigg(-\frac{\pi^2\epsilon^2}{\bars^2} \bigg)^n\,,
\end{align*}
where $a_n(k,\alpha)$ depends on the first three arguments.
Using this expression for $J_2$, we have
\begin{align*}
J_2 &\le 4\pi\epsilon\int_{2\pi\epsilon}^{1/2}\sum_{n=0}^\infty \abs{a_n}(\pi\epsilon)^{2n}\big(\bars^{\,-2n-k+\alpha}+(\pi\epsilon)^{2-k+\alpha}\bars^{\,-2n-2} \big) \, d\bars\\
&\le c\,\epsilon \sum_{n=0}^\infty c_n(\pi\epsilon)^{2n}\big(\bars^{\,-2n-k+1+\alpha}+(\pi\epsilon)^{2-k+\alpha}\bars^{\,-2n-1} \big)\bigg|_{\bars=2\pi\epsilon}^{\bars=\frac{1}{2}}\\
&\le c\,\epsilon(1+\epsilon^{1-k+\alpha})\,.
\end{align*}
Together, the estimates for $J_1$ and $J_2$ yield \eqref{eq:flatest}.
Combining Lemma \ref{lem:Rests} and the flat estimate \eqref{eq:flatest}, we then obtain Lemma \ref{lem:basic_est}. 
\end{proof}

Note that the difference \eqref{eq:Rsq} between $\abs{\bR}^2$ and $|\barR|^2$ involves two quadratic terms, $\epsilon\bars^2Q_{R,0}(s,\theta)$ and $\kappa_3\epsilon^2\bars\sin(\frac{\bartheta}{2})$, leading to two terms in the expansion \eqref{eq:Rdiff} with a $\frac{1}{\abs{\bR}}$ singularity. In particular, this difference is small in $\epsilon$ but does not gain regularity. We thus define an intermediary term $\bR_{\rm even}$ satisfying 
\begin{equation}\label{eq:Reven}
\abs{\bR_{\rm even}}^2 := \abs{\overline{\bm{R}}}^2 +\epsilon\bars^2Q_{R,0}(s,\theta) +\kappa_3\epsilon^2\bars\sin(\bartheta)\,.
\end{equation}
From the form \eqref{eq:barR_ids} of $|\barR|$, we note that $|\barR|$ is even about both $\bars=0$ and $\bartheta=0$. Since the coefficients $Q_{R,0}(s,\theta)$ and $\kappa_3$ in \eqref{eq:Reven} are independent of $\bars$ and $\bartheta$, we have that, for nonnegative integers $n$ and $m$, the expression
\begin{align*}
\frac{\bars^n(\epsilon\sin(\frac{\bartheta}{2}))^m}{\abs{\bR_{\rm even}}^{n+m+2}}\,, \quad n+m \text{ odd},
\end{align*}
is globally odd about $\bars=\bartheta=0$. In particular, we have
\begin{equation}\label{eq:oddness}
{\rm p.v.}\int_{-1/2}^{1/2}\int_{-\pi}^{\pi}\frac{\bars^n(\epsilon\sin(\frac{\bartheta}{2}))^m }{\abs{\bR_{\rm even}}^{n+m+2}}\, \epsilon d\bartheta d\bars = 0\,,
\end{equation}
where ${\rm p.v.}$ denotes the Cauchy principal value, i.e.
\begin{align*}
{\rm p.v.}\int_{-1/2}^{1/2}\int_{-\pi}^{\pi} g(\bars,\bartheta)\,d\bartheta d\bars = \lim_{\delta\to 0^+}\bigg(\int_{-1/2}^{-\delta}+\int_{\delta}^{1/2} \bigg)\bigg(\int_{-\pi}^{-2\pi\delta}+\int_{2\pi\delta}^{\pi}\bigg)g(\bars,\bartheta)\,d\bartheta d\bars\,.
\end{align*}
In addition, we have
\begin{equation}\label{eq:Rsq2}
\begin{aligned}
\abs{\bm{R}}^2 &=\textstyle \abs{\bR_{\rm even}}^2 + \bars^4Q_{R,1} +\epsilon\bars^3 Q_{R,2}+ \epsilon^2 \bars^2\sin(\frac{\bartheta}{2})Q_{R,3}\,,\\
\frac{1}{\abs{\bm{R}}}-\frac{1}{\abs{\bR_{\rm even}}} &= \frac{\bars^4Q_{R,1} +\epsilon\bars^3 Q_{R,2}+ \epsilon^2 \bars^2\sin(\frac{\bartheta}{2})Q_{R,3}}{\abs{\bR_{\rm even}}\abs{\bm{R}}(\abs{\bm{R}}+\abs{\bR_{\rm even}})}\,,
\end{aligned}
\end{equation}
and, by a similar argument to Lemma \ref{lem:Rests}, 
\begin{equation}\label{eq:Reven_low}
\abs{\bR_{\rm even}} \ge c(\kappa_*,c_\Gamma)|\barR|\,.
\end{equation}

The expansion \eqref{eq:Rsq2} may be used to show the following bound which relies on near-cancellation in $\bars$ and $\bartheta$ upon integration. 
\begin{lemma}[Bound for integrands with odd powers]\label{lem:odd_nm}
Let $\barR$, $\bR$ be as in \eqref{eq:overlineR}, \eqref{eq:curvedR} and consider nonnegative integers $\ell,k,n,m$ such that $\ell+k+2=n+m$ and $\ell+k$ is odd. Given $g,\varphi\in C^{0,\alpha}(\Gamma_\epsilon)$, we have 
\begin{equation}\label{eq:oddlem}
\abs{{\rm p.v.}\int_{-1/2}^{1/2}\int_{-\pi}^{\pi}\frac{\bars^\ell(\epsilon\sin(\frac{\bartheta}{2}))^k g(\bars,\bartheta) }{|\barR|^m\abs{\bR}^n}\varphi(s-\bars,\theta-\bartheta)\, \epsilon d\bartheta d\bars} \le 
c(\kappa_*,c_\Gamma)\,\epsilon^\alpha\norm{g}_{C^{0,\alpha}}\norm{\varphi}_{C^{0,\alpha}} 
\end{equation}
\end{lemma}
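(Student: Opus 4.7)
\emph{The plan} is to reduce the integral to one whose kernel is globally odd under joint negation $(\bars,\bartheta)\mapsto(-\bars,-\bartheta)$, so that its principal value vanishes, and then bound the residuals. Inspecting \eqref{eq:Reven}, $\abs{\bR_{\rm even}}^2$ is even under this joint negation (since $\bars\sin\bartheta$ is invariant), and $|\barR|$ is manifestly even. Hence whenever $\ell+k$ is odd, the kernel $\bars^\ell(\epsilon\sin(\bartheta/2))^k/(|\barR|^m\abs{\bR_{\rm even}}^n)$ is odd, and its principal value vanishes by exactly the argument behind \eqref{eq:oddness}. The obstacle is that the actual denominator features $\abs{\bR}^n$ rather than $\abs{\bR_{\rm even}}^n$, and $\abs{\bR}$ lacks this parity.

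\emph{First,} I would use \eqref{eq:Rsq2} to split
$$\frac{1}{\abs{\bR}^n}=\frac{1}{\abs{\bR_{\rm even}}^n}+\bigg(\frac{1}{\abs{\bR}}-\frac{1}{\abs{\bR_{\rm even}}}\bigg)\sum_{j=0}^{n-1}\frac{1}{\abs{\bR}^j\abs{\bR_{\rm even}}^{n-1-j}},$$
where the difference-numerator supplies the terms $\bars^4 Q_{R,1}$, $\epsilon\bars^3 Q_{R,2}$, and $\epsilon^2\bars^2\sin(\bartheta/2)\,Q_{R,3}$. Using $|\bars|\le|\barR|$, $|\epsilon\sin(\bartheta/2)|\le|\barR|$, the lower bounds $\abs{\bR},\abs{\bR_{\rm even}}\ge c|\barR|$ from \eqref{eq:xest2} and \eqref{eq:Reven_low}, and the scaling identity $\ell+k+2=n+m$, each of the three resulting remainder integrands turns out \emph{non-singular}, either uniformly bounded or of the form $\epsilon/|\barR|$. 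Lemma \ref{lem:basic_est} then bounds the total remainder contribution by $c(\kappa_*,c_\Gamma)\,\epsilon\,\norm{g}_{L^\infty}\norm{\varphi}_{L^\infty}\le c\epsilon^\alpha\norm{g}_{C^{0,\alpha}}\norm{\varphi}_{C^{0,\alpha}}$ for $\epsilon\le 1$; since these integrands are absolutely integrable, no principal value is needed here.

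\emph{Next,} for the remaining integral against $1/\abs{\bR_{\rm even}}^n$, I split
$$g(\bars,\bartheta)\,\varphi(s-\bars,\theta-\bartheta)=g(0,0)\varphi(s,\theta)+h(s,\theta,\bars,\bartheta).$$
The constant piece $g(0,0)\varphi(s,\theta)$ pulls out of the integral and pairs with the (now odd) kernel, so its principal value is zero by the parity argument above (the definition of p.v.\ removes a symmetric neighborhood of $\{\bars=0\}\cup\{\bartheta=0\}$, which is invariant under joint negation). For $h$, combining the H\"older characterization \eqref{eq:dot_Calpha_eps} with $\sqrt{\bars^2+\epsilon^2\bartheta^2}\le c|\barR|$ (from \eqref{eq:flat2cyl2}) yields
$$|h|\le c\bigl(\norm{g}_{C^{0,\alpha}}\norm{\varphi}_{L^\infty}+\norm{g}_{L^\infty}\norm{\varphi}_{C^{0,\alpha}}\bigr)|\barR|^\alpha.$$
Together with $|\bars|^\ell|\epsilon\sin(\bartheta/2)|^k\le|\barR|^{\ell+k}$ and $\ell+k+2=n+m$, the $h$-integrand is bounded by $c|\barR|^{\alpha-2}$, which is absolutely integrable. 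Since $2-\alpha\in(1,2)$, Lemma \ref{lem:basic_est} delivers the target bound $c(\kappa_*,c_\Gamma)\epsilon^\alpha\norm{g}_{C^{0,\alpha}}\norm{\varphi}_{C^{0,\alpha}}$.

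\emph{Main obstacle.} The tedious point is the bookkeeping in the first step: each of the three remainder terms requires a slightly different allocation of the inequalities $|\bars|\le|\barR|$, $|\epsilon\sin(\bartheta/2)|\le|\barR|$, and $|\sin(\bartheta/2)|\le|\barR|/(2\epsilon)$ in order to absorb the extra powers of $\bars$ or $\epsilon$ against the denominator factors and arrive at a non-singular or $\epsilon/|\barR|$-type integrand. Once this is carried out, the rest of the proof is mechanical: principal-value cancellation on the leading piece, and H\"older regularity upgrading $|\barR|^{\alpha-2}$ to the claimed $\epsilon^\alpha$ via Lemma \ref{lem:basic_est}.
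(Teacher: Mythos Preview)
Your proposal is correct and follows essentially the same approach as the paper: isolate the odd kernel involving $\abs{\bR_{\rm even}}$ (whose principal value vanishes), control the discrepancy $\abs{\bR}^{-n}-\abs{\bR_{\rm even}}^{-n}$ via \eqref{eq:Rsq2} and Lemma~\ref{lem:basic_est}, and use the $C^{0,\alpha}$ regularity of $g$ and $\varphi$ to gain $|\barR|^\alpha$ on what remains. The only cosmetic difference is the ordering: the paper first freezes $g$ at $(0,0)$ (its $J_1$), then swaps $\abs{\bR}$ for $\abs{\bR_{\rm even}}$ (its $J_2$), then freezes $\varphi$ (its $J_3$), whereas you perform the kernel swap first with the full $g\varphi$ and then freeze $g\varphi$ jointly; the resulting estimates are identical.
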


\begin{proof}
Recalling the definition \eqref{eq:Reven} of $\abs{\bR_{\rm even}}$ and using the cancellation property \eqref{eq:oddness}, we may write
\begin{align*}
{\rm p.v.}\int_{-1/2}^{1/2}&\int_{-\pi}^{\pi}\frac{\bars^\ell(\epsilon\sin(\frac{\bartheta}{2}))^k g(\bars,\bartheta)}{|\barR|^m\abs{\bR}^n}\varphi(s-\bars,\theta-\bartheta)\, \epsilon d\bartheta d\bars = J_1 + J_2 + J_3\,,\\
J_1&= {\rm p.v.}\int_{-1/2}^{1/2}\int_{-\pi}^{\pi}\frac{ \bars^\ell(\epsilon\sin(\frac{\bartheta}{2}))^k}{|\barR|^m\abs{\bR}^n}\big(g(\bars,\bartheta)-g(0,0) \big)\varphi(s-\bars,\theta-\bartheta)\, \epsilon d\bartheta d\bars\\
J_2&= {\rm p.v.}\int_{-1/2}^{1/2}\int_{-\pi}^{\pi}\frac{\bars^\ell(\epsilon\sin(\frac{\bartheta}{2}))^k g(0,0)}{|\barR|^m} \bigg(\frac{1}{\abs{\bR}^n}-\frac{1}{\abs{\bR_{\rm even}}^n}\bigg)\varphi(s-\bars,\theta-\bartheta)\, \epsilon d\bartheta d\bars\\
J_3&= {\rm p.v.}\int_{-1/2}^{1/2}\int_{-\pi}^{\pi}\frac{\bars^\ell(\epsilon\sin(\frac{\bartheta}{2}))^k g(0,0)}{|\barR|^m\abs{\bR_{\rm even}}^n}\big(\varphi(s,\theta)-\varphi(s-\bars,\theta-\bartheta)\big)\, \epsilon d\bartheta d\bars\,.
\end{align*}

Since $g\in C^{0,\alpha}(\Gamma_\epsilon)$, we have
\begin{align*}
\abs{J_1}&\le \norm{g}_{C^{0,\alpha}}\norm{\varphi}_{L^\infty}\int_{-1/2}^{1/2}\int_{-\pi}^{\pi}\frac{ \abs{\bars}^\ell|\epsilon\sin(\frac{\bartheta}{2})|^k}{|\barR|^{\ell+k+2-\alpha}}\, \epsilon d\bartheta d\bars \le c\,\epsilon^\alpha \norm{g}_{C^{0,\alpha}}\norm{\varphi}_{L^\infty}
\end{align*}
by Lemma \ref{lem:basic_est}. To bound $J_2$, we use \eqref{eq:Rsq2} to write
\begin{equation}\label{eq:Reven_Ndiff}
\begin{aligned}
\frac{1}{\abs{\bR}^n}-\frac{1}{\abs{\bR_{\rm even}}^n}
 &= \bigg(\frac{1}{\abs{\bR}}-\frac{1}{\abs{\bR_{\rm even}}}\bigg)\sum_{j=0}^{n-1}\frac{1}{\abs{\bR}^j\abs{\bR_{\rm even}}^{n-1-j}}\\
&= \frac{\bars^4Q_{R,1} +\epsilon\bars^3 Q_{R,2}+ \epsilon^2 \bars^2\sin(\frac{\bartheta}{2})Q_{R,3} }{\abs{\bR_{\rm even}}\abs{\bm{R}}(\abs{\bm{R}}+\abs{\bR_{\rm even}})}\sum_{j=0}^{n-1}\frac{1}{\abs{\bR}^j\abs{\bR_{\rm even}}^{n-1-j}}\,.
\end{aligned}
\end{equation}
Using \eqref{eq:Reven_low}, we may also estimate
\begin{align*}
\abs{J_2}&\le c\norm{g}_{L^\infty}\norm{\varphi}_{L^\infty}\int_{-1/2}^{1/2}\int_{-\pi}^{\pi} \frac{\bars^4 +\epsilon\abs{\bars}^3 + \epsilon\bars^2|\epsilon\sin(\frac{\bartheta}{2})| }{|\barR|^m\abs{\bR_{\rm even}}\abs{\bm{R}}(\abs{\bm{R}}+\abs{\bR_{\rm even}})}\sum_{j=0}^{n-1}\frac{\abs{\bars}^\ell|\epsilon\sin(\frac{\bartheta}{2})|^k }{\abs{\bR}^j\abs{\bR_{\rm even}}^{n-1-j}}\, \epsilon d\bartheta d\bars \\
&\le c(\kappa_*,c_\Gamma)\norm{g}_{L^\infty}\norm{\varphi}_{L^\infty}\int_{-1/2}^{1/2}\int_{-\pi}^{\pi}\bigg(1+ \frac{\epsilon}{|\barR|}\bigg)\, \epsilon d\bartheta d\bars 
\le c(\kappa_*,c_\Gamma)\,\epsilon\norm{g}_{L^\infty}\norm{\varphi}_{L^\infty}\,.
\end{align*}
Finally, using that $\varphi\in C^{0,\alpha}(\Gamma_\epsilon)$, we have
\begin{align*}
\abs{J_3} &\le \norm{g}_{L^\infty}\norm{\varphi}_{C^{0,\alpha}}\int_{-1/2}^{1/2}\int_{-\pi}^{\pi}\frac{\abs{\bars}^\ell|\epsilon\sin(\frac{\bartheta}{2})|^k }{|\barR|^{\ell+k+2-\alpha}}\, \epsilon d\bartheta d\bars
\le c(\kappa_*,c_\Gamma)\, \epsilon^\alpha \norm{g}_{L^\infty}\norm{\varphi}_{C^{0,\alpha}}\,.
\end{align*}
Combining the estimates for $J_1$, $J_2$, and $J_3$, we obtain Lemma \ref{lem:odd_nm}.
\end{proof}

We will next need a lemma for estimating the $\abs{\cdot}_{\dot C^{0,\alpha}}$ seminorm of common terms appearing in the expanded boundary integral expressions.  

\begin{lemma}[Estimating $\abs{\cdot}_{\dot C^{0,\alpha}}$ seminorms]\label{lem:alpha_est}
Let $\ell,k,m,n$ be nonnegative integers satisfying either
\begin{enumerate}
\item $\ell+k=m+n-1$ or
\item $\ell+k=m+n-2$ and $\ell+k$ is odd.
\end{enumerate} 
Given $g(s,\theta,s-\bars,\theta-\bartheta)\in C^{0,\textcolor{black}{\alpha^+}}(\Gamma_\epsilon\times\Gamma_\epsilon)$ for any $\textcolor{black}{\alpha^+}>\alpha>0$, for $\barR$ and $\bR$ as in \eqref{eq:overlineR}, \eqref{eq:curvedR}, let $K_{\ell kmn}(s,\theta,\bars,\bartheta)$ denote
\begin{equation}\label{eq:K_lkmn}
 K_{\ell kmn}(s,\theta,\bars,\bartheta):= \frac{\bars^\ell(\epsilon\sin(\frac{\bartheta}{2}))^k g(s,\theta,s-\bars,\theta-\bartheta)}{\abs{\barR(s,\theta,\bars,\bartheta)}^{m}\abs{\bR(s,\theta,\bars,\bartheta)}^n}\,.
\end{equation} 
Suppose $\varphi(s,\theta)\in C^{0,\alpha}(\Gamma_\epsilon)$. Then for any $(s_0,\theta_0)\in[-1/2,1/2]\times[-\pi,\pi]$ with $s_0^2+\theta_0^2\neq 0$ we have 
\begin{equation}\label{eq:alphalem}
\begin{aligned}
&\bigg|{\rm p.v.}\int_{-s_0-1/2}^{-s_0+1/2}\int_{-\theta_0-\pi}^{-\theta_0+\pi} K_{\ell kmn}(s_0+s,\theta_0+\theta,s_0+\bars,\theta_0+\bartheta)\varphi(s-\bars,\theta-\bartheta) \, \epsilon d\bartheta d\bars \\
&\qquad\quad -  {\rm p.v.}\int_{-1/2}^{1/2}\int_{-\pi}^{\pi}K_{\ell kmn}(s,\theta,\bars,\bartheta)\varphi(s-\bars,\theta-\bartheta) \, \epsilon d\bartheta d\bars\bigg| \\
&\hspace{2cm} \le \begin{cases}
c(\kappa_*,c_\Gamma)\,\epsilon^{1-\alpha}\norm{g}_{C^{0,\alpha}_1}\norm{\varphi}_{L^\infty}\sqrt{s_0^2+\epsilon^2\theta_0^2}^{\,\alpha} & \text{in case (1)} \\
c(\kappa_*,c_\Gamma)\big(\norm{g}_{C^{0,\textcolor{black}{\alpha^+}}_1}+\norm{g}_{C^{0,\alpha}_2}\big)\norm{\varphi}_{C^{0,\alpha}}\sqrt{s_0^2+\epsilon^2\theta_0^2}^{\,\alpha} & \text{in case (2)}\,, 
\end{cases}
\end{aligned}
\end{equation}
where $\norm{\cdot}_{C^{0,\alpha}_1}$ and $\norm{\cdot}_{C^{0,\alpha}_2}$ are as defined in \eqref{eq:Q_Cbeta_12}.
\end{lemma}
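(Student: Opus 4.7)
The plan is first to reduce the difference in \eqref{eq:alphalem} to a single integral via change of variables in $(\bars,\bartheta)$, and then to treat two regimes separately according to the relative size of $\rho := \sqrt{s_0^2+\epsilon^2\theta_0^2}$ and $\epsilon$.

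After substituting $\bars \mapsto \bars+s_0$ and $\bartheta \mapsto \bartheta+\theta_0$ in the first integral, the integration domain becomes $[-1/2,1/2]\times[-\pi,\pi]$ and the difference in \eqref{eq:alphalem} equals $I[\varphi](s+s_0,\theta+\theta_0)-I[\varphi](s,\theta)$, with
\[
I[\varphi](s,\theta):={\rm p.v.}\int_{-1/2}^{1/2}\!\int_{-\pi}^{\pi} K_{\ell kmn}(s,\theta,\bars,\bartheta)\,\varphi(s-\bars,\theta-\bartheta)\,\epsilon\,d\bartheta d\bars.
\]
Crucially, the evenness of $\abs{\barR}$ and $\abs{\bR_{\rm even}}$ in $(\bars,\bartheta)$ is independent of the base point $(s,\theta)$, so the parity cancellation of Lemma \ref{lem:odd_nm} is available at the shifted base point as well.

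In the regime $\rho\ge\epsilon$, I would bound each of $I[\varphi](s+s_0,\theta+\theta_0)$ and $I[\varphi](s,\theta)$ separately. In case (1), Lemma \ref{lem:basic_est} at the critical homogeneity yields $\abs{I[\varphi](\cdot)}\lesssim \epsilon\norm{g}_{L^\infty}\norm{\varphi}_{L^\infty}$; in case (2), Lemma \ref{lem:odd_nm} yields $\abs{I[\varphi](\cdot)}\lesssim \epsilon^\alpha(\norm{g}_{C^{0,\alpha}_2}+\norm{g}_{L^\infty})\norm{\varphi}_{C^{0,\alpha}}$. The elementary inequalities $\epsilon\le\epsilon^{1-\alpha}\rho^\alpha$ (case 1) and $\epsilon^\alpha\le\rho^\alpha$ (case 2), both valid when $\rho\ge\epsilon$, then produce the desired bounds.

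In the regime $\rho<\epsilon$, I would telescope by adding and subtracting $K_{\ell kmn}(s,\theta,\bars,\bartheta)\,\varphi(s+s_0-\bars,\theta+\theta_0-\bartheta)$, splitting the integrand into a kernel-difference piece and a $\varphi$-difference piece. The kernel-difference further splits into a $g$-contribution, which by Hölder regularity of $g$ in its first pair of arguments carries a factor $\rho^\alpha\norm{g}_{C^{0,\alpha}_1}$ in case (1) and $\rho^{\alpha^+}\norm{g}_{C^{0,\alpha^+}_1}$ in case (2) (the upgraded exponent $\alpha^+$ being needed to absorb a borderline logarithmic divergence at homogeneity $0$), and a geometric-factor contribution: by \eqref{eq:Rsq}--\eqref{eq:Rdiffk} together with \eqref{eq:Q_bds2} and \eqref{eq:Qexpand}, all of the $(s,\theta)$-dependence of $\abs{\bR}^2-\abs{\barR}^2$ lives in coefficients like $Q_{R,0}$, $\bm{Q}_{\rm t}$, $\bm{Q}_r$ that are of class $C^{0,\alpha^+}$ in $(s,\theta)$ and are multiplied by additional powers of $\bars$ or $\epsilon$, so the resulting integrand is one degree less singular in $(\bars,\bartheta)$ and falls within the scope of Lemma \ref{lem:basic_est} (case (1)) or Lemma \ref{lem:odd_nm} (case (2)). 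For the $\varphi$-difference piece I would further split the integration over $\Omega_{\rm near}=\{\bars^2+\epsilon^2\bartheta^2\le 4\rho^2\}$ and its complement: on $\Omega_{\rm near}$, the bound $\int_{\Omega_{\rm near}}\abs{\bR}^{-1}\epsilon\,d\bartheta d\bars\lesssim \rho\le\epsilon^{1-\alpha}\rho^\alpha$ closes case (1), while a restricted version of Lemma \ref{lem:odd_nm} closes case (2); on the far region, Hölder regularity $\rho^\alpha\norm{\varphi}_{C^{0,\alpha}}$ finishes case (2) directly, whereas in case (1) a further substitution $\bars'=\bars-s_0$, $\bartheta'=\bartheta-\theta_0$ recasts the $\varphi$-difference as a shift-difference in the $(\bars,\bartheta)$-arguments of $K$, reducing back to Lemma \ref{lem:basic_est}.

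The main obstacle is case (2): one must simultaneously preserve the parity-based cancellation of Lemma \ref{lem:odd_nm} (essential at homogeneity $0$) and extract the Hölder factor $\rho^\alpha$ from the base-point shift. The telescoping above is engineered so that every summand retains the parity structure in $(\bars,\bartheta)$ required by Lemma \ref{lem:odd_nm}, and the strictly larger exponent $\alpha^+>\alpha$ on $g$ is precisely what is needed to absorb the logarithmic divergence that otherwise arises when the base-point shift crosses the singularity of the homogeneity-$0$ kernel.
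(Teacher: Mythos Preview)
Your large-scale strategy (split on $\rho\gtrless\epsilon$; for $\rho\ge\epsilon$ bound the two integrals separately via Lemmas~\ref{lem:basic_est} and~\ref{lem:odd_nm}) coincides with the paper's, and that regime is fine. The organization for $\rho<\epsilon$, however, differs from the paper in a way that creates a genuine gap in case~(2).

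After your change of variables you telescope into a \emph{kernel-difference} piece (shift in the base point $(s,\theta)$ only) and a \emph{$\varphi$-difference} piece, and you apply the near/far split only to the latter. The problem is the $g$-contribution of the kernel-difference piece in case~(2): it is
\[
\text{p.v.}\int_{-1/2}^{1/2}\!\int_{-\pi}^{\pi}\frac{\bars^\ell(\epsilon\sin\tfrac{\bartheta}{2})^k\,[g_1-g_2]}{|\barR|^m|\bR_1|^n}\,\varphi(s_1-\bars,\theta_1-\bartheta)\,\epsilon\,d\bartheta d\bars,
\]
integrated over the \emph{full} domain. If you bound $|g_1-g_2|\le\rho^{\alpha^+}\|g\|_{C^{0,\alpha^+}}$, the remaining integrand is $\sim|\barR|^{-2}$, which is not a ``borderline logarithmic divergence'' but a genuine divergence at $(\bars,\bartheta)=(0,0)$; the principal value does not rescue it once absolute values are taken. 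If instead you feed $\wt g:=g_1-g_2$ into Lemma~\ref{lem:odd_nm}, the output is $\epsilon^\alpha\|\wt g\|_{C^{0,\alpha}}\|\varphi\|_{C^{0,\alpha}}$, and the seminorm $|\wt g|_{\dot C^{0,\alpha}}$ in $(\bars,\bartheta)$ is $\lesssim\|g\|_{C^{0,\alpha}_2}$ with \emph{no} factor of $\rho^\alpha$ --- a mere $C^{0,\alpha^+}$ hypothesis on $g$ does not give a second-difference bound of the form $\rho^\alpha|\barR|^{\alpha'}$. Either route fails to produce $\rho^\alpha$ here.

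The paper avoids this by doing the near/far split \emph{first} and only then differencing: on the far region $I_2$ the paper bounds the full kernel difference (all four arguments shift at once) by $\rho^{\alpha^+}/|\barR_0|^2$, and the integral over $\{|\barR_0|>4\rho\}$ is $\lesssim|\log\rho|$, absorbed by $\alpha^+>\alpha$; on the near region each integral is bounded separately via the $K_1+K_2+K_3$ decomposition, picking up $\rho^\alpha$ from the size of the region. Your scheme can be repaired by imposing the same near/far split on the kernel-difference piece as well. Two smaller points: after your change of variables, both pairs of arguments of $g$ shift, so case~(1) actually needs $\|g\|_{C^{0,\alpha}_1}+\|g\|_{C^{0,\alpha}_2}$ rather than $\|g\|_{C^{0,\alpha}_1}$ alone; and the geometric-factor contribution is not ``one degree less singular'' (the $Q_{R,0}$ term keeps homogeneity $-2$), though it \emph{is} handled by Lemma~\ref{lem:odd_nm} because the $(s,\theta)$-dependent coefficient is constant in $(\bars,\bartheta)$ and the parity is preserved.
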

Note that Lemma \ref{lem:alpha_est} gives us a bound for the $\abs{\cdot}_{\dot C^\alpha}$ seminorm for boundary integral expressions with kernels of the form \eqref{eq:K_lkmn}.

\begin{proof}
Throughout, we will use that the following sequence of inequalities holds due to Lemma \ref{lem:Rests}: 
\begin{equation}\label{eq:RtoFlat}
\abs{\bR}\ge c(\kappa_*,c_\Gamma) |\barR| \ge c(\kappa_*,c_\Gamma)\sqrt{\bars^2+\epsilon^2\bartheta^2} \,.
\end{equation}

We begin by considering the case $\sqrt{s_0^2+\epsilon^2\theta_0^2}\ge \epsilon$. Then, using Lemma \ref{lem:basic_est} in case (1) and Lemma \ref{lem:odd_nm} in case (2), we have
\begin{equation}\label{eq:alphaest1}
\begin{aligned}
&\bigg|{\rm p.v.}\int_{-s_0-1/2}^{-s_0+1/2}\int_{-\theta_0-\pi}^{-\theta_0+\pi} K_{\ell kmn}(s_0+s,\theta_0+\theta,s_0+\bars,\theta_0+\bartheta)\varphi(s-\bars,\theta-\bartheta) \, \epsilon d\bartheta d\bars \\
&\qquad\quad -  {\rm p.v.}\int_{-1/2}^{1/2}\int_{-\pi}^{\pi}K_{\ell kmn}(s,\theta,\bars,\bartheta)\varphi(s-\bars,\theta-\bartheta) \, \epsilon d\bartheta d\bars\bigg| \\
&\quad\le \abs{{\rm p.v.}\int_{-s_0-1/2}^{-s_0+1/2}\int_{-\theta_0-\pi}^{-\theta_0+\pi}K_{\ell kmn}(s_0+s,\theta_0+\theta,s_0+\bars,\theta_0+\bartheta)\varphi(s-\bars,\theta-\bartheta)\, \epsilon d\bartheta d\bars}\\
&\qquad\quad
+ \abs{{\rm p.v.}\int_{-1/2}^{1/2}\int_{-\pi}^{\pi}K_{\ell kmn}(s,\theta,\bars,\bartheta)\varphi(s-\bars,\theta-\bartheta) \, \epsilon d\bartheta d\bars} \\
&\quad \le \begin{cases}
c\,\epsilon\norm{g}_{L^\infty}\norm{\varphi}_{L^\infty}\\
c\,\epsilon^\alpha \norm{g}_{C^{0,\alpha}_2}\norm{\varphi}_{C^{0,\alpha}}
\end{cases}
\le \begin{cases}
c\,\epsilon^{1-\alpha}\norm{g}_{L^\infty}\norm{\varphi}_{L^\infty}\sqrt{s_0^2+\epsilon^2\theta_0^2}^{\,\alpha} & \text{in case (1)}\\
c\norm{g}_{C^{0,\alpha}_2}\norm{\varphi}_{C^{0,\alpha}}\sqrt{s_0^2+\epsilon^2\theta_0^2}^{\,\alpha} & \text{in case (2)}.
\end{cases}
\end{aligned}
\end{equation}

When $\sqrt{s_0^2+\epsilon^2\theta_0^2}< \epsilon$, we split the integral \eqref{eq:alphalem} into two regions. First, consider the region $I_1$ where $\sqrt{(s_0+\bars)^2+\epsilon^2(\theta_0+\bartheta)^2}\le 4\sqrt{s_0^2+\epsilon^2\theta_0^2}$. Then $\sqrt{\bars^2+\epsilon^2\bartheta^2}$ belongs to the region $I_1'$ where $\sqrt{\bars^2+\epsilon^2\bartheta^2}\le 5\sqrt{s_0^2+\epsilon^2\theta_0^2}$. In case (1), using the notation 
\begin{equation}\label{eq:barR0}
\barR_0:=\barR(s_0+s,\theta_0+\theta,s_0+\bars,\theta_0+\bartheta)\,,
\end{equation}
 we may use \eqref{eq:RtoFlat} to estimate
\begin{equation}\label{eq:alphaest2_1}
\begin{aligned}
&\bigg|\iint_{I_1}K_{\ell kmn}(s_0+s,\theta_0+\theta,s_0+\bars,\theta_0+\bartheta)\varphi(s-\bars,\theta-\bartheta) \, \epsilon d\bartheta d\bars \\
&\qquad\qquad- \iint_{I_1'}K_{\ell kmn}(s,\theta,\bars,\bartheta)\varphi(s-\bars,\theta-\bartheta) \, \epsilon d\bartheta d\bars \bigg|\\
&\qquad \le c\norm{g}_{L^\infty}\norm{\varphi}_{L^\infty}\iint_{I_1} \frac{1}{\abs{\barR_0}} \, \epsilon d\bartheta d\bars + c\norm{g}_{L^\infty}\norm{\varphi}_{L^\infty}\iint_{I_1'}\frac{1}{|\barR|} \, \epsilon d\bartheta d\bars \\
&\qquad \le c\norm{g}_{L^\infty}\norm{\varphi}_{L^\infty}\bigg(\iint_{I_1} \frac{1}{\sqrt{(s_0+\bars)^2+\epsilon^2(\theta_0+\bartheta)^2}} \, \epsilon d\bartheta d\bars + \iint_{I_1'}\frac{1}{\sqrt{\bars^2+\epsilon^2\bartheta^2}} \, \epsilon d\bartheta d\bars\bigg) \\
&\qquad \le c\norm{g}_{L^\infty}\norm{\varphi}_{L^\infty}\bigg(\iint_{\rho\le 4\sqrt{s_0^2+\epsilon^2\theta_0^2}} \frac{1}{\rho} \, \rho d\rho d\phi + \iint_{\rho\le 5\sqrt{s_0^2+\epsilon^2\theta_0^2}}\frac{1}{\rho} \, \rho d\rho d\phi\bigg) \\
&\qquad \le c\norm{g}_{L^\infty}\norm{\varphi}_{L^\infty}\sqrt{s_0^2+\epsilon^2\theta_0^2} \le c\,\epsilon^{1-\alpha}\norm{g}_{L^\infty}\norm{\varphi}_{L^\infty}\sqrt{s_0^2+\epsilon^2\theta_0^2}^{\,\alpha}\,.
\end{aligned}
\end{equation}
In the third inequality we have switched to polar coordinates $(\rho,\phi)$ as in the proof of Lemma \ref{lem:basic_est}.

In case (2), as in the proof of Lemma \ref{lem:odd_nm}, we make use of the decomposition 
\begin{equation}\label{eq:case2_decomp}
\begin{aligned}
&{\rm p.v.}\int_{-1/2}^{1/2}\int_{-\pi}^\pi K_{\ell kmn}(s,\theta,\bars,\bartheta)\varphi(s-\bars,\theta-\bartheta) \, \epsilon d\bartheta d\bars = {\rm p.v.}\int_{-1/2}^{1/2}\int_{-\pi}^\pi \big(K_1+K_2+K_3\big)\, \epsilon d\bartheta d\bars\,,\\
&\qquad K_1(s,\theta,\bars,\bartheta)= \frac{ \bars^\ell(\epsilon\sin(\frac{\bartheta}{2}))^k}{|\barR|^m\abs{\bR}^n}\big(g(s,\theta,s-\bars,\theta-\bartheta)-g(s,\theta,s,\theta) \big)\varphi(s-\bars,\theta-\bartheta) \\
&\qquad K_2(s,\theta,\bars,\bartheta)= \frac{\bars^\ell(\epsilon\sin(\frac{\bartheta}{2}))^k g(s,\theta,s,\theta)}{|\barR|^m} \bigg(\frac{1}{\abs{\bR}^n}-\frac{1}{\abs{\bR_{\rm even}}^n}\bigg)\varphi(s-\bars,\theta-\bartheta) \\
&\qquad K_3(s,\theta,\bars,\bartheta)= \frac{\bars^\ell(\epsilon\sin(\frac{\bartheta}{2}))^k g(s,\theta,s,\theta)}{|\barR|^m\abs{\bR_{\rm even}}^n}\big(\varphi(s,\theta)-\varphi(s-\bars,\theta-\bartheta)\big)\,.
\end{aligned}
\end{equation}

Over the region $I_1'$, using that 
\begin{align*}
{\rm p.v.}\iint_{I_1'}\big(\abs{K_1}+\abs{K_2}+\abs{K_3}\big)\,\epsilon d\bartheta d\bars &\le c\norm{g}_{C^{0,\alpha}_2}\norm{\varphi}_{C^{0,\alpha}}\iint_{I_1'}\frac{1}{|\barR|^{2-\alpha}}\, \epsilon d\bartheta d\bars\,,
\end{align*}
we may follow the same steps \eqref{eq:alphaest2_1} as in case (1) to obtain 
\begin{equation}\label{eq:alphaest2_2}
\begin{aligned}
&\bigg|\iint_{I_1}K_{\ell kmn}(s_0+s,\theta_0+\theta,s_0+\bars,\theta_0+\bartheta)\varphi(s-\bars,\theta-\bartheta) \, \epsilon d\bartheta d\bars \\
&\qquad- \iint_{I_1'}K_{\ell kmn}(s,\theta,\bars,\bartheta)\varphi(s-\bars,\theta-\bartheta) \, \epsilon d\bartheta d\bars \bigg|
\le c\norm{g}_{C^{0,\alpha}_2}\norm{\varphi}_{C^{0,\alpha}}\sqrt{s_0^2+\epsilon^2\theta_0^2}^{\,\alpha} \,.
\end{aligned}
\end{equation}

Finally, we consider the region $I_2$ where $\sqrt{(s_0+\bars)^2+\epsilon^2(\theta_0+\bartheta)^2}> 4\sqrt{s_0^2+\epsilon^2\theta_0^2}$, so $\sqrt{\bars^2+\epsilon^2\bartheta^2}> 3\sqrt{s_0^2+\epsilon^2\theta_0^2}$. 
In case (1), making use of \eqref{eq:RtoFlat} along with the fact that 
\begin{align*}
\sqrt{(s_0+\bars)^2+\epsilon^2(\theta_0+\bartheta)^2} \le \frac{4}{3}\sqrt{\bars^2+\epsilon^2\bartheta^2}\,,
\end{align*}
 it may be shown that  
\begin{align*}
|\wt K|&:=
\abs{K_{\ell kmn}(s_0+s,\theta_0+\theta,s_0+\bars,\theta_0+\bartheta)- K_{\ell kmn}(s,\theta,\bars,\bartheta)} \\
&\le c\norm{g}_{L^\infty}\bigg(\abs{\frac{1}{\abs{\barR_0}^{m}}-\frac{1}{|\barR|^{m}}}\frac{\abs{s_0+\bars}^\ell|\epsilon\sin(\frac{\theta_0-\bartheta}{2})|^k }{\abs{\bR_0}^n} + \abs{(s_0+\bars)^\ell-\bars^\ell}\frac{|\epsilon\sin(\frac{\theta_0-\bartheta}{2})|^k }{|\barR|^{m}\abs{\bR}^n} \\
&\quad +\abs{\frac{1}{\abs{\bR_0}^n}-\frac{1}{\abs{\bR}^n}}\frac{\abs{s_0+\bars}^\ell|\epsilon\sin(\frac{\theta_0-\bartheta}{2})|^k }{|\barR|^m}  +\abs{\textstyle(\epsilon\sin(\frac{\theta_0+\bartheta}{2}))^k - (\epsilon\sin(\frac{\bartheta}{2}))^k}\frac{\abs{\bars}^\ell }{|\barR|^{m}\abs{\bR}^n}\bigg) \\
&\quad +\abs{g(s_0+s,\theta_0+\theta,s-\bars,\theta-\bartheta)-g(s,\theta,s-\bars,\theta-\bartheta)}\frac{\abs{\bars}^\ell|\epsilon\sin(\frac{\bartheta}{2})|^k }{|\barR|^{m}\abs{\bR}^n}\\
&\le  c\norm{g}_{C^{0,\alpha}_1}\frac{\sqrt{s_0^2+\epsilon^2\theta_0^2}^\alpha }{\abs{\barR_0}} + c \norm{g}_{L^\infty}\frac{\sqrt{s_0^2+\epsilon^2\theta_0^2}^{\,\alpha}}{\abs{\barR_0}^{1+\alpha}} \,,
\end{align*}
where we again use the notation \eqref{eq:barR0}. Using Lemma \ref{lem:basic_est}, we then have
\begin{equation}\label{eq:alphaest3_1}
\begin{aligned}
&\iint_{I_2}|\wt K|\abs{\varphi(s-\bars,\theta-\bartheta)}\,\epsilon d\bartheta d\bars \\
&\qquad \le c\norm{\varphi}_{L^\infty}\sqrt{s_0^2+\epsilon^2\theta_0^2}^{\,\alpha}\bigg(\norm{g}_{C^{0,\alpha}_1}\iint_{I_2}\frac{1}{\abs{\barR_0}} \,\epsilon d\bartheta d\bars 
 + \norm{g}_{L^\infty}\iint_{I_2}\frac{1}{\abs{\barR_0}^{1+\alpha}}\,\epsilon d\bartheta d\bars\bigg) \\
&\qquad \le c\norm{\varphi}_{L^\infty}\sqrt{s_0^2+\epsilon^2\theta_0^2}^{\,\alpha}\big(\epsilon\norm{g}_{C^{0,\alpha}_1} + \epsilon^{1-\alpha} \norm{g}_{L^\infty}\big)\,.
\end{aligned}
\end{equation}

In case (2), using the decomposition \eqref{eq:case2_decomp}, for each $i\in\{1,2,3\}$ we may define 
\begin{align*}
|\wt K_i|&:= \abs{K_i(s_0+s,\theta_0+\theta,s_0+\bars,\theta_0+\bartheta)- K_i(s,\theta,\bars,\bartheta)}\,.
\end{align*}
Similar to case (1), within the region $I_2$ it may be shown that 
\begin{align*}
|\wt K_1| &\le c\norm{g}_{C^{0,\textcolor{black}{\alpha^+}}_1}\norm{\varphi}_{L^\infty}\frac{\sqrt{s_0^2+\epsilon^2\theta_0^2}^{\,\textcolor{black}{\alpha^+}}}{\abs{\barR_0}^2}\\
|\wt K_2|&\le c\norm{\varphi}_{L^\infty}\bigg(\norm{g}_{C^{0,\alpha}_1}\frac{\sqrt{s_0^2+\epsilon^2\theta_0^2}^{\,\alpha}}{\abs{\barR_0}}+\norm{g}_{L^\infty}\frac{\sqrt{s_0^2+\epsilon^2\theta_0^2}^{\,\alpha}}{\abs{\barR_0}^{1+\alpha}}\bigg)\\
|\wt K_3|&\le c\norm{\varphi}_{C^{0,\alpha}}\bigg(\norm{g}_{C^{0,\alpha}_1}\frac{\sqrt{s_0^2+\epsilon^2\theta_0^2}^{\,\alpha}}{\abs{\barR_0}^{2-\alpha}}+\norm{g}_{L^\infty}\frac{\sqrt{s_0^2+\epsilon^2\theta_0^2}}{\abs{\barR_0}^{3-\alpha}}\bigg)\,.
\end{align*}
We may bound $|\wt K_2|$ exactly as in \eqref{eq:alphaest3_1} to obtain
\begin{equation}\label{eq:alphaest3_2}
\iint_{I_2}|\wt K_2|\,\epsilon d\bartheta d\bars 
\le c\,\epsilon\norm{g}_{C^{0,\alpha}_1}\norm{\varphi}_{L^\infty}\sqrt{s_0^2+\epsilon^2\theta_0^2}^{\,\alpha} + c\,\epsilon^{1-\alpha} \norm{g}_{L^\infty}\norm{\varphi}_{L^\infty}\sqrt{s_0^2+\epsilon^2\theta_0^2}^{\,\alpha}\,.
\end{equation}
For $|\wt K_1|$ and $|\wt K_3|$, using the inequality \eqref{eq:RtoFlat} along with a switch to polar coordinates, we obtain
\begin{equation}\label{eq:alphaest3_3}
\begin{aligned}
&\iint_{I_2}\big(|\wt K_1|+|\wt K_3|\big)\,\epsilon d\bartheta d\bars \le c\norm{g}_{C^{0,\textcolor{black}{\alpha^+}}_1}\norm{\varphi}_{L^\infty}\iint_{I_2}\frac{\sqrt{s_0^2+\epsilon^2\theta_0^2}^{\,\textcolor{black}{\alpha^+}}}{\abs{\barR_0}^2} \,\epsilon d\bartheta d\bars  \\
&\quad + c\norm{\varphi}_{C^{0,\alpha}}\bigg(\norm{g}_{C^{0,\alpha}_1}\iint_{I_2}\frac{\sqrt{s_0^2+\epsilon^2\theta_0^2}^{\,\alpha}}{\abs{\barR_0}^{\,2-\alpha}} \,\epsilon d\bartheta d\bars 
 + \norm{g}_{L^\infty}\iint_{I_2}\frac{\sqrt{s_0^2+\epsilon^2\theta_0^2}}{\abs{\barR_0}^{3-\alpha}}\,\epsilon d\bartheta d\bars\bigg) \\
%
%
%
&\le c\norm{g}_{C^{0,\beta}_1}\norm{\varphi}_{C^{0,\alpha}}\bigg(\iint_{\rho>4\sqrt{s_0^2+\epsilon^2\theta_0^2}}\bigg(\frac{\sqrt{s_0^2+\epsilon^2\theta_0^2}^{\,\textcolor{black}{\alpha^+}}}{\rho^2} +\frac{\sqrt{s_0^2+\epsilon^2\theta_0^2}^{\,\alpha}}{\rho^{2-\alpha}} + \frac{\sqrt{s_0^2+\epsilon^2\theta_0^2}}{\rho^{3-\alpha}}\bigg) \rho d\rho d\phi \\
&\le c\norm{g}_{C^{0,\textcolor{black}{\alpha^+}}_1}\norm{\varphi}_{C^{0,\alpha}}\bigg(\sqrt{s_0^2+\epsilon^2\theta_0^2}^{\,\textcolor{black}{\alpha^+}}\big(1+|\log(s_0^2+\epsilon^2\theta_0^2)|\big) + \sqrt{s_0^2+\epsilon^2\theta_0^2}^{\,\alpha}\big(1+\sqrt{s_0^2+\epsilon^2\theta_0^2}^{\,\alpha}\big)  \\
&\hspace{3cm} + \sqrt{s_0^2+\epsilon^2\theta_0^2}\big(1+\sqrt{s_0^2+\epsilon^2\theta_0^2}^{\,\alpha-1}\big)\bigg)\\
&\le c\norm{g}_{C^{0,\textcolor{black}{\alpha^+}}_1}\norm{\varphi}_{C^{0,\alpha}}\sqrt{s_0^2+\epsilon^2\theta_0^2}^{\,\alpha}\,,
\end{aligned}
\end{equation}
where we have used that $\textcolor{black}{\alpha^+}>\alpha$ to absorb the logarithmic term. 

Combining the estimates \eqref{eq:alphaest1}, \eqref{eq:alphaest2_1}, \eqref{eq:alphaest2_2}, \eqref{eq:alphaest3_1}, \eqref{eq:alphaest3_2}, and \eqref{eq:alphaest3_3}, we obtain Lemma \ref{lem:alpha_est}.
\end{proof}

\subsection{Mapping properties of single layer remainder terms}
Equipped with the lemmas of section \ref{subsec:tools}, we may now prove Lemma \ref{lem:RS_and_RD} regarding mapping properties of the single and double layer remainder terms $\mc{R}_\mc{S}$ and $\mc{R}_\mc{D}$. We begin with the single layer remainder $\mc{R}_\mc{S}$, defined in \eqref{eq:RS_def}. Recalling the mapping properties of $\overline{\mc{S}}$ (see Lemma \ref{lem:straight_components}), we show that $\mc{R}_\mc{S}$ may be decomposed into two operators: one which gains more regularity than $\overline{\mc{S}}$, and one which is smaller in $\epsilon$. We restate the result \eqref{eq:RS_mapping} here for convenience. 
\begin{lemma}[Mapping properties of $\mc{R}_{\mc{S}}$, restated]\label{lem:sing_layer_remainder}
For $0<\alpha<\gamma\le\beta<1$, let $\X\in C^{2,\beta}$ be as in section \ref{subsec:geom} and consider $\mc{R}_\mc{S}$ as in \eqref{eq:RS_def}. Given $\varphi\in C^{0,\alpha}(\Gamma_\epsilon)$ with $\int_{\T}\varphi(s,\theta)\,ds=0$, we have that $\mc{R}_\mc{S}$ may be decomposed as 
\begin{align*}
\mc{R}_\mc{S}[\varphi] &= \mc{R}_{\mc{S},\epsilon}[\varphi] + \mc{R}_{\mc{S},+}[\varphi] 
\end{align*}
where
\begin{equation}\label{eq:RS_lem}
\begin{aligned}
\norm{\mc{R}_{\mc{S},\epsilon}[\varphi]}_{C^{0,\alpha}} &\le c(\kappa_{*,\alpha},c_\Gamma)\,\epsilon^{2-\alpha}\norm{\varphi}_{L^\infty}\,, \qquad 
\abs{\mc{R}_{\mc{S},\epsilon}[\varphi]}_{\dot C^{1,\alpha}} \le c(\kappa_{*,\textcolor{black}{\alpha^+}},c_\Gamma)\,\epsilon^{1-\textcolor{black}{\alpha^+}}\norm{\varphi}_{C^{0,\alpha}}  \\
\norm{\mc{R}_{\mc{S},+}[\varphi]}_{C^{0,\alpha}} &\le c(\kappa_{*,\alpha},c_\Gamma)\,\epsilon^{1-\alpha}\norm{\varphi}_{L^\infty} \,, \qquad
\abs{\mc{R}_{\mc{S},+}[\varphi]}_{\dot C^{1,\textcolor{black}{\gamma}}} \le c(\kappa_{*,\textcolor{black}{\gamma}},c_\Gamma)\,\epsilon^{1-2\textcolor{black}{\gamma}}\norm{\varphi}_{C^{0,\alpha}}\,.
\end{aligned}
\end{equation}
\end{lemma}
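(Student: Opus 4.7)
The plan is to substitute the expansion \eqref{eq:Rdiff} for $1/|\bR|-1/|\barR|$ directly into the definition \eqref{eq:RS_def}, producing a natural decomposition of $\mc{R}_\mc{S}[\varphi]$ into five pieces indexed by the five numerator terms in \eqref{eq:Rdiff}. I will assign the first two pieces (with numerators $\epsilon\bars^2 Q_{R,0}(s,\theta)$ and $\kappa_3 \epsilon^2 \bars\sin\bartheta$, both carrying an explicit extra $\epsilon$-factor but not improving the $|\bR|^{-3}$-singularity of the denominator $|\barR||\bR|(|\bR|+|\barR|)$) to $\mc{R}_{\mc{S},\epsilon}$, and the three remaining pieces (whose numerators all carry at least two powers of $\bars$ and therefore reduce the singularity to at worst $1/|\barR|$) to $\mc{R}_{\mc{S},+}$. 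The denominator will be treated as comparable to $|\barR|^3$ via \eqref{eq:xest2}; when the strict form \eqref{eq:K_lkmn} required by Lemma \ref{lem:alpha_est} is needed, I will rewrite $1/(|\bR|+|\barR|) = 1/(2|\bR|) + (|\bR|-|\barR|)/[2|\bR|(|\bR|+|\barR|)]$ and collect the resulting $O(\epsilon)$-small correction with the higher-order remainder.

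For $\mc{R}_{\mc{S},\epsilon}$, the $L^\infty$ bound on the first piece is immediate from Lemma \ref{lem:basic_est} applied to $\bars^2/|\barR|^3 \lesssim 1/|\barR|$, producing $c\epsilon^2\norm{\varphi}_{L^\infty} \le c\epsilon^{2-\alpha}\norm{\varphi}_{L^\infty}$. For the twist piece, since $g(\bars,\bartheta) := \kappa_3 \sin\bartheta$ satisfies $g(0,0)=0$, Lemma \ref{lem:odd_nm} (with $\ell=1$, $k=0$) produces an $\epsilon^\alpha$-cancellation, giving a bound $c\epsilon^{2+\alpha}\norm{\varphi}_{C^{0,\alpha}}$. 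The $\dot C^{0,\alpha}$ seminorms follow from Lemma \ref{lem:alpha_est} (case (1) for the first piece, case (2) for the twist piece). For the $\dot C^{1,\alpha}$ bound I differentiate the kernel under the integral sign: the derivative either lands on a $Q_{R,0}$- or $\wh\kappa$-type coefficient (controlled by \eqref{eq:Q_bds2}--\eqref{eq:Q_Rj}) or on $|\bR|$ (raising the singularity order by one while still fitting the hypotheses of Lemma \ref{lem:alpha_est}). The worst term produces the claimed $\epsilon^{1-\alpha^+}$ bound; the margin $\alpha^+ > \alpha$ is forced by a logarithmic factor in case (2) of Lemma \ref{lem:alpha_est}, which must be absorbed into the $\sqrt{s_0^2+\epsilon^2\theta_0^2}^{\,\alpha^+-\alpha}$ weight.

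For $\mc{R}_{\mc{S},+}$, the key feature is that each numerator carries at least $\bars^2$, so the effective kernel has a singularity of at most $\bars^2/|\barR|^3 \lesssim 1/|\barR|$, and the operator should gain a full derivative in analogy with the straight single layer $\overline{\mc{S}}$ from Lemma \ref{lem:straight_components}. The $L^\infty$ and $C^{0,\alpha}$ bounds again follow from Lemma \ref{lem:basic_est}, with the $\epsilon^{-\alpha}$-losses in the $Q_{R,2}$, $Q_{R,3}$ Hölder norms absorbed by the explicit $\epsilon$- and $\epsilon^2$-prefactors of those pieces. For $\dot C^{1,\gamma}$ I differentiate the operator under the integral sign (valid because the $\bars^2$ gain keeps the derivative kernel integrable), reorganize the result into the form \eqref{eq:K_lkmn} by pulling out a bounded multiplier $\bars^2/|\barR|^2$, and apply Lemma \ref{lem:alpha_est} case (1). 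The final $\epsilon^{1-2\gamma}$ exponent comes from combining the $\epsilon^{-\gamma}$-loss in the $C^{0,\gamma}$-norm of $Q_{R,2}$ or $Q_{R,3}$, a second $\epsilon^{-\gamma}$ produced by the differentiation, and the explicit $\epsilon$-prefactors already present.

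The main obstacle is not any individual estimate but rather the careful tracking of $\epsilon$-powers across the five pieces: each carries its own balance between the explicit $\epsilon$-factors in the numerator of \eqref{eq:Rdiff}, the $\epsilon$-factor from $\mc{J}_\epsilon$, the possible $\epsilon^{-\alpha}$- or $\epsilon^{-\gamma}$-loss from the Hölder norms of $Q_{R,j}$, and the $\epsilon$-scaling produced by whichever of Lemmas \ref{lem:basic_est}, \ref{lem:odd_nm}, or \ref{lem:alpha_est} is invoked. Ensuring that every piece simultaneously satisfies the four target exponents $2-\alpha$, $1-\alpha$, $1-\alpha^+$, and $1-2\gamma$—in particular threading the margin $\alpha^+>\alpha$ through the proof to absorb the logarithmic factor in Lemma \ref{lem:alpha_est} case (2)—is the central technical challenge. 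Once this bookkeeping is in place, each estimate reduces to a direct application of the tools assembled in Section \ref{subsec:tools}.
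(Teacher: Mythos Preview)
Your decomposition has a genuine gap at the $\dot C^{1,\alpha}$ and $\dot C^{1,\gamma}$ stage. The five numerator terms in \eqref{eq:Rdiff} involve the coefficients $Q_{R,j}$ from \eqref{eq:Q_Rj}, and these depend on the curvature $\wh\kappa(s,\theta)$, which under the hypothesis $\X\in C^{2,\beta}$ is only $C^{0,\beta}$ in $s$, not $C^1$. In particular your first piece has the factor $Q_{R,0}(s,\theta)$, which depends on the evaluation point $(s,\theta)$ alone; when you ``differentiate the kernel under the integral sign'' the $s$-derivative must land on $Q_{R,0}$, and $\p_sQ_{R,0}$ does not exist. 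The references \eqref{eq:Q_bds2}--\eqref{eq:Q_Rj} you cite give $C^{0,\beta}$ bounds for the $Q_{R,j}$, not bounds on their derivatives. The same problem afflicts pieces 3--5 in your $\mc{R}_{\mc{S},+}$: each of $Q_{R,1}$, $Q_{R,2}$, $Q_{R,3}$ is only $C^{0,\beta}$. The full numerator $|\barR|^2-|\bR|^2$ \emph{is} $C^1$ in $s$ (since $\bR$ is built from the $C^{1,\beta}$ frame), but this regularity arises through cancellation among the five terms and is destroyed by your splitting.

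The paper's remedy is to introduce an intermediate vector $\bR_{\rm t}=\bars\be_{\rm t}(s)+\epsilon(\be_r(s,\theta)-\be_r(s-\bars,\theta-\bartheta))$, which is built from the $C^{1,\beta}$ frame vectors rather than the $C^{0,\beta}$ curvature coefficients. Then both $1/|\bR|-1/|\bR_{\rm t}|$ and $1/|\bR_{\rm t}|-1/|\barR|$ are individually $C^1$ in $(s,\theta)$; the former is higher order in $\bars$ and goes to $\mc{R}_{\mc{S},+}$, the latter carries the extra $\epsilon$ and goes to $\mc{R}_{\mc{S},\epsilon}$. One differentiates these kernels via $\p_s(1/|\bR_{\rm t}|)=-(\p_s\bR_{\rm t}\cdot\bR_{\rm t})/|\bR_{\rm t}|^3$ and only \emph{afterward} expands the result in $C^{0,\beta}$ coefficients of the form needed for Lemma~\ref{lem:alpha_est}. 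Two smaller omissions in your plan: you do not treat the tail $\mc{R}_{\mc{S},0}$ arising because $\overline{\mc{S}}$ is an integral over $\bars\in\R$ rather than $\T$ (this is where the mean-zero condition $\int_\T\varphi\,ds=0$ is used), and you do not separate the Jacobian correction $-\epsilon^2\wh\kappa$ from $\mc{J}_\epsilon$; both are easy but must be handled.
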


\begin{proof}
In order to separate $\mc{R}_{\mc{S}}$ into a piece that is very small in $\epsilon$ and a piece that gives additional smoothing, we will need to define an intermediate kernel similar to what was done with $\bR_{\rm even}$ in \eqref{eq:Reven}. We define
\begin{align*}
\bR_{\rm t}(s,\bars,\theta,\bartheta) = \bars\be_{\rm t}(s) + \epsilon(\be_r(s,\theta)-\be_r(s-\bars,\theta-\bartheta))\,, 
\end{align*}
and note that
\begin{equation}\label{eq:Rtsq}
\begin{aligned}
\abs{\bR_{\rm t}}^2 
%
%
%
&= \textstyle
\abs{\overline{\bm{R}}}^2 + \epsilon\bars^2Q_{S,0}(s,\theta,\bars,\bartheta) + 2\kappa_3\epsilon^2\bars\sin(\bartheta) + \epsilon\bars^3 Q_{S,1}(s,\theta,\bars,\bartheta) + \epsilon^2\bars^2\sin(\frac{\bartheta}{2})Q_{S,2}(s,\theta,\bars,\bartheta) \\
\abs{\bm{R}}^2 
%
%
&= \textstyle
\abs{\bR_{\rm t}}^2 + \bars^3 Q_{S,3}(s,\theta,\bars,\bartheta) + \epsilon \bars^2\sin(\frac{\bartheta}{2}) Q_{S,4}(s,\theta,\bars,\bartheta)\,,
\end{aligned}
\end{equation}
where, using the notation of section \ref{subsec:tools}, we have
\begin{align*}
Q_{S,0} &= \textstyle -2\wh\kappa(s,\theta)+\epsilon\wh\kappa(s,\theta)^2+\epsilon\kappa_3^2 -4\sin(\frac{\bartheta}{2})\be_{\rm t}(s)\cdot\bm{Q}_\theta(s,\bars,\theta-\frac{\bartheta}{2}) \\
Q_{S,1} &=  \textstyle 2\epsilon \bars^3 Q_{0,1}(s,\bars,\theta)   +\epsilon^2\bars^3 Q_{0,4}(s,\bars,\theta) \\
Q_{S,2} &=\textstyle 4Q_{0,3}(s,\theta,\bars,\bartheta) -4\bm{Q}_\theta\cdot\bm{Q}_r(s,\bars,\theta)+4\sin(\frac{\bartheta}{2})\abs{\bm{Q}_\theta}^2\\
Q_{S,3} &= \textstyle 2\be_{\rm t}\cdot\bm{Q}_{\rm t} +2\epsilon \bm{Q}_{\rm t}(s,\bars)\cdot\bm{Q}_r(s,\bars,\theta-\bartheta) +\bars\abs{\bm{Q}_{\rm t}}^2 \\
Q_{S,4} &= \textstyle - 4\bm{Q}_{\rm t}(s,\bars)\cdot\be_\theta(s,\theta-\frac{\bartheta}{2})\,.
\end{align*}
Here, using the notation \eqref{eq:Q_Cbeta_12}, each $Q_{S,j}$ satisfies the $C^{0,\beta}$ estimates
\begin{equation}\label{eq:QSj_cbeta}
\norm{Q_{S,j}}_{C^{0,\beta}_\mu} \le \epsilon^{-\beta}\,c(\kappa_{*,\beta})\,, \quad \mu=1,2\,.
\end{equation}
Note, in particular, that the difference between $\bR_{\rm t}$ and $\bR$ is higher order in $\bars$, while the difference between $\bR_{\rm t}$ and $\barR$ is higher order in $\epsilon$. As in \eqref{eq:Rdiff} and \eqref{eq:Rdiffk}, we have that the inverse differences $\frac{1}{\abs{\bR}^k}-\frac{1}{\abs{\bR_{\rm t}}^k}$ and $\frac{1}{\abs{\bR_{\rm t}}^k}-\frac{1}{|\barR|^k}$ may be written
\begin{equation}\label{eq:Rt_inv_diff}
\begin{aligned}
\frac{1}{\abs{\bR}^k}-\frac{1}{\abs{\bR_{\rm t}}^k} &= \frac{\bars^3 Q_{S,3} + \epsilon \bars^2\sin(\frac{\bartheta}{2})Q_{S,4}}{\abs{\bR}\abs{\bR_{\rm t}}(\abs{\bR}+\abs{\bR_{\rm t}})}\sum_{\ell=0}^{k-1}\frac{1}{\abs{\bR}^\ell\abs{\bR_{\rm t}}^{k-1-\ell}}\\
\frac{1}{\abs{\bR_{\rm t}}^k}-\frac{1}{|\barR|^k} &= \frac{\epsilon\bars^2Q_{S,0} + 2\kappa_3\epsilon^2\bars\sin(\bartheta) + \epsilon\bars^3 Q_{S,1} + \epsilon^2\bars^2\sin(\frac{\bartheta}{2})Q_{S,2}}{\abs{\bR_{\rm t}}|\barR|(\abs{\bR_{\rm t}}+|\barR|)}\sum_{\ell=0}^{k-1}\frac{1}{\abs{\bR_{\rm t}}^\ell|\barR|^{k-1-\ell}}\,.
\end{aligned}
\end{equation}
Furthermore, we have that $\bR_{\rm t}$ satisfies the same estimates as $\bR$ in each of Lemmas \ref{lem:Rests}, \ref{lem:basic_est}, \ref{lem:odd_nm}, and \ref{lem:alpha_est} by analogous arguments.

Using the representation \eqref{eq:barS_barD} of $\overline{\mc{S}}$ and recalling the form \eqref{eq:jacfac} of the Jacobian factor $\mc{J}_\epsilon(s-\bars,\theta-\bartheta)$, we proceed by considering the single layer remainder $\mc{R}_\mc{S}$ in four parts: 
\begin{align*}
\mc{R}_{\mc{S}}[\varphi](s,\theta) &= \mc{R}_{\mc{S},0}[\varphi](s,\theta)+\mc{R}_{\mc{S},1}[\varphi](s,\theta)+\mc{R}_{\mc{S},2}[\varphi](s,\theta)+\mc{R}_{\mc{S},3}[\varphi](s,\theta)\,,\\
\mc{R}_{\mc{S},0} &:= 
-\frac{1}{4\pi}\bigg(\int_{-\infty}^{-1/2}+\int_{1/2}^{\infty}\bigg)\int_{-\pi}^{\pi}\frac{1}{|\barR|}\,\varphi(s-\bars,\theta-\bartheta) \,\epsilon\, d\bartheta d\bars \\
\mc{R}_{\mc{S},1} &:= 
\frac{1}{4\pi}\int_{-1/2}^{1/2}\int_{-\pi}^{\pi}\bigg(\frac{1}{\abs{\bR}}-\frac{1}{\abs{\bR_{\rm t}}} \bigg)\,\varphi(s-\bars,\theta-\bartheta) \,\epsilon\, d\bartheta d\bars \\
\mc{R}_{\mc{S},2} &:= 
\frac{1}{4\pi}\int_{-1/2}^{1/2}\int_{-\pi}^{\pi}\bigg(\frac{1}{\abs{\bR_{\rm t}}}-\frac{1}{|\barR|} \bigg)\,\varphi(s-\bars,\theta-\bartheta) \,\epsilon\, d\bartheta d\bars \\
\mc{R}_{\mc{S},3} &:= -\frac{1}{4\pi}\int_{-1/2}^{1/2}\int_{-\pi}^{\pi}\frac{1}{\abs{\bR}}\,\varphi(s-\bars,\theta-\bartheta) \,\epsilon^2\wh\kappa(s-\bars,\theta-\bartheta)\, d\bartheta d\bars \,.
\end{align*}
We begin by bounding the smooth remainder $\mc{R}_{\mc{S},0}$ away from the singularity at $\bars=\bartheta=0$. Since $\int_0^1\varphi(s,\theta)\,ds=0$, we may write $\varphi(s,\theta)=\p_s\Phi(s,\theta)$ for some $\Phi$ with $\Phi(0,\theta)=0$. Then, integrating by parts in $\bars$, we have
\begin{align*}
\mc{R}_{\mc{S},0} &= 
\frac{1}{4\pi}\bigg(\int_{-\infty}^{-1/2}+\int_{1/2}^{\infty}\bigg)\int_{-\pi}^{\pi}\p_{\bars}\bigg(\frac{1}{|\barR|}\bigg)\,\Phi(s-\bars,\theta-\bartheta) \,\epsilon\, d\bartheta d\bars \\
&\qquad - \frac{1}{4\pi}\int_{-\pi}^{\pi}\frac{1}{|\barR|}\,\Phi(s-\bars,\theta-\bartheta) \,\epsilon\, d\bartheta \bigg|_{\bars=-1/2}^{\bars=1/2}\,.
\end{align*}
In particular, 
\begin{align*}
\abs{\mc{R}_{\mc{S},0}} &\le c\,\epsilon\norm{\Phi}_{L^\infty}\int_{1/2}^{\infty}\frac{1}{\bars^2} d\bars +c\,\epsilon\norm{\Phi}_{L^\infty} \le c\,\epsilon\norm{\Phi}_{L^\infty}= c\,\epsilon\norm{\int_0^s\varphi\,ds}_{L^\infty}\le c\,\epsilon\norm{\varphi}_{L^\infty}\,.
\end{align*}
Furthermore, we have
\begin{align*}
\abs{\mc{R}_{\mc{S},0}}_{\dot C^2} &\le c\,\epsilon\norm{\varphi}_{L^\infty}\int_{1/2}^{\infty}\bigg(\frac{1}{\bars^3}+\frac{\epsilon}{\bars^4}+\frac{\epsilon^2}{\bars^5}\bigg) d\bars \le c\,\epsilon\norm{\varphi}_{L^\infty}\,.
\end{align*}

Moving on to the remainders $\mc{R}_{\mc{S},1}$, $\mc{R}_{\mc{S},2}$, and $\mc{R}_{\mc{S},3}$, we may use \eqref{eq:Rt_inv_diff} and Lemma \ref{lem:basic_est} to estimate
\begin{align*}
\abs{\mc{R}_{\mc{S},1}} &\le 
 c\norm{\varphi}_{L^\infty}\int_{-1/2}^{1/2}\int_{-\pi}^{\pi}\frac{\abs{\bars}^3\abs{Q_{S,3}}+\bars^2|\epsilon\sin(\frac{\bartheta}{2})|\abs{Q_{S,4}}}{\abs{\bR}\abs{\bR_{\rm t}}(\abs{\bR}+\abs{\bR_{\rm t}})}\,\epsilon\, d\bartheta d\bars 
\le c(\kappa_*,c_\Gamma)\,\epsilon\norm{\varphi}_{L^\infty} \\
\abs{\mc{R}_{\mc{S},2}} &\le  
c(\kappa_*)\norm{\varphi}_{L^\infty}\int_{-1/2}^{1/2}\int_{-\pi}^{\pi}\frac{\epsilon\bars^2 + \epsilon^2\abs{\bars}|\sin(\bartheta)| }{\abs{\bR_{\rm t}}|\barR|(\abs{\bR_{\rm t}}+|\barR|)} \,\epsilon\, d\bartheta d\bars 
\le c(\kappa_*,c_\Gamma)\,\epsilon^2\norm{\varphi}_{L^\infty} \\
\abs{\mc{R}_{\mc{S},3}} &\le c(\kappa_*)\norm{\varphi}_{L^\infty}\int_{-1/2}^{1/2}\int_{-\pi}^{\pi}\frac{1}{\abs{\bR}} \,\epsilon^2\, d\bartheta d\bars
\le c(\kappa_*,c_\Gamma)\,\epsilon^2\norm{\varphi}_{L^\infty}\,.
\end{align*}
Using Lemma \ref{lem:alpha_est}, case (1), we may similarly estimate
\begin{align*}
\abs{\mc{R}_{\mc{S},1}}_{\dot C^{0,\alpha}} &\le c(\kappa_{*,\alpha},c_\Gamma)\,\epsilon^{1-\alpha}\norm{\varphi}_{L^\infty} \\
\abs{\mc{R}_{\mc{S},2}}_{\dot C^{0,\alpha}} &\le c(\kappa_{*,\alpha},c_\Gamma)\,\epsilon^{2-\alpha}\norm{\varphi}_{L^\infty} \\
\abs{\mc{R}_{\mc{S},3}}_{\dot C^{0,\alpha}} &\le c(\kappa_{*,\alpha},c_\Gamma)\,\epsilon^{2-\alpha}\norm{\varphi}_{L^\infty}\,.
\end{align*}
We next bound derivatives of $\mc{R}_{\mc{S},j}$, $j=1,2,3$, in $s$ and $\theta$. We begin with $\p_s\mc{R}_{\mc{S}}$ and note that
\begin{align*}
\p_s\bR_{\rm t} &= (1-\epsilon\wh\kappa(s,\theta))\be_{\rm t}(s) + \bars(\kappa_1\be_{\rm n_1}(s)+\kappa_2\be_{\rm n_2}(s))+\epsilon \kappa_3\be_\theta(s,\theta) 
\end{align*}
while
\begin{align*}
\p_s\bR&= (1-\epsilon\wh\kappa(s,\theta))\be_{\rm t}(s)+\epsilon\kappa_3\be_\theta(s,\theta)\,.
\end{align*}
We may then write

\begin{equation}\label{eq:psRt_1}
\begin{aligned}
\p_s\bR_{\rm t}\cdot\bR_{\rm t} &= \p_s\barR\cdot\barR + \epsilon\bars Q_{S,5}(s,\theta,\bars,\bartheta)-\epsilon^2 \kappa_3\sin(\bartheta) + \epsilon\bars^2 Q_{S,6}(s,\theta,\bars,\bartheta)\,, \\
Q_{S,5} &= -\wh\kappa(s,\theta-\bartheta)-(1-\epsilon\wh\kappa)\big(\wh\kappa(s,\theta)-\bars Q_{0,1}(s,\bars,\theta)\big)+\epsilon\kappa_3^2\cos(\bartheta) \\
Q_{S,6} &= (\kappa_1\be_{\rm n_1}(s)+\kappa_2\be_{\rm n_2}(s))\cdot\bm{Q}_r(s,\bars,\theta-\bartheta)
\end{aligned}
\end{equation}
where $Q_{0,1}$ and $\bm{Q}_r$ are as in section \ref{subsec:tools}. In addition, we have 
\begin{equation}\label{eq:psRt_2}
\begin{aligned}
\p_s\bR_{\rm t}\cdot\bR_{\rm t} &= \p_s\bR\cdot\bR + \epsilon\bars^2Q_{S,6}(s,\theta,\bars,\bartheta) + \bars^2Q_{S,7}(s,\theta,\bars) \\
Q_{S,7} &= -\big((1-\epsilon\wh\kappa(s,\theta))\be_{\rm t}(s)+\epsilon\kappa_3\be_\theta(s,\theta)\big)\cdot\bm{Q}_{\rm t}(s,\bars)
\end{aligned}
\end{equation}
where $Q_{S,6}$ is as in \eqref{eq:psRt_1} and $\bm{Q}_{\rm t}$ is as in section \ref{subsec:tools}. Each of $Q_{S,5}$, $Q_{S,6}$, $Q_{S,7}$ additionally satisfy the bound \eqref{eq:QSj_cbeta}.
Using \eqref{eq:psRt_1} and \eqref{eq:psRt_2}, we may write
\begin{align*}
\p_s\bigg(\frac{1}{\abs{\bR}}-\frac{1}{\abs{\bR_{\rm t}}}\bigg) &= \frac{\p_s\bR_{\rm t}\cdot\bR_{\rm t}}{\abs{\bR_{\rm t}}^3}-\frac{\p_s\bR\cdot\bR}{\abs{\bR}^3}\\
&= \p_s\bR_{\rm t}\cdot\bR_{\rm t}\bigg(\frac{1}{\abs{\bR_{\rm t}}^3}-\frac{1}{\abs{\bR}^3}\bigg)+ \frac{\p_s\bR_{\rm t}\cdot\bR_{\rm t}-\p_s\bR\cdot\bR}{\abs{\bR}^3}\\
&= \big(\bars + \epsilon\bars Q_{S,5}-\epsilon^2 \kappa_3\sin(\bartheta) + \epsilon\bars^2 Q_{S,6}\big)\bigg(\frac{1}{\abs{\bR_{\rm t}}^3}-\frac{1}{\abs{\bR}^3}\bigg)+ \frac{\epsilon\bars^2Q_{S,6} + \bars^2Q_{S,7}}{\abs{\bR}^3}\,,
\end{align*}
where we note that $\p_s\barR\cdot\barR=\bars$.

Using \eqref{eq:Rt_inv_diff} to expand $\frac{1}{\abs{\bR_{\rm t}}^3}-\frac{1}{\abs{\bR}^3}$ and noting the additional power of $\bars$ in the difference, we may apply Lemma \ref{lem:basic_est} to estimate 
\begin{align*}
\abs{\p_s\mc{R}_{\mc{S},1}} &= \abs{ \frac{1}{4\pi}\int_{-1/2}^{1/2}\int_{-\pi}^{\pi}\p_s\bigg(\frac{1}{\abs{\bR}}-\frac{1}{\abs{\bR_{\rm t}}} \bigg)\,\varphi(s-\bars,\theta-\bartheta) \,\epsilon\, d\bartheta d\bars } \\
&\le c(\kappa_*)\norm{\varphi}_{L^\infty}\int_{-1/2}^{1/2}\int_{-\pi}^{\pi} \frac{1}{\abs{\bR}}\,\epsilon\, d\bartheta d\bars
\le c(\kappa_*,c_\Gamma)\,\epsilon\norm{\varphi}_{L^\infty}\,.
\end{align*}
Furthermore, for any \textcolor{black}{$\gamma$} satisfying $\alpha<\textcolor{black}{\gamma}<1$, we may use case (1) of Lemma \ref{lem:alpha_est} to estimate
\begin{align*}
\abs{\p_s\mc{R}_{\mc{S},1}}_{\dot C^{0,\textcolor{black}{\gamma}}} &= \abs{ \frac{1}{4\pi}\int_{-1/2}^{1/2}\int_{-\pi}^{\pi}\p_s\bigg(\frac{1}{\abs{\bR}}-\frac{1}{\abs{\bR_{\rm t}}} \bigg)\,\varphi(s-\bars,\theta-\bartheta) \,\epsilon\, d\bartheta d\bars }_{\dot C^{0,\textcolor{black}{\gamma}}} \\
&\le c(\kappa_*,c_\Gamma)\,\epsilon^{1-\textcolor{black}{\gamma}}\bigg(\max_j\norm{Q_{S,j}}_{C^{0,\textcolor{black}{\gamma}}_1} \bigg)\norm{\varphi}_{L^\infty}
\le c(\kappa_{*,\textcolor{black}{\gamma}},c_\Gamma)\,\epsilon^{1-2\textcolor{black}{\gamma}}\norm{\varphi}_{L^\infty}\,.
\end{align*}

Similarly, for $\mc{R}_{\mc{S},2}$ we may calculate
\begin{align*}
\p_s\bigg(\frac{1}{\abs{\bR_{\rm t}}}-\frac{1}{|\barR|}\bigg) 
&= \bars\bigg(\frac{1}{|\barR|^3}-\frac{1}{\abs{\bR_{\rm t}}^3}\bigg)- \frac{\epsilon\bars Q_{S,5}-\epsilon^2 \kappa_3\sin(\bartheta) + \epsilon\bars^2 Q_{S,6}}{\abs{\bR_{\rm t}}^3}\,.
\end{align*}
Using \eqref{eq:Rt_inv_diff} and noting the additional $\epsilon$, we may use Lemmas \ref{lem:odd_nm} and \ref{lem:basic_est} to obtain 
\begin{align*}
\abs{\p_s\mc{R}_{\mc{S},2}} &= 
\abs{\frac{1}{4\pi}\int_{-1/2}^{1/2}\int_0^{2\pi}\p_s\bigg(\frac{1}{\abs{\bR_{\rm t}}}-\frac{1}{|\barR|} \bigg)\,\varphi(s-\bars,\theta-\bartheta) \,\epsilon\, d\bartheta d\bars} \\
&\le c(\kappa_*,c_\Gamma)\,\epsilon^{1+\alpha}\,\big(\max_{j}\norm{Q_{S,j}}_{C^{0,\alpha}_2}\big)\norm{\varphi}_{C^{0,\alpha}}
\le c(\kappa_{*,\alpha},c_\Gamma)\,\epsilon\norm{\varphi}_{C^{0,\alpha}}\,.
\end{align*}
Furthermore, using Lemma \ref{lem:alpha_est}, case (2), we may estimate
\begin{align*}
\abs{\p_s\mc{R}_{\mc{S},2}}_{\dot C^{0,\alpha}} &= 
\abs{\frac{1}{4\pi}\int_{-1/2}^{1/2}\int_{-\pi}^{\pi}\p_s\bigg(\frac{1}{\abs{\bR_{\rm t}}}-\frac{1}{|\barR|} \bigg)\,\varphi(s-\bars,\theta-\bartheta) \,\epsilon\, d\bartheta d\bars}_{\dot C^{0,\alpha}} \\
&\le c(\kappa_*,c_\Gamma)\,\epsilon\,\max_j\big(\norm{Q_{S,j}}_{C^{0,\textcolor{black}{\alpha^+}}_1}+\norm{Q_{S,j}}_{C^{0,\alpha}_2} \big)\norm{\varphi}_{C^{0,\alpha}}
\le c(\kappa_{*,\textcolor{black}{\alpha^+}},c_\Gamma)\,\epsilon^{1-\textcolor{black}{\alpha^+}} \norm{\varphi}_{C^{0,\alpha}}\,.
\end{align*}

Finally, using \eqref{eq:psRt_1} and \eqref{eq:psRt_2}, by Lemmas \ref{lem:odd_nm} and \ref{lem:basic_est} we may estimate 
\begin{align*}
\abs{\p_s\mc{R}_{\mc{S},3}} &= \abs{\frac{1}{4\pi}\int_{-1/2}^{1/2}\int_{-\pi}^{\pi}\frac{\p_s\bR\cdot\bR}{\abs{\bR}^3}\,\varphi(s-\bars,\theta-\bartheta) \,\epsilon^2\wh\kappa(s-\bars,\theta-\bartheta)\, d\bartheta d\bars}\\
&\le c(\kappa_*,c_\Gamma)\,\epsilon^{1+\alpha}\,\big(\max_{j}\norm{Q_{S,j}}_{C^{0,\alpha}_2}\big)\norm{\varphi\wh\kappa}_{C^{0,\alpha}}
\le c(\kappa_{*,\alpha},c_\Gamma)\,\epsilon\norm{\varphi}_{C^{0,\alpha}}\,.
\end{align*}
Similarly, by Lemma \ref{lem:alpha_est}, case (2), we have
\begin{align*}
\abs{\p_s\mc{R}_{\mc{S},3}}_{\dot C^{0,\alpha}} &= \abs{\frac{1}{4\pi}\int_{-1/2}^{1/2}\int_{-\pi}^{\pi}\frac{\p_s\bR\cdot\bR}{\abs{\bR}^3}\,\varphi(s-\bars,\theta-\bartheta) \,\epsilon^2\wh\kappa(s-\bars,\theta-\bartheta)\, d\bartheta d\bars}_{\dot C^{0,\alpha}}\\
&\le c(\kappa_*,c_\Gamma)\,\epsilon\,\max_j\big(\norm{Q_{S,j}}_{C^{0,\textcolor{black}{\alpha^+}}_1}+\norm{Q_{S,j}}_{C^{0,\alpha}_2} \big)\norm{\varphi\wh\kappa}_{C^{0,\alpha}}
\le c(\kappa_{*,\textcolor{black}{\alpha^+}},c_\Gamma)\,\epsilon^{1-\textcolor{black}{\alpha^+}} \norm{\varphi}_{C^{0,\alpha}}\,.
\end{align*}

Next we estimate $\frac{1}{\epsilon}\p_\theta\mc{R}_{\mc{S}}$. Note that $\frac{1}{\epsilon}\p_\theta\bR=\frac{1}{\epsilon}\p_\theta\bR_{\rm t}=\be_\theta(s,\theta)$. Using the identities of section \ref{subsec:tools}, we may write 
\begin{equation}\label{eq:pthetaRt}
\begin{aligned}
\textstyle\frac{1}{\epsilon}\p_\theta\bR_{\rm t}\cdot\bR_{\rm t} &= 
\textstyle 2\epsilon\sin(\frac{\bartheta}{2})\cos(\frac{\bartheta}{2}) +\epsilon \bars\kappa_3\cos(\bartheta)+\epsilon\bars^2 Q_{0,3}(s,\theta,\bars,\bartheta) \\
&= \textstyle\frac{1}{\epsilon}\p_\theta\bR\cdot\bR \\
&= \textstyle\frac{1}{\epsilon}\p_\theta\barR\cdot\barR + \epsilon\bars Q_{S,8}(s,\theta,\bars,\bartheta)\,,
\end{aligned}
\end{equation}
where $Q_{0,3}$ is as in \eqref{eq:Qexpand} and $Q_{S,8}$ satisfies the bound \eqref{eq:QSj_cbeta}.
We may then write
\begin{align*}
\textstyle \frac{1}{\epsilon}\p_\theta\displaystyle\bigg(\frac{1}{\abs{\bR}}-\frac{1}{\abs{\bR_{\rm t}}}\bigg) &= \frac{\frac{1}{\epsilon}\p_\theta\bR_{\rm t}\cdot\bR_{\rm t}}{\abs{\bR_{\rm t}}^3}-\frac{\frac{1}{\epsilon}\p_\theta\bR\cdot\bR}{\abs{\bR}^3}
= \textstyle\frac{1}{\epsilon}\p_\theta\bR\cdot\bR\displaystyle\bigg(\frac{1}{\abs{\bR_{\rm t}}^3}-\frac{1}{\abs{\bR}^3}\bigg)\\
&= \big(\textstyle 2\epsilon\sin(\frac{\bartheta}{2})\cos(\frac{\bartheta}{2}) +\epsilon \bars\kappa_3\cos(\bartheta)+\epsilon\bars^2 Q_{0,3}\big)\displaystyle\bigg(\frac{1}{\abs{\bR_{\rm t}}^3}-\frac{1}{\abs{\bR}^3}\bigg)\,.
\end{align*}
Again using \eqref{eq:Rt_inv_diff} to expand $\frac{1}{\abs{\bR_{\rm t}}^3}-\frac{1}{\abs{\bR}^3}$, we may use Lemma \ref{lem:basic_est} to obtain
\begin{align*}
\textstyle\abs{\frac{1}{\epsilon}\p_\theta\mc{R}_{\mc{S},1}} &= 
\abs{\frac{1}{4\pi}\int_{-1/2}^{1/2}\int_{-\pi}^{\pi}\textstyle\frac{1}{\epsilon}\p_\theta\displaystyle\bigg(\frac{1}{\abs{\bR}}-\frac{1}{\abs{\bR_{\rm t}}} \bigg)\,\varphi(s-\bars,\theta-\bartheta) \,\epsilon\, d\bartheta d\bars}\\
&\le c(\kappa_*)\norm{\varphi}_{L^\infty}\int_{-1/2}^{1/2}\int_{-\pi}^{\pi} \frac{1}{\abs{\bR}}\,\epsilon\, d\bartheta d\bars
\le c(\kappa_*,c_\Gamma)\,\epsilon\norm{\varphi}_{L^\infty}\,.
\end{align*}
Furthermore, again choosing $\textcolor{black}{\gamma}$ to satisfy $\alpha<\textcolor{black}{\gamma}\le\beta$, we may use case (1) of Lemma \ref{lem:alpha_est} to estimate
\begin{align*}
\textstyle\abs{\frac{1}{\epsilon}\p_\theta\mc{R}_{\mc{S},1}}_{\dot C^{0,\textcolor{black}{\gamma}}} &= 
\abs{\frac{1}{4\pi}\int_{-1/2}^{1/2}\int_{-\pi}^{\pi}\textstyle\frac{1}{\epsilon}\p_\theta\displaystyle\bigg(\frac{1}{\abs{\bR}}-\frac{1}{\abs{\bR_{\rm t}}} \bigg)\,\varphi(s-\bars,\theta-\bartheta) \,\epsilon\, d\bartheta d\bars}_{\dot C^{0,\textcolor{black}{\gamma}}}\\
&\le c(\kappa_*,c_\Gamma)\,\epsilon^{1-\textcolor{black}{\gamma}}\,\max_j\norm{Q_{S,j}}_{C^{0,\textcolor{black}{\gamma}}_1}\norm{\varphi}_{L^\infty}
\le c(\kappa_{*,\textcolor{black}{\gamma}},c_\Gamma)\,\epsilon^{1-2\textcolor{black}{\gamma}}\norm{\varphi}_{L^\infty}\,.
\end{align*}

Next, using \eqref{eq:pthetaRt}, we may write
\begin{align*}
\textstyle\frac{1}{\epsilon}\p_\theta\displaystyle\bigg(\frac{1}{\abs{\bR_{\rm t}}}-\frac{1}{|\barR|}\bigg) 
&= \textstyle\frac{1}{\epsilon}\p_\theta\barR\cdot\barR\displaystyle\bigg(\frac{1}{|\barR|^3}-\frac{1}{\abs{\bR_{\rm t}}^3}\bigg) + \frac{\frac{1}{\epsilon}\p_\theta\barR\cdot\barR-\frac{1}{\epsilon}\p_\theta\bR_{\rm t}\cdot\bR_{\rm t}}{\abs{\bR_{\rm t}}^3}\\
&= \textstyle 2\epsilon\sin(\frac{\bartheta}{2})\cos(\frac{\bartheta}{2})
\displaystyle\bigg(\frac{1}{|\barR|^3}-\frac{1}{\abs{\bR_{\rm t}}^3}\bigg) - \frac{\epsilon\bars Q_{S,8}}{\abs{\bR_{\rm t}}^3}\,.
\end{align*}
We may then use the expansion \eqref{eq:Rt_inv_diff} of $\frac{1}{|\barR|^3}-\frac{1}{\abs{\bR_{\rm t}}^3}$ along with Lemmas \ref{lem:odd_nm} and \ref{lem:basic_est} to estimate 
\begin{align*}
\textstyle\abs{\frac{1}{\epsilon}\p_\theta\mc{R}_{\mc{S},2}} &= 
\abs{\frac{1}{4\pi}\int_{-1/2}^{1/2}\int_{-\pi}^{\pi}\bigg(\frac{1}{\abs{\bR_{\rm t}}}-\frac{1}{|\barR|} \bigg)\,\varphi(s-\bars,\theta-\bartheta) \,\epsilon\, d\bartheta d\bars} \\
&\le c(\kappa_*,c_\Gamma)\,\epsilon^{1+\alpha}\,\big(\max_{j}\norm{Q_{S,j}}_{C^{0,\alpha}_2}\big)\norm{\varphi}_{C^{0,\alpha}}
\le c(\kappa_{*,\alpha},c_\Gamma)\,\epsilon\norm{\varphi}_{C^{0,\alpha}}\,.
\end{align*}
As was the case for $\p_s\mc{R}_{\mc{S},2}$, we may next use Lemma \ref{lem:alpha_est}, case (2) to obtain the $\dot C^{0,\alpha}$ estimate
\begin{align*}
\textstyle\abs{\frac{1}{\epsilon}\p_\theta\mc{R}_{\mc{S},2}}_{\dot C^{0,\alpha}} &= 
\abs{\frac{1}{4\pi}\int_{-1/2}^{1/2}\int_{-\pi}^{\pi}\bigg(\frac{1}{\abs{\bR_{\rm t}}}-\frac{1}{|\barR|} \bigg)\,\varphi(s-\bars,\theta-\bartheta) \,\epsilon\, d\bartheta d\bars} \\
&\le c(\kappa_*,c_\Gamma)\,\epsilon\,\max_j\big(\norm{Q_{S,j}}_{C^{0,\textcolor{black}{\alpha^+}}_1}+\norm{Q_{S,j}}_{C^{0,\alpha}_2} \big)\norm{\varphi}_{C^{0,\alpha}}
\le c(\kappa_{*,\textcolor{black}{\alpha^+}},c_\Gamma)\,\epsilon^{1-\textcolor{black}{\alpha^+}} \norm{\varphi}_{C^{0,\alpha}}\,.
\end{align*}
Finally, using \eqref{eq:pthetaRt}, we may estimate $\frac{1}{\epsilon}\p_\theta\mc{R}_{\mc{S},3}$ similarly to $\p_s\mc{R}_{\mc{S},3}$ to obtain
\begin{align*}
\textstyle\abs{\frac{1}{\epsilon}\p_\theta\mc{R}_{\mc{S},3}} &= \abs{\frac{1}{4\pi}\int_{-1/2}^{1/2}\int_{-\pi}^{\pi}\frac{\frac{1}{\epsilon}\p_\theta\bR\cdot\bR}{\abs{\bR}}\,\varphi(s-\bars,\theta-\bartheta) \,\epsilon^2\wh\kappa(s-\bars,\theta-\bartheta)\, d\bartheta d\bars} \\
&\le c(\kappa_*,c_\Gamma)\,\epsilon^{1+\alpha}\,\big(\max_{j}\norm{Q_{S,j}}_{C^{0,\alpha}_2}\big)\norm{\varphi\wh\kappa}_{C^{0,\alpha}}
\le c(\kappa_{*,\alpha},c_\Gamma)\,\epsilon\norm{\varphi}_{C^{0,\alpha}}\,.
\end{align*}
Again by case (2) of Lemma \ref{lem:alpha_est} we have 
\begin{align*}
\textstyle\abs{\frac{1}{\epsilon}\p_\theta\mc{R}_{\mc{S},3}}_{\dot C^{0,\alpha}} &= \abs{\frac{1}{4\pi}\int_{-1/2}^{1/2}\int_{-\pi}^{\pi}\frac{\frac{1}{\epsilon}\p_\theta\bR\cdot\bR}{\abs{\bR}}\,\varphi(s-\bars,\theta-\bartheta) \,\epsilon^2\wh\kappa(s-\bars,\theta-\bartheta)\, d\bartheta d\bars}_{\dot C^{0,\alpha}} \\
&\le c(\kappa_*,c_\Gamma)\,\epsilon\,\max_j\big(\norm{Q_{S,j}}_{C^{0,\textcolor{black}{\alpha^+}}_1}+\norm{Q_{S,j}}_{C^{0,\alpha}_2} \big)\norm{\varphi\wh\kappa}_{C^{0,\alpha}}
\le c(\kappa_{*,\textcolor{black}{\alpha^+}},c_\Gamma)\,\epsilon^{1-\textcolor{black}{\alpha^+}} \norm{\varphi}_{C^{0,\alpha}}\,.
\end{align*}
In total, we obtain Lemma \ref{lem:sing_layer_remainder}.
\end{proof}

\subsection{Mapping properties of double layer remainder terms}
We next show estimate \eqref{eq:RD_mapping} of Lemma \ref{lem:RS_and_RD} regarding the double layer remainder $\mc{R}_\mc{D}$ defined in \eqref{eq:RD_def}. In particular, we show that $\mc{R}_\mc{D}$ is smoother than $(\frac{1}{2}{\bf I}-\overline{{\mc D}})$ and obtain explicit $\epsilon$ dependence for the bound. We reiterate the result here for convenience. 
\begin{lemma}[Mapping properties of $\mc{R}_{\mc{D}}$, restated]\label{lem:doub_layer_remainder}
For $0<\gamma<\beta<1$, let $\X\in C^{2,\beta}$ be as in section \ref{subsec:geom} and consider $\mc{R}_\mc{D}$ as in \eqref{eq:RD_def}. Given $\psi\in C^{0,\gamma}(\Gamma_\epsilon)$, we have
\begin{equation}\label{eq:RD_lem}
\norm{\mc{R}_\mc{D}[\psi]}_{C^{1,\gamma}} \le c(\kappa_{*,\textcolor{black}{\gamma^+}},c_\Gamma)\,\epsilon^{-\textcolor{black}{\gamma^+}} \norm{\psi}_{C^{0,\gamma}}
\end{equation}
\textcolor{black}{for any $\gamma^+\in(\gamma, \beta]$.}
\end{lemma}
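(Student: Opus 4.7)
The plan is to decompose the kernel difference $K_\mc{D}-\overline{K}_\mc{D}$ into finitely many pieces, each controllable by the technical tools of Section \ref{subsec:tools}, in direct analogy with the proof of Lemma \ref{lem:sing_layer_remainder}. Write
\begin{equation*}
K_\mc{D}-\overline{K}_\mc{D} \;=\; \frac{1}{4\pi|\bR|^3}\bigl(\bR\cdot\bm{n}_{x'}-\barR\cdot\overline{\bm{n}}_{x'}\bigr) \;+\; \frac{\barR\cdot\overline{\bm{n}}_{x'}}{4\pi}\left(\frac{1}{|\bR|^3}-\frac{1}{|\barR|^3}\right).
\end{equation*}
By \eqref{eq:Rnxprime} the first piece collapses to $\frac{\bars^2 Q_{\rm n'}}{4\pi|\bR|^3}$, which scales like $|\bR|^{-1}$ after the $\bars^2$ gain. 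For the second piece, substitute $\barR\cdot\overline{\bm{n}}_{x'}=-2\epsilon\sin^2(\bartheta/2)$ from \eqref{eq:barR_ids} and expand the inverse-cube difference via \eqref{eq:Rdiffk}. The result is a finite sum of kernels of the form $\frac{\bars^\ell(\epsilon\sin(\bartheta/2))^k\, \wt Q}{|\barR|^m|\bR|^n}$ that satisfy either the exponent balance $\ell+k=m+n-1$ (case (1) of Lemma \ref{lem:alpha_est}) or the odd-parity balance $\ell+k=m+n-2$ with $\ell+k$ odd (case (2)), exactly paralleling the decomposition used for $\mc{R}_\mc{S}$. The far-field tail $|\bars|>\tfrac12$ arising from the integration range of $\overline{\mc{D}}$ is smooth and is handled as in $\mc{R}_{\mc{S},0}$ (direct estimation suffices, since no integration by parts is needed given the $|\bR|^{-2}$ decay of $K_\mc{D}$).

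For the $L^\infty$ bound on $\mc{R}_\mc{D}[\psi]$, Lemma \ref{lem:basic_est} immediately yields $\|\mc{R}_\mc{D}[\psi]\|_{L^\infty}\le c(\kappa_*,c_\Gamma)\,\epsilon\,\|\psi\|_{L^\infty}$, which is better than required. Next, to bound $\p_s\mc{R}_\mc{D}[\psi]$ and $\tfrac{1}{\epsilon}\p_\theta\mc{R}_\mc{D}[\psi]$ in $L^\infty$, I would use the identities \eqref{eq:psRt_1}--\eqref{eq:pthetaRt}: differentiation formally raises the singularity to $|\bR|^{-2}$, but the explicit structure of $\p_s\bR\cdot\bR$ and $\tfrac{1}{\epsilon}\p_\theta\bR\cdot\bR$ always supplies an extra $\bars$ or $\epsilon\sin(\bartheta/2)$ factor, keeping every piece in the framework of Lemma \ref{lem:alpha_est} (with parity condition intact).

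The sharpest estimate—the $\dot C^{0,\gamma}$ seminorm of each first derivative—comes from applying case (2) of Lemma \ref{lem:alpha_est} to the differentiated kernels with $\alpha=\gamma$ and $\alpha^+=\gamma^+\in(\gamma,\beta]$. The $\epsilon^{-\gamma^+}$ factor in \eqref{eq:RD_lem} originates precisely from the norm $\|\wt Q\|_{C^{0,\gamma^+}_1}\le \epsilon^{-\gamma^+}c(\kappa_{*,\gamma^+})$, where $\wt Q$ ranges over $Q_{\rm n'}$ together with the $Q_{R,j}$ and $Q_{0,j}$ of Section \ref{subsec:tools}—all of which blow up like $\epsilon^{-\gamma^+}$ in their Hölder norm in the $(s,\theta)$ variables, and no worse. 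No additional $\epsilon$ losses appear from $\psi$, since $\psi$ enters only through the $C^{0,\gamma}$ input to Lemma \ref{lem:alpha_est}.

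The main obstacle is bookkeeping: organizing the many pieces produced by differentiating the kernel so that each still matches the parity/exponent hypotheses of Lemma \ref{lem:alpha_est}, and verifying that the $\epsilon^{-\gamma^+}$ power is in fact the worst loss (rather than something stronger like $\epsilon^{-1-\gamma^+}$ which would arise from a naive derivative count). Mirroring the term-by-term structure of the proof of Lemma \ref{lem:sing_layer_remainder} keeps the calculation organized, but the derivatives $\p_s(\bR\cdot\bm{n}_{x'})$ and $\tfrac{1}{\epsilon}\p_\theta(\bR\cdot\bm{n}_{x'})$—which require expanding $\p_s\bm{n}_{x'}$ via the Frenet-like ODE \eqref{eq:frame} and noting the cancellations they produce against $\p_s\barR\cdot\overline{\bm{n}}_{x'}$—are the most delicate element of the argument.
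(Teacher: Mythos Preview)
Your proposal is correct and follows essentially the same route as the paper: the same decomposition of $K_\mc{D}-\overline{K}_\mc{D}$ via \eqref{eq:Rnxprime} and \eqref{eq:Rdiffk}, the same far-field tail estimate, explicit computation of $\p_s K_\mc{D}$ and $\tfrac{1}{\epsilon}\p_\theta K_\mc{D}$, and application of Lemmas \ref{lem:basic_est}, \ref{lem:odd_nm}, \ref{lem:alpha_est} with the $\epsilon^{-\gamma^+}$ loss coming from the $C^{0,\gamma^+}_1$ norms of the coefficient functions. The only item you do not mention explicitly is the Jacobian correction piece $\mc{R}_{\mc{D},2}$ (the $-\epsilon^2\wh\kappa$ part of $\mc{J}_\epsilon$), which the paper treats separately; but your reference to mirroring the structure of Lemma \ref{lem:sing_layer_remainder} already covers this, and the estimates for that piece are no harder than for $\mc{R}_{\mc{D},1}$.
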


\begin{proof}
Due to the form \eqref{eq:barS_barD} of $\overline{\mc{D}}$ used in section \ref{sec:strt_periodic} and the Jacobian factor \eqref{eq:jacfac} appearing in \eqref{eq:RD_def}, we consider the double layer remainder $\mc{R}_\mc{D}$ in three parts: 
\begin{align*}
\mc{R}_{\mc{D}}[\psi](s,\theta) &= \mc{R}_{\mc{D},0}[\psi](s,\theta)+\mc{R}_{\mc{D},1}[\psi](s,\theta)+\mc{R}_{\mc{D},2}[\psi](s,\theta)\,,\\
\mc{R}_{\mc{D},0} &:= 
-\frac{1}{4\pi}\bigg(\int_{-\infty}^{-1/2}+\int_{1/2}^{\infty}\bigg)\int_{-\pi}^{\pi}\overline{K}_\mc{D}\,\psi(s-\bars,\theta-\bartheta) \,\epsilon\, d\bartheta d\bars \\
\mc{R}_{\mc{D},1} &:= 
\frac{1}{4\pi}\int_{-1/2}^{1/2}\int_{-\pi}^{\pi}(K_\mc{D}-\overline{K}_\mc{D})\,\psi(s-\bars,\theta-\bartheta) \,\epsilon\, d\bartheta d\bars \\
\mc{R}_{\mc{D},2} &:= -\frac{1}{4\pi}\int_{-1/2}^{1/2}\int_{-\pi}^{\pi}K_\mc{D}\,\psi(s-\bars,\theta-\bartheta) \,\epsilon^2\wh\kappa(s-\bars,\theta-\bartheta)\, d\bartheta d\bars \,.
\end{align*}

We begin with the smooth remainder $\mc{R}_{\mc{D},0}$ away from the singularity at $\bars=\bartheta=0$. The straight kernel $\overline K_{\mc{D}}$ may be parameterized as 
\begin{equation}\label{eq:barKD}
\overline K_{\mc{D}} = \frac{-2\epsilon \sin^2(\frac{\bartheta}{2})}{|\barR|^3}\,,
\end{equation}
and thus, using the form \eqref{eq:overlinebx} of $\barR$, we have
\begin{align*}
\norm{\mc{R}_{\mc{D},0}}_{C^2} &\le 
c\,\epsilon^2\norm{\psi}_{L^\infty}\int_{1/2}^{\infty}\bigg(\frac{1}{\abs{\bars}^3}+\frac{1}{\bars^4}+\frac{1}{\abs{\bars}^5}\bigg) d\bars \le c\,\epsilon^2\norm{\psi}_{L^\infty}\,.
\end{align*}

We next turn to $L^\infty$ bounds for the remainders $\mc{R}_{\mc{D},1}$ and $\mc{R}_{\mc{D},2}$. Using the expansion \eqref{eq:Rnxprime}, we have that the curved kernel $K_{\mc D}$ may be expressed as  
\begin{align*}
K_{\mc D}-\overline K_{\mc D} &= \frac{\bR\cdot\bm{n}_{x'}-\barR\cdot\overline{\bm{n}}_{x'}}{\abs{\bR}^3}+ \barR\cdot\overline{\bm{n}}_{x'}\bigg(\frac{1}{\abs{\bR}^3}-\frac{1}{|\barR|^3}\bigg)
\\
&= \frac{\bars^2Q_{\rm n'}}{\abs{\bR}^3} - \textstyle 2\epsilon\sin^2(\frac{\bartheta}{2})\displaystyle \bigg(\frac{1}{\abs{\bR}^3}-\frac{1}{|\barR|^3}\bigg) \,.
\end{align*}
We may then use equation \eqref{eq:Rdiffk} and Lemma \ref{lem:basic_est} to estimate 
\begin{align*}
\abs{\mc{R}_{\mc{D},1}} &\le c\norm{\psi}_{L^\infty}\int_{-1/2}^{1/2}\int_{-\pi}^{\pi}\abs{K_\mc{D}-\overline{K}_\mc{D}} \,\epsilon\, d\bartheta d\bars \\
&\le c\norm{\psi}_{L^\infty}\int_{-1/2}^{1/2}\int_{-\pi}^{\pi}\bigg(\frac{\bars^2}{\abs{\bR}^3}+\frac{\epsilon\bars^2 +\epsilon^2|\sin(\frac{\bartheta}{2})|\abs{\bars} + \bars^4+\epsilon\abs{\bars}^3 + \epsilon^2 \bars^2|\sin(\frac{\bartheta}{2})|}{\abs{\overline{\bm{R}}}\abs{\bm{R}}(\abs{\bm{R}}+\abs{\overline{\bm{R}}})}\sum_{\ell=0}^{2}\frac{\epsilon\sin^2(\frac{\bartheta}{2})}{\abs{\bR}^\ell|\barR|^{2-\ell}} \bigg) \,\epsilon\, d\bartheta d\bars\\
&\le c\norm{\psi}_{L^\infty}\int_{-1/2}^{1/2}\int_{-\pi}^{\pi}\frac{1}{\abs{\bR}} \,\epsilon\, d\bartheta d\bars 
\le c\,\epsilon \norm{\psi}_{L^\infty}\,.
\end{align*}
Similarly, using the form \eqref{eq:barKD} of $\overline K_{\mc{D}}$, we may estimate
\begin{align*}
\abs{\mc{R}_{\mc{D},2}} &\le c(\kappa_*)\norm{\psi}_{L^\infty}\int_{-1/2}^{1/2}\int_{-\pi}^{\pi}\abs{K_\mc{D}} \,\epsilon^2 d\bartheta d\bars \\
&\le c(\kappa_*)\norm{\psi}_{L^\infty}\int_{-1/2}^{1/2}\int_{-\pi}^{\pi}\bigg(\frac{1}{|\barR|}+\frac{\epsilon}{\abs{\bR}} \bigg) \,\epsilon d\bartheta d\bars 
\le c(\kappa_*,c_\Gamma)\,\epsilon\norm{\psi}_{L^\infty}\,.
\end{align*}

We next show $C^{0,\gamma}$ bounds for $\p_s\mc{R}_{\mc D}$. We begin by noting that the straight kernel $\overline K_{\mc D}$ satisfies
\begin{equation}\label{eq:ds_barKD}
\p_s \overline K_{\mc D} = -3\frac{\bars\,\barR\cdot\overline{\bm{n}}_{x'}}{|\barR|^5} = 6\frac{\epsilon\sin^2(\frac{\bartheta}{2})\bars}{|\barR|^5}\,.
\end{equation}
Using that 
\begin{align*}
\p_s\bm{R} = (1-\epsilon\wh\kappa(s,\theta))\be_{\rm t}(s)+\epsilon\kappa_3\be_\theta(s,\theta)
\end{align*}
along with the form \eqref{eq:curvedR} of $\bR$ and the remainder expressions \eqref{eq:Qexpand}, we have that 
\begin{align*}
\bm{R}\cdot\p_s\bm{R} &= \textstyle \bars  + \bars^3Q_{\rm s,1}(s,\bars) + \epsilon\bars Q_{\rm s,2}(s,\theta,\bars,\bartheta) +\epsilon^2\kappa_3\sin(\bartheta) \\ 
\p_s\bm{R}\cdot\bm{n}_{x'} &= \p_s\bm{R}\cdot\be_r(s-\bars,\theta-\bartheta)=\bars\wh\kappa(s,\theta)-\kappa_3\epsilon\sin(\bartheta)+\epsilon\bars Q_{\rm s,3}(s,\theta,\bars,\bartheta) \,.
\end{align*}
Here, using the notation \eqref{eq:Q_Cbeta_12}, the functions $Q_{{\rm s},j}$ satisfy 
\begin{equation}\label{eq:Qsj_ests}
\begin{aligned}
 \norm{Q_{{\rm s},1}}_{C^{0,\beta}_\mu}&\le c(\kappa_{*,\beta})\,,\quad \mu=1,2 \\
 \norm{Q_{{\rm s},j}}_{C^{0,\beta}_\mu}&\le \epsilon^{-\beta}c(\kappa_{*,\beta})\,, \quad \mu=1,2\,;\; j=2,3\,.
 \end{aligned} 
 \end{equation}
Altogether, we have that the curved kernel $K_{\mc D}$ may be written 
\begin{equation}\label{eq:ps_KD}
\begin{aligned}
\p_s K_{\mc D} &= -3\frac{(\bm{R}\cdot\bm{n}_{x'})(\bm{R}\cdot\p_s\bm{R})}{\abs{\bm{R}}^5} + \frac{(\p_s\bm{R})\cdot\bm{n}_{x'}}{\abs{\bm{R}}^3}\\
&=\p_s\overline K_{\mc D} + 6\epsilon\textstyle\sin^2(\frac{\bartheta}{2})\bars \displaystyle \bigg(\frac{1}{\abs{\bR}^5} -\frac{1}{|\barR|^5}\bigg) + \frac{\bars\,\wh\kappa-\kappa_3\epsilon\sin(\bartheta)+\epsilon\bars Q_{\rm s,3}}{\abs{\bR}^3}+\frac{6\epsilon\sin^2(\frac{\bartheta}{2})\bars^3Q_{\rm s,1} }{\abs{\bR}^5} \\
 &\quad +\frac{6\epsilon^2\sin^2(\frac{\bartheta}{2})\bars Q_{\rm s,2} + \bars^3 Q_{\rm s,4} +\epsilon^2\sin(\frac{\bartheta}{2})\bars^2 Q_{\rm s,5} +12\kappa_3\epsilon^3\sin^3(\frac{\bartheta}{2})\cos(\frac{\bartheta}{2}) }{\abs{\bR}^5}
\end{aligned}
\end{equation}
where $Q_{\rm s,4} = -3Q_{\rm n'}(1+\bars^2 Q_{\rm s,1}+\epsilon Q_{\rm s,2})$ and $Q_{\rm s,5} = -6Q_{\rm n'}\kappa_3\cos(\frac{\bartheta}{2})$ for $Q_{\rm n'}(s,\bars,\theta-\bartheta)$ as in \eqref{eq:Rnxprime}.

Using equation \eqref{eq:Rdiffk} to rewrite $\frac{1}{\abs{\bR}^5}-\frac{1}{|\barR|^5}$, we may apply Lemmas \ref{lem:odd_nm} and \ref{lem:basic_est} along with the estimates \eqref{eq:Qsj_ests} to obtain
\begin{align*}
\abs{\p_s\mc{R}_{\mc{D},1}} &=
\abs{\frac{1}{4\pi}{\rm p.v.}\int_{-1/2}^{1/2}\int_{-\pi}^{\pi}(\p_sK_\mc{D}-\p_s\overline{K}_\mc{D})\psi(s-\bars,\theta-\bartheta) \,\epsilon\, d\bartheta d\bars } \\
&\le c(\kappa_{*},c_\Gamma)\,\epsilon^\gamma \big(\max_{j}\norm{Q_{{\rm s},j}}_{C^{0,\gamma}_2} \big) \norm{\psi}_{C^{0,\gamma}} 
\le c(\kappa_{*,\gamma},c_\Gamma) \norm{\psi}_{C^{0,\gamma}}\,.
\end{align*}
We may similarly apply Lemma \ref{lem:alpha_est} to obtain a $\dot C^{0,\gamma}$ estimate for $\p_s\mc{R}_{\mc{D},1}$:
\begin{align*}
\abs{\p_s\mc{R}_{\mc{D},1}}_{\dot C^{0,\gamma}} &=
\abs{\frac{1}{4\pi}{\rm p.v.}\int_{-1/2}^{1/2}\int_{-\pi}^{\pi}(\p_sK_\mc{D}-\p_s\overline{K}_\mc{D})\psi(s-\bars,\theta-\bartheta) \,\epsilon\, d\bartheta d\bars }_{\dot C^{0,\gamma}}\\
&\le c(\kappa_*,c_\Gamma)\,\max_j\big(\norm{Q_{{\rm s},j}}_{C^{0,\textcolor{black}{\gamma^+}}_1}+\norm{Q_{{\rm s},j}}_{C^{0,\gamma}_2} \big)\norm{\psi}_{C^{0,\gamma}} 
\le c(\kappa_{*,\textcolor{black}{\gamma^+}},c_\Gamma)\,\epsilon^{-\textcolor{black}{\gamma^+}} \norm{\psi}_{C^{0,\gamma}}\,.
\end{align*}
For $\p_s\mc{R}_{\mc{D},2}$, we use the form \eqref{eq:ds_barKD} of $\p_s\overline K_\mc{D}$ along with Lemmas \ref{lem:odd_nm} and \ref{lem:basic_est} to estimate
\begin{align*}
\abs{\p_s\mc{R}_{\mc{D},2}} &= \abs{\frac{1}{4\pi}{\rm p.v.}\int_{-1/2}^{1/2}\int_{-\pi}^{\pi}\p_s K_\mc{D}\,\psi(s-\bars,\theta-\bartheta) \,\epsilon^2\wh\kappa\, d\bartheta d\bars} \\
&\le c(\kappa_{*},c_\Gamma)\,\epsilon^\gamma \big(\max_{j}\norm{Q_{{\rm s},j}}_{C^{0,\gamma}_2} \big) \norm{\psi}_{C^{0,\gamma}} 
\le c(\kappa_{*,\gamma},c_\Gamma) \norm{\psi}_{C^{0,\gamma}}\,.
\end{align*}
Similarly, using Lemma \ref{lem:alpha_est}, we have 
\begin{align*}
\abs{\p_s\mc{R}_{\mc{D},2}}_{\dot C^{0,\gamma}} &=
\abs{\frac{1}{4\pi}{\rm p.v.}\int_{-1/2}^{1/2}\int_{-\pi}^{\pi}\p_sK_\mc{D}\,\psi(s-\bars,\theta-\bartheta) \,\epsilon^2\wh\kappa\, d\bartheta d\bars }_{\dot C^{0,\gamma}} \\
 &\le c(\kappa_*,c_\Gamma)\,\max_j\big(\norm{Q_{{\rm s},j}}_{C^{0,\beta}_1}+\norm{Q_{{\rm s},j}}_{C^{0,\gamma}_2} \big)\norm{\psi}_{C^{0,\gamma}} 
\le c(\kappa_{*,\beta},c_\Gamma)\,\epsilon^{-\beta} \norm{\psi}_{C^{0,\gamma}}\,.
\end{align*}

Finally, we prove $C^{0,\gamma}$ bounds for $\frac{1}{\epsilon}\p_\theta\mc{R}_{\mc D}$. We again begin by calculating the straight kernel $\frac{1}{\epsilon}\p_\theta\overline K_\mc{D}$, given by 
\begin{equation}\label{eq:dtheta_barKD}
 \textstyle \frac{1}{\epsilon}\p_\theta \overline K_\mc{D} = \displaystyle-\frac{3}{\epsilon}\frac{(\barR\cdot\overline{\bm{n}}_{x'})(\barR\cdot\p_\theta\barR)}{|\barR|^5} + \frac{\frac{1}{\epsilon}(\p_\theta\barR)\cdot\overline{\bm{n}}_{x'}}{|\barR|^3} =\frac{12\epsilon^2\sin^3(\frac{\bartheta}{2})\cos(\frac{\bartheta}{2})}{|\barR|^5} - \frac{2\sin(\frac{\bartheta}{2})\cos(\frac{\bartheta}{2})}{|\barR|^3}\,.
\end{equation}
Using that 
\begin{align*}
\textstyle \frac{1}{\epsilon}\p_\theta\bm{R} = \be_\theta(s,\theta) 
\end{align*}
along with the form \eqref{eq:curvedR} of $\bR$, we may write 
\begin{align*}
\textstyle \frac{1}{\epsilon}\bm{R}\cdot\p_\theta\bm{R} &= \textstyle 2\epsilon\sin(\frac{\bartheta}{2})\cos(\frac{\bartheta}{2}) +\bars^2Q_{\theta,1} + \epsilon\bars Q_{\theta,2} \,, \quad 
\textstyle\frac{1}{\epsilon}\p_\theta\bm{R}\cdot\bm{n}_{x'} = \textstyle -2\sin(\frac{\bartheta}{2})\cos(\frac{\bartheta}{2}) + \bars Q_{\theta,3}\,,
\end{align*}
where again, using the notation \eqref{eq:Q_Cbeta_12}, each $Q_{\theta,j}(s,\theta,\bars,\bartheta)$ satisfies 
\begin{equation}\label{eq:Qthetaj}
\norm{Q_{\theta,j}}_{C^{0,\beta}_\mu}\le \epsilon^{-\beta}c(\kappa_{*,\beta})\,, \quad \mu=1,2\,.
\end{equation}
The curved kernel $\frac{1}{\epsilon}\p_\theta K_{\mc{D}}$ may then be written
\begin{equation}\label{eq:ptheta_KD}
\begin{aligned}
\textstyle \frac{1}{\epsilon}\p_\theta K_{\mc{D}} &= -3\frac{\frac{1}{\epsilon}(\bR\cdot\bm{n}_{x'})(\bR\cdot\p_\theta\bR)}{\abs{\bR}^5} + \frac{\frac{1}{\epsilon}(\p_\theta\bR)\cdot\bm{n}_{x'}}{\abs{\bR}^3}\\
 &= \textstyle \frac{1}{\epsilon}\p_\theta\overline K_{\mc D} +\textstyle 12\epsilon^2\sin^3(\frac{\bartheta}{2})\cos(\frac{\bartheta}{2})\displaystyle \bigg(\frac{1}{\abs{\bR}^5}- \frac{1}{|\barR|^5}\bigg) 
 -\textstyle 2\sin(\frac{\bartheta}{2})\cos(\frac{\bartheta}{2})\displaystyle\bigg( \frac{1}{\abs{\bR}^3}-\frac{1}{|\barR|^3}\bigg) \\
 &\qquad  + \frac{\bars Q_{\theta,3}}{\abs{\bR}^3} -3\frac{ 2\epsilon^2\sin^2(\frac{\bartheta}{2})\bars Q_{\theta,2} + \epsilon\sin(\frac{\bartheta}{2})\bars^2Q_{\theta,4} +\bars^3Q_{\theta,5} }{\abs{\bR}^5} 
\end{aligned}
\end{equation}
where $Q_{\theta,4}= 2(\sin(\frac{\bartheta}{2})Q_{\theta,1} +\cos(\frac{\bartheta}{2})Q_{\rm n'})$ and $Q_{\theta,5}= Q_{\rm n'}(\epsilon Q_{\theta,2}+\bars Q_{\theta,1})$ for $Q_{\rm n'}(s,\bars,\theta-\bartheta)$ as in \eqref{eq:Rnxprime}.

Again using equation \eqref{eq:Rdiffk} to rewrite $\frac{1}{\abs{\bR}^5}-\frac{1}{|\barR|^5}$ and $\frac{1}{\abs{\bR}^3}-\frac{1}{|\barR|^3}$, by Lemmas \ref{lem:odd_nm} and \ref{lem:basic_est} we may estimate 
\begin{align*}
\textstyle \abs{\frac{1}{\epsilon}\p_\theta\mc{R}_{\mc{D},1}} &= \abs{\frac{1}{4\pi}{\rm p.v.}\int_{-1/2}^{1/2}\int_{-\pi}^{\pi}\frac{1}{\epsilon}(\p_\theta K_\mc{D}-\p_\theta\overline{K}_\mc{D})\,\psi(s-\bars,\theta-\bartheta) \,\epsilon\, d\bartheta d\bars } \\
&\le c(\kappa_{*},c_\Gamma)\,\epsilon^\gamma \big(\max_{j}\norm{Q_{\theta,j}}_{C^{0,\gamma}_2} \big) \norm{\psi}_{C^{0,\gamma}}
\le c(\kappa_{*,\gamma},c_\Gamma) \norm{\psi}_{C^{0,\gamma}} \,.
\end{align*}
Furthermore, using Lemma \ref{lem:alpha_est} and the form \eqref{eq:ptheta_KD} of $\frac{1}{\epsilon}(\p_\theta K_{\mc{D}}-\p_\theta\overline{K}_\mc{D})$, we may obtain the $\dot C^{0,\gamma}$ estimate 
\begin{align*}
\textstyle \abs{\frac{1}{\epsilon}\p_\theta\mc{R}_{\mc{D},1}}_{\dot C^{0,\gamma}} &= \abs{\frac{1}{4\pi}{\rm p.v.}\int_{-1/2}^{1/2}\int_{-\pi}^{\pi}\frac{1}{\epsilon}(\p_\theta K_\mc{D}-\p_\theta\overline{K}_\mc{D})\,\psi(s-\bars,\theta-\bartheta) \,\epsilon\, d\bartheta d\bars }_{\dot C^{0,\gamma}}\\
&\le c(\kappa_*,c_\Gamma)\,\max_j\big(\norm{Q_{\theta,j}}_{C^{0,\textcolor{black}{\gamma^+}}_1}+\norm{Q_{\theta,j}}_{C^{0,\gamma}_2} \big)\norm{\psi}_{C^{0,\gamma}} 
\le c(\kappa_{*,\textcolor{black}{\gamma^+}},c_\Gamma)\,\epsilon^{-\textcolor{black}{\gamma^+}} \norm{\psi}_{C^{0,\gamma}}\,.
\end{align*}
Next, for $\frac{1}{\epsilon}\p_\theta\mc{R}_{\mc{D},2}$, using \eqref{eq:ptheta_KD} and the form \eqref{eq:dtheta_barKD} of $\p_\theta\overline{K}_\mc{D}$, by Lemmas \ref{lem:odd_nm} and \ref{lem:basic_est} we obtain 
\begin{align*}
\textstyle \abs{\frac{1}{\epsilon}\p_\theta\mc{R}_{\mc{D},2}} &= \abs{\frac{1}{4\pi}{\rm p.v.}\int_{-1/2}^{1/2}\int_{-\pi}^{\pi}\frac{1}{\epsilon}\p_\theta K_\mc{D}\,\psi(s-\bars,\theta-\bartheta) \,\epsilon^2\wh\kappa\, d\bartheta d\bars }\\
&\le c(\kappa_{*},c_\Gamma)\,\epsilon^\gamma \big(\max_{j}\norm{Q_{\theta,j}}_{C^{0,\gamma}_2} \big) \norm{\psi}_{C^{0,\gamma}}
\le c(\kappa_{*,\gamma},c_\Gamma) \norm{\psi}_{C^{0,\gamma}} \,.
\end{align*}
Finally, using Lemma \ref{lem:alpha_est}, we may show 
\begin{align*}
\textstyle \abs{\frac{1}{\epsilon}\p_\theta\mc{R}_{\mc{D},2}}_{\dot C^{0,\gamma}} &= \abs{\frac{1}{4\pi}{\rm p.v.}\int_{-1/2}^{1/2}\int_{-\pi}^{\pi}\frac{1}{\epsilon}\p_\theta K_\mc{D}\,\psi(s-\bars,\theta-\bartheta) \,\epsilon^2\wh\kappa\, d\bartheta d\bars }_{\dot C^{0,\gamma}}\\
&\le c(\kappa_*,c_\Gamma)\,\max_j\big(\norm{Q_{\theta,j}}_{C^{0,\textcolor{black}{\gamma^+}}_1}+\norm{Q_{\theta,j}}_{C^{0,\gamma}_2} \big)\norm{\psi}_{C^{0,\gamma}} 
\le c(\kappa_{*,\textcolor{black}{\gamma^+}},c_\Gamma)\,\epsilon^{-\textcolor{black}{\gamma^+}} \norm{\psi}_{C^{0,\gamma}}\,.
\end{align*}
Combining each of the estimates for $\mc{R}_{\mc{D},0}$, $\mc{R}_{\mc{D},1}$, and $\mc{R}_{\mc{D},2}$, we obtain Lemma \ref{lem:doub_layer_remainder}.
\end{proof}

\subsection{Proof of Lemma \ref{lem:mean_in_s}}\label{subsec:mean_s_pf}
\textcolor{black}{
Here we show the decomposition of the $\theta$-averaged single layer operator presented in Lemma \ref{lem:mean_in_s}; in particular, for constant-in-$s$ function $h(\theta)$, we show that
\begin{align*}
\overline{\mc{S}}^{-1}\int_0^{2\pi}\mc{S}[h(\theta)]\,\epsilon\,d\theta = \mc{H}_\epsilon[h(\theta)]+\mc{H}_+[h(\theta)]
\end{align*}
for $\mc{H}_\epsilon$ which is small in $\epsilon$ and $\mc{H}_+$ which is smoother. We begin by letting
\begin{align*}
\int_0^{2\pi}\mc{S}[h(\theta)]\,\epsilon\,d\theta &= H_1(s)+H_2(s)\,,\\
H_1(s) &= \int_0^{2\pi}\int_\T\int_0^{2\pi}\mc{G}\,h(\theta')\,\epsilon^2\,d\theta'ds'd\theta \\
H_2(s) &= \int_0^{2\pi}\int_\T\int_0^{2\pi}\mc{G}\,h(\theta')\,\epsilon^3\wh\kappa\,d\theta'ds'd\theta 
\end{align*}
where $\mc{G}=\frac{1}{4\pi}\frac{1}{\abs{\bR}}$. We may use Lemmas \ref{lem:basic_est} and \ref{lem:alpha_est} to obtain
\begin{align*}
\norm{H_1}_{C^{0,\alpha}}\le c(\kappa_*,c_\Gamma)\,\epsilon^{2-\alpha}\norm{h}_{L^\infty}\,, \qquad
\norm{H_2}_{C^{0,\alpha}}\le c(\kappa_*,c_\Gamma)\,\epsilon^{3-\alpha}\norm{h}_{L^\infty}\,.
\end{align*}
}

\textcolor{black}{
We next consider $\p_sH_1$ and $\p_sH_2$. We note that for a curved filament, $\mc{G}$ is not a convolution kernel with respect to $s$, but it nearly is in the following sense. Using the form \eqref{eq:curvedR} of $\bR$ (and noting that we are using $s'$ rather than $\bars=s-s'$), we may show
\begin{align*}
\p_s\mc{G} = -\p_{s'}\mc{G} +\frac{1}{4\pi}\frac{\p_s\bR\cdot\bR-\p_{s'}\bR\cdot\bR}{\abs{\bR}^3} = -\p_{s'}\mc{G} +\frac{\epsilon(s-s')Q_{H1}+\epsilon^2\sin(\frac{\theta-\theta'}{2})Q_{H2}}{\abs{\bR}^3}
\end{align*}
where $Q_{H1}$ and $Q_{H2}$ both satisfy bounds of the form \eqref{eq:QSj_cbeta}. Using that $h(\theta')$ is independent of $s'$, we then have
\begin{align*}
\p_sH_1 &= \int_0^{2\pi}\int_\T\int_0^{2\pi}\p_s\mc{G}(s,s',\theta,\theta')\,h(\theta')\,\epsilon^2\,d\theta'ds'd\theta \\
&= \int_0^{2\pi}\int_\T\int_0^{2\pi}\frac{\epsilon(s-s')Q_{H1}+\epsilon^2\sin(\frac{\theta-\theta'}{2})Q_{H2}}{\abs{\bR}^3}\,h(\theta')\,\epsilon^2\,d\theta'ds'd\theta \,.
\end{align*}
By Lemma \ref{lem:alpha_est}, we then have
\begin{align*}
\abs{\p_sH_1}_{\dot C^{0,\alpha}} &\le c(\kappa_{*,\alpha^+},c_\Gamma)\,\epsilon^{2-\alpha^+} \norm{h}_{C^{0,\alpha}}\,.
\end{align*}
Similarly, we have 
\begin{align*}
\abs{\p_sH_1}_{\dot C^{0,\alpha}} &= \abs{\int_0^{2\pi}\int_\T\int_0^{2\pi}\p_s\mc{G}\,h(\theta')\,\epsilon^3\wh\kappa\,d\theta'ds'd\theta}_{\dot C^{0,\alpha}}
\le c(\kappa_{*,\alpha^+},c_\Gamma)\,\epsilon^{2-\alpha^+} \norm{h}_{C^{0,\alpha}}\,.
\end{align*}
}

\textcolor{black}{
Recall the notation $P_j$, $P_{<j}$, $P_{\ge j}$ associated with the Littlewood-Paley projection \eqref{eq:littlewood_p} and let $j_\epsilon = \frac{\abs{\log(2\pi\epsilon)}}{\log(2)}$ as in the proof of Lemma \ref{lem:S_mapping}. 
We define
\begin{align*}
\mc{H}_\epsilon = \overline{\mc{S}}^{-1}(P_{\ge j_\epsilon}H_1 + H_2)\,, \qquad 
\mc{H}_+ = \overline{\mc{S}}^{-1}(P_{<j_\epsilon}H_1)\,,
\end{align*}
and will use that the high frequencies here are small in $\epsilon$ and the low frequencies are smoother. 
Using Lemma \ref{lem:S_mapping}, we have
\begin{align*}
\norm{\mc{H}_\epsilon}_{C^{0,\alpha}}\le c\abs{H_1}_{\dot C^{1,\alpha}}+c\,\epsilon^{-1}\norm{H_2}_{C^{0,\alpha}} + c\,\abs{H_2}_{\dot C^{1,\alpha}}
\le c(\kappa_{*,\alpha^+},c_\Gamma)\,\epsilon^{2-\alpha^+} \norm{h}_{C^{0,\alpha}}\,.
\end{align*}
Furthermore, again using Lemma \ref{lem:S_mapping}, for $\alpha<\gamma\le \beta$ we have 
\begin{align*}
\norm{\mc{H}_+}_{C^{0,\gamma}} &\le c\,\epsilon^{-1}\norm{P_{<j_\epsilon}H_1}_{C^{0,\gamma}} + c\abs{P_{<j_\epsilon}H_1}_{\dot C^{1,\gamma}} 
\le c\,\epsilon^{-1}\norm{P_{<j_\epsilon}H_1}_{C^{0,\gamma}} + c\,2^{j_\epsilon}\abs{P_{<j_\epsilon}H_1}_{\dot C^{0,\gamma}} \\
&\le c\,\epsilon^{-1}\norm{H_1}_{C^{0,\gamma}} \le c(\kappa_{*,\gamma},c_\Gamma)\,\epsilon^{1-\gamma} \norm{h}_{C^{0,\alpha}}\,.
\end{align*}
\hfill \qedsymbol
}


\section{Bounds for full Neumann data}\label{sec:w}
Here we prove Lemma \ref{lem:w} bounding the full Neumann data $w(s,\theta)$ appearing in \eqref{eq:SB_DtN_strt_pert} in terms of the $\theta$-independent Dirichlet data $v(s)$. Our approach will rely on a different layer potential representation of the full Dirichlet-to-Neumann map $v=u\big|_{\Gamma_\epsilon} \mapsto w=\frac{\p u}{\p\bm{n}_x}\big|_{\Gamma_\epsilon}$ for $u$ harmonic in $\Omega_\epsilon$. 

Following \cite[Chapter 6]{kress1989linear}, given $\varphi(\bx')$, $\bx'=\X(s')+\epsilon\be_r(s',\theta')\in\Gamma_\epsilon$, we begin by defining the modified double layer potential
\begin{align*}
\mc{D}'[\varphi](\bx) &= \int_{\Gamma_\epsilon}\bigg(K_\mc{D}(\bx,\bx')\varphi(\bx') + \frac{\varphi(\bx')}{\abs{\bx-\X(s')}} \bigg)\, dS_{x'}\,, \qquad  \bx\in\Omega_\epsilon\,,
\end{align*}
where $K_{\mc{D}}$ is as in \eqref{eq:KD}, $\X(s')$ is the nearest point on the filament centerline to $\bx'$, and $dS_{x'}$ denotes the surface element with respect to $\bx'$ on $\Gamma_\epsilon$. The form of the modified kernel is based on the method used in \cite{keaveny2011applying} for a double layer formulation of the Stokes flow about a slender body and is designed to eliminate the null space of the operator $\frac{1}{2}{\bf I}+\mc{D}$ along $\Gamma_\epsilon$, which consists of constants.
%
For $\bx\in \Gamma_\epsilon$, we will use the same notation to denote 
\begin{equation}\label{eq:mod_double_layer}
\mc{D}'[\varphi](\bx) = \int_{\Gamma_\epsilon}K_\mc{D}(\bx,\bx')\varphi(\bx')\, dS_{x'} + \int_{\Gamma_\epsilon} \frac{\varphi(\bx')}{\abs{\bx-\X(s')}} \, dS_{x'}\,.
\end{equation}
Note that for $\bx\in\Gamma_\epsilon$, $\abs{\bx-\X(s')}\ge\epsilon$. 
Given Dirichlet data $v(\bx)$, $\bx\in \Gamma_\epsilon$, we have that
\begin{equation}\label{eq:uDprime}
u(\bx) = \mc{D}'[\varphi](\bx) \qquad \text{for }\bx\in \Omega_\epsilon
\end{equation}
is solution of exterior Dirichlet problem 
\begin{align*}
\Delta u=0 \quad \text{in }\Omega_\epsilon\,, \qquad u\big|_{\Gamma_\epsilon} = v(\bx)
\end{align*}
 provided that the density $\varphi$ on $\Gamma_\epsilon$ satisfies the integral equation
\begin{equation}\label{eq:sol_Dirichlet}
v(\bx) = \textstyle (\frac{1}{2}{\bf I}+\mc{D}')[\varphi](\bx)\,,\qquad \bx\in\Gamma_\epsilon \,,
\end{equation}
where the $\frac{1}{2}{\bf I}$ comes from the exterior jump relation \eqref{eq:ext_jump}. This may be seen by noting that if $u(\bx)$ is given by \eqref{eq:uDprime} for $\varphi$ satisfying $(\frac{1}{2}{\bf I}+\mc{D}')[\varphi]=0$ on $\Gamma_\epsilon$, then $u\big|_{\Gamma_\epsilon}=0$ and $u(\bx)=\mc{D}'[\varphi](\bx)\sim\frac{1}{\abs{\bx}}$ as $\abs{\bx}\to\infty$, so by uniqueness for the exterior Dirichlet problem, $u=0$ in $\Omega_\epsilon$. Furthermore, by continuity of $\frac{\p u}{\p\bm{n}_x}$ across $\Gamma_\epsilon$ (see \cite[Theorem 6.20]{kress1989linear}), we have that \emph{within} the slender body $\Gamma_\epsilon$, $u$ is harmonic with zero Neumann data. In particular, $u\equiv{\rm constant}$ in $\Sigma_\epsilon$, and therefore $\varphi\equiv{\rm constant}$ on $\Gamma_\epsilon$. However, since $\varphi$ is constant, using the jump relation $\int_{\Gamma_\epsilon}K_\mc{D}(\bx,\bx')\,dS_{x'}=-\frac{1}{2}$ for $\bx\in \Gamma_\epsilon$ (see \cite[Example 6.17]{kress1989linear}), we have $0=(\frac{1}{2}{\bf I}+\mc{D}')[\varphi]=\int_{\Gamma_\epsilon} \frac{\varphi}{\abs{\bx-\X(s')}}\,dS_{x'}$, and thus $\varphi=0$ due to the correction term.

Along $\Gamma_\epsilon$, we denote the normal derivative of the double layer potential by 
\begin{align*}
\mc{T}[\varphi](\bx) = -\frac{\p}{\p \bm{n}_x}\mc{D}'[\varphi](\bx)\,, \quad \bx\in \Gamma_\epsilon\,.
\end{align*}
Here again the minus sign arises because $\bm{n}_x$ points out from the slender body $\Sigma_\epsilon$. The operator $\mc{T}$ is hypersingular and must be understood as a finite part integral \cite{steinbach2007numerical}: 
\begin{equation}\label{eq:Tdef}
\mc{T}[\varphi](\bx) = {\rm p.v.}\int_{\Gamma_\epsilon}K_\mc{T}(\bx,\bx')\big(\varphi(\bx')-\varphi(\bx)\big)\, dS_{x'} + \int_{\Gamma_\epsilon}\frac{(\bx-\X(s'))\cdot\bm{n}_x}{\abs{\bx-\X(s')}^3}\varphi(\bx')\,dS_{x'}\,,
\end{equation}
where the kernel $K_\mc{T}$ is given by
\begin{equation}\label{eq:KT}
K_\mc{T}(\bx,\bx')= -\frac{\p}{\p\bm{n}_x}\frac{\p\mc{G}}{\p \bm{n}_{x'}} = -\frac{1}{4\pi}\bigg(\frac{\bm{n}_x\cdot\bm{n}_{x'}}{\abs{\bx-\bx'}^3} -3\frac{(\bx-\bx')\cdot\bm{n}_x \,(\bx-\bx')\cdot\bm{n}_{x'}}{\abs{\bx-\bx'}^5}\bigg)\,.
\end{equation}
The representation \eqref{eq:Tdef} may be obtained by first noting that within the exterior domain $\Omega_\epsilon$, the double layer kernel $K_\mc{D}$ integrates to zero (see \cite[Example 6.17]{kress1989linear}):
\begin{align*}
\int_{\Gamma_\epsilon}K_\mc{D}(\by,\bx')\,dS_{x'} =0\,, \quad \by\in \Omega_\epsilon\,.
\end{align*}
In particular, inserting a constant-in-$\by$ density $\varphi(\bx)$, $\bx\in \Gamma_\epsilon$, we have
\begin{align*}
\nabla_y\int_{\Gamma_\epsilon}K_\mc{D}(\by,\bx')\varphi(\bx)\,dS_{x'} =0\,, \quad \by\in \Omega_\epsilon\,,\;\; \bx\in \Gamma_\epsilon\,.
\end{align*}
We may then consider the first term of $\mc{T}$ as 
\begin{align*}
 -\lim_{\by\to\bx}\bm{n}_x\cdot\nabla_y\int_{\Gamma_\epsilon}K_\mc{D}(\by,\bx')\big(\varphi(\bx')-\varphi(\bx)\big)\,dS_{x'}\,, \quad \by\in \Omega_\epsilon\,,\;\; \bx\in \Gamma_\epsilon\,.
\end{align*}

Given $v(\bx)$, $\bx\in\Gamma_\epsilon$, we thus obtain a representation for the full Dirichlet-to-Neumann (DtN) map as
\begin{equation}\label{eq:usual_DtN}
w(\bx)= \textstyle \mc{T}(\frac{1}{2}{\bf I}+\mc{D}')^{-1}[v](\bx)\,, \qquad \bx\in \Gamma_\epsilon\,,
\end{equation}
i.e. as the normal derivative of the solution \eqref{eq:sol_Dirichlet} to the exterior Dirichlet problem with data $v(\bx)$.

Using the representation \eqref{eq:usual_DtN}, we proceed to prove Lemma \ref{lem:w} in three steps, summarized in the following three lemmas.
\begin{lemma}[Modified double layer smoothing]\label{lem:Dsmooths}
Let $0<\gamma<\beta<1$. Given a fiber centerline $\X(s)\in C^{2,\beta}$ as in section \ref{subsec:geom} and a surface density $\varphi\in C^{0,\gamma}(\Gamma_\epsilon)$, the modified double layer operator $\mc{D}'$ given by \eqref{eq:mod_double_layer} satisfies  
\begin{equation}
\norm{\mc{D}'[\varphi]}_{C^{1,\gamma}} \le 
c(\kappa_{*,\textcolor{black}{\gamma^+}},c_\Gamma)\,\epsilon^{-1-\gamma} \norm{\varphi}_{C^{0,\gamma}}
\end{equation}
\textcolor{black}{for any $\gamma^+\in(\gamma, \beta]$.}
\end{lemma}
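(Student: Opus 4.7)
The plan is to decompose
\[
\mc{D}'[\varphi] = \overline{\mc{D}}[\varphi] + \mc{R}_{\mc{D}}[\varphi] + \mc{C}[\varphi],
\]
where $\overline{\mc{D}}$ is the straight-cylinder double layer operator from Lemma~\ref{lem:straight_components}, $\mc{R}_{\mc{D}} = \mc{D} - \overline{\mc{D}}$ is the curvature remainder \eqref{eq:RD_def}, and $\mc{C}[\varphi](\bx) := \int_{\Gamma_\epsilon}\frac{\varphi(\bx')}{\abs{\bx - \X(s')}}\,dS_{x'}$ is the smooth correction term added to $\mc{D}$ in \eqref{eq:mod_double_layer}. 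Each piece will be bounded separately in $C^{1,\gamma}(\Gamma_\epsilon)$.

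First, the remainder $\mc{R}_\mc{D}[\varphi]$ is controlled directly by the already-proved estimate \eqref{eq:RD_mapping} of Lemma~\ref{lem:RS_and_RD}, which yields $\norm{\mc{R}_\mc{D}[\varphi]}_{C^{1,\gamma}} \le c(\kappa_{*,\gamma^+},c_\Gamma)\,\epsilon^{-\gamma^+}\norm{\varphi}_{C^{0,\gamma}}$; this is better than the target $\epsilon^{-1-\gamma}$, so nothing further is needed for this piece.

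Second, for the correction $\mc{C}[\varphi]$, I would exploit the key geometric fact that $\abs{\bx - \X(s')} \ge \epsilon$ uniformly for $\bx \in \Gamma_\epsilon$ and $s' \in \T$, since $\X(s)$ is the closest centerline point to $\bx \in \Gamma_\epsilon$ (so $\epsilon = \abs{\bx - \X(s)} \le \abs{\bx - \X(s')}$). More refined comparison shows $\abs{\bx - \X(s')}^2 \ge c(\kappa_*,c_\Gamma)\,[(s-s')^2 + \epsilon^2]$ in analogy with Lemma~\ref{lem:Rests}, so that derivatives of the kernel $\p_s^\alpha\p_\theta^\beta \frac{1}{\abs{\bx-\X(s')}}$ are integrable and can be estimated using the flat-cylinder arguments developed in Lemma~\ref{lem:basic_est} and case~(2) of Lemma~\ref{lem:alpha_est}. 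Since the kernel has no singularity finer than $\epsilon$, each derivative costs only a factor of $\epsilon^{-1}$ (plus an $\epsilon^{-\gamma}$ for the Hölder seminorm of the derivative), giving $\norm{\mc{C}[\varphi]}_{C^{1,\gamma}} \le c(\kappa_*,c_\Gamma)\,\epsilon^{-1-\gamma}\norm{\varphi}_{L^\infty}$.

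Third, for the straight double layer $\overline{\mc{D}}[\varphi]$ on $\mc{C}_\epsilon$, I would use the Fourier multiplier representation from Lemma~\ref{lem:straight_components}. Crucially, $m_\mc{D}(k,\ell)$ is uniformly bounded and vanishes in the limit $(k,\ell)\to(0,\ell_0)$ for $\ell_0 \ne 0$; moreover, the large-frequency asymptotics of the Bessel functions give $m_\mc{D}(k,0) = O(1/(\epsilon\abs{k}))$ as $\epsilon\abs{k}\to\infty$, so there is one full derivative of smoothing in the $s$-direction at frequencies above $1/\epsilon$. Using a two-parameter Littlewood-Paley decomposition in $(s,\theta)$, analogous to the argument in Section~\ref{sec:strt_periodic} (Lemma~\ref{lem:mapping_Leps}) but adapted to the weighted metric \eqref{eq:dot_Calpha_eps} on $\Gamma_\epsilon$, one shows that the surface derivatives $\p_s \overline{\mc{D}}[\varphi]$ and $\epsilon^{-1}\p_\theta\overline{\mc{D}}[\varphi]$ admit $C^{0,\gamma}$ bounds with coefficient $\epsilon^{-1-\gamma}$ on a general $\varphi \in C^{0,\gamma}(\mc{C}_\epsilon)$, the extra $\epsilon^{-1}$ coming from the intrinsic angular arc-length and the $\epsilon^{-\gamma}$ from the $\epsilon$-weighted Hölder norm.

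The main obstacle is Step~3. The multiplier $m_\mc{D}(k,\ell)$ does not gain derivatives uniformly in both $(k,\ell)$: its decay is anisotropic, and the $\ell$-dependence at $k=0$ in particular requires careful treatment. The proof must combine the partial smoothing in $s$ with the $\epsilon$-weighted angular scaling so that the $\theta$-direction bound is controlled by a Bernstein-type inequality adapted to the cylinder. Once the multiplier estimates and dyadic splittings are set up as in Lemma~\ref{lem:multipliers} and Lemma~\ref{lem:multiplier}, tracking the $\epsilon$-powers carefully through both frequency regimes $\abs{k}\lessgtr 1/\epsilon$ gives the stated bound.
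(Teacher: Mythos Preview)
Your decomposition $\mc{D}' = \overline{\mc{D}} + \mc{R}_\mc{D} + \mc{C}$ and your handling of $\mc{R}_\mc{D}$ and $\mc{C}$ match the paper's argument essentially exactly. The difference is in Step~3, the straight double layer piece.

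The paper does \emph{not} touch the Fourier multiplier representation of $\overline{\mc{D}}$ here. Instead it writes $\mc{D}' = \mc{R}_\mc{D} + \mc{D}_2$, where $\mc{D}_2$ is the integral of the straight kernel $\overline{K}_\mc{D}$ over $[-1/2,1/2]$ together with $\mc{C}$, and then bounds $\mc{D}_2$ by direct kernel estimates. The point is that the explicit derivatives
\[
\p_s\overline{K}_\mc{D} = \frac{6\epsilon\sin^2(\frac{\bartheta}{2})\,\bars}{|\barR|^5}\,,\qquad
\tfrac{1}{\epsilon}\p_\theta\overline{K}_\mc{D} = \frac{12\epsilon^2\sin^3(\frac{\bartheta}{2})\cos(\frac{\bartheta}{2})}{|\barR|^5} - \frac{2\sin(\frac{\bartheta}{2})\cos(\frac{\bartheta}{2})}{|\barR|^3}
\]
all have the odd--numerator/even--denominator structure that feeds directly into Lemma~\ref{lem:odd_nm} for the $L^\infty$ bound and case~(2) of Lemma~\ref{lem:alpha_est} for the $\dot C^{0,\gamma}$ seminorm. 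Since those lemmas were already built for the $\mc{R}_\mc{D}$ analysis, the whole thing is a few lines.

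Your multiplier route is in principle viable, but it is substantially harder and your sketch has loose ends: for instance $m_\mc{D}(0,\ell)$ does \emph{not} vanish for $\ell\neq 0$ (one computes $m_\mc{D}(0,1)=-\tfrac{1}{2}$), and setting up a genuine two-parameter Littlewood--Paley theory on $\mc{C}_\epsilon$ adapted to the weighted metric, together with sharp anisotropic bounds on $m_\mc{D}(k,\ell)$ and its $\xi$-derivatives in both variables, is real work that the paper simply sidesteps. The payoff of the kernel approach is that all the tools are already on the shelf from Section~\ref{sec:RS_RD}; your approach would be more systematic but is overkill for this lemma.
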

The proof of Lemma \ref{lem:Dsmooths} appears in section \ref{subsec:mod_doub}.
In addition, we show the following uniform bound for $(\frac{1}{2}{\bf I}+\mc{D}')^{-1}$ with respect to the filament centerline shape:
\begin{lemma}[Uniform bounds in $\kappa$]\label{lem:DL_inverse}
Let $0<\gamma<\beta<1$. Given a fixed fiber radius $\epsilon>0$, for any filament centerline $\X(s)\in C^{2,\beta}$ satisfying the non-self-intersection condition \eqref{eq:cGamma}, the inverse $(\frac{1}{2}{\bf I}+\mc{D}')^{-1}$ involving the modified double layer is uniformly bounded with respect to the centerline shape; i.e. given $\varphi\in C^{0,\gamma}$, we have 
\begin{equation}\label{eq:DL_inv}
\norm{\textstyle (\frac{1}{2}{\bf I}+\mc{D}')^{-1}[\varphi]}_{C^{0,\gamma}} \le c(\epsilon,\kappa_{*,\textcolor{black}{\gamma^+}},c_\Gamma) \norm{\varphi}_{C^{0,\gamma}} 
\end{equation}
\textcolor{black}{for any $\gamma^+\in(\gamma, \beta]$.}
\end{lemma}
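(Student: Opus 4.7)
The plan is to use a Fredholm-compactness-contradiction scheme. I would first establish that $\tfrac{1}{2}{\bf I}+\mc{D}'$ is invertible on $C^{0,\gamma}(\Gamma_\epsilon)$ for each fixed centerline $\X$, then extract uniformity over the class of admissible centerlines by a compactness argument on the parameter space $\T\times 2\pi\T$.

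\textbf{Step 1 (injectivity).} The null space argument following \eqref{eq:sol_Dirichlet} already shows injectivity of $\tfrac{1}{2}{\bf I}+\mc{D}'$ on continuous densities. Briefly: if $(\tfrac{1}{2}{\bf I}+\mc{D}')[\varphi]=0$, then $u=\mc{D}'[\varphi]$ is harmonic in $\Omega_\epsilon$ with zero Dirichlet data and decays at infinity, so $u\equiv 0$ in $\Omega_\epsilon$. Continuity of $\p_{\bm{n}_x}u$ across $\Gamma_\epsilon$ forces $u$ to have zero Neumann data inside $\Sigma_\epsilon$, hence $\varphi$ is constant; the correction term $\int_{\Gamma_\epsilon}\varphi(\bx')\,|\bx-\X(s')|^{-1}dS_{x'}$ then forces $\varphi\equiv 0$.

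\textbf{Step 2 (compactness and invertibility for fixed $\X$).} Lemma \ref{lem:Dsmooths} with exponent $\gamma^+\in(\gamma,\beta]$ gives that $\mc{D}':C^{0,\gamma^+}(\Gamma_\epsilon)\to C^{1,\gamma^+}(\Gamma_\epsilon)$, and since the embedding $C^{1,\gamma^+}(\Gamma_\epsilon)\hookrightarrow C^{0,\gamma}(\Gamma_\epsilon)$ is compact, $\mc{D}'$ is compact on $C^{0,\gamma}(\Gamma_\epsilon)$. The Fredholm alternative combined with Step 1 yields invertibility of $\tfrac{1}{2}{\bf I}+\mc{D}'$ on $C^{0,\gamma}(\Gamma_\epsilon)$, with a bounded inverse by the open mapping theorem. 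This gives \eqref{eq:DL_inv} with a constant that a priori depends on the particular $\X$.

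\textbf{Step 3 (uniformization by contradiction).} Suppose \eqref{eq:DL_inv} fails uniformly. Then one finds sequences $\X_n\in C^{2,\beta}$, $\varphi_n$, $\psi_n$ with
\begin{equation*}
\textstyle (\tfrac{1}{2}{\bf I}+\mc{D}'_n)[\varphi_n]=\psi_n,\qquad \|\varphi_n\|_{C^{0,\gamma}}=1,\qquad \|\psi_n\|_{C^{0,\gamma}}\to 0,
\end{equation*}
while $\kappa_{*,\gamma^+}(\X_n)$ stays bounded and $c_\Gamma(\X_n)$ stays bounded below. Pull back to the fixed parameter space $(s,\theta)\in\T\times 2\pi\T$ using $\bx_n(s,\theta)=\X_n(s)+\epsilon\be_{r,n}(s,\theta)$. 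By Arzel\`a--Ascoli applied to $\{\X_n\}$ bounded in $C^{2,\gamma^+}$ we extract $\X_n\to\X_\infty$ in $C^{2,\gamma}$, and $\X_\infty$ still satisfies the non-self-intersection bound with the same $c_\Gamma$.

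\textbf{Step 4 (passing to the limit).} Lemma \ref{lem:Dsmooths}, applied with exponent $\gamma^+$ and the uniform bounds on $\kappa_{*,\gamma^+}$, $c_\Gamma$, gives
\begin{equation*}
\|\mc{D}'_n[\varphi_n]\|_{C^{1,\gamma^+}(\Gamma_{\epsilon,n})}\le c(\epsilon, \kappa_{*,\gamma^+},c_\Gamma)\|\varphi_n\|_{C^{0,\gamma^+}}\le c(\epsilon,\kappa_{*,\gamma^+},c_\Gamma),
\end{equation*}
uniformly in $n$. After pullback to $\T\times 2\pi\T$ this is a uniform $C^{1,\gamma^+}$ bound on a fixed compact domain, so Arzel\`a--Ascoli yields a subsequence with $\mc{D}'_n[\varphi_n]\to g_\infty$ in $C^{0,\gamma}$. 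The identity $\varphi_n=2\psi_n-2\mc{D}'_n[\varphi_n]$ combined with $\psi_n\to 0$ then gives $\varphi_n\to\varphi_\infty$ in $C^{0,\gamma}$ with $\|\varphi_\infty\|_{C^{0,\gamma}}=1$. A direct kernel-continuity check, using the explicit forms \eqref{eq:R}--\eqref{eq:curvedR} and $\X_n\to\X_\infty$ in $C^{2,\gamma}$, shows $\mc{D}'_n[\varphi_n]\to\mc{D}'_\infty[\varphi_\infty]$. Hence $(\tfrac{1}{2}{\bf I}+\mc{D}'_\infty)[\varphi_\infty]=0$, contradicting Step 1.

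\textbf{Main obstacle.} The delicate point is justifying the passage $\mc{D}'_n[\varphi_n]\to\mc{D}'_\infty[\varphi_\infty]$ when the surfaces vary with $n$. This requires pulling everything back to the fixed parameter domain and invoking the kernel expansions of section \ref{subsec:tools} to obtain uniform integrability in $(\bars,\bartheta)$ together with pointwise convergence in $(s,\theta)$ of the integrands. The smoothing in Lemma \ref{lem:Dsmooths} is exactly what makes this work: the $C^{1,\gamma^+}$ bound provides both equicontinuity for $\mc{D}'_n[\varphi_n]$ and a dominated-convergence framework for the integrals.
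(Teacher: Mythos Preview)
Your approach is essentially the same as the paper's: a compactness--contradiction argument using the smoothing from Lemma~\ref{lem:Dsmooths}, injectivity of $\tfrac12{\bf I}+\mc{D}'$, and Arzel\`a--Ascoli on both the densities and the centerlines (pulled back to the fixed parameter torus). The paper frames Step~3 slightly differently---it fixes a single $\varphi$ and invokes the uniform boundedness principle---but the substance is identical.

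One small slip in Step~4: you apply Lemma~\ref{lem:Dsmooths} at exponent $\gamma^+$ and write $\|\varphi_n\|_{C^{0,\gamma^+}}\le c$, but you have only normalized $\|\varphi_n\|_{C^{0,\gamma}}=1$. Simply apply Lemma~\ref{lem:Dsmooths} at exponent $\gamma$ instead: this gives a uniform $C^{1,\gamma}$ bound on $\mc{D}'_n[\varphi_n]$, and the embedding $C^{1,\gamma}\hookrightarrow C^{0,\gamma}$ is already compact, which is all you need for the Arzel\`a--Ascoli step. (This is what the paper does.)
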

The proof is given via a compactness argument in section \ref{subsec:doub_inv}. Unlike most of the other bounds we obtain, the $\epsilon$-dependence of the constant in \eqref{eq:DL_inv} is not explicit.

The final lemma regards the mapping properties of the hypersingular operator $\mc{T}$. In particular, we may extract a leading order piece plus a smoother remainder as follows. 
\begin{lemma}[Hypersingular operator bounds]\label{lem:hypersingular}
Let $\X\in C^{2,\alpha}$ be as in section \ref{subsec:geom} and consider $\mc{T}$ as in \eqref{eq:Tdef}. Given $\varphi\in C^{1,\alpha}(\Gamma_\epsilon)$, we may decompose $\mc{T}[\varphi]$ as
\begin{align*}
\mc{T}[\varphi] = \mc{T}_0[\varphi]+\mc{T}_{+}[\varphi]
\end{align*}
where
\begin{equation}\label{eq:RT_lem}
\begin{aligned}
\norm{\mc{T}_0[\varphi]}_{C^{0,\alpha}} &\le c(\kappa_{*,\alpha},c_\Gamma)\bigg(\norm{\p_s\varphi}_{C^{0,\alpha}}+ \textstyle\norm{\frac{1}{\epsilon}\p_\theta\varphi}_{C^{0,\alpha}} \bigg)\\
\norm{\mc{T}_+[\varphi]}_{C^{0,\alpha}} &\le c(\kappa_{*,\alpha},c_\Gamma)\,\epsilon^{-1-\alpha}\norm{\varphi}_{L^\infty}\,.
\end{aligned}
\end{equation}
\end{lemma}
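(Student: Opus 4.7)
The plan is to use integration by parts (essentially Maue's identity for the Laplace hypersingular operator) to transfer one normal derivative off of $\mc{G}$ and onto tangential derivatives of $\varphi$, reducing $\mc{T}$ to a single-layer-type operator applied to surface-tangential derivatives of $\varphi$. First, I split \eqref{eq:Tdef} into the principal value piece $\mc{T}^{\rm hyp}[\varphi]$ (with kernel $K_\mc{T}$) and the correction piece $\mc{T}^{\rm corr}[\varphi]$ (with kernel $(\bx-\X(s'))\cdot\bm{n}_x/|\bx-\X(s')|^3$). Since $|\bx-\X(s')|\ge c\,\epsilon$ for $\bx\in\Gamma_\epsilon$, the correction kernel and its tangential derivatives are not genuinely singular (though each derivative in $\p_s$ or $\frac{1}{\epsilon}\p_\theta$ loses a power of $\epsilon^{-1}$). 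A direct $C^{0,\alpha}$ estimate via Lemmas \ref{lem:basic_est} and \ref{lem:alpha_est} applied to this kernel gives $\norm{\mc{T}^{\rm corr}[\varphi]}_{C^{0,\alpha}} \le c(\kappa_{*,\alpha},c_\Gamma)\,\epsilon^{-1-\alpha}\norm{\varphi}_{L^\infty}$, which is absorbed into $\mc{T}_+$.

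Second, I would reduce $\mc{T}^{\rm hyp}[\varphi]$ via surface integration by parts in $(s',\theta')$. Using $K_\mc{T} = -\p_{\bm{n}_x}\p_{\bm{n}_{x'}}\mc{G}$ and $\nabla_{x'}\mc{G} = -\nabla_x\mc{G}$, I decompose $\p_{\bm{n}_{x'}}\mc{G}(\bx,\bx') = -\bm{n}_{x'}\cdot\nabla_x\mc{G}$ and split $\bm{n}_{x'}$ as a piece parallel to $\bm{n}_x$ (which pairs with the outer $\p_{\bm{n}_x}$) plus a piece tangential to $\Gamma_\epsilon$ at $\bx$. The tangential piece, paired with $\nabla_x\mc{G}$, is a tangential derivative of $\mc{G}$ in $\bx$; converting this into tangential derivatives in $\bx'$ (since $\mc{G}$ depends on $\bx-\bx'$) and integrating by parts against $\varphi$ on the closed surface $\Gamma_\epsilon$ (no boundary terms) replaces the $|\bR|^{-3}$ singularity by a $|\bR|^{-2}$ kernel multiplied by $\p_{s'}\varphi$ or $\frac{1}{\epsilon}\p_{\theta'}\varphi$. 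The outer $\p_{\bm{n}_x}$ then acts on a kernel whose effective order is kept down by the near-cancellation $\bR\cdot\bm{n}_x = O(\epsilon\sin^2(\frac{\bartheta}{2})+\bars^2)$ from \eqref{eq:Rnx}, so the remaining integrals fit the weight counts $\ell+k+2 = m+n$ of Lemmas \ref{lem:basic_est} and \ref{lem:alpha_est}. The principal parts of this reduction constitute $\mc{T}_0$ with the required bound in terms of $C^{0,\alpha}$ norms of $\p_s\varphi$ and $\frac{1}{\epsilon}\p_\theta\varphi$, while commutator terms (from differentiating the Jacobian $\mc{J}_\epsilon$ and the frame vectors in $s'$ or $\theta'$) contribute curvature-weighted pieces that end up in $\mc{T}_+$ with the $\epsilon^{-1-\alpha}\norm{\varphi}_{L^\infty}$ bound.

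The main obstacle is executing this reduction cleanly while tracking every commutator: each integration by parts produces multiple terms from the Jacobian $\mc{J}_\epsilon = \epsilon(1-\epsilon\wh\kappa)$, the frame ODEs \eqref{eq:frame}, and the expansions \eqref{eq:QrQtheta}--\eqref{eq:Qexpand} for $\bm{Q}_r$ and $\bm{Q}_\theta$. Ensuring that the leading-order terms organize into clean tangential derivatives of $\varphi$ (rather than leaving a residual hypersingular integral) requires combining both terms of $K_\mc{T}$ with the outer $\p_{\bm{n}_x}$ via the Maue-type compatibility $\bR\cdot\bm{n}_x = -\bR\cdot\bm{n}_{x'} + O(\bars^2)$, which follows from \eqref{eq:Rnxprime}--\eqref{eq:Rnx} and reflects near-orthogonality of $\bR$ to both normals along $\Gamma_\epsilon$. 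Once this structural reduction is established, the quantitative bounds \eqref{eq:RT_lem} follow by direct application of Lemmas \ref{lem:basic_est} and \ref{lem:alpha_est} with the same weight-counting strategy used throughout section \ref{sec:RS_RD}; the factor $\epsilon^{-\alpha}$ in the $\mc{T}_+$ bound emerges from the $C^{0,\alpha}$ step (as in the bounds \eqref{eq:RS_mapping}, \eqref{eq:RD_mapping}), and the factor $\epsilon^{-1}$ from the $\frac{1}{\epsilon}\p_\theta$ commutators acting on the frame vectors in $\theta'$.
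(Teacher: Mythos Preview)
Your approach via integration by parts (a Maue-type reduction) is \emph{genuinely different} from the paper's, which never integrates by parts. The paper instead expands the kernel $K_{\mc T}$ directly using \eqref{eq:Rnxprime}--\eqref{eq:Rnx} and isolates the pure $\frac{1}{|\bR|^3}$ piece as
\[
\mc T_0[\varphi] = -\frac{1}{4\pi}\,\mathrm{p.v.}\int \frac{1}{|\bR|^3}\big(\varphi(s-\bars,\theta-\bartheta)-\varphi(s,\theta)\big)\,\mc J_\epsilon\,d\bartheta\,d\bars,
\]
pushing every term with an extra factor of $\sin^2(\frac{\bartheta}{2})$ or $\bars^2$ in the numerator (hence effectively $|\bR|^{-1}$) plus the correction kernel into $\mc T_+$. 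For $\mc T_0$ the paper then Taylor-expands $\varphi(s-\bars,\theta-\bartheta)-\varphi(s,\theta)$ to first order with H\"older remainder; the linear terms $\frac{\bars}{|\bR|^3}$ and $\frac{\epsilon\bartheta}{|\bR|^3}$ fall under the odd-cancellation Lemma~\ref{lem:odd_nm}, and the $|\bars|^{1+\alpha}$, $|\epsilon\bartheta|^{1+\alpha}$ remainders are integrable. The $\dot C^{0,\alpha}$ estimate is then a two-region argument in the spirit of Lemma~\ref{lem:alpha_est}, done by hand because the integrand carries the subtracted difference $\varphi(\bx')-\varphi(\bx)$ rather than $\varphi(\bx')$ alone.

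Your Maue-identity route is classical and should in principle yield the same bounds, but the specific mechanism you sketch is imprecise: decomposing $\bm n_{x'}$ in the frame at $\bx$ (rather than at $\bx'$) and then ``converting tangential derivatives in $\bx$ into tangential derivatives in $\bx'$ since $\mc G$ depends on $\bx-\bx'$'' glosses over the fact that the tangent spaces at $\bx$ and $\bx'$ differ, so that conversion is not free and generates exactly the commutators you worry about. The standard Maue reduction instead uses harmonicity of $\mc G$ to rewrite $\p_{\bm n_x}\p_{\bm n_{x'}}\mc G$ as a surface-curl pairing, which is a cleaner starting point. The paper's approach sidesteps all of this: by working with the finite-part representation \eqref{eq:Tdef} already containing $\varphi(\bx')-\varphi(\bx)$, a Taylor expansion of $\varphi$ replaces integration by parts entirely, and the bookkeeping reduces to the same weight-counting lemmas you cite without any frame commutators.
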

The proof of Lemma \ref{lem:hypersingular} appears in section \ref{subsec:hyper}.\\

We may combine Lemmas \ref{lem:Dsmooths}, \ref{lem:DL_inverse}, and \ref{lem:hypersingular} to show Lemma \ref{lem:w} as follows. 
\begin{proof}[Proof of Lemma \ref{lem:w}]
Given $\theta$-independent Dirichlet data $v(s)$ in \eqref{eq:sol_Dirichlet}, we may solve for the intermediate density $\varphi(\bx)=\varphi(s,\theta)$ as 
\begin{align*}
\varphi(s,\theta) &= 2v(s)-\mc{M}[v(s)]\,, \qquad
\mc{M}=\textstyle \mc{D}'(\frac{1}{2}{\bf I}+\mc{D}')^{-1}\,. 
\end{align*}
Using Lemmas \ref{lem:Dsmooths} and \ref{lem:DL_inverse}, we have that 
\begin{align*}
\norm{\mc{M}[v]}_{C^{1,\gamma}}\le c(\kappa_{*,\textcolor{black}{\gamma^+}},\epsilon)\norm{v}_{C^{0,\gamma}}\,. 
\end{align*}
Note that we will not make use of the full $C^{1,\alpha}$ regularity of $v(s)$ because (1) it is not strictly necessary for our main goal of simply showing that certain remainder terms are smoother, since we will just take $\gamma>\alpha$, and (2) it would involve an even nastier computation of derivatives along $\Gamma_\epsilon$. 

We may use Lemma \ref{lem:hypersingular} to write $w(s,\theta)$ from \eqref{eq:usual_DtN} as
\begin{align*}
w(s,\theta) &= \mc{T}[2v(s)] -\mc{T}\mc{M}[v(s)]\\
&= 2\mc{T}_0[v(s)]+ 2\mc{T}_+[v(s)] - \mc{T}_0\mc{M}[v(s)] - \mc{T}_+\mc{M}[v(s)]\,.
\end{align*}
Since $v\in C^{1,\alpha}(\T)$, we have
\begin{align*}
2\norm{\mc{T}_0[v]}_{C^{0,\alpha}} \le c(\kappa_{*,\alpha},c_\Gamma)\norm{\p_sv}_{C^{0,\alpha}}\,,
\end{align*}
while each of the other terms are smoother. In particular, for $\alpha<\gamma<\textcolor{black}{\gamma^+}\le\beta$, we have
\begin{align*}
2\norm{\mc{T}_+[v]}_{C^{0,\gamma}} &\le c(\kappa_{*,\gamma},c_\Gamma)\,\epsilon^{-1-\gamma}\norm{v}_{L^\infty} \\
\norm{\mc{T}_0\mc{M}[v]}_{C^{0,\gamma}} &\le c(\kappa_{*,\gamma},c_\Gamma)\bigg(\norm{\p_s\mc{M}[v]}_{C^{0,\gamma}}+ \epsilon^{-1-\gamma}\norm{\p_\theta\mc{M}[v]}_{C^{0,\gamma}} \bigg)
\le c(\kappa_{*,\textcolor{black}{\gamma^+}},c_\Gamma,\epsilon)\norm{v}_{C^{0,\gamma}} \\
\norm{\mc{T}_+\mc{M}[v]}_{C^{0,\gamma}} &\le c(\kappa_{*,\gamma},c_\Gamma)\epsilon^{-1-\gamma}\norm{\mc{M}[v]}_{L^\infty}
\le c(\kappa_{*,\textcolor{black}{\gamma^+}},c_\Gamma,\epsilon)\norm{v}_{C^{0,\gamma}}\,.
\end{align*}
Combining the above bounds, we obtain Lemma \ref{lem:w}.
\end{proof}

The remainder of this section is devoted to the proofs of Lemmas \ref{lem:Dsmooths}, \ref{lem:DL_inverse}, and \ref{lem:hypersingular}.

\subsection{Proof of Lemma \ref{lem:Dsmooths}: modified double layer}\label{subsec:mod_doub}
To show the mapping properties of the modified double layer operator $\mc{D}'$ given by \eqref{eq:mod_double_layer}, we begin by writing
\begin{align*}
\mc{D}'[\varphi] &= \mc{R}_{\mc{D}}[\varphi] + \mc{D}_2[\varphi]\,,\\
\mc{D}_2[\varphi] &:= \frac{1}{4\pi}\int_{-1/2}^{1/2}\int_{-\pi}^{\pi}\overline{K}_\mc{D}\,\varphi(s-\bars,\theta-\bartheta) \,\epsilon\, d\bartheta d\bars \\ 
&\qquad+\int_{-1/2}^{1/2}\int_{-\pi}^{\pi}\frac{\varphi(s-\bars,\theta-\bartheta)}{\abs{\X(s)-\X(s-\bars)+\epsilon\be_r(s,\theta)}}\,\mc{J}_\epsilon(s-\bars,\theta-\bartheta)\, d\bartheta d\bars\,,
\end{align*}
where $\mc{R}_\mc{D}$ is as in \eqref{eq:RD_def} and satisfies Lemma \ref{lem:RS_and_RD}, i.e.
\begin{equation}\label{eq:RD_lem_again}
\norm{\mc{R}_\mc{D}[\varphi]}_{C^{1,\gamma}} \le c(\kappa_{*,\textcolor{black}{\gamma^+}},c_\Gamma)\,\epsilon^{-\textcolor{black}{\gamma^+}} \norm{\varphi}_{C^{0,\gamma}}\,. 
\end{equation}
 It thus remains to estimate $\mc{D}_2$.
Using the representation \eqref{eq:barKD} of $\overline{K}_{\mc{D}}$, we have
\begin{align*}
\abs{\mc{D}_2[\varphi]} &\le \norm{\varphi}_{L^\infty}\bigg(c\int_{-1/2}^{1/2}\int_{-\pi}^{\pi}\frac{\epsilon \sin^2(\frac{\bartheta}{2})}{|\barR|^3} \,\epsilon\, d\bartheta d\bars 
+c(\kappa_*)\int_{-1/2}^{1/2}\int_{-\pi}^{\pi}\frac{1}{\epsilon}\,\epsilon\,d\bartheta d\bars\bigg)
\le c(\kappa_*,c_\Gamma)\norm{\varphi}_{L^\infty}\,.
\end{align*}

Furthermore, recalling \eqref{eq:ds_barKD} and \eqref{eq:dtheta_barKD}, we may calculate
\begin{align*}
\p_s\overline{K}_\mc{D} = -6\frac{\epsilon\sin^2(\frac{\bartheta}{2}) \,\bars}{|\barR|^5}\,, \quad
 \frac{1}{\epsilon}\p_\theta \overline K_\mc{D} = \frac{12\epsilon^2\sin^3(\frac{\bartheta}{2})\cos(\frac{\bartheta}{2})}{|\barR|^5} - \frac{2\sin(\frac{\bartheta}{2})\cos(\frac{\bartheta}{2})}{|\barR|^3}\,.
\end{align*}
Then, using Lemma \ref{lem:odd_nm}, we have
\begin{align*}
\abs{\p_s\mc{D}_2[\varphi]} &\le \bigg|\frac{1}{4\pi}\int_{-1/2}^{1/2}\int_{-\pi}^{\pi}6\frac{\epsilon\sin^2(\frac{\bartheta}{2}) \,\bars}{|\barR|^5}\,\varphi(s-\bars,\theta-\bartheta) \,\epsilon\, d\bartheta d\bars\bigg|\\
&\quad + \norm{\varphi}_{L^\infty}\int_{-1/2}^{1/2}\int_{-\pi}^{\pi}\frac{\big|(1+\epsilon\wh\kappa)\be_{\rm t}+\epsilon\kappa_3\be_\theta\big|}{\epsilon^3}\,\abs{\mc{J}_\epsilon}\,d\bartheta d\bars 
\le c(\kappa_*)\,\epsilon^{-1}\norm{\varphi}_{C^{0,\gamma}}
\end{align*}
as well as
\begin{align*}
 \textstyle \abs{\frac{1}{\epsilon}\p_\theta\mc{D}_2[\varphi]} &\le \abs{\frac{1}{4\pi}\int_{-1/2}^{1/2}\int_{-\pi}^{\pi}\bigg(\frac{12\epsilon^2\sin^3(\frac{\bartheta}{2})\cos(\frac{\bartheta}{2})}{|\barR|^5} - \frac{2\sin(\frac{\bartheta}{2})\cos(\frac{\bartheta}{2})}{|\barR|^3} \bigg)\varphi(s-\bars,\theta-\bartheta) \,\epsilon\, d\bartheta d\bars }\\
 &\quad +\norm{\varphi}_{L^\infty}\int_{-1/2}^{1/2}\int_{-\pi}^{\pi}\frac{1}{\epsilon^3}\,\abs{\mc{J}_\epsilon}\,d\bartheta d\bars
\le c\,\epsilon^{-1}\norm{\varphi}_{C^{0,\gamma}}\,.
\end{align*}
Finally, using case (2) of Lemma \ref{lem:alpha_est}, we may obtain $\dot C^{0,\gamma}$ bounds for derivatives of $\mc{D}_2[\varphi]$:
\begin{align*}
\abs{\p_s\mc{D}_2[\varphi]}_{\dot C^{0,\gamma}} &\le \abs{\frac{1}{4\pi}\int_{-1/2}^{1/2}\int_{-\pi}^{\pi}6\frac{\epsilon\sin^2(\frac{\bartheta}{2}) \,\bars}{|\barR|^5}\,\varphi(s-\bars,\theta-\bartheta) \,\epsilon\, d\bartheta d\bars}_{\dot C^{0,\gamma}} \\
&\quad + c(\kappa_{*,\gamma})\norm{\varphi}_{L^\infty}\int_{-1/2}^{1/2}\int_{-\pi}^{\pi}\frac{1}{\epsilon^{2+\gamma}}\,d\bartheta d\bars 
\le c(\kappa_{*,\gamma})\,\epsilon^{-1-\gamma}\norm{\varphi}_{C^{0,\gamma}}
\end{align*}
and
\begin{align*}
 \textstyle \abs{\frac{1}{\epsilon}\p_\theta\mc{D}_2[\varphi]}_{\dot C^{0,\gamma}} &\le \abs{\frac{1}{4\pi}\int_{-1/2}^{1/2}\int_{-\pi}^{\pi}\bigg(\frac{12\epsilon^2\sin^3(\frac{\bartheta}{2})\cos(\frac{\bartheta}{2})}{|\barR|^5} - \frac{2\sin(\frac{\bartheta}{2})\cos(\frac{\bartheta}{2})}{|\barR|^3} \bigg)\varphi(s-\bars,\theta-\bartheta) \,\epsilon\, d\bartheta d\bars }_{\dot C^{0,\gamma}}\\
 &\quad + c(\kappa_{*,\gamma})\norm{\varphi}_{L^\infty}\int_{-1/2}^{1/2}\int_{-\pi}^{\pi}\frac{1}{\epsilon^{2+\gamma}}\,d\bartheta d\bars 
\le c(\kappa_{*,\gamma})\,\epsilon^{-1-\gamma}\norm{\varphi}_{C^{0,\gamma}}\,.
\end{align*}
Combining the estimates for $\mc{D}_2$ with \eqref{eq:RD_lem_again}, we obtain Lemma \ref{lem:Dsmooths}.
\hfill\qedsymbol

\subsection{Proof of Lemma \ref{lem:DL_inverse}: inverse double layer}\label{subsec:doub_inv}
Throughout, we consider the surface density $\varphi=\varphi(s,\theta)$ along $\Gamma_\epsilon$ as a function of the surface parameters $s$ and $\theta$. 
Using the uniform boundedness principle\footnote{The uniform boundedness principle applies if the underlying Banach space is the same. Technically the space $C^{0,\gamma}(\Gamma_\epsilon)$ is different for each curve $\X(s)$; however, these spaces may be identified across different $\X(s)$ using the norm \eqref{eq:dot_Calpha_eps}.}, it will suffice to fix $\varphi\neq 0$ in $C^{0,\gamma}(\Gamma_\epsilon)$ and show that  
\begin{align*}
\textstyle \norm{(\frac{1}{2}{\bf I}+\mc{D}')^{-1}[\varphi]}_{C^{0,\gamma}} \le c_\varphi(\kappa_{*,\textcolor{black}{\gamma^+}},c_\Gamma)\norm{\varphi}_{C^{0,\gamma}}
\end{align*}
for any centerline curve $\X(s)$ satisfying $\norm{\kappa}_{C^{0,\textcolor{black}{\gamma^+}}}\le \kappa_{*,\textcolor{black}{\gamma^+}}$ as well as the non-self-intersection condition \eqref{eq:cGamma} with the same $c_\Gamma$. \emph{A priori} the constant $c_\varphi$ may depend on the function $\varphi$; however, since $\varphi$ is arbitrary, uniform boundedness will imply that the bound in fact holds for a constant that is uniform in $\varphi$. 

We proceed by contradiction. Fix $\norm{\varphi}_{C^{0,\gamma}}=1$. If no such bound exists, then we may select a sequence of curves $\X_j$, $j\in \N$, each satisfying \eqref{eq:cGamma} and $\norm{(\X_j)_{ss}}_{C^{0,\textcolor{black}{\gamma^+}}}=\norm{\kappa_j}_{C^{0,\textcolor{black}{\gamma^+}}}\le \kappa_{*,\textcolor{black}{\gamma^+}}$, such that
\begin{align*}
\textstyle \norm{(\frac{1}{2}{\bf I}+\mc{D}_j')^{-1}[\varphi]}_{C^{0,\gamma}} >j\,,
\end{align*}  
where $\mc{D}_j'$ denotes the modified double layer operator \eqref{eq:mod_double_layer} on curve $j$.
Defining
\begin{align*}
h_j =\frac{\varphi}{\|(\frac{1}{2}{\bf I}+\mc{D}_j')^{-1}[\varphi]\|_{C^{0,\gamma}}}\,, \quad
g_j = \frac{(\frac{1}{2}{\bf I}+\mc{D}_j')^{-1}[\varphi]}{\|(\frac{1}{2}{\bf I}+\mc{D}_j')^{-1}[\varphi]\|_{C^{0,\gamma}}}\,,
\end{align*}
and noting that $\norm{g_j}_{C^{0,\gamma}}=1$, we have that, by assumption,
\begin{equation}\label{eq:j_contradiction}
\textstyle (\frac{1}{2}{\bf I}+\mc{D}_j')[g_j] = h_j \to 0 \quad \text{in }C^{0,\gamma}\,.
\end{equation}
Note that by Lemma \ref{lem:Dsmooths}, $\mc{D}_j'[g_j]$ is smoother and is bounded in $C^{1,\gamma}$ uniformly in $j$. Thus by \eqref{eq:j_contradiction}, along a subsequence $j_\ell$, $g_{j_\ell}-2h_{j_\ell}$ converges strongly in $C^{0,\gamma}$ to some limit $g_\infty$ with $\norm{g_\infty}_{C^{0,\gamma}}=1$.

Now, since $\norm{(\X_j)_{ss}}_{C^{0,\textcolor{black}{\gamma^+}}}=\norm{\kappa_j}_{C^{0,\textcolor{black}{\gamma^+}}}\le \kappa_{*,\textcolor{black}{\gamma^+}}$, there exists a subsequence $\X_{j_k}$ of curves converging (after a possible translation) strongly in $C^{2,\gamma}$, $0<\gamma<\textcolor{black}{\gamma^+}$, to some limit curve $\X_\infty(s)$ which also satisfies the assumption \eqref{eq:cGamma}. In particular, using the form \eqref{eq:KD} of the double layer kernel $K_\mc{D}$, we have, for any $\psi\in C^{0,\gamma}(\Gamma_\epsilon)$,
\begin{align*}
&(\mc{D}_{j_k}'-\mc{D}_\infty')[\psi]\\
&= \frac{1}{4\pi}\int_{-1/2}^{1/2}\int_{-\pi}^\pi\bigg(\frac{(\X_{j_k}(s)-\X_{j_k}(s'))\cdot\be_{r,j_k}(s',\theta') +\epsilon(\be_{r,j_k}(s,\theta)\cdot\be_{r,j_k}(s',\theta')-1)}{\abs{\X_{j_k}(s)-\X_{j_k}(s')+\epsilon(\be_{r,j_k}(s,\theta)-\be_{r,j_k}(s',\theta'))}^3}\mc{J}_{\epsilon,j_k}(s',\theta')\\
&\quad - \frac{(\X_\infty(s)-\X_\infty(s'))\cdot\be_{r,\infty}(s',\theta') +\epsilon(\be_{r,\infty}(s,\theta)\cdot\be_{r,\infty}(s',\theta')-1)}{\abs{\X_\infty(s)-\X_\infty(s')+\epsilon(\be_{r,\infty}(s,\theta)-\be_{r,\infty}(s',\theta'))}^3}\mc{J}_{\epsilon,\infty}(s',\theta')\bigg)\,\psi(s',\theta')\,d\theta'ds \\
&\quad+ \int_{-1/2}^{1/2}\int_{-\pi}^\pi\bigg(\frac{\mc{J}_{\epsilon,j_k}(s',\theta')}{\abs{\X_{j_k}(s)-\X_{j_k}(s')+\epsilon\be_{r,j_k}(s,\theta)}}- \frac{\mc{J}_{\epsilon,\infty}(s',\theta')}{\abs{\X_\infty(s)-\X_\infty(s')+\epsilon\be_{r,\infty}(s,\theta)}}\bigg) \psi(s',\theta')\,d\theta' ds'\,,
 \end{align*} 
 where each of $\be_{r,j}$, $\mc{J}_{\epsilon,j}$, $\wh\kappa_{j}$ are defined along the curve $\X_j$. From this we can see that $\abs{(\mc{D}_{j_k}'-\mc{D}_\infty')[\psi]}\to 0$ as $j_k\to\infty$. By a diagonalization argument, we therefore have that 
\begin{align*}
\mc{D}_{j}'g_{j} \to D_\infty'g_\infty 
\end{align*}
along some subsequence, and $g_\infty$ satisfies $(\frac{1}{2}{\bf I}+\mc{D}_\infty')[g_\infty]=0$. By injectivity of the modified double layer (see discussion below \eqref{eq:sol_Dirichlet}), we have $g_\infty=0$, which contradicts $\norm{g_\infty}_{C^{0,\gamma}}=1$. 
\hfill\qedsymbol \\

\subsection{Proof of Lemma \ref{lem:hypersingular}: bounds for hypersingular operator}\label{subsec:hyper}
We begin by writing down a more detailed expression for the kernel $K_{\mc{T}}$ given by \eqref{eq:KT}. Using \eqref{eq:Qexpand}, \eqref{eq:Rnxprime}, and \eqref{eq:Rnx}, we have
\begin{align*}
K_\mc{T} &= -\frac{1}{4\pi}\bigg(\frac{\bm{n}_x\cdot\bm{n}_{x'}}{\abs{\bR}^3} -3\frac{\bR\cdot\bm{n}_x \,\bR\cdot\bm{n}_{x'}}{\abs{\bR}^5}\bigg)  \\
&= -\frac{1}{4\pi}\bigg(\frac{1-2\sin^2(\frac{\bartheta}{2}) +\bars^2 Q_{0,5}}{\abs{\bR}^3} -\frac{12\epsilon^2\sin^4(\frac{\bartheta}{2}) + 6\epsilon\sin^2(\frac{\bartheta}{2})\bars^2(Q_{\rm n}-Q_{\rm n'}) + 3\bars^4Q_{\rm n}Q_{\rm n'}}{\abs{\bR}^5}\bigg)\,.
\end{align*} 
We consider the hypersingular part of the operator $\mc{T}$ in \eqref{eq:Tdef} in terms of the surface parameters $s$ and $\theta$:
\begin{align*}
\mc{T}[\varphi] &= {\rm p.v.}\int_{-1/2}^{1/2}\int_{-\pi}^\pi K_{\mc{T}}(s,\theta,\bars,\bartheta)\,\big(\varphi(s-\bars,\theta-\bartheta)-\varphi(s,\theta)\big)\,\mc{J}_\epsilon(s-\bars,\theta-\bartheta)\, d\bartheta d\bars \\
&\qquad + \int_{\Gamma_\epsilon}\frac{(\bx-\X(s'))\cdot\bm{n}_x}{\abs{\bx-\X(s')}^3}\varphi(\bx')\,dS_{x'} \,.
\end{align*}
We may then decompose $\mc{T}$ as follows:
\begin{align*} 
\mc{T}[\varphi] &= \mc{T}_0[\varphi]+\mc{T}_{+}[\varphi]\,,\\
\mc{T}_0[\varphi] &= -\frac{1}{4\pi}\,{\rm p.v.}\int_{-1/2}^{1/2}\int_{-\pi}^{\pi} \frac{1}{\abs{\bR}^3}\,\big(\varphi(s-\bars,\theta-\bartheta)-\varphi(s,\theta)\big)\,\mc{J}_\epsilon\, d\bartheta d\bars\,,\\
\mc{T}_{+}[\varphi] &= \frac{1}{4\pi}\int_{-1/2}^{1/2}\int_{-\pi}^{\pi}\bigg(\frac{12\epsilon^2\sin^4(\frac{\bartheta}{2}) + 6\epsilon\sin^2(\frac{\bartheta}{2})\bars^2(Q_{\rm n}-Q_{\rm n'}) + 3\bars^4Q_{\rm n}Q_{\rm n'}}{\abs{\bR}^5} \\
&\qquad +\frac{2\sin^2(\frac{\bartheta}{2}) -\bars^2 Q_{0,5}}{\abs{\bR}^3}\bigg) \, \big(\varphi(s-\bars,\theta-\bartheta)-\varphi(s,\theta)\big)\,\mc{J}_\epsilon\, d\bartheta d\bars 
+ \int_{\Gamma_\epsilon}\frac{(\bx-\X(s'))\cdot\bm{n}_x}{\abs{\bx-\X(s')}^3}\varphi(\bx')\,dS_{x'}\,.
\end{align*}
Here $\mc{T}_0$ captures the main behavior of $\mc{T}$ while $\mc{T}_{+}$ is smoother. We begin by estimating $\mc{T}_{+}$. Using Lemma \ref{lem:basic_est}, we have
\begin{align*}
\abs{\mc{T}_+} &\le c(\kappa_*)\norm{\varphi}_{L^\infty}\int_{-1/2}^{1/2}\int_{-\pi}^{\pi}\bigg(\frac{\epsilon^{-2}+\epsilon^{-1}+1}{\abs{\bR}}+\frac{1}{\epsilon^2}\bigg) \,\epsilon d\bartheta d\bars 
\le c(\kappa_*,c_\Gamma)\,\epsilon^{-1}\norm{\varphi}_{L^\infty}\,.
\end{align*}
Using case (1) of Lemma \ref{lem:alpha_est}, we may also estimate
\begin{align*}
\abs{\mc{T}_+}_{\dot C^{0,\alpha}} 
\le c(\kappa_{*,\alpha},c_\Gamma)\,\epsilon^{-1-\alpha}\norm{\varphi}_{L^\infty}\,.
\end{align*}

We next turn to bounds for $\mc{T}_0$. Since $\varphi\in C^{1,\alpha}(\Gamma_\epsilon)$, we may write 
\begin{equation}\label{eq:varphi_expand}
\begin{aligned}
\varphi(s-\bars,\theta-\bartheta)- \varphi(s,\theta) &= \textstyle -\bars \,\p_s\varphi(s,\theta) - \epsilon\bartheta\,\frac{1}{\epsilon}\p_\theta\varphi(s,\theta) \\
&\qquad+ \abs{\bars}^{1+\alpha} Q_{\varphi,s}(s,\theta,\bars,\bartheta) + |\epsilon\bartheta|^{1+\alpha} Q_{\varphi,\theta}(s,\theta,\bars,\bartheta) \,,
\end{aligned}
\end{equation} 
where 
\begin{equation}\label{eq:Qvarphi}
\norm{Q_{\varphi,s}}_{L^\infty}\le c\norm{\p_s\varphi}_{C^{0,\alpha}} \,, 
\qquad
\norm{Q_{\varphi,\theta}}_{L^\infty}\le \textstyle c\norm{\frac{1}{\epsilon}\p_\theta\varphi}_{C^{0,\alpha}} \,.
\end{equation}
We may then write 
\begin{align*}
\mc{T}_0 &= J_1+J_2\,,\\
J_1 &= \frac{1}{4\pi}{\rm p.v.}\int_{-1/2}^{1/2}\int_{-\pi}^\pi \frac{1}{\abs{\bR}^3}\bigg(\bars\p_s\varphi(s,\theta)+\epsilon\bartheta\, \textstyle \frac{1}{\epsilon}\p_\theta\varphi(s,\theta) \bigg)\,\mc{J}_\epsilon(s-\bars,\theta-\bartheta)\, d\bartheta d\bars\\
J_2 &= -\frac{1}{4\pi}\int_{-1/2}^{1/2}\int_{-\pi}^\pi\frac{1}{\abs{\bR}^3}\big( \abs{\bars}^{1+\alpha} Q_{\varphi,s} + |\epsilon\bartheta|^{1+\alpha} Q_{\varphi,\theta}\big)\,\mc{J}_\epsilon(s-\bars,\theta-\bartheta)\, d\bartheta d\bars \,.
\end{align*}
We begin with $L^\infty$ estimates for $J_1$ and $J_2$. First, using Lemma \ref{lem:odd_nm}, we may estimate $J_1$ as 
\begin{align*}
 \abs{J_1}&\le c\,\norm{\p_s\varphi}_{L^\infty}\abs{{\rm p.v.}\int_{-1/2}^{1/2}\int_{-\pi}^\pi \frac{\bars}{\abs{\bR}^3}\,\mc{J}_\epsilon\, d\bartheta d\bars}
 + c\,\norm{\textstyle \frac{1}{\epsilon}\p_\theta\varphi}_{L^\infty}\displaystyle\abs{{\rm p.v.}\int_{-1/2}^{1/2}\int_{-\pi}^\pi \frac{\epsilon\bartheta}{\abs{\bR}^3}\,\mc{J}_\epsilon\, d\bartheta d\bars} \\ 
 &\le c(\kappa_{*,\alpha},c_\Gamma)\,\epsilon^\alpha\big(\norm{\p_s\varphi}_{L^\infty}+\textstyle \norm{\frac{1}{\epsilon}\p_\theta\varphi}_{L^\infty}\big)\,.
\end{align*}  
 We may next use Lemma \ref{lem:basic_est} to estimate $J_2$ as 
\begin{align*}
\abs{J_2}&\le c(\kappa_*)\big(\norm{Q_{\varphi,s}}_{L^\infty}+\norm{Q_{\varphi,\theta}}_{L^\infty}\big)\int_{-1/2}^{1/2}\int_{-\pi}^\pi\frac{1}{\abs{\bR}^{2-\alpha}}\,\epsilon d\bartheta d\bars\\
&\le c(\kappa_*,c_\Gamma)\,\epsilon^\alpha\big(\norm{\p_s\varphi}_{C^{0,\alpha}}+\textstyle \norm{\frac{1}{\epsilon}\p_\theta\varphi}_{C^{0,\alpha}}\big)\,.
\end{align*}
In total we have
\begin{equation}\label{eq:J_C0est}
\abs{\mc{T}_0} \le c(\kappa_{*,\alpha},c_\Gamma)\,\epsilon^\alpha\big(\norm{\p_s\varphi}_{C^{0,\alpha}}+\textstyle \norm{\frac{1}{\epsilon}\p_\theta\varphi}_{C^{0,\alpha}}\big)\,.
\end{equation}

To prove the $\dot C^{0,\alpha}$ estimate for $\mc{T}_0$, we may follow the approach of Lemma \ref{lem:alpha_est} while being a bit more careful with the structure of remainder terms. We again let $\bR$, $\bR_0$ denote
\begin{align*}
\bR = \bR(s,\theta,\bars,\bartheta), \quad \bR_0 = \bR(s_0+s,\theta_0+\theta,s_0+\bars,\theta_0+\bartheta)\,,
\end{align*}
and aim to bound the expression 
\begin{align*}
\wh{\mc{T}_0}&:= \mc{T}_0[\varphi](s_0+s,\theta_0+\theta)-\mc{T}_0[\varphi](s,\theta) \\
&= -\frac{1}{4\pi}{\rm p.v.}\int_{-s_0-1/2}^{-s_0+1/2}\int_{-\theta_0-\pi}^{-\theta_0+\pi} \frac{1}{\abs{\bR_0}^3}\,\big(\varphi(s-\bars,\theta-\bartheta)-\varphi(s_0+s,\theta_0+\theta)\big)\,\mc{J}_\epsilon(s-\bars,\theta-\bartheta) \,d\bartheta d\bars \\
&\qquad +\frac{1}{4\pi} {\rm p.v.}\int_{-1/2}^{1/2}\int_{-\pi}^\pi \frac{1}{\abs{\bR}^3} \,\big(\varphi(s-\bars,\theta-\bartheta)-\varphi(s,\theta)\big)\,\mc{J}_\epsilon(s-\bars,\theta-\bartheta) \,d\bartheta d\bars\,.
\end{align*}
We begin by noting two different expansions for $\varphi$ about $(s_0+s,\theta_0+\theta)$. In particular, we write
\begin{equation}\label{eq:varphi_expand2}
\varphi(s-\bars,\theta-\bartheta)- \varphi(s_0+s,\theta_0+\theta) 
=(s_0+\bars) H_{\varphi,s} + \epsilon(\theta_0+\bartheta)H_{\varphi,\theta}\,,
\end{equation}
where we will make use of two different forms of the remainder terms. Expansion 1 is given by
\begin{equation}\label{eq:rem1}
\begin{aligned}
(s_0+\bars)H_{\varphi,s} &= -(s_0+\bars)\p_s\varphi(s_0+s,\theta_0+\theta)\\
&\qquad +\abs{s_0+\bars}^{1+\alpha}Q_{\varphi,s}(s_0+s,\theta_0+\theta,s_0+\bars,\theta_0+\bartheta)\\ 
\epsilon(\theta_0+\bartheta)H_{\varphi,\theta}&= \textstyle -\epsilon(\theta_0+\bartheta)\,\frac{1}{\epsilon}\p_\theta\varphi(s_0+s,\theta_0+\theta)\\
&\qquad +\abs{\epsilon(\theta_0+\bartheta)}^{1+\alpha}Q_{\varphi,\theta}(s_0+s,\theta_0+\theta,s_0+\bars,\theta_0+\bartheta)\,,
\end{aligned}
\end{equation}
while expansion 2 is given by 
\begin{equation}\label{eq:rem2}
\begin{aligned}
(s_0+\bars)H_{\varphi,s} &= -(s_0+\bars)\p_s\varphi(s,\theta)+\abs{s_0}^{1+\alpha}Q_{\varphi,s}(s,\theta,s_0+s,\theta_0+\theta)\\
&\qquad +\abs{\bars}^{1+\alpha}Q_{\varphi,s}(s,\theta,\bars,\bartheta)\\ 
\epsilon(\theta_0+\bartheta)H_{\varphi,\theta}&=\textstyle -\epsilon(\theta_0+\bartheta)\,\frac{1}{\epsilon}\p_\theta\varphi(s,\theta)+\abs{\epsilon\theta_0}^{1+\alpha}Q_{\varphi,\theta}(s,\theta,s_0+s,\theta_0+\theta) \\
&\qquad+ |\epsilon\bartheta|^{1+\alpha}Q_{\varphi,\theta}(s,\theta,\bars,\bartheta)\,.
\end{aligned}
\end{equation}
Here each $Q_{\varphi,\mu}$ is as in \eqref{eq:Qvarphi}.

Again as in Lemma \ref{lem:alpha_est}, we first consider the case $\sqrt{s_0^2+\epsilon^2\theta_0^2}\ge \epsilon$. Then, using the first expansion \eqref{eq:rem1} of the remainder terms, we may use the $L^\infty$ estimate \eqref{eq:J_C0est} from above to obtain 
\begin{align*}
\abs{\wh{\mc{T}}_0}&\le \abs{\mc{T}_0[\varphi](s_0+s,\theta_0+\theta)} + \abs{\mc{T}_0[\varphi](s,\theta)}  
\le c(\kappa_{*,\alpha},c_\Gamma)\,\epsilon^\alpha\big(\norm{\p_s\varphi}_{C^{0,\alpha}}+\textstyle \norm{\frac{1}{\epsilon}\p_\theta\varphi}_{C^{0,\alpha}}\big) \\
&\le c(\kappa_{*,\alpha},c_\Gamma)\,\sqrt{s_0^2+\epsilon^2\theta_0^2}^{\,\alpha}\big(\norm{\p_s\varphi}_{C^{0,\alpha}}+\textstyle \norm{\frac{1}{\epsilon}\p_\theta\varphi}_{C^{0,\alpha}}\big)\,.
\end{align*}

We next consider the case $\sqrt{s_0^2+\epsilon^2\theta_0^2}< \epsilon$. 
Using \eqref{eq:Tdef} and the expansions \eqref{eq:varphi_expand} and \eqref{eq:varphi_expand2}, we may write 
\begin{align*}
\wh{\mc{T}}_0 &= J_{\alpha,{\rm s}}+J_{\alpha,\theta} \,,\\
J_{\alpha,{\rm s}}&= -\frac{1}{4\pi}{\rm p.v.}\int_{-1/2-s_0}^{1/2-s_0}\int_{-\pi-\theta_0}^{\pi-\theta_0}\frac{1}{\abs{\bR_0}^3}(s_0+\bars)H_{\varphi,s}\,\mc{J}_\epsilon(s-\bars,\theta-\bartheta)\,d\bartheta d\bars \\
&\quad + \frac{1}{4\pi}{\rm p.v.}\int_{-1/2}^{1/2}\int_{-\pi}^{\pi}\frac{1}{\abs{\bR}^3}\big(-\bars\p_s\varphi(s,\theta)+\abs{\bars}^{1+\alpha}Q_{\varphi,s}(s,\theta,\bars,\bartheta) \big) \,\mc{J}_\epsilon(s-\bars,\theta-\bartheta)\,d\bartheta d\bars\\
J_{\alpha,\theta}&= -\frac{1}{4\pi}{\rm p.v.}\int_{-1/2-s_0}^{1/2-s_0}\int_{-\pi-\theta_0}^{\pi-\theta_0} \frac{1}{\abs{\bR_0}^3}\epsilon(\theta_0+\bartheta)H_{\varphi,\theta}\,\mc{J}_\epsilon(s-\bars,\theta-\bartheta) d\bartheta d\bars\\
&\quad  + \frac{1}{4\pi}{\rm p.v.}\int_{-1/2}^{1/2}\int_{-\pi}^{\pi} \frac{1}{\abs{\bR}^3}\bigg(\textstyle -\epsilon\bartheta \,\frac{1}{\epsilon}\p_\theta\varphi(s,\theta)+|\epsilon\bartheta|^{1+\alpha}Q_{\varphi,\theta}(s,\theta,\bars,\bartheta) \bigg)\,\mc{J}_\epsilon(s-\bars,\theta-\bartheta) d\bartheta d\bars\,.
\end{align*}
To bound $J_{\alpha,{\rm s}}$ and $J_{\alpha,\theta}$, we split the integrals into two regions:
\begin{align*}
I_1 &= \big\{ (\bars,\bartheta)\; : \; \sqrt{(s_0+\bars)^2+\epsilon^2(\theta_0+\bartheta)^2}\le 4\sqrt{s_0^2+\epsilon^2\theta_0^2} \big\}\,,\\
I_2 &= \big\{ (\bars,\bartheta)\; : \; \sqrt{(s_0+\bars)^2+\epsilon^2(\theta_0+\bartheta)^2}> 4\sqrt{s_0^2+\epsilon^2\theta_0^2} \big\}\,.
\end{align*}
For $j={\rm s},\theta$, let $J_{\alpha,j,1}$ denote the integral of the integrand of $J_{\alpha,j}$ over the region $I_1$. Note that $\sqrt{\bars^2+\epsilon^2\bartheta^2}$ then belongs to the region $I_1'$ where $\sqrt{\bars^2+\epsilon^2\bartheta^2}\le 5\sqrt{s_0^2+\epsilon^2\theta_0^2}$. Using the first expansion \eqref{eq:rem1} for $\varphi$ about $(s_0+s,\theta_0+\theta)$ and recalling the definition \eqref{eq:Reven}, \eqref{eq:Rsq2} of $\bR_{\rm even}$, we have  
\begin{align*}
\abs{J_{\alpha,{\rm s},1}} 
&\le \abs{\p_s\varphi(s_0+s,\theta_0+\theta)\,{\rm p.v.}\iint_{I_1} \frac{s_0+\bars}{\abs{\bR_0}^3}\,\mc{J}_\epsilon(s-\bars,\theta-\bartheta)\, d\bartheta d\bars}
 + \iint_{I_1}\frac{\abs{s_0+\bars}^{1+\alpha}}{\abs{\bR_0}^3}\abs{Q_{\varphi,s}} \,\abs{\mc{J}_\epsilon} d\bartheta d\bars \\
&\qquad + \abs{\p_s\varphi(s,\theta)\,{\rm p.v.}\iint_{I_1'} \frac{\bars}{\abs{\bR}^3}\,\mc{J}_\epsilon(s-\bars,\theta-\bartheta)\, d\bartheta d\bars}
 + \iint_{I_1'} \frac{\abs{\bars}^{1+\alpha}}{\abs{\bR}^3}\abs{Q_{\varphi,s}}\,\abs{\mc{J}_\epsilon} d\bartheta d\bars \\
&\le \norm{\p_s\varphi}_{L^\infty}\abs{{\rm p.v.}\iint_{I_1} (s_0+\bars)\frac{\mc{J}_\epsilon(s-\bars,\theta-\bartheta)-\mc{J}_\epsilon(s_0+s,\theta_0+\theta)}{\abs{\bR_0}^3}d\bartheta d\bars} \\
&\qquad + \norm{\p_s\varphi}_{L^\infty}\abs{{\rm p.v.}\iint_{I_1} (s_0+\bars)\bigg(\frac{1}{\abs{\bR_0}^3}-\frac{1}{\abs{\bR_{\rm even,0}}^3}\bigg) \,\mc{J}_\epsilon(s_0+s,\theta_0+\theta) \,d\bartheta d\bars}\\
&\qquad + \norm{\p_s\varphi}_{L^\infty}\abs{{\rm p.v.}\iint_{I_1'} \bars\,\frac{\mc{J}_\epsilon(s-\bars,\theta-\bartheta)-\mc{J}_\epsilon(s,\theta)}{\abs{\bR}^3} d\bartheta d\bars}  \\
&\qquad + \norm{\p_s\varphi}_{L^\infty}\abs{{\rm p.v.}\iint_{I_1'} \bars\bigg(\frac{1}{\abs{\bR}^3}-\frac{1}{\abs{\bR_{\rm even}}^3}\bigg)\,\mc{J}_\epsilon(s,\theta)\, d\bartheta d\bars}  \\
&\qquad + c(\kappa_*)\norm{\p_s\varphi}_{C^{0,\alpha}}\bigg(\iint_{I_1}\frac{1}{\abs{\bR_0}^{2-\alpha}} \,\epsilon d\bartheta d\bars + \iint_{I_1'} \frac{1}{\abs{\bR}^{2-\alpha}}\,\epsilon d\bartheta d\bars\bigg) \\
%
%
%
%
%
&\le c(\kappa_{*,\alpha})\norm{\p_s\varphi}_{C^{0,\alpha}}\bigg(\iint_{I_1}\frac{1}{\abs{\bR_0}^{2-\alpha}} \epsilon\, d\bartheta d\bars + \iint_{I_1'} \frac{1}{\abs{\bR}^{2-\alpha}} \epsilon\, d\bartheta d\bars\bigg) \\
&\le c(\kappa_{*,\alpha})\norm{\p_s\varphi}_{C^{0,\alpha}}\bigg(\iint_{\rho\le4\sqrt{s_0^2+\epsilon^2\theta_0^2}}\frac{1}{\rho^{2-\alpha}} \,\rho d\rho d\phi + \iint_{\rho\le 5\sqrt{s_0^2+\epsilon^2\theta_0^2}} \frac{1}{\rho^{2-\alpha}}\,\rho d\rho d\phi\bigg) \\
&\le c(\kappa_{*,\alpha},c_\Gamma)\norm{\p_s\varphi}_{C^{0,\alpha}}\sqrt{s_0^2+\epsilon^2\theta_0^2}^{\,\alpha} \,,
\end{align*}
where we are using the notation $\bR_{\rm even,0}$ to denote $\bR_{\rm even}(s_0+s,\theta_0+\theta,s_0+\bars,\theta_0+\bartheta)$, and in the second-to-last line we have switched to polar coordinates as in the proof \eqref{eq:alphaest2_1} of Lemma \ref{lem:alpha_est}. 
Similarly, we may show that $J_{\alpha,\theta,1}$ satisfies 
\begin{align*}
\abs{J_{\alpha,\theta,1}} &\le c(\kappa_{*,\alpha},c_\Gamma)\,\epsilon^{-1-\alpha}\textstyle\norm{\p_\theta\varphi}_{C^{0,\alpha}} \sqrt{s_0^2+\epsilon^2\theta_0^2}^{\,\alpha} \,.
\end{align*}

Finally, we consider the integrals $J_{\alpha,{\rm s},2}$ and $J_{\alpha,\theta,2}$ over the region $I_2$. Note that $\sqrt{\bars^2+\epsilon^2\bartheta^2}$ belongs to the region $I_2'$ where $\sqrt{\bars^2+\epsilon^2\bartheta^2}> 3\sqrt{s_0^2+\epsilon^2\theta_0^2}$. 
Using the second expansion \eqref{eq:rem2} of $\varphi$ about $(s_0+s,\theta_0+\theta)$, we have that $J_{\alpha,{\rm s},2}$ may be written 
\begin{align*}
J_{\alpha,{\rm s},2}&= 
-\p_s\varphi(s,\theta)\iint_{I_2} \bigg(\frac{(s_0+\bars)}{\abs{\bR_0}^3}-\frac{\bars}{\abs{\bR}^3} \bigg)\,\big(\mc{J}_\epsilon(s-\bars,\theta-\bartheta)-\mc{J}_\epsilon(s,\theta)\big)\, d\bartheta d\bars \\
&\qquad -\p_s\varphi(s,\theta)\iint_{I_2} \bigg(\frac{(s_0+\bars)}{\abs{\bR_0}^3}-\frac{(s_0+\bars)}{\abs{\bR_{\rm even,0}}^3} + \frac{\bars}{\abs{\bR_{\rm even}}^3}-\frac{\bars}{\abs{\bR}^3} \bigg)\,\mc{J}_\epsilon(s,\theta)\, d\bartheta d\bars \\
&\qquad + \abs{s_0}^{1+\alpha}Q_{\varphi,s}(s,\theta,s_0+s,\theta_0+\theta)\iint_{I_2} \frac{1}{\abs{\bR_0}^3}\,\mc{J}_\epsilon(s-\bars,\theta-\bartheta)\, d\bartheta d\bars\,,
\end{align*}
where we are again using oddness in the middle line to insert $\bR_{\rm even}$ as in \eqref{eq:Reven}.
Then, expanding the differences $\frac{(s_0+\bars)}{\abs{\bR_0}^3}-\frac{(s_0+\bars)}{\abs{\bR_{\rm even,0}}^3}$ and $\frac{\bars}{\abs{\bR_{\rm even}}^3}-\frac{\bars}{\abs{\bR}^3}$ using \eqref{eq:Rsq2}, we have
\begin{align*}
\abs{J_{\alpha,{\rm s},2}} 
&\le c(\kappa_{*,\alpha})\norm{\p_s\varphi}_{L^\infty}\iint_{I_2} \abs{\frac{(s_0+\bars)}{\abs{\bR_0}^3}-\frac{\bars}{\abs{\bR}^3}}\,\big(\abs{\bars}^\alpha+|\epsilon\bartheta|^\alpha\big)\,\epsilon\, d\bartheta d\bars \\
&\quad + \norm{\p_s\varphi}_{L^\infty}\bigg|\iint_{I_2} \bigg(\frac{\bars^5Q_{R,1} +\epsilon\bars^4 Q_{R,2}+ \epsilon^2 \bars^3\sin(\frac{\bartheta}{2})Q_{R,3} }{\abs{\bR_{\rm even}}\abs{\bm{R}}(\abs{\bm{R}}+\abs{\bR_{\rm even}})}\sum_{j=0}^{2}\frac{1}{\abs{\bR}^j\abs{\bR_{\rm even}}^{2-j}} \\
&\quad - \frac{(s_0+\bars)^5Q_{R,1} +\epsilon(s_0+\bars)^4 Q_{R,2}+ \epsilon^2 (s_0+\bars)^3\sin(\frac{\theta_0+\bartheta}{2})Q_{R,3} }{\abs{\bR_{\rm even,0}}\abs{\bm{R}_0}(\abs{\bm{R}_0}+\abs{\bR_{\rm even,0}})}\sum_{j=0}^{2}\frac{1}{\abs{\bR_0}^j\abs{\bR_{\rm even,0}}^{2-j}}\bigg) \mc{J}_\epsilon(s,\theta) d\bartheta d\bars\bigg| \\
&\quad + \abs{s_0}^{1+\alpha}\norm{\p_s\varphi}_{C^{0,\alpha}}\iint_{I_2} \frac{1}{\abs{\bR_0}^3}\,\epsilon d\bartheta d\bars\\
&\le c(\kappa_{*,\alpha})\norm{\p_s\varphi}_{L^\infty}\iint_{I_2} \frac{\abs{s_0}}{\abs{\bR_0}^{3-\alpha}}\,\epsilon\, d\bartheta d\bars + c(\kappa_*)\norm{\p_s\varphi}_{L^\infty}\iint_{I_2} \frac{\sqrt{s_0^2+\epsilon^2\theta_0^2}}{\abs{\barR_0}^2}\,\epsilon d\bartheta d\bars\\
&\quad + c(\kappa_{*,\alpha})\norm{\p_s\varphi}_{L^\infty}\iint_{I_2} \frac{\sqrt{s_0^2+\epsilon^2\theta_0^2}^\alpha}{\abs{\barR_0}}\,\epsilon d\bartheta d\bars 
+ \norm{\p_s\varphi}_{C^{0,\alpha}}\iint_{I_2} \frac{\abs{s_0}^{1+\alpha}}{\abs{\bR_0}^3}\,\epsilon d\bartheta d\bars\\
&\le c(\kappa_{*,\alpha})\norm{\p_s\varphi}_{C^{0,\alpha}}\iint_{\rho>4\sqrt{s_0^2+\epsilon^2\theta_0^2}}\bigg(\frac{\abs{s_0}}{\rho^{3-\alpha}}+ \frac{\sqrt{s_0^2+\epsilon^2\theta_0^2}}{\rho^2}+\frac{\sqrt{s_0^2+\epsilon^2\theta_0^2}^{\,\alpha}}{\rho}+\frac{\abs{s_0}^{1+\alpha}}{\rho^3} \bigg)\,\rho d\rho d\phi \\
&\le c(\kappa_{*,\alpha},c_\Gamma)\norm{\p_s\varphi}_{C^{0,\alpha}}\bigg(\abs{s_0}\big(1+\sqrt{s_0^2+\epsilon^2\theta_0^2}^{\,-1+\alpha}\big)+\sqrt{s_0^2+\epsilon^2\theta_0^2}\big(1+ \abs{\log(s_0^2+\epsilon^2\theta_0^2)}\big) \\
&\qquad +\sqrt{s_0^2+\epsilon^2\theta_0^2}^{\,\alpha}\big(1+\sqrt{s_0^2+\epsilon^2\theta_0^2}\big)+ \abs{s_0}^{1+\alpha}\big(1+\sqrt{s_0^2+\epsilon^2\theta_0^2}^{\,-1}\big)\bigg) \\
&\le c(\kappa_{*,\alpha},c_\Gamma)\sqrt{s_0^2+\epsilon^2\theta_0^2}^{\,\alpha}\norm{\p_s\varphi}_{C^{0,\alpha}}\,.
\end{align*}
Here we have used the closeness of the kernels with respect to $s_0$ and $\epsilon\theta_0$ to obtain the second inequality, and have switched to polar coordinates in the third inequality. 
Similarly, we may show that $J_{\alpha,\theta,2}$ satisfies 
\begin{align*}
\abs{J_{\alpha,\theta,2}}&\le c(\kappa_{*,\alpha},c_\Gamma)\sqrt{s_0^2+\epsilon^2\theta_0^2}^{\,\alpha}\,\textstyle\norm{\frac{1}{\epsilon}\p_\theta\varphi}_{C^{0,\alpha}}\,.
\end{align*}
In total, we have 
\begin{equation}\label{eq:hatT0_est}
\abs{\wh{\mc{T}}_0} \le c(\kappa_{*,\alpha},c_\Gamma)\sqrt{s_0^2+\epsilon^2\theta_0^2}^{\,\alpha}\bigg(\norm{\p_s\varphi}_{C^{0,\alpha}}+ \textstyle \norm{\frac{1}{\epsilon}\p_\theta\varphi}_{C^{0,\alpha}} \bigg)\,.
\end{equation}
Combining \eqref{eq:J_C0est} and \eqref{eq:hatT0_est}, we obtain Lemma \ref{lem:hypersingular}.
\hfill\qedsymbol \\



\section{Slender body NtD in H\"older spaces}\label{sec:SBNtD_Holder}
In this section we prove Lemma \ref{lem:SB_PDE_holder} regarding H\"older space estimates for the slender body Neumann-to-Dirichlet map $\mc{L}_\epsilon$ about a general curved filament.

Given a slender filament $\Sigma_\epsilon$ as in section \ref{subsec:geom} with $\X(s)\in C^{2,\alpha}$ and slender body Neumann data $f(s)\in C^{0,\alpha}(\T)$, 
we begin by considering the solution $u$ to the slender body PDE \eqref{eq:SB_PDE} in $\Omega_\epsilon=\R^3\backslash\overline{\Sigma_\epsilon}$.
It will be useful to recall some results from the $L^2$-based solution theory for the slender body PDE\footnote{The solution theory in \cite{closed_loop} is developed in the context of the Stokes equations; the analogous theory for the Laplace setting follows by nearly identical and somewhat simpler arguments.} developed in \cite{closed_loop}. In particular, we recall the following bound (\cite[Theorem 1.2]{closed_loop}):
\begin{lemma}[$L^2$-based SB PDE solution theory]\label{lem:L2_SBPDE}
Given a slender body $\Sigma_\epsilon$ as in section \ref{subsec:geom} with $\X(s)\in C^2$ and given $f\in L^2(\T)$, the solution $u$ to \eqref{eq:SB_PDE} belongs to 
\begin{align*}
D^{1,2}(\Omega_\epsilon):= \{u\in L^6(\Omega_\epsilon)\,:\, \nabla u\in L^2(\Omega_\epsilon) \}
\end{align*} 
and satisfies the estimate 
\begin{equation}\label{eq:L2est_SB_PDE}
\norm{u}_{D^{1,2}(\Omega_\epsilon)}\equiv \norm{\nabla u}_{L^2(\Omega_\epsilon)} \le c(\kappa_*)\abs{\log\epsilon}^{1/2}\norm{f}_{L^2(\T)}\,.
\end{equation}
\end{lemma}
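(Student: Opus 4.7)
\emph{Plan for the proof of Lemma \ref{lem:L2_SBPDE}.} The strategy is to set up a variational framework on the constrained energy space
\[
H := \bigl\{\phi \in D^{1,2}(\Omega_\epsilon) \,:\, \phi\big|_{\Gamma_\epsilon}\ \text{is independent of } \theta\bigr\},
\]
and apply Lax--Milgram. By the Sobolev inequality on $\R^3$, $D^{1,2}(\Omega_\epsilon)$ is continuously embedded in $L^6(\Omega_\epsilon)$, so $\|\nabla \cdot\|_{L^2(\Omega_\epsilon)}$ is a genuine norm and the trace onto $\Gamma_\epsilon$ lies in $H^{1/2}(\Gamma_\epsilon)\subset L^2(\Gamma_\epsilon)$; the $\theta$-independence condition is a closed linear constraint, so $H$ is a Hilbert space with inner product $\langle u,\phi\rangle_H := \int_{\Omega_\epsilon}\nabla u\cdot\nabla\phi\,d\bx$. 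I then formulate \eqref{eq:SB_PDE} weakly as: find $u\in H$ so that
\[
\int_{\Omega_\epsilon}\nabla u\cdot\nabla\phi\,d\bx = \int_\T f(s)\,\phi(s)\,ds \qquad \text{for every } \phi\in H.
\]
The identity is obtained formally by multiplying $\Delta u=0$ by $\phi$ and integrating by parts, using that the boundary term $\int_{\Gamma_\epsilon}\phi\,\partial_{\bm n_x}u\,dS_x$ collapses — thanks to $\phi|_{\Gamma_\epsilon}=\phi(s)$ and the parameterization \eqref{eq:jacfac} — into $\int_\T \phi(s)\bigl(\int_0^{2\pi}\partial_{\bm n_x}u\,\mc{J}_\epsilon\,d\theta\bigr)\,ds=\int_\T f\phi\,ds$.

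The bilinear form $a(u,\phi)=\int_{\Omega_\epsilon}\nabla u\cdot\nabla\phi$ is obviously bounded and coercive on $H$. What is not obvious, and is the crux of the argument, is boundedness of the linear functional $\phi\mapsto \int_\T f\phi\,ds$ on $H$. This reduces to proving the slender body trace inequality
\begin{equation}\label{eq:planned_trace}
\|\phi\|_{L^2(\T)} \le c(\kappa_*)\,|\log\epsilon|^{1/2}\,\|\nabla \phi\|_{L^2(\Omega_\epsilon)}\qquad \text{for all }\phi\in H,
\end{equation}
where $\phi$ on the left denotes the $\theta$-independent trace. Given \eqref{eq:planned_trace}, Cauchy--Schwarz yields
\[
\Bigl|\int_\T f\phi\,ds\Bigr| \le \|f\|_{L^2(\T)}\|\phi\|_{L^2(\T)} \le c(\kappa_*)\,|\log\epsilon|^{1/2}\|f\|_{L^2(\T)}\|\nabla\phi\|_{L^2(\Omega_\epsilon)},
\]
and Lax--Milgram produces a unique $u\in H$ with $\|\nabla u\|_{L^2(\Omega_\epsilon)}\le c(\kappa_*)|\log\epsilon|^{1/2}\|f\|_{L^2(\T)}$, which is exactly \eqref{eq:L2est_SB_PDE}. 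Standard elliptic regularity/uniqueness then identifies $u$ with the classical harmonic solution of \eqref{eq:SB_PDE}.

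The main technical obstacle is thus establishing \eqref{eq:planned_trace} with the correct $|\log\epsilon|^{1/2}$ behavior. My plan is to work locally: cover a tubular neighborhood of $\Gamma_0$ by charts in which $\bx=\X(s)+r\be_r(s,\theta)$ with $r\in(\epsilon,r_*)$, so that $d\bx=(1-r\wh\kappa(s,\theta))\,r\,dr\,d\theta\,ds$ and, for $\epsilon$ small relative to $1/\kappa_*$, the Jacobian is uniformly comparable to $r$. In these coordinates, for each fixed $s$ the 2D slice of the gradient controls
\[
\int_0^{2\pi}|\phi(s,\epsilon,\theta)-\phi(s,r_*,\theta)|^2\,d\theta \le |\log(r_*/\epsilon)|\int_\epsilon^{r_*}\!\!\int_0^{2\pi}|\p_r\phi|^2\,r\,dr\,d\theta,
\]
by the standard logarithmic-capacity estimate for an annulus in $\R^2$. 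Using that $\phi(s,\epsilon,\cdot)$ is constant in $\theta$ to drop the $\theta$-integral on the left, averaging the annular bound over $\theta$, and integrating in $s$, then controlling $\phi(s,r_*,\cdot)$ by a standard trace inequality on a fixed tubular cross-section $\{r=r_*\}$ (which costs no $\epsilon$ at all), yields \eqref{eq:planned_trace} with constant depending on $\kappa_*$ and $r_*(c_\Gamma,\kappa_*)$. The geometric dependence is absorbed into $c(\kappa_*)$ using the bounds on the Jacobian factor $1-r\wh\kappa$ and the lower bound on $r_*$ from Section~\ref{subsec:geom}.
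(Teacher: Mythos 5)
Your proposal is correct. Note that the paper does not actually prove this lemma: it is imported verbatim from \cite[Theorem 1.2]{closed_loop} (with a footnote that the Laplace case follows by the same, simpler arguments), and your Lax--Milgram argument on the constrained space of $D^{1,2}$ functions with $\theta$-independent trace, with the crux being the $\abs{\log\epsilon}^{1/2}$ trace inequality obtained from the annular capacity estimate $\big|\int_\epsilon^{r_*}\p_r\phi\,dr\big|^2\le \log(r_*/\epsilon)\int_\epsilon^{r_*}\abs{\p_r\phi}^2 r\,dr$ together with the $\theta$-independence of the boundary value and a fixed-scale trace bound at $r=r_*$, is precisely the strategy of that reference. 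The only cosmetic remark is that, with $\bm{n}_x$ the normal pointing out of $\Sigma_\epsilon$, integration by parts over $\Omega_\epsilon$ produces $-\int_\T f\phi\,ds$ rather than $+\int_\T f\phi\,ds$ on the right-hand side; this sign convention (shared by the paper's own weak form \eqref{eq:weaku}) is immaterial for the estimate.
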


For $r_*=r_*(c_\Gamma,\kappa_*)<\frac{1}{2\kappa_*}$ as in section \ref{subsec:geom}, we consider the curved annular region 
\begin{equation}\label{eq:O_region}
\mc{O}_{r_*}= \big\{ \bx\in \Omega_\epsilon \;:\; \epsilon<\text{ dist}(\bx,\Gamma_0) < r_*\big\}
\end{equation}
about the filament $\Sigma_\epsilon$. Note that since $u$ belongs to $L^6(\Omega_\epsilon)$, the estimate \eqref{eq:L2est_SB_PDE} implies that $u$ belongs to $H^1(\mc{O}_{r_*})$ and satisfies 
\begin{equation}\label{eq:L2est_SB_PDE_2}
\norm{u}_{H^1(\mc{O}_{r_*})}\le c(|\mc{O}_{r_*}|)\norm{u}_{D^{1,2}(\mc{O}_{r_*})} \le c(\kappa_*,|\mc{O}_{r_*}|)\abs{\log\epsilon}^{1/2}\norm{f}_{L^2(\T)}\,.
\end{equation} 
\textcolor{black}{
Within $\mc{O}_{r_*}$, we consider the weak form of the slender body PDE \eqref{eq:SB_PDE}. Given $\phi\in H^1(\mc{O}_{r_*})$ with $\phi\big|_{\Gamma_\epsilon}=\phi(s)$ and $\phi\big|_{\p\mc{O}_{r_*}\backslash\Gamma_\epsilon}=0$, the weak solution $u\in H^1(\mc{O}_{r_*})$ to \eqref{eq:SB_PDE} satisfies
\begin{equation}\label{eq:weaku}
 \int_{\mc{O}_{r_*}}\nabla u\cdot\nabla\phi\,d\bx = \int_\T f(s)\phi(s)\,ds\,.
\end{equation}
For $\bx\in \mc{O}_{r_*}$, we may define a $C^{1,\beta}$ change of variables $\Phi(\bx)$ mapping the tube to a straight cylinder. In particular, for $\bx=\X(s)+r\be_r(s,\theta)$ about the curved centerline $\X(s)$, we may take $\Phi(\bx) = s\be_z+r\be_r(\theta)=s\be_z+r\cos\theta\be_x+r\sin\theta\be_y$ where $(\be_z,\be_x,\be_y)$ are Cartesian basis vectors about a straight centerline. 
 We may calculate 
\begin{align*}
\nabla\Phi^{-1} &= (1-r\wh\kappa)\be_{\rm t}\otimes\be_z+r\kappa_3\be_\theta(s,\theta)\otimes\be_z+\be_\theta(s,\theta)\otimes\be_\theta(\theta) + \be_r(s,\theta)\otimes\be_r(\theta) \,,
\end{align*}
from which we may calculate
\begin{align*}
\nabla\Phi(\nabla\Phi)^{\rm T}\circ \Phi^{-1} &= \frac{1+r^2\kappa_3^2}{(1-r\wh\kappa)^2}\be_z\otimes\be_z - \frac{r\kappa_3}{1-r\wh\kappa}\big(\be_\theta(\theta)\otimes\be_z+\be_z\otimes\be_\theta(\theta)\big) \\
&\qquad  +\be_\theta(\theta)\otimes\be_\theta(\theta) +\be_r(\theta)\otimes\be_r(\theta)\,.
\end{align*}
Defining ${\bm A}(s,r,\theta)=\frac{1}{\abs{\det\nabla\Phi}}\nabla\Phi(\nabla\Phi)^{\rm T}\circ \Phi^{-1}$, we note that within the region $\mc{O}_{r_*}$, we have $\frac{1}{c(\kappa_{*,\alpha})}<\norm{{\bm A}}_{C^{0,\alpha}(\mc{O}_{r_*})}\le c(\kappa_{*,\alpha})$ and $\bm{A}$ depends smoothly on $\theta$. 
}

\textcolor{black}{
Letting $\psi = \phi\circ\Phi^{-1}$ and noting that for $\by\in \Phi(\Gamma_\epsilon)$, $\psi(\by)=\phi\big|_{\bx\in\Gamma_\epsilon}$ is independent of $\theta$, we may rewrite \eqref{eq:weaku} in straightened coordinates $\by=\Phi(\bx)$ as
\begin{equation}\label{eq:weaku_str}
\int_{\Phi(\mc{O}_{r_*})} \bm{A}\nabla u \cdot\nabla\psi\,d\by = \int_\T f(s) \psi(s)\,ds\,.
\end{equation}
We will rely on the formulation \eqref{eq:weaku_str} to prove Lemma \ref{lem:SB_PDE_holder}, but first introduce some auxiliary lemmas. 
}

\textcolor{black}{
Following the construction of \cite[Chapter 5]{giaquinta2013introduction}, we will build a Campanato-type function space designed for the slender body PDE. 
For $\rho< r_*/2$, we define the following annular region within $\Phi(\mc{O}_{r_*})$ about the straightened filament: 
\begin{align*}
A_\rho(s_0) &= \big\{ \by=s\be_z+r\be_r+\theta\be_\theta\in \Phi(\mc{O}_{r_*}) \; : \; s_0-\rho < s < s_0+\rho\,, \; \epsilon\le r < \rho+\epsilon\, , \; 0\le\theta<2\pi \big\}\,.
\end{align*}
Given a function $g(\by)$, $\by\in A_\rho(s_0)$, let $g_{s_0,\rho}$ denote the mean of $g$ over the annulus $A_\rho(s_0)$: 
\begin{align*}
g_{s_0,\rho} = \fint_{A_\rho(s_0)} g\,d\by\,.
\end{align*}
We then define the Campanato-type seminorm $[\cdot]_{\mc{A}^{2,\alpha}}$ and full norm $\norm{\cdot}_{\mc{A}^{2,\alpha}}$ by
\begin{equation}\label{eq:campanato}
\begin{aligned}
[g]_{\mc{A}^{2,\alpha}} &= \sup_{s_0\in \T,\,0<\rho<r_*/2}\rho^{-\alpha}\bigg(\fint_{A_\rho(s_0)}\abs{g - g_{s_0,\rho}}^2\,d\bx\bigg)^{1/2}\,, \\
\norm{g}_{\mc{A}^{2,\alpha}} &= \norm{g}_{L^2(\Phi(\mc{O}_{r_*}))} + [g]_{\mc{A}^{2,\alpha}}\,.
\end{aligned}
\end{equation}
}

\textcolor{black}{
We note the following relationship between the space $\mc{A}^{2,\alpha}$ and the H\"older space $C^{0,\alpha}$ in the annular region $\Phi(\mc{O}_{r_*})$ about the straightened filament. 
\begin{lemma}[Campanato-type norm bounds]\label{lem:camp}
Given a function $g$ in $\Phi(\mc{O}_{r_*})$ satisfying $g\big|_{\mc{C}_\epsilon}=g(s)$, a function of arclength only, we have that 
\begin{equation}\label{eq:Calpha_Aalpha}
\norm{g}_{C^{0,\alpha}(\T)}\le c\,\norm{g}_{\mc{A}^{2,\alpha}} \le c\,\norm{g}_{C^{0,\alpha}(\Phi(\mc{O}_{r_*}))}\,.
\end{equation}
Note that the $C^{0,\alpha}$ norm on the left hand side is over the filament centerline ($s\in \T$), while the $C^{0,\alpha}$ norm on the right hand side is over the entire annular region $\Phi(\mc{O}_{r_*})$. 
\end{lemma}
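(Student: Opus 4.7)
The two inequalities rest on different mechanisms, and the boundary hypothesis $g|_{\mc{C}_\epsilon}=g(s)$ plays an essential role in the upper bound.

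\emph{Right inequality.} Without the $\theta$-independence of the trace, this estimate would fail, since the annulus $A_\rho(s_0)$ wraps fully around the angular circle and thus has Cartesian diameter of order $\epsilon+\rho$ rather than $\rho$. To exploit $g|_{\mc{C}_\epsilon}=g(s)$, I would compare any $\by=s\be_z+r\be_r(\theta)\in A_\rho(s_0)$ to its radial projection $\by^*=s\be_z+\epsilon\be_r(\theta)\in\mc{C}_\epsilon$, using $|g(\by)-g(\by^*)|\le [g]_{C^{0,\alpha}}(r-\epsilon)^\alpha\le [g]_{C^{0,\alpha}}\rho^\alpha$. Since $g(\by^*)=g(s)$ is independent of $\theta$, for any $\by_1,\by_2\in A_\rho(s_0)$,
\begin{equation*}
|g(\by_1)-g(\by_2)|\le 2[g]_{C^{0,\alpha}}\rho^\alpha+[g|_{\mc{C}_\epsilon}]_{C^{0,\alpha}(\T)}(2\rho)^\alpha\le c\rho^\alpha\norm{g}_{C^{0,\alpha}(\Phi(\mc{O}_{r_*}))}\,,
\end{equation*}
so $\rho^{-\alpha}\big(\fint_{A_\rho(s_0)}|g-g_{s_0,\rho}|^2\big)^{1/2}\le c\norm{g}_{C^{0,\alpha}(\Phi(\mc{O}_{r_*}))}$ for all $\rho<r_*/2$. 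The $L^2$ part of the norm follows from boundedness of $\Phi(\mc{O}_{r_*})$.

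\emph{Left inequality.} I would follow the classical Campanato--Meyers scheme. A key geometric observation is that $|A_\rho(s_0)|\sim \rho^2(\rho+2\epsilon)$, so the ratio $|A_{2\rho}(s_0)|/|A_\rho(s_0)|$ is bounded uniformly for $\rho\in(0,r_*/2)$ in both regimes $\rho\le\epsilon$ and $\rho>\epsilon$. A standard Cauchy--Schwarz plus dyadic telescoping then yields
\begin{equation*}
|g_{s_0,\rho}-g_{s_0,2\rho}|\le c\rho^\alpha[g]_{\mc{A}^{2,\alpha}}\,,\qquad |g_{s_0,\rho}-g_{s_0,R}|\le cR^\alpha[g]_{\mc{A}^{2,\alpha}}\text{ for }\rho<R\le r_*/2\,,
\end{equation*}
so $g_{s_0,\rho}$ is Cauchy as $\rho\to 0^+$, with limit $\widetilde g(s_0)$. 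For $s_0\neq s_1\in\T$ with $R=|s_0-s_1|$, both $A_R(s_0)$ and $A_R(s_1)$ lie in the enlarged annulus $A_{2R}(s^*)$ with $s^*=(s_0+s_1)/2$, and another Cauchy--Schwarz comparison yields $|g_{s_0,R}-g_{s_1,R}|\le cR^\alpha[g]_{\mc{A}^{2,\alpha}}$. Combining produces $[\widetilde g]_{C^{0,\alpha}(\T)}\le c[g]_{\mc{A}^{2,\alpha}}$, while $|g_{s_0,r_*/4}|\le c(\epsilon,r_*)\norm{g}_{L^2(\Phi(\mc{O}_{r_*}))}$ together with the telescoping gives $\norm{\widetilde g}_{L^\infty(\T)}\le c\norm{g}_{\mc{A}^{2,\alpha}}$.

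\emph{Main obstacle.} The delicate step is identifying $\widetilde g(s_0)$ with the boundary trace $g(s_0):=g|_{\mc{C}_\epsilon}(s_0)$. If $g$ is continuous up to $\mc{C}_\epsilon$ (as is ensured in the application to Lemma~\ref{lem:SB_PDE_holder} by interior elliptic regularity of the slender body PDE solution for $\theta$-independent Dirichlet data), then for $\by\in A_\rho(s_0)$ with $\rho$ small, $g(\by)$ is close to $g(s)$ by continuity across the thin radial shell and $g(s)\to g(s_0)$ as $s\to s_0$, so $g_{s_0,\rho}\to g(s_0)$. For a general $g\in\mc{A}^{2,\alpha}$, the identification instead proceeds via Lebesgue differentiation on radial slices $\{r=\epsilon\}$ once one verifies $L^1$-convergence of the shell averages to the trace as $\rho\to 0^+$.
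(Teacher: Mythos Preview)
Your approach to the left inequality is essentially the paper's: both run the standard Campanato--Meyers scheme (dyadic telescoping of annulus averages, Cauchy limit, overlap estimate for the H\"older continuity of the limit, Lebesgue differentiation to identify the limit with the trace). The paper's overlap step uses $A_{2R}(s_0)\cap A_{2R}(s_0')$ directly rather than your enlarged midpoint annulus, and it handles the volume scaling with the cruder bound $|g_{s_0,R}-g_{s_0,\rho}|\le c R^{\alpha+1}\rho^{-1}[g]_{\mc{A}^{2,\alpha}}$ before specializing to $\rho=R/2$, but these are cosmetic differences. Your discussion of the identification $\widetilde g(s_0)=g|_{\mc{C}_\epsilon}(s_0)$ is more explicit than the paper's one-line appeal to Lebesgue differentiation.

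For the right inequality you are in fact more careful than the paper. The paper simply asserts that for $\by,\by'\in A_\rho(s_0)$ one has $|g(\by)-g(\by')|\le c|g|_{C^{0,\alpha}}\rho^\alpha$ and calls the inequality ``immediate,'' without invoking the hypothesis $g|_{\mc{C}_\epsilon}=g(s)$. As you correctly observe, the Cartesian diameter of $A_\rho(s_0)$ is of order $\epsilon+\rho$ (it wraps fully in $\theta$), so this bound is not valid for small $\rho$ under the standard H\"older norm without the angular independence of the trace. Your radial-projection argument, routing through $\by^*\in\mc{C}_\epsilon$ and using that $g(\by^*)$ depends only on $s$, is the natural fix and makes the role of the boundary hypothesis transparent. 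This is a genuine improvement in rigor over the paper's presentation, though the right inequality is not actually invoked in the proof of Lemma~\ref{lem:SB_PDE_holder}.
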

}

\begin{proof}
\textcolor{black}{
The second inequality is immediate: let $g\in C^{0,\alpha}(\Phi(\mc{O}_{r_*}))$ and consider $\by,\by'\in A_\rho(s_0)$. We have
\begin{align*}
\abs{g(\by)-g(\by')}\le c\,\abs{g}_{C^{0,\alpha}}\rho^\alpha
\end{align*}
for any choice of $\by'\in A_\rho(s_0)$. In particular, we have
\begin{align*}
\abs{g(\by)-g_{s_0,\rho}}\le c\,\abs{g}_{C^{0,\alpha}}\rho^\alpha\,,
\end{align*}
and thus
\begin{align*}
\rho^{-\alpha}\bigg(\fint_{A_\rho(s_0)}\abs{g - g_{s_0,\rho}}^2\,d\by\bigg)^{1/2}
&\le c\,\abs{g}_{C^{0,\alpha}}\,.
\end{align*}
}

\textcolor{black}{
Now consider $g\in \mc{A}^{2,\alpha}$ and take $0<\rho<R\le \frac{r_*}{2}$. Note that 
\begin{align*}
\abs{g_{s_0,R}-g_{s_0,\rho}}^2\le 2\abs{g(\by)-g_{s_0,R}}^2+2\abs{g(\by)-g_{s_0,\rho}}^2\,.
\end{align*}
Integrating over the smaller annulus $A_\rho(s_0)$ and noting that, for $\rho$ small, the volume of the annulus scales as $\rho^2$, we have 
\begin{align*}
\abs{g_{s_0,R}-g_{s_0,\rho}}^2 &\le \frac{c}{\rho^2}\bigg(\int_{A_R(s_0)}\abs{g-g_{s_0,R}}^2\,d\bx+ \int_{A_\rho(s_0)}\abs{g-g_{s_0,\rho}}^2\,d\bx \bigg)\\
&\le \frac{c}{\rho^2}\big(R^{2\alpha+2}+\rho^{2\alpha+2} \big)[g]_{\mc{A}^{2,\alpha}}^2 
\le c\frac{R^{2\alpha+2}}{\rho^2}[g]_{\mc{A}^{2,\alpha}}^2\,.
\end{align*}
Defining $R_k=\frac{R}{2^k}$, we then note that 
\begin{align*}
\abs{g_{s_0,R_k}-g_{s_0,R_{k+1}}} \le c\frac{R^\alpha}{2^{\alpha k}}[g]_{\mc{A}^{2,\alpha}}\,;
\end{align*}
in particular, the sequence of annuli forms a Cauchy sequence limiting to the $s=s_0$ cross section of the filament surface $\mc{C}_\epsilon$. Using that $g\big|_{\mc{C}_\epsilon}=g(s)$, by Lebesgue differentiation, the limit of the above sequence as $k\to\infty$ is $g\big|_{\mc{C}_\epsilon}(s_0)$, with a rate
\begin{equation}\label{eq:uR_convergence}
\abs{g_{s_0,R}-g\big|_{\mc{C}_\epsilon}(s_0)}\le c\,R^\alpha[g]_{\mc{A}^{2,\alpha}}\,.
\end{equation}
}

\textcolor{black}{
Now, for $\by,\by'\in\Phi(\mc{O}_{r_*})$, write $\by=s\be_z+r\be_r(\theta)$ and $\by'=s'\be_z+r'\be_r(\theta')$. Let $R=\abs{s-s'}$ be the centerline distance between the cross sections corresponding to $\by$ and $\by'$. For $R<\frac{r_*}{4}$, along the slender cylinder surface we may estimate  
\begin{equation}\label{eq:hyhyprime}
\abs{g(s_0)-g(s_0')} \le \abs{g(s_0)-g_{s_0,2R}} + \abs{g_{s_0,2R}-g_{s_0',2R}} + \abs{g(s_0')-g_{s_0',2R}}\,.
\end{equation} 
By the definition of $R$, we have $\abs{A_{2R}(s_0)\cap A_{2R}(s_0')}\ge \abs{A_R(s_0)}\ge c R^2$, since the volume of the annulus $A_R$ scales like $R^2$ when $R$ is small. Thus, integrating the constant $\abs{g_{s_0,2R}-g_{s_0',2R}}$ over $\abs{A_{2R}(s_0)\cap A_{2R}(s_0')}$, we have 
\begin{align*}
\abs{g_{s_0,2R}-g_{s_0',2R}} &\le \frac{1}{\abs{A_{2R}(s_0)\cap A_{2R}(s_0')}}\bigg(\fint_{A_{2R}(s_0)}\abs{g(\by)-g_{s_0,2R}}\,d\by + \fint_{A_{2R}(s_0')}\abs{g(\by)-g_{s_0',2R}}\,d\by \bigg) \\
&\le \frac{c}{R^2}\bigg(R^2\bigg(\fint_{A_{2R}(s_0)}\abs{g(\by)-g_{s_0,2R}}^2\,d\by\bigg)^{1/2} + R^2\bigg(\fint_{A_{2R}(s_0')}\abs{g(\by)-g_{s_0',2R}}^2\,d\by\bigg)^{1/2} \bigg) \\
&\le c\,R^{\alpha}[g]_{\mc{A}^{2,\alpha}} \,.
\end{align*}
Combining this estimate with \eqref{eq:uR_convergence} in \eqref{eq:hyhyprime}, we obtain 
\begin{align*}
\abs{g(s_0)-g(s_0')} &\le c\,\abs{s_0-s_0'}^{\alpha}[g]_{\mc{A}^{2,\alpha}}\,. 
\end{align*}
}
\end{proof}

\textcolor{black}{
In addition to the Campanato-type characterization of H\"older continuity about $\mc{C}_\epsilon$, we will require a series of intermediary results regarding the following special version of slender body PDE. Let $R>0$ and $s_0\in\T$ and consider the annular region $A_R(s_0)$ about $\mc{C}_\epsilon$. Suppose $\bm{B}(\theta)$ is a smooth, matrix-valued function supported in $A_R(s_0)$ depending only on the angle $\theta$ and satisfying $\frac{1}{c}\le \abs{\p_\theta^k{\bm B}}\le c$ for some $c>0$. Let $f_0$ be a known constant and $d$ be a given function on $\p A_R(s_0)\backslash \mc{C}_\epsilon$. 
We consider the weak solution $h\in H^1(A_R(s_0))$ to 
\begin{equation}\label{eq:v_pde}
\begin{aligned}
-\div (\bm{B}(\theta) \nabla h) &= 0 \qquad\quad \text{in }A_R(s_0)\\
\int_0^{2\pi}\bm{B}(\theta)\nabla h\cdot\bm{n}_y\, \epsilon \, d\theta &= f_0 \qquad\;\, \text{on }\mc{C}_\epsilon \\
h\big|_{\Gamma_\epsilon} &= h(s)\,, \quad \text{unknown but independent of }\theta \\
h &= d \qquad\quad \text{on } \p A_R(s_0)\backslash \mc{C}_\epsilon\,.
\end{aligned}
\end{equation}
We have that $h$ satisfies the following proposition.
\begin{proposition}[Higher regularity for $h$]\label{prop:high_reg}
Let $0<\rho<R$ and consider the annulus $A_\rho(s_0)\subset A_R(s_0)$. The solution $h$ to \eqref{eq:v_pde} satisfies
\begin{equation}
\norm{h}_{H^4(A_\rho(s_0))} \le c(\epsilon,\kappa_*)\norm{h}_{H^1(A_R(s_0))}\,.
\end{equation} 
\end{proposition}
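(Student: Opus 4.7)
The plan is to exploit the fact that the operator $-\div(\bm{B}(\theta)\nabla\cdot)$ commutes with $s$-translations (since $\bm{B}$ depends only on $\theta$), and that the nonstandard slender body boundary condition is preserved under $\p_s$. Specifically, if $h$ solves \eqref{eq:v_pde}, then $\p_s h$ satisfies the same PDE in the interior, still has a $\theta$-independent trace on $\mc{C}_\epsilon$, and, crucially, has vanishing total flux across cross-sections since $\p_s f_0 = 0$. This allows a Caccioppoli-type iteration in the $s$ direction, after which the remaining cross-sectional regularity follows from standard elliptic bootstrapping.

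I would first fix a nested chain of intermediate radii $\rho = \rho_0 < \rho_1 < \rho_2 < \rho_3 < \rho_4 = R$ together with smooth cutoffs $\eta_k$ satisfying $\eta_k \equiv 1$ on $A_{\rho_{k-1}}(s_0)$, $\eta_k \equiv 0$ on $\p A_R(s_0)\setminus \mc{C}_\epsilon$ and outside $A_{\rho_k}(s_0)$, with $\|\nabla\eta_k\|_{L^\infty}\le c(\rho_k-\rho_{k-1})^{-1}$. Note that each $\eta_k$ is allowed to be nonzero on the slender cylinder $\mc{C}_\epsilon$. Letting $D_s^\delta g(s,r,\theta) = \delta^{-1}(g(s+\delta,r,\theta)-g(s,r,\theta))$ denote the difference quotient in $s$, I would test the weak form of the equation satisfied by $D_s^\delta h$ against $\eta_1^2 D_s^\delta h$. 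Integration by parts produces a boundary term on $\mc{C}_\epsilon$ of the form
\begin{equation*}
\int_{s_0-\rho_1}^{s_0+\rho_1} \eta_1^2(s)\, D_s^\delta h(s)\left(\int_0^{2\pi}\bm{B}(\theta)\nabla D_s^\delta h\cdot \bm{n}_y\,\epsilon\, d\theta\right) ds,
\end{equation*}
which vanishes because the total flux of $D_s^\delta h$ is zero; the boundary term on $\p A_{\rho_1}\setminus\mc{C}_\epsilon$ vanishes by the cutoff. Combining with uniform ellipticity of $\bm{B}$, the standard Cauchy–Schwarz absorption yields a Caccioppoli estimate $\|\nabla D_s^\delta h\|_{L^2(A_{\rho_0}(s_0))} \le c(\epsilon,\kappa_*,\rho_1-\rho_0)\|h\|_{H^1(A_{\rho_1}(s_0))}$. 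Sending $\delta \to 0$ controls $\|\p_s h\|_{H^1(A_{\rho_0}(s_0))}$. Iterating four times, with successively smaller cutoffs, yields $\|\p_s^k h\|_{H^1(A_{\rho_{4-k}}(s_0))}\le c(\epsilon,\kappa_*)\|h\|_{H^1(A_R(s_0))}$ for $k = 0,1,2,3$.

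Next I would promote these pure $s$-derivative bounds to full $H^4$ control by using the PDE to trade $s$-derivatives for cross-sectional derivatives. Expanding the equation and isolating the second-order terms in $(r,\theta)$ gives a uniformly elliptic equation in the cross section (with smooth $\theta$-dependent coefficients) whose right-hand side involves $\p_s^2 h$ and $\p_s\nabla_{(r,\theta)} h$. Moreover, the Dirichlet data $h|_{r=\epsilon}=h(s)$ lies in $H^4(\T)$ by the previous step and the trace theorem, while the outer Dirichlet condition is avoided by restricting to $A_\rho(s_0)$ via the cutoffs. Classical interior and boundary $H^k$ regularity for the two-dimensional cross-sectional Dirichlet problem, applied pointwise in $s$ and integrated, then bootstraps all mixed derivatives and yields the stated $\|h\|_{H^4(A_\rho(s_0))}\le c(\epsilon,\kappa_*)\|h\|_{H^1(A_R(s_0))}$.

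The main obstacle is the boundary term on $\mc{C}_\epsilon$ arising in the Caccioppoli step: unlike for standard Dirichlet or Neumann problems, neither the trace nor the normal derivative individually vanishes for $\p_s h$. The slender body structure resolves this precisely because $\p_s h|_{\mc{C}_\epsilon}$ remains $\theta$-independent so that it factors out of the $\theta$-integral, while the total flux vanishes upon $s$-differentiation of the constant $f_0$. Once this cancellation is secured for $s$-derivatives, all other steps reduce to classical elliptic regularity with smooth, uniformly elliptic coefficients.
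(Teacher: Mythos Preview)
Your approach is correct and shares the paper's key insight: the slender body boundary structure is preserved under $\p_s$ (the trace stays $\theta$-independent and the total flux becomes zero), so a tangential Caccioppoli iteration goes through. The organizational difference is that the paper also runs the Caccioppoli step for the other tangential direction $\frac{1}{r}\p_\theta$ (using that $\p_\theta h\big|_{\mc{C}_\epsilon}=0$ reduces to a Dirichlet condition) and then reads off $\p_{rr}h$ directly from the equation $\div(\bm{B}\nabla h)=0$, iterating this package three times; you instead concentrate all Caccioppoli work in $s$ and recover the $(r,\theta)$ derivatives by appealing to 2D boundary regularity for the cross-sectional problem. Both routes work: yours is a bit cleaner in $s$ (no commutators, pure translation invariance, explicit difference quotients), while the paper's treats the two tangential directions on equal footing and avoids invoking a separate 2D theory. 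One remark: your phrase ``applied pointwise in $s$ and integrated'' should be understood as an anisotropic bootstrap---successively applying the cross-sectional estimate to $\p_s^j h$, with the mixed $\p_s\p_\theta h$ terms on the right controlled by the previous step---rather than a literal fixed-$s$ 2D problem, since the $\be_z\otimes\be_\theta$ block of $\bm{B}$ prevents the equation from decoupling in $s$.
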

}

\begin{proof}
\textcolor{black}{
We consider the weak form of \eqref{eq:v_pde}. For $g\in H^1(A_R(s_0))$ with $g\big|_{\mc{C}_\epsilon}=g(s)$ and $g\big|_{\p A_R(s_0)\backslash \mc{C}_\epsilon}=0$, we have that a weak solution $h$ to \eqref{eq:v_pde} satisfies
\begin{equation}\label{eq:v_weakform}
\int_{A_R(s_0)}{\bm B}\nabla h\cdot\nabla g \,d\by = f_0\int_{\abs{s-s_0}\le R} g(s)\,ds\,.
\end{equation}
For some $R'<R$, define a smooth cutoff function $\eta$ supported up to the filament surface $\mc{C}_\epsilon$ and satisfying
\begin{equation}\label{eq:eta_R}
\eta(\bx) = \begin{cases}
1\,, & \bx\in A_{R'}(s_0)  \\
0\,, & \bx\not\in A_R(s_0)\,.
\end{cases}
\end{equation}
Within $A_R(s_0)$, we consider the derivatives $\p_s$, $\frac{1}{r}\p_\theta$ in directions tangential to $\mc{C}_\epsilon$. We will take $\p_k^2(\eta^2h)$, $\p_k=\p_s,\frac{1}{r}\p_\theta$ as our test function in \eqref{eq:v_weakform} (this does not \emph{a priori} belong to $H^1$ but may be justified using finite differences). 
Then for $\p_k=\p_s,\frac{1}{r}\p_\theta$, we have
\begin{align*}
\int_{A_R(s_0)} \bm{B}\nabla h\cdot\nabla(\p_k^2(\eta^2 h))\,d\bx = f_0\int_{\abs{s-s_0}< R} \p_k^2(\eta^2h)\,ds = 0 \,.
\end{align*}
Note that, in straight cylindrical coordinates, $\p_s$ commutes with $\nabla$ while $\abs{[\nabla,\frac{1}{r}\p_\theta]g}\le \frac{1}{\epsilon}\abs{\nabla g}$. 
Since $\bm{B}(\theta)$ is bounded below, after commuting and integrating by parts, we obtain
\begin{align*}
\int_{A_R(s_0)} \eta^2\abs{\nabla\p_k h}^2\,d\by &\le 
c(\epsilon)\int_{A_R(s_0)} \bigg( \eta\big(\abs{\bm{B}} + \abs{\p_k\bm{B}}\big)\abs{\nabla\p_k h} \abs{\nabla h}\\
&\qquad + \abs{\bm{B}}\abs{\nabla h}\big(\abs{\nabla\p_k^2\eta^2}\abs{h}+\abs{\nabla^2\eta^2}\abs{\nabla h} \big)\bigg)\,d\by   \,.
\end{align*}
Using Young's inequality on the right hand side, over the smaller set $A_{R'}(s_0)$, we may bound
\begin{equation}\label{eq:tan_reg}
\norm{\nabla\p_k h}_{L^2(A_{R'}(s_0))} 
\le c(\epsilon)\norm{h}_{H^1(A_R(s_0))}\,, \qquad \p_k=\p_s,\textstyle \frac{1}{r}\p_\theta\,.
\end{equation}
We then write out
\begin{equation}\label{eq:curved_laplace}
\div(\bm{B}(\theta)\nabla h) =  \frac{1}{r}\frac{\p}{\p r}\bigg(r (\bm{B}\nabla h)\cdot\be_r\bigg) + \frac{1}{r}\frac{\p}{\p\theta}\bigg((\bm{B}\nabla h)\cdot\be_\theta \bigg)+\frac{\p}{\p s}\bigg((\bm{B}\nabla h)\cdot\be_z \bigg)\,,
\end{equation}
where $\nabla h = \p_r h \be_r + \frac{1}{r}\p_\theta h\be_\theta + \p_s h \be_z$ and $\abs{\frac{1}{r}\p_\theta\bm{B}(\theta)}\le \frac{c}{\epsilon}$. We may then combine the tangential regularity estimate \eqref{eq:tan_reg} with the fact that $h$ satisfies $\div(\bm{B}(\theta)\nabla h)=0$ in $A_{R'}$ to bound second derivatives in directions normal to the filament surface: 
\begin{align*}
 \norm{\p_{rr}h}_{L^2(A_{R'})} 
 &\le c(\epsilon)\bigg(\norm{\nabla h}_{L^2(A_R)}+ \norm{\nabla\p_s h}_{L^2(A_{R'})}
 +\textstyle \norm{\nabla(\frac{1}{r}\p_\theta h)}_{L^2(A_{R'})} \bigg) \,.
 \end{align*} 
In total, we obtain an $H^2$ bound for $h$ over the region $A_{R'}(s_0)$:
\begin{equation}\label{eq:H2bd}
\norm{h}_{H^2(A_{R'}(s_0))} 
\le c(\epsilon)\norm{h}_{H^1(A_R(s_0))}\,.
\end{equation}
}

\textcolor{black}{
We may iterate this procedure over slightly smaller annuli. Within $A_{R'}(s_0)$, using that $\p_s$ commutes with $\div$ and $\bm{B}(\theta)\nabla$, we have that $\p_s h$ (weakly) satisfies
\begin{equation}\label{eq:ps_v_eqn}
\begin{aligned}
\div(\bm{B}(\theta)\nabla \p_s h) &= 0 \quad \text{in }A_{R'}(s_0) \\
\int_0^{2\pi}\bm{B}(\theta)\nabla (\p_s h)\cdot\bm{n}_y\,\epsilon\,d\theta &=0 \quad \text{on }\mc{C}_\epsilon \\
\p_sh\big|_{\mc{C}_\epsilon} &= \p_sh(s)\,, \quad \text{independent of }\theta \,.
\end{aligned}
\end{equation}
Furthermore, we have that $\frac{1}{\epsilon}\p_\theta h$ satisfies
\begin{equation}\label{eq:ptheta_v_eqn}
\begin{aligned}
\textstyle \div(\bm{B}(\theta)\nabla \frac{1}{r}\p_\theta h) &= \textstyle [\div(\bm{B}(\theta)\nabla),\frac{1}{r}\p_\theta]h \quad \text{in }A_{R'}(s_0) \\
\textstyle \frac{1}{\epsilon}\p_\theta h\big|_{\mc{C}_\epsilon}&=0 \,.
\end{aligned}
\end{equation} 
Here we note that the right hand side commutator satisfies
\begin{equation}\label{eq:commutator}
\textstyle \norm{[\div(\bm{B}(\theta)\nabla),\frac{1}{r}\p_\theta]h}_{L^2(A_{R'}(s_0))}
\le c(\epsilon)\norm{h}_{H^2(A_{R'}(s_0))}\,.
\end{equation}
We consider the weak form of \eqref{eq:ps_v_eqn} and \eqref{eq:ptheta_v_eqn} as in \eqref{eq:v_weakform}, this time with test function $g=\p_\ell^2\eta_2^2(\p_k h)$, $\p_\ell,\p_k\in\{\p_s,\frac{1}{r}\p_\theta\}$ where $\eta_2$ is a smooth cutoff satisfying, for $R''<R'$,
\begin{align*}
\eta_2(\bx)= \begin{cases}
1 & \bx \in A_{R''}(s_0) \\
0 & \bx \not\in A_{R'}(s_0).
\end{cases}
\end{align*}
Again, this $g$ does not \emph{a priori} belong to $H^1$ but may be justified using finite differences. After a series of integration by parts we obtain a similar bound to \eqref{eq:tan_reg}:
\begin{equation}\label{eq:tan_reg2}
\norm{\nabla\p_\ell\p_k h}_{L^2(A_{R''}(s_0))} 
\le c(\epsilon)\bigg(\norm{h}_{H^1(A_R)}+\norm{h}_{H^2(A_{R'})} \bigg)\,.
\end{equation}
}

\textcolor{black}{
To obtain an $H^3$ estimate for the normal direction to the straightened slender body, we note that within $A_{R''}$, $h$ satisfies
\begin{align*}
\div(\bm{B}(\theta)\nabla \p_r h)  &= [\div(\bm{B}(\theta)\nabla),\p_r]h \,,
\end{align*}
where $[\div(\bm{B}(\theta)\nabla),\p_r]h$ also satisfies \eqref{eq:commutator}. Using \eqref{eq:curved_laplace}, we have that $\p_{rrr}h$ may be written entirely in terms of functions which belong to $L^2$, and may thus be bounded as 
\begin{align*}
\norm{\p_{rrr}h}_{L^2(A_{R''})} 
&\le c(\epsilon)\bigg(\textstyle \norm{\frac{1}{r}\p_\theta^2 \nabla h}_{L^2(A_{R''})}  
+\norm{\p_s^2 \nabla h}_{L^2(A_{R''})}  
+\norm{\frac{1}{r}\p_\theta\p_s \nabla h}_{L^2(A_{R''})}
+ \norm{h}_{H^2(A_{R'})}  \bigg)\,.
\end{align*}
In total, using \eqref{eq:H2bd}, we obtain the full $H^3$ bound 
\begin{align*}
\norm{h}_{H^3(A_{R''})}
&\le c(\epsilon)\norm{h}_{H^1(A_R)}\,.
\end{align*}
We may perform this entire procedure one more time to obtain, for $R'''<R''<R'<R$, the bound
\begin{align*}
\norm{h}_{H^4(A_{R'''}(s_0))}
&\le c(\epsilon)\norm{h}_{H^1(A_R(s_0))}\,.
\end{align*}
}
\end{proof}

\textcolor{black}{
Using Proposition \ref{prop:high_reg}, we may show that $h$ satisfies the following.
\begin{corollary}[Caccioppoli inequality]\label{cor:caccio}
For $0<R'<R$, the solution $h$ to \eqref{eq:v_pde} satisfies
\begin{equation}\label{eq:caccio}
\int_{A_{R'}(s_0)}\abs{\nabla \p_s h}^2\,d\by \le c(\epsilon)\frac{1}{(R-R')^2}\int_{A_R(s_0)}\abs{\p_s h - (\p_s h)_{s_0,R}}^2\,d\by\,.
\end{equation}
\end{corollary}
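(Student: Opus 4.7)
The plan is to run a classical Caccioppoli argument on $\p_s h$, exploiting the fact that in the straightened cylindrical coordinates the coefficient matrix $\bm{B}(\theta)$ is independent of $s$, so differentiating \eqref{eq:v_pde} in $s$ produces another slender body PDE for $\p_s h$ with \emph{zero} total Neumann data and still a $\theta$-independent Dirichlet trace on $\mc{C}_\epsilon$. Since Proposition~\ref{prop:high_reg} gives $h\in H^4(A_\rho(s_0))$ for any $\rho<R$, $\p_s h$ is a bona fide weak (in fact strong) solution of
\begin{equation*}
\div(\bm{B}(\theta)\nabla \p_s h)=0 \quad\text{in }A_\rho(s_0), \qquad \int_0^{2\pi}\bm{B}\nabla \p_s h\cdot\bm{n}_y\,\epsilon\,d\theta=0 \text{ on }\mc{C}_\epsilon,\qquad \p_s h\big|_{\mc{C}_\epsilon}=\p_s h(s),
\end{equation*}
exactly the system \eqref{eq:ps_v_eqn} already used in the proof of Proposition~\ref{prop:high_reg}.

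Next, I would fix a smooth cutoff $\eta=\eta(s,r)$, independent of $\theta$, with $\eta\equiv 1$ on $A_{R'}(s_0)$, $\eta\equiv 0$ outside some $A_{\wt R}(s_0)$ with $R'<\wt R<R$ (so in particular $\eta$ vanishes on the outer boundary $\p A_R(s_0)\setminus\mc{C}_\epsilon$), and $|\nabla\eta|\le c(R-R')^{-1}$. Set $c_0=(\p_s h)_{s_0,R}$ and use
\begin{equation*}
g:=\eta^2\,(\p_s h - c_0)
\end{equation*}
as a test function in the weak form of \eqref{eq:ps_v_eqn}. This $g$ is admissible: on $\mc{C}_\epsilon$ its trace is $\eta(s,\epsilon)^2(\p_s h(s)-c_0)$, which is a function of $s$ only, and $g$ vanishes on the outer boundary portion of $\p A_R(s_0)$. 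Since the Neumann datum for $\p_s h$ is $0$, the boundary integral over $\T$ on the right-hand side of the analogue of \eqref{eq:v_weakform} is zero, leaving
\begin{equation*}
\int_{A_R(s_0)}\eta^2\,\bm{B}\nabla\p_s h\cdot\nabla\p_s h\,d\by \;=\; -\int_{A_R(s_0)} 2\eta\,(\p_s h-c_0)\,\bm{B}\nabla\p_s h\cdot\nabla\eta\,d\by.
\end{equation*}

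To conclude, I would invoke coercivity of $\bm{B}$: the formula for $\bm{B}=\frac{1}{\abs{\det\nabla\Phi}}\nabla\Phi(\nabla\Phi)^{\rm T}\circ\Phi^{-1}$ given before Lemma~\ref{lem:camp} is symmetric positive definite with a lower bound depending only on $\kappa_*$ and $\epsilon$, so $\bm{B}\xi\cdot\xi\ge c(\epsilon,\kappa_*)^{-1}|\xi|^2$. Young's inequality on the right-hand side then absorbs a portion of $\int \eta^2|\nabla\p_s h|^2$ into the left-hand side and leaves
\begin{equation*}
\int_{A_{R'}(s_0)}|\nabla\p_s h|^2\,d\by \;\le\; \int_{A_R(s_0)}\eta^2|\nabla \p_s h|^2\,d\by \;\le\; \frac{c(\epsilon)}{(R-R')^2}\int_{A_R(s_0)}|\p_s h - c_0|^2\,d\by,
\end{equation*}
which is \eqref{eq:caccio}.

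I do not anticipate a genuine obstacle here: the argument is essentially the textbook Caccioppoli estimate, and the only slender-body-specific point to check is that $g=\eta^2(\p_s h-c_0)$ is a legal test function in the weak form \eqref{eq:v_weakform}. That check reduces to two observations — the trace $\p_s h|_{\mc{C}_\epsilon}$ is $\theta$-independent because $h|_{\mc{C}_\epsilon}$ is, and the $s$-derivative of the zero-Neumann condition is again zero — both of which are immediate from the $s$-independence of $\bm{B}(\theta)$ in the straightened coordinates.
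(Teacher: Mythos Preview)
Your proposal is correct and essentially identical to the paper's proof: both differentiate \eqref{eq:v_pde} in $s$ to obtain \eqref{eq:ps_v_eqn}, test against $\eta^2(\p_s h - \lambda)$ for a constant $\lambda$ (the paper leaves $\lambda$ general until the last step, you set $\lambda=(\p_s h)_{s_0,R}$ immediately), and close with coercivity of $\bm{B}$ and Young's inequality. Your explicit remark that the cutoff can be taken $\theta$-independent so that the test function has $\theta$-independent trace on $\mc{C}_\epsilon$ is a nice touch the paper leaves implicit.
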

}

\begin{proof}
\textcolor{black}{
We have that $\p_s h$ satisfies \eqref{eq:ps_v_eqn} within $A_R(s_0)$ for some $R>0$. Furthermore, $\p_s h-\lambda$ also satisfies \eqref{eq:ps_v_eqn} for any constant $\lambda$ since $(\p_s h-\lambda)\big|_{\Gamma_\epsilon}$ is still independent of $\theta$. Let $\eta$ be a cutoff function as in \eqref{eq:eta_R}. Multiplying \eqref{eq:ps_v_eqn} by $\eta^2(\p_s h-\lambda)$ and integrating by parts, we obtain
\begin{align*}
0&=\int_{A_R(s_0)}\bm{B}(\theta)\nabla(\p_sh-\lambda)\cdot\nabla(\eta^2 (\p_sh-\lambda))\,d\by\\
 &= \int_{A_R(s_0)}\eta^2\bm{B}(\theta)\nabla(\p_sh-\lambda)\cdot\nabla(\p_sh-\lambda)\,d\by + 2\int_{A_R(s_0)}\eta\bm{B}(\theta)\nabla(\p_sh-\lambda)\cdot\nabla\eta \,(\p_sh-\lambda)\,d\by\,.
\end{align*}
Therefore, using that $\bm{B}$ is bounded below, we have
\begin{align*}
\int_{A_R(s_0)}\abs{\eta\nabla(\p_sh-\lambda)}^2\,d\by \le c\int_{A_R(s_0)}\abs{\nabla\eta}\abs{\bm{B}}\abs{\eta\nabla(\p_sh-\lambda)} \abs{\p_sh-\lambda}\,d\by\,.
\end{align*}
Using Young's inequality and that $\eta=1$ on $A_{R'}(s_0)$ while $\abs{\nabla\eta}\sim\frac{1}{R-R'}$, we have
\begin{align*}
\int_{A_{R'}(s_0)}\abs{\nabla(\p_sh-\lambda)}^2\,d\by \le \frac{c}{(R-R')^2}\int_{A_R(s_0)}\abs{\p_sh-\lambda}^2\,d\by\,.
\end{align*}
Taking $\lambda=(\p_s h)_{s_0,R}$, we obtain \eqref{eq:caccio}.
} 
\end{proof}

\textcolor{black}{
Using Proposition \ref{prop:high_reg} and Corollary \ref{cor:caccio}, we may prove the following rate-dependent estimate for the solution $h$ to \eqref{eq:v_pde}.
\begin{lemma}\label{lem:v_est}
For $0<\rho<R/2$, the solution $h$ to \eqref{eq:v_pde} satisfies 
\begin{equation}\label{eq:v_est}
\fint_{A_\rho(s_0)} \abs{\p_sh - (\p_s h)_{s_0,\rho}}^2 \,d\by 
\le c(\epsilon)\bigg(\frac{\rho}{R}\bigg)^2\fint_{A_R(s_0)}\abs{\p_s h - (\p_s h)_{s_0,R}}^2\,d\by\,.
\end{equation}
\end{lemma}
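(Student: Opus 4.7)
The bound is a standard Campanato-type decay estimate obtained from interior elliptic regularity for \eqref{eq:ps_v_eqn}. First, I would set $w := \p_s h - (\p_s h)_{s_0, R}$ and observe that $w$ satisfies the same homogeneous system \eqref{eq:ps_v_eqn} as $\p_s h$: subtracting the constant $(\p_s h)_{s_0, R}$ preserves both the zero-flux condition on $\mc{C}_\epsilon$ and the $\theta$-independence of the trace. Hence Proposition \ref{prop:high_reg} and Corollary \ref{cor:caccio} both apply to $w$, and by construction $w_{s_0, R} = 0$ while $|\p_s h - (\p_s h)_{s_0, \rho}| = |w - w_{s_0, \rho}|$.

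The core step is an interior Lipschitz estimate for $w$ on the intermediate annulus $A_{R/2}(s_0)$, namely
\[
\|\nabla w\|_{L^\infty(A_{R/2}(s_0))}^2 \le \frac{c(\epsilon)}{R^2} \fint_{A_R(s_0)} |w|^2 \, d\by\,.
\]
I would prove this in two stages. First, Proposition \ref{prop:high_reg} applied to $w$ on nested annuli $A_{R/2}(s_0) \subset A_{3R/4}(s_0)$ bounds $\|w\|_{H^4(A_{R/2})}$ by $\|w\|_{H^1(A_{3R/4})}$; then the three-dimensional Sobolev embedding $H^3 \hookrightarrow W^{1,\infty}$ passes from $H^4$ control to an $L^\infty$ bound on $\nabla w$. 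To bound $\|w\|_{H^1(A_{3R/4})}$, I would apply Corollary \ref{cor:caccio} directly to $w$ (justified because $w$ solves the homogeneous system and $w_{s_0, R} = 0$), yielding $\int_{A_{3R/4}} |\nabla w|^2 \le c(\epsilon) R^{-2} \int_{A_R} |w|^2$; the $L^2$ part of the $H^1$ norm is bounded trivially by containment. Geometric factors like $|A_R(s_0)| \sim \epsilon R^2$ for $R < r_*/2$ are absorbed into $c(\epsilon)$.

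With the interior Lipschitz estimate in hand, the decay is immediate. Since $A_\rho(s_0) \subset A_{R/2}(s_0)$ by the hypothesis $\rho < R/2$, Poincar\'e's inequality on the annulus $A_\rho(s_0)$ gives
\[
\fint_{A_\rho(s_0)} |w - w_{s_0, \rho}|^2 \, d\by \le c\,\rho^2 \|\nabla w\|_{L^\infty(A_\rho(s_0))}^2 \le c\,\rho^2 \|\nabla w\|_{L^\infty(A_{R/2}(s_0))}^2\,,
\]
and combining with the displayed interior Lipschitz bound yields exactly \eqref{eq:v_est}.

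The main technical obstacle is the careful tracking of $R$-dependence in the constants arising from Proposition \ref{prop:high_reg} (where the cutoff functions $\eta$ in its proof contribute factors of $|\nabla\eta| \sim 1/(R-R')$) and in the Sobolev embedding on a domain of $(s,r)$-diameter $R$. The cleanest way to handle this is to rescale the $(s,r)$-directions of $A_R(s_0)$ to unit size (keeping $\theta$ fixed) so that Proposition \ref{prop:high_reg} and the Sobolev embedding become scale-invariant, apply them on the rescaled annulus, and unscale; only the $R^{-2}$ factor from Caccioppoli then survives, while the rescaled coefficient matrix $\bm{B}$ picks up geometric factors bounded in terms of $\epsilon$ and $\kappa_*$, so that all $R$-dependence beyond the claimed $(\rho/R)^2$ is absorbed into $c(\epsilon)$.
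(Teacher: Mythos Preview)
Your proposal is correct and follows essentially the same route as the paper: Poincar\'e on $A_\rho$, then Sobolev embedding to pass to $\sup|\nabla\p_s h|$ on $A_{R/2}$, then Proposition~\ref{prop:high_reg} to descend to $H^1$ on $A_{3R/4}$, then Corollary~\ref{cor:caccio} to reach the oscillation on $A_R$. Your introduction of $w=\p_s h-(\p_s h)_{s_0,R}$ and your explicit scaling discussion are cosmetic refinements of the paper's more compressed chain of inequalities; since $\nabla w=\nabla\p_s h$, the two versions are the same computation.
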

}

\begin{proof}
\textcolor{black}{
Using a Poincar\'e inequality, Sobolev embedding, and Corollary \ref{cor:caccio}, we have that $h$ satisfies 
\begin{align*}
\fint_{A_\rho(s_0)} \abs{\p_s h - (\p_s h)_{s_0,\rho}}^2 \,d\by &\le c(\epsilon)\,\rho^2\fint_{A_\rho(s_0)} \abs{\nabla\p_s h}^2 \,d\by 
\le c(\epsilon)\,\rho^2\sup_{\by\in A_\rho(s_0)} \abs{\nabla\p_s h}^2 \\ 
&\le c(\epsilon)\,\rho^2\sup_{\by\in A_{R/2}(s_0)} \abs{\nabla\p_s h}^2
\le c(\epsilon)\,\rho^2\norm{\nabla\p_s h}_{H^2(A_{R/2}(s_0))}^2\\
&
\le c(\epsilon)\,\rho^2\fint_{A_{3R/4}(s_0)}\abs{\nabla\p_s h}^2\,d\by \\
&\le c(\epsilon)\bigg(\frac{\rho}{R}\bigg)^2\fint_{A_R(s_0)}\abs{\p_s h - (\p_s h)_{s_0,R}}^2\,d\by\,.
\end{align*}
}
\end{proof}

\textcolor{black}{
Given Lemmas \ref{lem:camp} and \ref{lem:v_est}, we may proceed to the proof of Lemma \ref{lem:SB_PDE_holder}.
}
\begin{proof}[Proof of Lemma \ref{lem:SB_PDE_holder}]
\textcolor{black}{
We consider the weak solution $u$ to the slender body PDE, which satisfies \eqref{eq:weaku_str} in the straightened region $\Phi(\mc{O}_{r_*})$. For $0<R\le\frac{r_*}{2}$ and $s_0\in \T$, we consider the annular region $A_R(s_0)$ within $\Phi(\mc{O}_{r_*})$. 
}

\textcolor{black}{
Within $A_R(s_0)$, we write $u=h+q$ where $h$ satisfies the following version of the slender body PDE with frozen coefficients in $s$ and $r$. Given the matrix $\bm{A}(s,r,\theta)$, let $\bm{A}_0(\theta) = \bm{A}(s_0,\epsilon,\theta)$. We consider $h$ satisfying
\begin{equation}\label{eq:real_v_eqn} 
\begin{aligned}
-\div (\bm{A}_0 \nabla h) &= 0 \qquad\quad \text{in }A_R(s_0)\\
\int_0^{2\pi}\bm{A}_0\nabla h\cdot\bm{n}_y\, \epsilon d\theta &= f(s_0) \quad\; \text{on }\Phi(\Gamma_\epsilon) \\
h\big|_{\Phi(\Gamma_\epsilon)} &= h(s)\,, \quad \text{unknown but independent of }\theta \\
h &= u \qquad\quad \text{on } \p A_R(s_0)\backslash \Phi(\Gamma_\epsilon)\,.
\end{aligned}
\end{equation}
Since $\bm{A}_0$ depends smoothly on $\theta$, we may use Lemma \ref{lem:v_est} to obtain a bound for the oscillation of $\p_sh$ for any $\rho<R/2$:
\begin{equation}\label{eq:h_osc}
\int_{A_\rho(s_0)}\abs{\p_s h - (\p_s h)_{s_0,\rho}}^2 d\by \le c(\epsilon,\kappa_*)\bigg(\frac{\rho}{R} \bigg)^4\int_{A_R(s_0)}\abs{\p_s h - (\p_s h)_{s_0,R}}^2 d\by\,.
\end{equation}
Furthermore, rewriting \eqref{eq:weaku_str} as
\begin{align*}
\int_{A_{R}(s_0)}(\bm{A}-\bm{A}_0)\nabla u \cdot\nabla\psi\,d\by + \int_{A_{R}(s_0)}\bm{A}_0(\nabla h+\nabla q) \cdot\nabla\psi\,d\by &= \int_{\abs{s-s_0}\le R}f(s)\psi(s)\,ds\,,
\end{align*}
we may use the weak form of \eqref{eq:real_v_eqn} to obtain an equation for $\nabla q$:
\begin{align*}
\int_{A_{R}(s_0)}\bm{A}_0\nabla q \cdot\nabla\psi\,d\by 
&= -\int_{A_{R}(s_0)}(\bm{A}-\bm{A}_0)\nabla u \cdot\nabla\psi\,d\by    
+\int_{\abs{s-s_0}\le R}\big(f(s)-f(s_0)\big)\psi(s)\,ds\,.
\end{align*}  
}

\textcolor{black}{
Since $q=0$ on $\p A_{R}(s_0)\backslash \Phi(\Gamma_\epsilon)$ and $q=q(s)$ on $\Phi(\Gamma_\epsilon)$ by construction, using that the matrix $\bm{A}_0$ is bounded below, we may estimate 
\begin{equation}\label{eq:weq1}
\begin{aligned}
\int_{A_{R}}\abs{\nabla q}^2\,d\by 
&\le c(\kappa_*)\bigg(R^\alpha\norm{\bm{A}}_{C^{0,\alpha}(A_{R})}\norm{\nabla u}_{L^2(A_{R})} \norm{\nabla q}_{L^2(A_{R})} \\
&\qquad + R^{1/2+\alpha}\norm{f}_{C^{0,\alpha}(\T)}\norm{q\big|_{\mc{C}_\epsilon}}_{L^2(\abs{s-s_0}<R)}\bigg)\,.
\end{aligned}
\end{equation}
Note that, by a scaling argument, $q|_{\mc{C}_\epsilon}$ satisfies the following trace inequality within the annulus $A_{R}(s_0)$:
\begin{align*}
\int_{\abs{s-s_0}<R}\abs{q\big|_{\mc{C}_\epsilon}}^2\,ds 
&= \frac{1}{2\pi}\int_0^{2\pi}\int_{\abs{s-s_0}<R}\abs{q\big|_{\mc{C}_\epsilon}}^2\,ds d\theta 
\le c(\epsilon)\,R\int_{A_R(s_0)}\bigg(\abs{\nabla q}^2 + R^{-2}\abs{q}^2 \bigg)\,d\by \\
&\le c(\epsilon)\,R\int_{A_R(s_0)}\abs{\nabla q}^2 \,d\by\,.
\end{align*}
Using Young's inequality and the definition of $\bm{A}$, the estimate \eqref{eq:weq1} yields 
\begin{equation}\label{eq:weq2}
\int_{A_{R}}\abs{\nabla q}^2\,d\by 
\le c(\epsilon,\kappa_{*,\alpha})\bigg(R^{2\alpha} \int_{A_R}\abs{\nabla u}^2\,d\by+ R^{2+2\alpha}\norm{f}_{C^{0,\alpha}(\T)}^2\bigg)\,.
\end{equation}
}

\textcolor{black}{
Furthermore, by Proposition \ref{prop:high_reg}, we have that, for $\rho<R/2$, the function $h$ satisfies
\begin{align*}
\int_{A_\rho(s_0)}\abs{\nabla h}^2\,d\by \le c\,\rho^2\sup_{\by\in A_\rho(s_0)}\abs{\nabla h}^2 
\le c\,\rho^2\sup_{\by\in A_{R/2}(s_0)}\abs{\nabla h}^2 \\
\le c(\epsilon,R)\rho^2\norm{\nabla h}_{H^2(A_{R/2}(s_0))}^2\le c(\epsilon)\bigg(\frac{\rho}{R}\bigg)^2\norm{\nabla h}_{L^2(A_{R}(s_0))}^2\,,
\end{align*}
by scaling. From this we may obtain 
\begin{equation}\label{eq:u_with_R}
\begin{aligned}
\int_{A_\rho(s_0)}\abs{\nabla u}^2\,d\by &\le c(\epsilon)\bigg(\frac{\rho}{R}\bigg)^2\int_{A_R(s_0)}\abs{\nabla u}^2\,d\by + c(\epsilon)\int_{A_R(s_0)}\norm{\nabla q}^2\,d\by\\
&\le c(\epsilon,\kappa_{*,\alpha})\bigg(\bigg(\frac{\rho}{R}\bigg)^2 + R^{2\alpha}\bigg) \int_{A_R(s_0)}\abs{\nabla u}^2\,d\by+ R^{2+2\alpha}\,c(\epsilon,\kappa_{*,\alpha})\norm{f}_{C^{0,\alpha}(\T)}^2\,,
\end{aligned}
\end{equation}
where we have used the bound \eqref{eq:weq2}.
}

\textcolor{black}{
We now state a useful result from \cite[Lemma 5.13]{giaquinta2013introduction}. 
\begin{proposition}[Lemma 5.13 from \cite{giaquinta2013introduction}]\label{prop:giaq}
Given a nondecreasing function $\Psi:\R^+\to\R^+$ satisfying
\begin{align*}
\Psi(\rho) \le c_1\bigg(\bigg(\frac{\rho}{R}\bigg)^m+\omega\bigg)\Psi(R)+c_2R^\nu\,, \quad m>\nu>0 
\end{align*}
for all $0<\rho\le R\le \frac{r_*}{2}$, for $\omega$ sufficiently small (depending on $c_1,m,\nu$), we in fact have
\begin{equation}\label{eq:giaquinta_est}
\Psi(\rho) \le c_3\bigg(\frac{\Psi(R)}{R^\nu}+c_2\bigg)\rho^\nu 
\end{equation}
for all $0\le \rho\le R\le \frac{r_*}{2}$.
\end{proposition}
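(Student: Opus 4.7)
The plan is to establish this via a standard dyadic iteration going back to Campanato and Morrey: apply the hypothesis at a single geometric ratio, iterate, and then interpolate between discrete scales.

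Concretely, I would proceed as follows. First, fix a ratio $\tau \in (0,1)$ to be chosen, and apply the hypothesis with $\rho = \tau R$ to obtain
\begin{equation*}
\Psi(\tau R) \le c_1(\tau^m + \omega)\Psi(R) + c_2 R^\nu.
\end{equation*}
The idea is to select $\tau$ and the smallness threshold for $\omega$ so that the coefficient $c_1(\tau^m+\omega)$ is controlled by a power of $\tau$ that is strictly smaller than $\tau^\nu$. More precisely, I fix any intermediate exponent $s$ with $\nu < s < m$, choose $\tau$ small enough that $c_1\tau^m \le \tau^s/2$ (possible because $m>s$ forces $c_1\tau^{m-s}\to 0$ as $\tau\to 0^+$), and then require $\omega \le \tau^s/(2c_1)$. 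These choices yield the cleaner one-step estimate
\begin{equation*}
\Psi(\tau R) \le \tau^s \Psi(R) + c_2 R^\nu.
\end{equation*}

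Next, I would iterate: replacing $R$ successively by $\tau R, \tau^2 R, \dots$ gives by induction
\begin{equation*}
\Psi(\tau^k R) \le \tau^{ks}\Psi(R) + c_2 R^\nu \sum_{j=0}^{k-1}\tau^{(k-1-j)s+j\nu} \le \tau^{ks}\Psi(R) + \frac{c_2 R^\nu \,\tau^{(k-1)\nu}}{1-\tau^{s-\nu}},
\end{equation*}
where the geometric series in the middle converges precisely because $s>\nu$. Using $\tau^{ks}\le \tau^{k\nu}$ and absorbing constants, this gives
\begin{equation*}
\Psi(\tau^k R) \le C\,\tau^{k\nu}\bigl(\Psi(R) + c_2 R^\nu\bigr)\quad\text{for all }k\ge 0,
\end{equation*}
with $C$ depending only on $c_1$, $m$, $\nu$. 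Finally, for arbitrary $0<\rho\le R$ I would select the integer $k\ge 0$ with $\tau^{k+1}R < \rho \le \tau^k R$ and invoke monotonicity of $\Psi$ together with $\tau^{k\nu}\le \tau^{-\nu}(\rho/R)^\nu$ to conclude
\begin{equation*}
\Psi(\rho) \le \Psi(\tau^k R) \le C\tau^{-\nu}\Bigl(\frac{\rho}{R}\Bigr)^\nu\bigl(\Psi(R)+c_2R^\nu\bigr),
\end{equation*}
which is the advertised inequality with $c_3 = C\tau^{-\nu}$. The case $\rho=0$ is handled by monotonicity and nonnegativity.

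The one subtlety, which I expect to be the main obstacle in a clean write-up, is the insistence on an \emph{intermediate} exponent $s \in (\nu,m)$ rather than the naive $s=\nu$. A computation shows that choosing $s=\nu$ leads to an iteration whose inhomogeneous term grows like $k\,\tau^{(k-1)\nu}R^\nu$ and therefore fails to imply the desired bound. The strict gap between the decay rate of the $\Psi$-factor and the target exponent $\nu$ is what allows the geometric series to sum, and the standing hypothesis $m>\nu$ is precisely what guarantees room to make such a choice. Quantifying the smallness of $\omega$ in terms of $c_1,m,\nu$ is then a direct consequence of the choice of $\tau$.
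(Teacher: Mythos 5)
Your proof is correct, and it is the standard Campanato-type iteration argument that the cited reference (Giaquinta--Martinazzi, Lemma 5.13) itself uses: fix an intermediate exponent $s\in(\nu,m)$, choose $\tau$ and the smallness of $\omega$ so that the one-step coefficient is $\tau^s$, iterate to get decay $\tau^{k\nu}$ at dyadic scales, and interpolate by monotonicity. The paper does not reprove this lemma — it simply quotes it — so there is nothing to contrast with; your write-up, including the correctly identified subtlety that taking $s=\nu$ would produce a logarithmic loss, is exactly the canonical proof.
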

}

\textcolor{black}{
Applying Proposition \ref{prop:giaq} to \eqref{eq:u_with_R} with $\Psi(\rho)=\int_{A_\rho(s_0)}\abs{\nabla u}^2\,d\by$, $m=2$, and $\nu=2-\delta$ for small $\delta>0$, we obtain 
\begin{equation}\label{eq:new_nablau_est}
\int_{A_\rho(s_0)}\abs{\nabla u}^2\,d\by 
\le c(\epsilon,\kappa_{*,\alpha})\bigg(R^{-(2-\delta)}\int_{A_R(s_0)}\abs{\nabla u}^2\,d\by+ R^{2\alpha+\delta}\norm{f}_{C^{0,\alpha}(\T)}^2\bigg)\rho^{2-\delta}
\end{equation}
for any $0\le \rho\le R\le\frac{r_*}{2}$.
}

\textcolor{black}{
Using \eqref{eq:h_osc} and \eqref{eq:weq2}, for $0<\rho\le\wt \rho \le R\le \frac{r_*}{2}$ we may then estimate
\begin{equation}\label{eq:main_comp}
\begin{aligned}
\int_{A_\rho(s_0)} &\abs{\p_s u - (\p_s u)_{s_0,\rho}}^2\,d\by \le 
2\int_{A_\rho(s_0)} \abs{\p_s h - (\p_s h)_{s_0,\rho}}^2 d\by + 2\int_{A_\rho(s_0)}\abs{\p_s q - (\p_s q)_{s_0,\rho}}^2\,d\by \\
& \le c(\epsilon,\kappa_*)\bigg(\frac{\rho}{\wt \rho} \bigg)^4 \int_{A_{\wt \rho}(s_0)}\abs{\p_s h - (\p_s h)_{s_0,\wt \rho}}^2\,d\by + 4\int_{A_\rho(s_0)}\abs{\nabla q}^2\,d\by \\
& \le c(\epsilon,\kappa_*) \bigg(\frac{\rho}{\wt \rho} \bigg)^4 \int_{A_{\wt\rho}(s_0)}\abs{\p_s u - (\p_s u)_{s_0,\wt\rho}}^2\,d\by + c(\epsilon,\kappa_*)\int_{A_{\wt \rho}(s_0)}\abs{\nabla q}^2\,d\by \\
& \le c(\epsilon,\kappa_*) \bigg(\frac{\rho}{\wt \rho} \bigg)^4 \int_{A_{\wt\rho}(s_0)}\abs{\p_s u - (\p_s u)_{s_0,\wt\rho}}^2\,d\by \\
&\qquad + c(\epsilon,\kappa_{*,\alpha})\bigg(\wt\rho^{2\alpha}\int_{A_{\wt\rho}(s_0)}\abs{\nabla u}^2\,d\by + \wt\rho^{2+2\alpha}\norm{f}_{C^{0,\alpha}(\T)}^2\bigg) \\
&\le  c(\epsilon,\kappa_*) \bigg(\frac{\rho}{\wt\rho} \bigg)^4 \int_{A_{\wt\rho}(s_0)}\abs{\p_s u - (\p_s u)_{s_0,\wt\rho}}^2\,d\by 
 + c(\epsilon,\kappa_{*,\alpha})\,\wt\rho^{2+2\alpha-\delta}\,\wt C  \,,
\end{aligned}
\end{equation}
where $\wt C = R^{-(2-\delta)}\int_{A_{R}(s_0)}\abs{\nabla u}^2\,d\by+ R^\delta\norm{f}_{C^{0,\alpha}(\T)}^2$. 
}

\textcolor{black}{
By Proposition \ref{prop:giaq} with $\Psi(\rho)=\int_{A_\rho(s_0)} \abs{\p_s u - (\p_s u)_{s_0,\rho}}^2\,d\by$, $m=4$, and $\nu=2+2\alpha-\delta$, we have 
\begin{align*}
\int_{A_\rho(s_0)} \abs{\p_s u - (\p_s u)_{s_0,\rho}}^2\,d\by&\le \rho^{2+2\alpha-\delta}\,c(\epsilon,\kappa_{*,\alpha})\bigg(
R^{-(2+2\alpha-\delta)}\int_{A_{R}(s_0)}\abs{\p_s u - (\p_s u)_{s_0,R}}^2\,d\by + \wt C \bigg) 
\end{align*}
for any $0\le\rho\le R\le\frac{r_*}{2}$.
By Lemma \ref{lem:camp}, we thus have $\p_s u\big|_{\Phi(\Gamma_\epsilon)\cap \p A_R(s_0)}\in C^{0,\alpha-\frac{\delta}{2}}$ for small $\delta>0$. Since $u\big|_{\Phi(\Gamma_\epsilon)}$ is independent of $\theta$, we also have $\frac{1}{\epsilon}\p_\theta u\big|_{\Phi(\Gamma_\epsilon)\cap \p A_R(s_0)}\in C^{0,\alpha-\frac{\delta}{2}}$. Thus by classical elliptic regularity theory we have $u\in C^{1,\alpha-\frac{\delta}{2}}(A_R(s_0))$ throughout the annulus $A_R(s_0)$ with 
\begin{equation}\label{eq:uholder}
\norm{u}_{C^{1,\alpha-\frac{\delta}{2}}(A_R(s_0))} \le c(\epsilon,\kappa_{*,\alpha})\big(\norm{u}_{H^1(\mc{O}_{r_*})} +\norm{f}_{C^{0,\alpha}(\T)} \big)\,.
\end{equation}
In particular, $\nabla u\in L^\infty(A_R(s_0))$, and we may estimate
\begin{align*}
\int_{A_R(s_0)}\abs{\nabla u}^2\,d\by &\le c\,R^2\sup_{\by\in A_R(s_0)}\abs{\nabla u}^2\,.
\end{align*}
We may then improve the estimate \eqref{eq:main_comp} to 
\begin{equation}\label{eq:main_comp2}
\begin{aligned}
&\int_{A_\rho(s_0)} \abs{\p_s u - (\p_s u)_{s_0,\rho}}^2\,d\by \\
&\qquad \le  c(\epsilon,\kappa_*) \bigg(\frac{\rho}{R} \bigg)^4 \int_{A_{R}(s_0)}\abs{\p_s u - (\p_s u)_{s_0,R}}^2\,d\by 
 + c(\epsilon,\kappa_{*,\alpha})\,R^{2+2\alpha}\,\wt C_2  \,,
\end{aligned}
\end{equation}
where $\wt C_2=\sup_{\by\in A_R(s_0)}\abs{\nabla u}^2 + \norm{f}_{C^{0,\alpha}}^2$. Again applying Proposition \ref{prop:giaq}, we have
\begin{equation}\label{eq:final_AR_est}
\fint_{A_\rho(s_0)} \abs{\p_s u - (\p_s u)_{s_0,\rho}}^2\,d\by\le \rho^{2\alpha}\,c(\epsilon,\kappa_{*,\alpha})\bigg(
R^{-2\alpha}\fint_{A_R(s_0)}\abs{\p_s u - (\p_s u)_{s_0,R}}^2\,d\by + \wt C_2 \bigg) \,.
\end{equation}
Since we may cover the region $\Phi(\mc{O}_{r_*})$ by annuli $A_R(s_0)$, we thus obtain $\p_s u \in \mc{A}^{2,\alpha}$, and therefore, by Lemma \ref{lem:camp}, $u=u(s)\in C^{1,\alpha}(\T)$ along the filament surface $\Gamma_\epsilon$. The estimate \eqref{eq:SB_PDE_holder} follows from the bounds \eqref{eq:uholder} and \eqref{eq:L2est_SB_PDE_2} as well as Lemma \ref{lem:camp}.
}
\end{proof}

{\bf Acknowledgments.} \quad
L.O. acknowledges support from NSF Postdoctoral Fellowship DMS-2001959 and thanks Dallas Albritton for helpful discussion. 

\vspace{-0.2cm}
{\bf Data availability.}\quad
Data sharing not applicable to this article as no datasets were generated or analyzed during the current study. 


\bibliographystyle{abbrv} 
\bibliography{LaplaceBib}

\end{document}